\documentclass[reqno]{amsart}
 
\usepackage{amssymb,amsfonts,amsmath, amsthm}
\usepackage{lipsum} 
\usepackage[all,arc]{xy}
\usepackage{enumerate}
\usepackage{mathrsfs}
\usepackage[usenames, dvipsnames]{xcolor}
\definecolor{cerulean}{RGB}{0,123,167}
\usepackage[colorlinks = true, linkcolor = cerulean, citecolor = Thistle]{hyperref}
\usepackage{tikz}
\usepackage{tikz-cd}
\usepackage[super]{nth}
\usetikzlibrary{arrows}
\usepackage{caption}
\usepackage{geometry}
\geometry{margin = 0.8in, tmargin=0.8in}
\usepackage{multicol}
\usepackage{esint}
\usepackage{mathtools}
\usepackage{import}
\usepackage{xifthen}
\usepackage{pdfpages}
\usepackage{transparent}
\usepackage{upgreek}
\usepackage{pgfplots}
\pgfplotsset{compat=1.17} 
\usepackage{bbm}

\newcommand{\C}{\mathbb{C}}

\newcommand{\F}{\mathbb{F}}	
\newcommand{\T}{\mathbb{T}}
\newcommand{\R}{\mathbb{R}}

\newcommand{\N}{\mathbb{N}}
\newcommand{\Z}{\mathbb{Z}}

\newcommand{\p}{\partial}

\newcommand{\abs}[1]{\vert#1\vert}

\providecommand{\norm}[1]{\left\Vert#1\right\Vert}

\newcommand{\triplenorm}[1]{{\left\vert\kern-0.25ex\left\vert\kern-0.25ex\left\vert #1 
    \right\vert\kern-0.25ex\right\vert\kern-0.25ex\right\vert}}

\newcommand{\Hzero}{{}_0H^1}
\newcommand{\Hzeros}[1]{{}_0H^{#1}}

\numberwithin{equation}{section}

\DeclareMathOperator{\supp}{supp}

\DeclareMathOperator{\sgn}{sgn}

\DeclareMathOperator{\diverge}{div}

\DeclareMathOperator{\sym}{sym}

\newtheorem{thm}{Theorem}[section]
\newtheorem{cor}[thm]{Corollary}
\newtheorem{prop}[thm]{Proposition}
\newtheorem{lem}[thm]{Lemma}

\theoremstyle{definition}
\newtheorem*{thm*}{Theorem}
\newtheorem*{def*}{Definition}
\newtheorem*{prop*}{Proposition}
\newtheorem{defn}[thm]{Definition}

\theoremstyle{remark}
\newtheorem{rem}[thm]{Remark}

\newcommand{\m}{\mathrm}

\newcommand{\lv}{\lVert}
\newcommand{\rv}{\rVert}

\newcommand{\al}{\alpha}

\newcommand{\es}{\varnothing}

\newcommand{\f}{\frac}
\newcommand{\sig}{\sigma}

\newcommand{\lf}{\lfloor}
\newcommand{\rf}{\rfloor}

\newcommand{\bpm}{\begin{pmatrix}}
\newcommand{\epm}{\end{pmatrix}}

\newcommand{\loc}{\m{loc}}

\renewcommand{\le}{\leqslant}
\renewcommand{\ge}{\geqslant}

\newcommand{\tnorm}[1]{\lv#1\rv}
\newcommand{\bp}[1]{\Big(#1\Big)}
\renewcommand{\sp}[1]{\big(#1\big)}
\newcommand{\tp}[1]{(#1)}
\newcommand{\babs}[1]{\Big|#1\Big|}

\newcommand{\tcb}[1]{\{{#1}\}}
\newcommand{\br}[1]{\left\langle #1 \right\rangle}

\providecommand{\tbr}[1]{\langle #1 \rangle}

\pdfstringdefDisableCommands{\def\eqref#1{(\ref{#1})}}

\title[Traveling wave solutions to free boundary incompressible Navier-Stokes equations]{Traveling wave solutions to the inclined or periodic free boundary incompressible Navier-Stokes equations}

\author{Junichi Koganemaru}
\address{
Department of Mathematical Sciences\\
Carnegie Mellon University\\
Pittsburgh, PA 15213, USA
}
\email[J. Koganemaru]{jkoganem@andrew.cmu.edu}

\author{Ian Tice}
\address{
Department of Mathematical Sciences\\
Carnegie Mellon University\\
Pittsburgh, PA 15213, USA
}
\email[I. Tice]{iantice@andrew.cmu.edu}
\thanks{I. Tice was supported by an NSF CAREER Grant (DMS \#1653161). }

\subjclass[2010]{Primary 35Q30, 35R35, 35C07; Secondary 46J10, 76D45, 76E05}

% 35Q30 = Navier-Stokes equations
% 35R35 = Free boundary problems
% 35C07 = Traveling wave solutions
% 76D45 = Capillarity for inc. visc. fluids
% 76E05 = paralel shear flows
% 46J10 = Banach algebras

\keywords{Free boundary Navier-Stokes, traveling waves, Sobolev algebras}

\begin{document}

\begin{abstract}

This paper concerns the construction of traveling wave solutions to the free boundary incompressible Navier-Stokes system.  We study a single layer of viscous fluid in a strip-like domain that is bounded below by a flat rigid surface and above by a moving surface.  The fluid is acted upon by a bulk force and a surface stress that are stationary in a coordinate system moving parallel to the fluid bottom.  We also assume that the fluid is subject to a uniform gravitational force that can be resolved into a sum of a vertical component and a component lying in the direction of the traveling wave velocity. This configuration arises, for instance, in the modeling of fluid flow down an inclined plane.  We also study the effect of periodicity by allowing the fluid cross section to be periodic in various directions.  The horizontal component of the gravitational field gives rise to stationary solutions that are pure shear flows, and we construct our solutions as perturbations of these by means of an implicit function argument.  An essential component of our analysis is the development of some new functional analytic properties of a scale of anisotropic Sobolev spaces, including that these spaces are an algebra in the supercritical regime, which may be of independent interest.
\end{abstract}

\maketitle

\section{Introduction}

The existence of traveling wave solutions to the equations of fluid mechanics has been a subject of intense study for nearly two centuries (see Section \ref{sec:previous work} for a brief summary).  Until recently, most of the mathematical results in this area focused on inviscid fluids, but work in the last few years constructed traveling wave solutions to the free boundary Navier-Stokes equations with a single horizontally infinite but finite depth layer of incompressible fluid \cite{leonitice}, and with multiple layers \cite{noahtice} in a uniform, downward-pointing gravitational field.  The purpose of the present paper is to extend these constructions into more general physical configurations by considering two effects: inclination of the fluid domain, which results in a component of the gravitational field parallel to the fluid layer; and, periodicity of the fluid layer in certain directions.  The key to the constructions in \cite{leonitice,noahtice} was the identification and utilization of a new scale of anisotropic Sobolev spaces, which serve as the container space for the function describing the free surface of the fluid.  The results in this paper rely crucially on some new functional analytic properties of these spaces, which we prove here: the development of this scale of spaces on domains with periodicity; and, the fact that these spaces form an algebra in the supercritical regime.  While these results are essential for our specific PDE needs, they may be of independent interest to those interested in Sobolev spaces.

\subsection{Kinematic and dynamic description of the fluid}

In this paper we consider a single finite depth layer of viscous, incompressible fluid.  We assume that the fluid evolves in a strip-like domain, bounded below by a flat, rigid, inclined surface and above by a free moving surface that can be described by the graph of a continuous function, in dimensions $n \ge 2$ (though, of course, only the cases $n \in \{2,3\}$ are physically relevant).  Furthermore, we assume that the fluid is subject to a uniform gravitational field $\mathfrak{G} \in \R^n$.

Prior to specifying the fluid domain or the equations of motion, we fix an orthogonal coordinate system as follows.   We choose the unit vector $e_n$ to be normal to the inclined surface, and we choose the unit vector $e_1$ in such a way that   $\mathfrak{G}$ lies in the $e_1$-$e_n$ hyperplane.  In other words, we posit that the uniform gravitational field $\mathfrak{G}$ resolves into two components as $\mathfrak{G} = \kappa e_1 - \mathfrak{g} e_n $, where $\kappa \in \R$ and  $\mathfrak{g} \in (0,\infty)$.   We can then define the angle of incline of the domain, $\theta \in (-\pi/2,\pi/2)$,  via $\tan \theta = \kappa/\mathfrak{g}$ (see Figure \ref{fig:fluid_domain}). 

Next we turn to a description of the fluid cross-section, $\Sigma$,  which will allow us to specify the free surface and rigid bottom of the fluid and to model periodicity.  The flat rigid bottom of the fluid, which is orthogonal to the $e_n$ direction, is described by $(n-1)$-dimensional sets of the form 
\begin{align}\label{sigma}
	\Sigma = 
	\begin{cases}
		\R \times \prod_{i=2}^{n-1} \Sigma_i, &\text{where }  \Sigma_i = \R  \text{ or else }\Sigma_i= L_i \T   \\
		\prod_{i=1}^{n-1} L_i \T,
	\end{cases}
\end{align}
where $L \T = \R / L \Z$ denotes the 1-torus of periodicity length $L>0$.    This choice of $\Sigma$ allows us to model: full periodicity in all directions, which corresponds to the second case; no periodicity, which corresponds to the first case with each $\Sigma_i = \R$; or partial periodicity when $n \ge 3$, which corresponds to the first case with at least one $\Sigma_i = L_i \T$.  However, for technical reasons that we will detail below, in the case of partial periodicity we cannot allow periodicity in the $e_1$ direction, and so the first factor of $\Sigma$ must be $\R$.  To be more explicit in the physically relevant cases $n \in \{2,3\}$, we note that \eqref{sigma} allows for $\Sigma \in \{\R , L \mathbb{T}\}$ when $n =2$, and when $n =3$ it allows for  $\Sigma \in \{ \R^2, \R \times L_2 \T, L_1 \T \times L_2 \T\}$, but we exclude the possibility that $\Sigma = L_1 \T \times \R$.  In any dimension, we endow $\Sigma$ with the usual topological and smooth structure.

With $\Sigma$ in hand, we can now describe strip-like $n$-dimensional domains with cross section $\Sigma$.  Given any function $\upzeta: \Sigma \to (0,\infty)$, we define the set $\Omega_\upzeta \subset \Sigma \times \R$ via $\Omega_\upzeta = \{ x = (x',x_n) \in \Sigma \times \R \mid 0 < x_n < \upzeta(x') \}$
and the $\upzeta$-graph surface $\Sigma_\upzeta \subset \Sigma \times \R$ via $\Sigma_\upzeta = \{ x = (x',x_n) \in \Sigma \times \R \mid x_n = \upzeta(x')  \text{ for some }  x' \in \Sigma\}$.  We note that if $\upzeta$ is continuous, then the upper boundary of $\Omega_\upzeta$ is $\Sigma_\upzeta$ and its lower boundary is given by $\Sigma_0 = \{ x = (x', x_n) \in \Sigma \times \R \mid x_n = 0 \}.$

We assume that at equilibrium, with all external forces and stresses absent, the fluid occupies the flat equilibrium domain $
	\Omega_b = \{x = (x',x_n) \in \Sigma  \times \R \mid 0 < x_n < b \}$
for some equilibrium depth parameter $b > 0$.  Furthermore, we assume that under perturbation, the fluid occupies the time-dependent fluid domain $\Omega_{b +\zeta(\cdot,t)}$, where $\zeta : \Sigma \times [0,\infty) \to (-b, \infty)$ is an unknown free surface function.  The fluid is then bounded above by the $(b+\zeta)$-graph surface $\Sigma_{b+\zeta(\cdot,t)}$ and below by the flat boundary $\Sigma_0$.

\begin{figure}
	\includegraphics[scale=0.6]{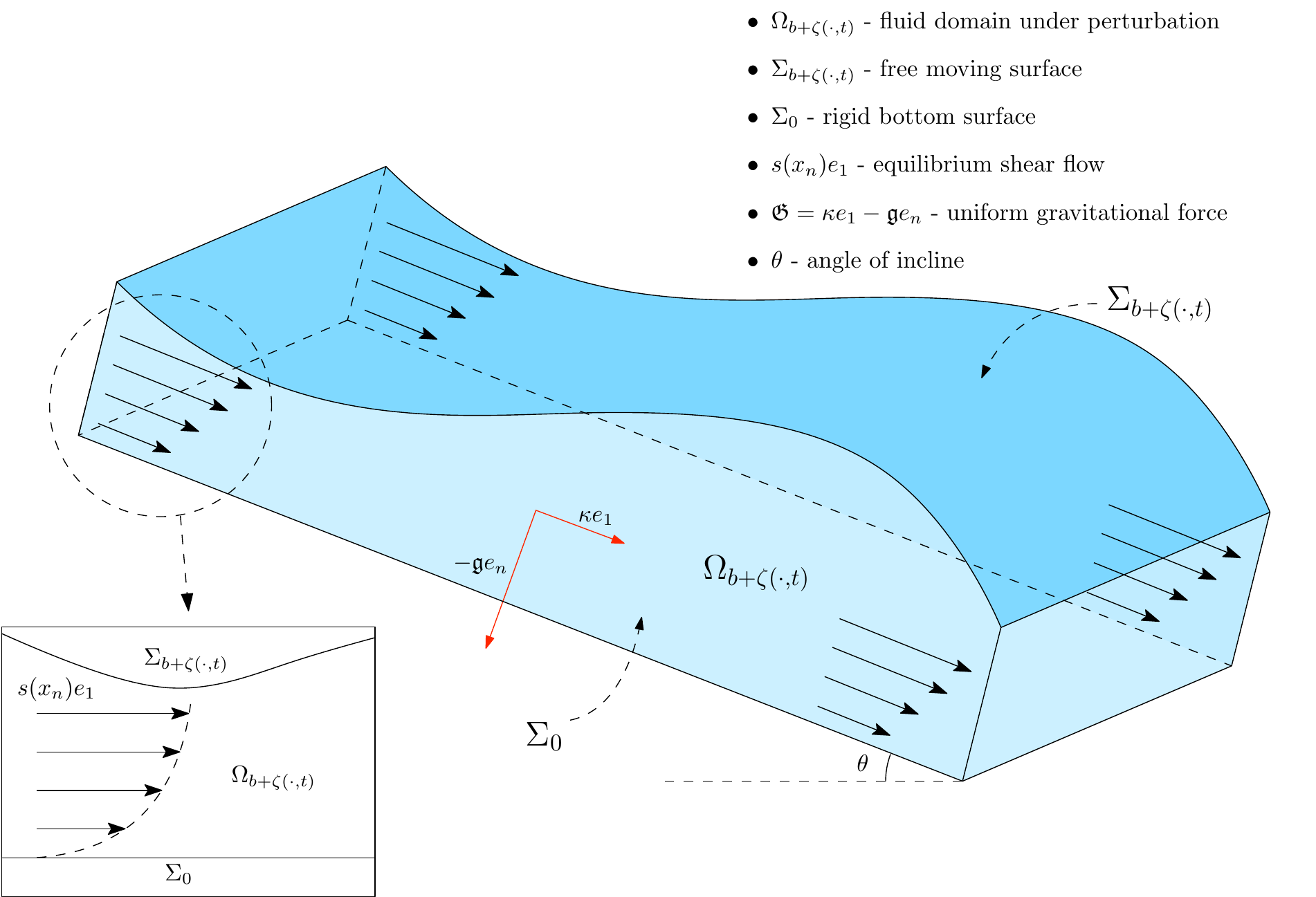}
	\caption{A sample portion of the time-dependent fluid domain under perturbation}
	\label{fig:fluid_domain}
\end{figure}

In addition to the aforementioned gravitational force, we posit that there are four other distinct forces acting upon the fluid: one in the bulk and three on the free surface. The first bulk force is a generic force described by the vector field $\tilde{\mathfrak{f}}(\cdot,t) : \Omega_{b +\zeta(\cdot,t)} \to \R^n$. The first surface force is a constant external pressure $P_{ext}\in \R$ applied by the fluid above the free surface. The second surface force is generated by an externally applied stress tensor, which is described by a map $\tilde{\mathcal{T}} : \Sigma_{b+\zeta(\cdot,t)} \to  \R^{n\times n}_{\sym}$, where $
		 \R^{n\times n}_{\sym} = \{ M \in \R^{n \times n} \mid M = M^T \}$ 
	 is the set of symmetric $n \times n$ matrices. The symmetry condition is imposed to be consistent with the fact that stress tensors are typically symmetric in continuum mechanics, but this condition is not essential and could be dropped in our analysis.  The third surface force is the surface tension generated by the surface itself, which we model in the standard way as $- \sigma \mathcal{H}(\zeta)$, where $\sigma \ge 0$ is the coefficient of the surface tension, and 
	 \begin{align} \label{eq:H}
		 \mathcal{H}(\zeta) = \diverge' \left( \frac{\nabla' \zeta }{\sqrt{1+ \abs{\nabla' \zeta}^2 }} \right) 
	 \end{align}
	 is the mean-curvature operator.

We assume that the evolution of the fluid  is described for time $t \ge 0$ by its velocity field $w(\cdot, t) : \Omega_{b +\zeta(\cdot,t)} \to \R^n$ and its pressure $P(\cdot, t): \Omega_{b +\zeta(\cdot,t)} \to \R$. For each $t > 0$, we require that the fluid velocity $w$, the pressure $P$, and the free surface $\zeta$ satisfy the free boundary incompressible Navier Stokes equations:
\begin{align} \label{eq:1}
	\begin{cases}
		\rho(\p_t w + w \cdot \nabla w) - \mu \Delta w + \nabla P =- \rho \mathfrak{g} e_n + \rho \kappa e_1  + \tilde{\mathfrak{f}}, & \text{in} \; \Omega_{b + \zeta(\cdot, t)} \\
		\diverge w = 0, & \text{in} \; \Omega_{b + \zeta(\cdot, t)} \\
		\p_t \zeta = w \cdot \nu \sqrt{1 + \abs{\nabla' \zeta}^2 },  & \text{on} \; \Sigma_{b + \zeta(\cdot, t)} \\	
		(PI - \mu \mathbb{D}w) \nu = [ - \sigma \mathcal{H}(\zeta) I + P_{ext} I + \tilde{\mathcal{T}} ] \nu,  & \text{on} \; \Sigma_{b + \zeta(\cdot, t)}	\\
	w= 0, & \text{on} \; \Sigma_0.
\end{cases}
	\end{align}
Here $\rho > 0$ is the fluid density, $\mu > 0$ is the fluid viscosity, $\mathbb{D}w = (\nabla w) + (\nabla w)^T \in \R^{n\times n}_{sym}$ is the symmetrized gradient of $w$, and $ \nu(\cdot, t) = (-\nabla'\zeta(\cdot,t), 1)/\sqrt{1+\abs{\nabla'\zeta(\cdot,t)}^2} \in \R^n$ 
is the outward pointing unit normal to the surface $\Sigma_{b+\zeta(\cdot, t)}$. The first two equations in \eqref{eq:1} are the standard incompressible Navier-Stokes equations; the first equation asserts the Newtonian balance of forces, the second enforces the conservation of mass. The third equation is the kinematic boundary condition describing the evolution of the free surface with the fluid. We note in particular that the third equation may be written as a transport equation in the form of $ \p_t \zeta + \nabla' \zeta \cdot w'\rvert_{\Sigma_{b + \zeta(\cdot, t)}} = w_n \rvert_{\Sigma_{b + \zeta(\cdot, t)}},$ which shows that the free surface $\zeta$ is transported by the horizontal component of the velocity $w'$ and driven by the vertical component of the velocity $w_n$. The fourth equation encodes the dynamic boundary conditions asserting the balance of forces on the free surface. The fifth equation is the typical no-slip condition enforced on flat rigid surfaces. 

For the sake of convenience, we shall assume without loss of generality that $\rho = \mu = \mathfrak{g} = 1$. This can be achieved by dividing both sides of the first equation in \eqref{eq:1} by $\rho$, rescaling the space-time variables, and renaming $b,\sigma, \kappa$, $\tilde{\mathfrak{f}}$, $P_{ext}$, $\tilde{\mathcal{T}}$, and (in the periodic settings) the periodicity length scales $L_i$, accordingly. This yields the system
\begin{align} \label{eq:2}
	\begin{cases}
		\p_t w + w \cdot \nabla w - \Delta w + \nabla P=-  e_n +\kappa e_1  + \tilde{\mathfrak{f}}, & \text{in} \; \Omega_{b + \zeta(\cdot, t)} \\
		\diverge w = 0, & \text{in} \; \Omega_{b + \zeta(\cdot, t)} \\
		\p_t \zeta = w \cdot \nu \sqrt{1 + \abs{\nabla' \zeta}^2 },  & \text{on} \; \Sigma_{b + \zeta(\cdot, t)} \\	
		(PI - \mathbb{D}w) \nu = [- \sigma \mathcal{H}(\zeta)I+ P_{ext} I + \tilde{\mathcal{T}} ]\nu,  & \text{on} \; \Sigma_{b + \zeta(\cdot, t)}	\\
	 w= 0, & \text{on} \; \Sigma_0.
\end{cases}
	\end{align}
For a differentiable vector field $u$ and a scalar $p$, we define the stress tensor $S(p,u) = pI - \mathbb{D}u \in \R^{n \times n}_{\sym}$, where $I$ is the $n \times n$ identity and $\mathbb{D}u$ is the symmetrized gradient of $u$. By defining the divergence operator to act on tensors in the canonical way, we have $\diverge S(p,u) = \nabla p - \Delta u + \nabla \diverge u$.  This means that in the first equation of \eqref{eq:2} we can rewrite 
\begin{equation}\label{eq:stress_div}
 \nabla P - \Delta w = \diverge S(P,w).
\end{equation}

\subsection{Shear flows and perturbations}

The system \eqref{eq:2} admits steady shear flow solutions that reduce to hydrostatic equilibrium when $\kappa=0$.  To see this, we suppose that $\zeta = 0, \tilde{\mathfrak{f}} = 0, \tilde{\mathcal{T}} = 0$. We then define the smooth functions $s_0,s: \R \to \R$ via 
\begin{align}\label{eq: shear}
	s_0(x_n) =   b x_n - \frac{x_n^2}{2} \text{ and }  s(x_n) = \kappa s_0(x_n).
\end{align}
We then define the steady shear velocity field $U^1: \Omega_{b} \to \R^n$ via  
\begin{align}\label{eq:modU1}
	 W^1(x',x_n) = s(x_n) e_1,
\end{align}
and the equilibrium hydrostatic pressure $p: \Omega_{b} \to \R$ via 
\begin{align}\label{eq: mod pressure}
	 p(x', x_n) = P_{ext} - x_n + b.
\end{align}
See Figure \ref{fig:fluid_domain} for a sketch of the $W^1$ profile.  One can readily check that $w = W^1, P = p, \zeta = 0$ is a steady shear flow solution to \eqref{eq:2} when $\tilde{\mathfrak{f}} = 0, \tilde{\mathcal{T}} = 0$.  Note that these shear flow solutions are special solutions induced by $\kappa \neq 0$, and they exist due to the presence of viscosity, with no clear analogue in the Euler system. In the literature, these solutions are sometimes referred to as Nusselt solutions.

We will study the system \eqref{eq:2} as a perturbation of this steady solution.  We define the perturbation of the velocity field and pressure field given by 
\begin{align} \label{eq:first perturb}
	\overline{w}^1 (x,t) = w(x,t) + W^1(x), \quad \overline{P}^1(x,t) = P(x,t) + p(x),
\end{align}
and we see that $(\overline{w}^1, \overline{P}^1, \zeta)$ is a solution to \eqref{eq:2} if and only if $(w,P, \zeta)$ satisfies 
\begin{align} \label{eq:3}
	\begin{cases}
		\p_t w + \diverge S(P,w) + ( w +  W^1) \cdot \nabla ( w+ W^1) = \tilde{\mathfrak{f}}, & \text{in} \; \Omega_{b + \zeta(\cdot, t)}\\
		\diverge w  = 0, & \text{in} \; \Omega_{b + \zeta(\cdot, t)} \\
		\p_t \zeta + \nabla' \zeta \cdot w' + (\p_1 \zeta) s(\zeta+b)= w_n ,  & \text{on} \; \Sigma_{b + \zeta(\cdot, t)} \\	
		S(P,w) \nu =  [(\zeta -\sigma \mathcal{H}(\zeta) )I  + \mathcal{\tilde{T}} - \kappa \zeta (e_1 \otimes e_n + e_n \otimes e_1)]\nu    ,  & \text{on} \; \Sigma_{b + \zeta(\cdot, t)}	\\
		w= 0, & \text{on} \; \Sigma_0,
\end{cases}
	\end{align}
Here we have utilized the identity \eqref{eq:stress_div} in the first equation of \eqref{eq:3}, and the tensor product $v \otimes w \in \R^{n \times n}$ of two vectors $v,w \in \R^{n}$ is defined in the standard way via $(v \otimes w)_{ij} = v_i w_j$.

\subsection{Traveling wave solutions around shear flows}
In this paper our main goal is to construct traveling wave solutions to \eqref{eq:3}, which are solutions that are stationary when viewed in a coordinate system moving at a constant speed. For this stationary condition to hold, the moving coordinate system must travel at a constant velocity parallel to the flat rigid surface $\Sigma_0$. In this paper we assume that the traveling waves move at a constant velocity in the direction of incline; in other words, that the moving coordinate system's velocity relative to the Eulerian coordinates of \eqref{eq:1} is $\gamma  e_1$ for some $\gamma  \in \R \setminus \{ 0 \}$. The speed of the traveling wave is then $\abs{\gamma }$, and $\sgn(\gamma )$ indicates the direction of travel along the $e_1$ axis.

Next we proceed to reformulate the problem in the traveling coordinates. We assume that the stationary free surface is given by an unknown function $\eta: \Sigma \to (-b, \infty)$, and it is related to $\zeta$ via $\zeta(x', t) = \eta(x' - \gamma  t e_1)$. The stationary velocity, pressure, force, and stress $v,q,\mathfrak{f}, \mathcal{T}$ are related to $w,P,\tilde{\mathfrak{f}},\tilde{\mathcal{T}}$ via 
\begin{align} \label{eq: traveling ansatz}
	w(x,t) = v(x-\gamma  te_1), \quad
	P(x,t) = q(x - \gamma  t e_1),\quad
	\tilde{\mathfrak{f}}(x,t) = \mathfrak{f}(x - \gamma  t e_1), \quad
	\tilde{\mathcal{T}}(x,t) = \mathcal{T}(x' - \gamma  t e_1).
\end{align}
Then using \eqref{eq: traveling ansatz}, the system \eqref{eq:3} reduces to a time-independent system for $(v, q, \eta)$ given the forcing terms $\mathfrak{f}$ and $\mathcal{T}$,
	\begin{align}\label{eq:4}
		\begin{cases}
			\diverge S(q,v) - \gamma  e_1  \cdot \nabla v + ( v +  W^1) \cdot \nabla ( v + W^1)  = \mathfrak{f}, & \text{in} \; \Omega_{b + \eta} \\
			\diverge v = 0 , & \text{in} \; \Omega_{b + \eta} \\
			-\gamma  \p_1 \eta + \nabla' \eta \cdot v' + (\p_1 \eta) s(\eta+b)= v_n  ,  & \text{on} \; \Sigma_{b + \eta} \\	
			S(q,v) \mathcal{N} =   [(\eta -\sigma \mathcal{H}(\eta) )I  + \mathcal{T}  - \kappa \eta  (e_1 \otimes e_n + e_n \otimes e_1) ]\mathcal{N}   ,  & \text{on} \; \Sigma_{b + \eta}	\\
		v= 0, & \text{on} \; \Sigma_0,
	\end{cases}
	\end{align}
where we have defined the non-unit normal vector field
\begin{align}\label{eq:N}
	\mathcal{N} = (-\nabla' \eta, 1).
\end{align}
For technical reasons to be discussed later in Section~\ref{sec: discussion}, it is convenient for us to remove the $\eta$ terms appearing in the fourth equation of \eqref{eq:4} by introducing an additional time-independent perturbation term 
\begin{align}\label{eq:modU2}
	\quad W^2(x',x_n)= \kappa x_n \eta(x') e_1
\end{align}
and by defining the modified shear velocity and modified pressure via
\begin{align}\label{eq: mod pressure zeta}
	 \overline{v}^1(x,t) = v + W^2(x), \; \overline{q}^1(x) = q(x) + \eta(x').
\end{align}
One may readily check that $(\overline{v}^1, \overline{q}^1, \zeta)$ is a solution to \eqref{eq:4} if and only if $(v,q,\eta)$ satisfies 
\begin{align} \label{eq:main unflattened}
	\begin{cases}
		\diverge S(q,v) -\gamma e_1 \cdot \nabla v - \gamma \kappa x_n \p_1 \eta e_1+ (v + W^1 + W^2 )\cdot \nabla (v+W^1 + W^2)  + (\nabla' \eta,0)  \\ 
		\hspace{0.3in} - \kappa x_n \Delta' \eta(x') e_1 - \kappa (x_n \nabla' \p_1 \eta, \p_1 \eta) = \mathfrak{f}, & \text{in} \; \Omega_{b + \eta} \\
		\diverge{v} + \kappa x_n \p_1 \eta(x') = 0 , & \text{in} \; \Omega_{b + \eta} \\
		 (-\gamma  + s(\eta+b) + \kappa (\eta+b)\eta)  \p_1 \eta = v \cdot \mathcal{N} ,  & \text{on} \; \Sigma_{b + \eta} \\	
		 S(q,v) \mathcal{N} =   [-\sigma \mathcal{H}(\eta)I  + \mathcal{T}  + \kappa (b+\eta) (e_1 \otimes (\nabla' \eta,0) + (\nabla' \eta,0)  \otimes e_1) ] \mathcal{N} ,  & \text{on} \; \Sigma_{b + \eta}	\\
		v= 0, & \text{on} \; \Sigma_0.
\end{cases}
\end{align}
We note that by using the perturbations introduced in \eqref{eq: mod pressure zeta}, we have replaced the $\eta$ terms in the fourth equation of \eqref{eq:4} by either derivatives of $\eta$ or products of $\eta$ and its derivatives, at the price of introducing additional terms in the first equation.

\subsection{Previous work}\label{sec:previous work}
The system \eqref{eq:1} and its variants have been studied extensively in the mathematical literature.  For a brief survey of the subject, we refer to Section 1.2 of Leoni-Tice \cite{leonitice} and Section 1.4 of Tice \cite{tice18}. For a more thorough review of the subject, we refer the surveys of Tolland \cite{tolland}, Groves \cite{groves}, Strauss \cite{strauss}, the recent paper by Strauss et al. \cite{straussONEPAS} in the inviscid case, and to the surveys of Zadrzy\'{n}ska \cite{zadrzynska} and Shibata-Shimizu \cite{S-S} in the viscous case.

When $\kappa = 0$, the small data theory for the free boundary problem \eqref{eq:1} over periodic domains is well-understood in dimension $n=3$. For the problem with surface tension, Nishida-Teramoto-Yoshihara \cite{N-T-Y} constructed global periodic solutions and proved that they decay exponentially fast to equilibrium. For the problem without surface tension, Hataya \cite{hataya} constructed global solutions decaying at a fixed algebraic rate, and later Guo-Tice \cite{guo-tice2013} constructed global solutions decaying almost exponentially. In the non-periodic setting, Beale \cite{beale} established the local-wellposedness of solutions without surface tension, and Beale \cite{beale84} proved the global existence of solutions with surface tension. Beale-Nishida \cite{bealenishida} later established that the aforementioned solutions decay at an algebraic rate. Tani-Tanaka \cite{tani} proved the global existence solutions with and without surface tension under milder assumptions on the initial data, but did not study their decay rates. Guo-Tice \cite{guo-tice2} proved that for the problem without surface tension, the global solutions decay at a fixed algebraic rate.

The investigation of the dynamics of viscous shear flows without free boundary is classical and dates back to the work of Orr \cite{orr} and Sommerfeld \cite{sommerfield}, where they noted the so-called viscous destabilization phenomenon. This was subsequently investigated formally by many authors in the physics literature, including Heisenberg \cite{heisenberg}, Lin \cite{lin}, and Tollmien \cite{tollmien}.  However, it wasn't until recently that a rigorous mathematical proof for the instability of viscous incompressible shear flows without free boundary appeared, in the work of Grenier-Guo-Nguyen \cite{G-G-N}.

When $\kappa \neq 0$, much less is known about the dynamics of the free boundary problem \eqref{eq:1}. Ueno \cite{ueno} studied the 2D problem with surface tension in the thin film regime and established uniform estimates of solutions with respect to the thinness parameter. Padula \cite{padula1} studied the 3D problem with surface tension and proved sufficient conditions for asymptotic stability under a priori assumptions on the global existence of solutions. Tice \cite{tice18} studied the asymptotic stability of shear flow solutions to the nonlinear problem with and without surface tension, and proved that solutions decay exponentially fast to equilibrium with surface tension and almost exponentially without surface tension.

The existence of traveling wave solutions without background shear flows to \eqref{eq:1} first appeared in the recent work of Leoni-Tice \cite{leonitice}, and the results therein were extended by Stevenson-Tice \cite{noahtice} to a multilayer configuration.  To the best of our knowledge, there are no known results on the existence of traveling wave solutions around shear flows in the case when $\kappa \neq 0$ or when the cross section $\Sigma$ is periodic.

\subsection{Reformulation in a fixed domain}\label{sec:flatten}
We note that in \eqref{eq:main unflattened}, the fluid domain of interest $\Omega_{b +\eta}$ is dependent on an unknown free surface $\eta$. To bypass the difficulty of working in such a domain, we proceed to reformulate the problem on a fixed domain $\Omega_b = \Sigma \times (0,b)$, at the cost of worsening the nonlinearities in the system. 

To do so we introduce the flattening map $\mathfrak{F}_\eta: \overline{\Omega_b} \to \overline{\Omega_{b+\eta}}$ associated to a continuous function $\eta: \Sigma_b \to (-b,\infty)$, defined via 
\begin{align}\label{eq:flattening}
	\mathfrak{F}_\eta(x',x_n) = x + \frac{x_n \eta(x')}{b} e_n.
\end{align}
We note that by construction, $\mathfrak{F}_\eta \rvert_{\Sigma_0} = \Sigma_0$ and $\mathfrak{F}_\eta(x',b) = \Sigma_{\eta + b}$ (see Figure \ref{fig:fig_2}). Moreover, $\mathfrak{F}_\eta$ is bijective with its inverse $\mathfrak{F}_\eta^{-1}: \overline{\Omega_{b+\eta}} \to \overline{\Omega_b}$ given by 
\begin{align}\label{eq:inv flattening}
	\mathfrak{F}_\eta^{-1} (y)= \left(y', \frac{by_n}{b+ \eta(x')} \right).
\end{align}

\begin{figure}[!h]
	\includegraphics[scale=0.8]{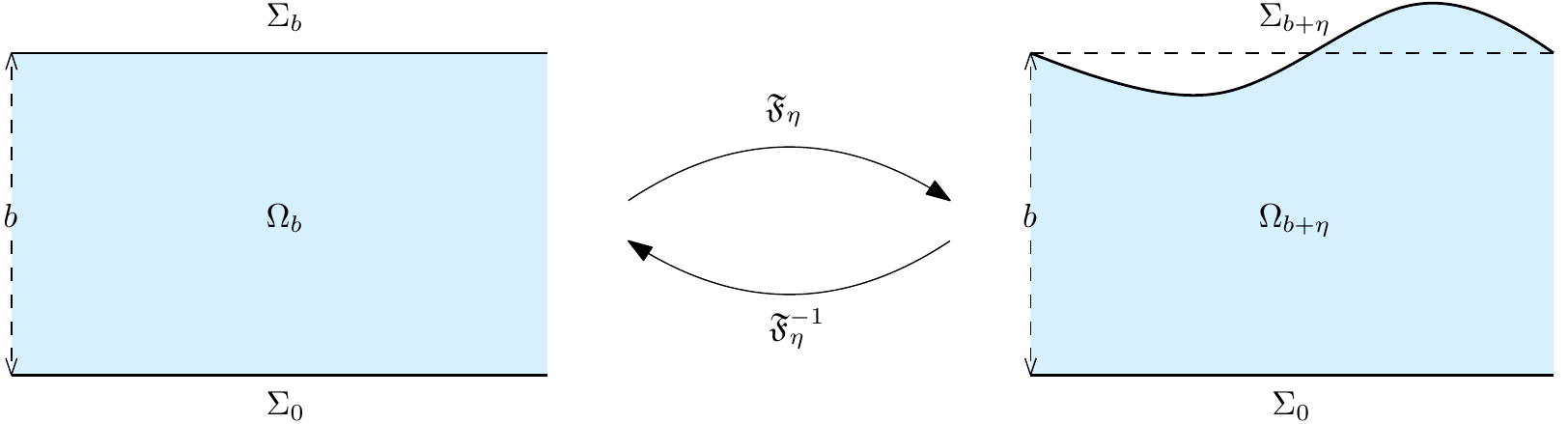}
	\caption{The flattening map $\mathfrak{F}_\eta$ and its inverse}
	\label{fig:fig_2}
\end{figure}

Throughout the rest of the paper we will typically suppress the dependence of the subsequent maps and their associated domains on $b$ and $\eta$, e.g. writing $\mathfrak{F}_\eta$ as $\mathfrak{F}$ and $\Omega_b$ as $\Omega$, unless these dependencies need to be emphasized. We also use the following abuse of notation throughout the paper: since the hypersurface $\Sigma_b \subset \Sigma \times \R$ is canonically diffeomorphic to $\Sigma$ via the projection map $\pi: \Sigma_b \to \Sigma$ given by $\pi(x',b) = x'$, we will use this to identify $H^s(\Sigma_b ; V)$ with $H^s(\Sigma;V)$ (and similar spaces) for any finite dimensional vector space $V$. This allows us to sometimes write $f(x')$ instead of $f(x',b)$, and allows us to apply the horizontal Fourier transform on $\Sigma_b$ in the natural way.

Following this convention, we note that $\mathfrak{F}$ is a homeomorphism inheriting the regularity of $\eta$, in the sense that if $\eta \in C^k(\Sigma;\R)$ then $\mathfrak{F}$ is a $C^k$ diffeomorphism. When $\eta$ is differentiable we may compute 
\begin{align}
\nabla \mathfrak{F}(x) = \begin{pmatrix}
I_{(n-1) \times (n-1)} & 0_{ (n-1) \times 1 } \\
\frac{x_n \nabla ' \eta(x')^T}{b} & 1 +\frac{\eta(x')}{b} 
\end{pmatrix}.
\end{align}
Thus, the Jacobian $\mathcal{J}$ and the inverse Jacobian $\mathcal{K}$ of $\mathfrak{F}$ are 
\begin{align}\label{eq:JandK}
\mathcal{J} = \det \nabla \mathfrak{F} = 1+ \frac{\eta}{b}, \quad 
\mathcal{K} = \frac{1}{J} = \frac{b}{b+\eta}.
\end{align}
We then introduce the matrix $\mathcal{A}: \Omega_b \to \R^{n\times n}$ defined via  
\begin{align}\label{eq:A}
\mathcal{A}(x) = (\nabla \mathfrak{F})^{-\intercal} =  \begin{pmatrix} 
	I_{(n-1) \times (n-1) } & \frac{-x_n \nabla' \eta(x')}{b+\eta(x')} \\
	0_{1 \times (n-1)} & \frac{b}{b+\eta(x')} 
\end{pmatrix} = \begin{pmatrix}
I_{(n-1) \times (n-1) } &   - x_n \mathcal{K} \frac{\nabla' \eta(x')}{b}\\
0_{1 \times (n-1)} & \mathcal{K}
\end{pmatrix},
\end{align}
the $\mathcal{A}$-dependent differential operators 
\begin{align} 
	(\nabla_{\mathcal{A}} f)_i = \sum_{j=1}^n \mathcal{A}_{ij} \p_j f, \quad (X \cdot \nabla_\mathcal{A} u)_i = \sum_{j,k=1}^n X_j \mathcal{A}_{jk} \p_k u_i, \quad \diverge_{\mathcal{A}} X = \sum_{i,j = 1}^n \mathcal{A}_{ij} \p_j X_i,
\end{align}
and also 
\begin{multline}
	(\mathbb{D}_{\mathcal{A}} u)_{ij} = \sum_{k=1}^n \mathcal{A}_{ik} \p_k u_j + \mathcal{A}_{jk} \p_k u_i,\quad S_{\mathcal{A}}(p,u) = pI - \mathbb{D}_\mathcal{A} u, \\ 
	 \diverge_{\mathcal{A}} S_{\mathcal{A}}(p,u) = \nabla_{\mathcal{A}} p - \Delta_{\mathcal{A}} u - \nabla_{\mathcal{A}} \diverge_{\mathcal{A}} u, 
	 \quad (\Delta_\mathcal{A} u)_i = \sum_{j,k,m=1}^n \mathcal{A}_{j,k} \p_k (\mathcal{A}_{jm} \p_m u_i).
\end{multline}
Now writing $u, U^1 ,U^2: \Omega \to \R^n, p: \Omega \to \R, f :\Omega \to \R^n$ via $u = v \circ \mathfrak{F}, U^1 =W^1 \circ \mathfrak{F},  U^2 = W^2 \circ \mathfrak{F}, p = q \circ \mathfrak{F}$, where $v,q$ satisfies \eqref{eq:3}, \eqref{eq:main unflattened} can be reformulated as the quasilinear system
\begin{align} \label{eq: main flattened}
	\begin{cases}	
		\diverge_\mathcal{A} S_\mathcal{A} (p, u) - \gamma  e_1 \cdot  \nabla_\mathcal{A} u  - \gamma \kappa (x_n + \eta \frac{x_n}{b})\p_1 \eta e_1+ \left( u + U^1 + U^2\right) \cdot  \nabla_\mathcal{A} \left( u + U^1 + U^2 \right) 
	 & \\
	 \hspace{0.3in} + (\nabla' \eta,0) - \kappa \left( x_n + \eta \frac{x_n}{b}\right) \Delta' \eta e_1   - \kappa ( (x_n + \eta \frac{x_n}{b}) \nabla' \p_1 \eta, \p_1 \eta)  = \mathfrak{f} \circ \mathfrak{F}_\eta
		, &  \text{in} \; \Omega \\
	\diverge_\mathcal{A} u+ \kappa \left( x_n + \eta(x') \frac{x_n}{b} \right) \p_1 \eta = 0, & \text{in} \; \Omega \\
	u\cdot \mathcal{N} = \left( - \gamma  + s(\eta + b) + \kappa (\eta + b) \eta \right) \p_1 \eta , & \text{on} \; \Sigma_b \\
	S_\mathcal{A}(p, u) \mathcal{N} = \left[ - \sigma \mathcal{H}(\eta) I + \mathcal{T}\circ \mathfrak{F}_\eta + \kappa (\eta + b) ( e_1 \otimes (\nabla' \eta,0) + (\nabla' \eta,0)  \otimes e_1)  \right] \mathcal{N}, & \text{on} \; \Sigma_b \\
	u = 0, & \text{on} \; \Sigma_0.
	\end{cases}
\end{align}

\subsection{Statement of main results}
We now state the main results obtained in this paper, though we delay a thorough discussion of them to the next subsection.  The first result concerns a key property of the anisotropic Sobolev space $X^s(\Sigma; \F)$, defined later in Section~\ref{sec: X^s} for $\F \in \{ \R, \C \}$, which serves as the container space for the free surface function $\eta$ when $\F = \R$.  

\begin{thm}[Proven later in Section~\ref{sec: X^s algebra}]\label{thm: 1}
	Suppose $s > d/2$, where $d = \dim \Sigma$ and $\Sigma$ is defined via \eqref{sigma}. Then the anisotropic space $X^s(\Sigma;\F)$ defined in Definition~\ref{def: X^s} is an algebra.
\end{thm}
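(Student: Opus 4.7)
The strategy I would pursue splits the algebra estimate into two contributions: a standard ``isotropic'' piece, controlled by the classical $H^s$-algebra property for $s > d/2$, and an ``anisotropic'' piece specific to the $X^s$-scale, handled by direct Fourier analysis exploiting that $\Sigma$ carries an $\R$-factor in the $e_1$-direction.

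First I would make a reduction to $\F = \C$, since the algebra property for real-valued functions follows by restriction from the complex case. Next, since $s > d/2 = \dim\Sigma/2$, I would invoke (or prove, as a preliminary lemma) a continuous embedding of $X^s(\Sigma;\F)$ into $H^s(\Sigma;\F) \cap L^\infty(\Sigma;\F)$; the second half follows from the usual Sobolev embedding on the geometry determined by \eqref{sigma}, uniformly across the cases of $\R$-factors and $L_i\T$-factors. This ensures the pointwise product $fg$ of two elements of $X^s$ is at least in $L^\infty$, so the only remaining issue is the quantitative $X^s$-bound on $fg$.

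For the $H^s$-component of the $X^s$-norm, I would apply the classical Kato--Ponce / Bony paraproduct estimate
\begin{equation*}
\|fg\|_{H^s} \lesssim \|f\|_{L^\infty}\|g\|_{H^s} + \|f\|_{H^s}\|g\|_{L^\infty},
\end{equation*}
valid for $s > 0$, and close this piece using the embedding step. This component requires no new ideas beyond what is standard.

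The heart of the argument is the anisotropic piece of the $X^s$-norm, which reflects the distinguished role of the $e_1$-direction. I would pass to the Fourier side and write $\widehat{fg} = \hat f * \hat g$, then split the convolution into sub-regions according to which of $|\xi_1 - \eta_1|$ or $|\eta_1|$ dominates. The triangle inequality $|\xi_1| \le |\xi_1 - \eta_1| + |\eta_1|$ lets me distribute the anisotropic low-frequency weight onto exactly one of $\hat f(\xi - \eta)$ or $\hat g(\eta)$ in each sub-region, while the other factor is estimated by an $L^\infty$-type bound coming from the $s > d/2$ Sobolev embedding. Combining the two sub-regions yields the bilinear bound in the anisotropic component of $X^s$, and together with the $H^s$-bound above this gives the algebra property.

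The principal obstacle I anticipate is precisely the interaction between the singular low-frequency weight in $\xi_1$ and the convolution structure of products: a priori, convolving two functions each carrying such a weight could produce a worse singularity, and naive applications of Cauchy--Schwarz fail. The resolution outlined above requires careful bookkeeping to ensure that only one factor at a time ``sees'' the singular weight. This is also the reason partial periodicity is forbidden in the $e_1$-direction in \eqref{sigma}: with $\Sigma_1 = \R$, the dual variable $\xi_1$ ranges over a continuum, which provides the integrability needed for the low-frequency convolution estimate to close; a discrete set of frequencies from a torus factor would not. Secondary technical nuisances, such as controlling mixed paraproduct terms and reconciling the Littlewood--Paley decomposition with the anisotropic weight, I expect to be routine once the core bilinear low-frequency estimate is established.
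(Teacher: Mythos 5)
Your proposal has a fatal flaw at its foundation: the embedding $X^s(\Sigma;\F) \hookrightarrow H^s(\Sigma;\F)$ that you propose as a preliminary lemma is false. The inclusion goes the other way, $H^s(\Sigma;\F) \hookrightarrow X^s(\Sigma;\F)$, and in the only nontrivial case ($1 \in R_\Sigma$ and $\abs{R_\Sigma} \ge 2$) it is strict: Lemma~\ref{lem: Xs Hs equiv} exhibits functions in $X^s(\Sigma;\F) \setminus L^2(\Sigma;\F)$. The reason is that the low-frequency weight $\upmu(\xi) = \abs{\xi_1}/\abs{\xi} + \abs{\xi}$ vanishes as $\xi \to 0$ along $\xi_1 = 0$, so $\hat f$ need not be square-integrable near the origin. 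Consequently your Kato--Ponce step cannot be applied to $f,g \in X^s$ as stated, since $\norm{f}_{H^s}$ may be infinite. (The $L^\infty$ half of your embedding is fine, since $\Vert\hat f\Vert_{L^1} \lesssim \norm{f}_{X^s}$ for $s>d/2$.) The correct reduction, carried out in Proposition~\ref{proposition on reduction to boundedness of a trilinear functional}, is to split $f = f_0 + f_1$ into low- and high-frequency parts: only $f_1, g_1$ lie in $H^s$, the terms $f_ig_j$ with $i+j\ge 1$ are handled by the product estimate $X^s \cdot H^s \to H^s$, and the entire difficulty is concentrated in the low-low product $f_0 g_0$.

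For that low-low product, distributing the weight via $\abs{\xi_1} \le \abs{\xi_1-\eta_1} + \abs{\eta_1}$ does not address the actual obstruction. The quantity to control is $\upmu(\xi+\eta)/(\upmu(\xi)\upmu(\eta))$, and the dangerous contribution to $\upmu(\xi+\eta)$ is not $\abs{\xi_1+\eta_1}$ itself but the ratio $\abs{\xi_1+\eta_1}/\abs{\xi+\eta}$, which blows up when $\xi$ and $\eta$ nearly cancel, i.e.\ $\abs{\xi} \sim \abs{\eta}$ but $\abs{\xi+\eta} \ll \abs{\xi}$. No triangle inequality in the first coordinate alone tames this: subadditivity $\upmu(\xi+\eta) \lesssim \upmu(\xi)+\upmu(\eta)$ holds only on the region $\abs{\xi}+\abs{\eta} \le 3\,\big|\abs{\xi}-\abs{\eta}\big|$ (Lemma~\ref{lemma on multiplier subadditivty}), and the complementary region requires a two-parameter dyadic decomposition in $\abs{\xi} \sim 2^{-m}$, $\abs{\xi+\eta} \sim 2^{-n}$ together with, for $d \in \{2,3\}$, an anisotropic rectangular tiling of side $2^{-2m}$ in the $\xi_1$-direction and $2^{-n}$ in the others to extract summable decay in the cell index. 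Moreover, your plan to put ``the other factor'' in an $L^\infty$-type bound on the Fourier side has no support: membership in $X^s$ gives $\upmu \hat f_0 \in L^2(B(0,1))$ and $\hat f_0 \in L^1(B(0,1))$, not a pointwise bound on $\hat f_0$. You have correctly located the difficulty (the low-frequency convolution and the essential role of the $\R$-factor in the $e_1$-direction), but the mechanism you propose to resolve it does not close.
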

The proof of this theorem utilizes anisotropic Littlewood-Paley techniques heavily inspired by \cite{benoit}. 
We note that this theorem is only non-trivial in the case when $1 \in R_\Sigma, \abs{R_\Sigma} \ge 1$, where the set $R_\Sigma$ is defined via \eqref{eq: R}. It turns out that in all the remaining possibilities of $\Sigma$, the anisotropic Sobolev $X^s(\Sigma ; \F)$ coincides with the standard Sobolev space $H^s(\Sigma ;\F)$, so the result follows directly from standard Sobolev theory. In Lemma~\ref{lem: Xs Hs equiv} we give the precise characterization of product domains $\Gamma$ for which $H^s(\Gamma ; \F) = X^s(\Gamma; \F)$ and $H^s(\Gamma ; \F) \subsetneq X^s(\Gamma; \F)$.  Moreover, we show in Proposition \ref{prop: Xs not complete case 3} that if $\Gamma = L_1 \T \times \R$, then the space $X^s(\Gamma; \F)$ is not complete, which is an initial indication of why we cannot allow for $\Sigma = L_1\T \times \R$ in our analysis.

With Theorem~\ref{thm: 1} in hand, we are able to prove the solvability of flattened system \eqref{eq: main flattened}, and by extension the solvability of the original system \eqref{eq:main unflattened}. Before stating the solvability results we make a quick comment on the forcing terms in \eqref{eq:main unflattened} and \eqref{eq: main flattened}. We first note that the bulk force $\mathfrak{f}$ in \eqref{eq:main unflattened} needs to be defined in the domain $\Omega_{b+\eta}$, which depends on the unknown free surface function $\eta$.  In order to work with $\mathfrak{f}$ without the implicit need to know $\eta$ first, we will assume a priori that $\mathfrak{f}$ is defined in all of $\Sigma \times \R$.  We will also need the map $(\mathfrak{f}, \eta) \mapsto \mathfrak{f} \circ \mathfrak{F}_\eta$ to be $C^1$, since we need to invoke the implicit function theorem in our analysis in Section~\ref{nonlinear}. It is known that this is possible, for example, if the domain of $\mathfrak{f}$ is $H^{s+1}$ and its codomain is $H^s$. Due to the strip-like structure of the fluid domain and the flattening map defined in \eqref{eq:flattening}, we can in fact allow for another type of bulk force which preserves the regularity of its domain. A detailed discussion of this can be found in Section 1.4 of \cite{leonitice}.  Moving forward, we consider the generalized bulk forces $\mathfrak{f} + L_{\Omega_{b+\eta}} f$ and the generalized surface forces $\mathcal{T} \rvert_{\Sigma_{b+\eta}} + S_{b+\eta}T$ in \eqref{eq:main unflattened}, where $L_{\Omega_{\eta+b}} f(x',x_n) = f(x')$ and  $S_{b+\eta} T(x',x_n) = T(x')$.
 
With this in mind, we are able to prove the solvability of \eqref{eq: main flattened}.
The spaces $C^k_b, C^k_0, \Hzeros{s}(\Omega;\R^n)$ referred to in the following results are defined in Section~\ref{sec:notation}, and the space $\mathcal{X}^s$ is defined in Definition~\ref{defn:Xsfordata}. We also write $\Sigma \ni x' = x'_{R_\Sigma} + x'_{T_\Sigma}$ in accordance with the notation introduced in \eqref{eq: xi R}.

\begin{thm}[Proved later in Section~\ref{sec: main flattened}]\label{thm:main1}
	Let $\Sigma$ be given by \eqref{sigma} and suppose that  $\N \ni s > \frac{n}{2}$.  Further suppose that either $\sigma > 0$ and $n \ge 2$ or else $\sigma = 0$ and $n = 2$. Then there exists open sets 
\begin{align} 
		\mathcal{U}^s \subset (\R \setminus \{0\}) \times \R \times H^{s+2}(\Sigma \times \R ; \R^{n \times n}_{\sym}) \times H^{s+\frac{1}{2}}(\Sigma ; \R^{n\times n}_{\sym}) \times H^{s+1}(\Sigma \times \R; \R^n) \times H^s(\Sigma ; \R^n) 
\end{align}
and $\mathcal{O}^s \subset \mathcal{X}^s$ such that the following hold.
\begin{enumerate}
	\item $(0,0,0) \in \mathcal{O}^s$, and for every $(u,p,\eta) \in \mathcal{O}^s$ we have that 
		\begin{align} 
		u \in C_b^{2 + \left\lfloor s-\frac{n}{2} \right\rfloor} (\Omega ; \R^n), p \in C^{1 + \left\lfloor s - \frac{n}{2} \right\rfloor}_b (\Omega; \R), \eta \in C^{3 + \left\lfloor s - \frac{n}{2} \right\rfloor}_0 (\Sigma; \R)		\end{align}
			and 
			\begin{align} 
				\lim_{|x_{R_\Sigma}'| \to \infty} \p^\alpha u(x) &= 0 \; \text{for all} \; \alpha \in \N^n \; \text{such that} \; \abs{\alpha} \le 2 + \left\lfloor s - \frac{n}{2} \right\rfloor \\
				\lim_{|x_{R_\Sigma}'| \to \infty} \p^\alpha p(x) &= 0 \; \text{for all} \; \alpha \in \N^n \; \text{such that} \; \abs{\alpha} \le 1 + \left\lfloor s - \frac{n}{2} \right\rfloor,
		\end{align}
		$\max_{\Sigma} \abs{\eta} \le \frac{b}{2}$, and the flattening map $\mathfrak{F}_\eta$ is a bi-Lipschitz homeomorphism and a $C^{3+\left\lfloor s-\frac{n}{2} \right\rfloor}$ diffeomorphism. 
	\item We have $(\R \setminus \{0\}) \times \{0\} \times \{ 0\} \times \{ 0 \} \times \{ 0 \} \times \{0 \} \subset \mathcal{U}^s$ 
	\item For each $(\gamma,\kappa, \mathcal{T}, T, \mathfrak{f}, f) \in \mathcal{U}^s$, there exists a unique $(u, p, \eta) \in \mathcal{O}^s$ classically solving
\begin{align} \label{eq: thm 1 main flattened}
	\begin{cases}	
		\diverge_\mathcal{A} S_\mathcal{A} (p, u) - \gamma  e_1 \cdot  \nabla_\mathcal{A} u  - \gamma \kappa (x_n + \eta \frac{x_n}{b})\p_1 \eta e_1+ \left( u + U^1 + U^2\right) \cdot  \nabla_\mathcal{A} \left( u + U^1 + U^2 \right) 
	 & \\
	 \hspace{0.3in} + (\nabla' \eta,0) - \kappa \left( x_n + \eta \frac{x_n}{b}\right) \Delta' \eta e_1   - \kappa ( (x_n + \eta \frac{x_n}{b}) \nabla' \p_1 \eta, \p_1 \eta)  = \mathfrak{f} \circ \mathfrak{F}_\eta + L_{\Omega_b} f
		, &  \text{in} \; \Omega \\
	\diverge_\mathcal{A} u+ \kappa \left( x_n + \eta(x') \frac{x_n}{b} \right) \p_1 \eta = 0, & \text{in} \; \Omega \\
	u\cdot \mathcal{N} = \left( - \gamma  + s(\eta + b) + \kappa (\eta + b) \eta \right) \p_1 \eta , & \text{on} \; \Sigma_b \\
	S_\mathcal{A}(p, u) \mathcal{N} = \left[ - \sigma \mathcal{H}(\eta) I + \mathcal{T}\circ \mathfrak{F}_\eta \vert_{\Sigma_b} + S_b T + \kappa (\eta + b) ( e_1 \otimes (\nabla' \eta,0) + (\nabla' \eta,0)  \otimes e_1)  \right] \mathcal{N}, & \text{on} \; \Sigma_b \\
	u = 0, & \text{on} \; \Sigma_0.
	\end{cases}
\end{align}

	\item The map $\mathcal{U}^s \ni (\gamma,\kappa, \mathcal{T}, T, \mathfrak{f}, f) \mapsto (u,p,\eta) \in \mathcal{O}^s$ is $C^1$ and locally Lipschitz. 
\end{enumerate}
\end{thm}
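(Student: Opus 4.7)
The approach is to rewrite the flattened system \eqref{eq: thm 1 main flattened} as $\Psi(\gamma,\kappa,\mathcal{T},T,\mathfrak{f},f;u,p,\eta)=0$ for a suitable $C^1$ map between Banach spaces and then invoke the implicit function theorem at each reference point $(\gamma,0,0,0,0,0;0,0,0)$ with $\gamma\in\R\setminus\{0\}$. Concretely, I would define
\begin{equation*}
\Psi\colon \mathcal{D}^s \times \mathcal{X}^s \longrightarrow \mathcal{Y}^s,
\end{equation*}
where $\mathcal{D}^s$ is the Cartesian product of the parameter spaces listed in the theorem, $\mathcal{X}^s$ is the unknown space of Definition~\ref{defn:Xsfordata} (encoding in particular the Dirichlet condition on $\Sigma_0$), and $\mathcal{Y}^s$ is the natural target space whose factors encode the residuals of the bulk equation, the divergence relation, the kinematic boundary condition and the dynamic boundary condition.

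First I would verify that $\Psi$ is well-defined and $C^1$ on a neighborhood of the reference points where $\|\eta\|_{L^\infty}<b/2$. The crucial input here is Theorem~\ref{thm: 1}: the Banach-algebra property of $X^s(\Sigma;\R)$ in the supercritical regime $s>d/2$ handles the $\eta$-products appearing in the surface and divergence equations, such as $\kappa(\eta+b)\eta$, $\eta\,\partial_1\eta$, the tensor $\kappa(\eta+b)(e_1\otimes(\nabla'\eta,0)+(\nabla'\eta,0)\otimes e_1)$, and the rational mean-curvature nonlinearity $\mathcal{H}(\eta)$. For the bulk nonlinearities $u\cdot\nabla_\mathcal{A} u$, $\gamma e_1\cdot\nabla_\mathcal{A} u$, and the $\mathcal{A}$-dependent Stokes operator $\diverge_\mathcal{A} S_\mathcal{A}(p,u)$, standard $H^s(\Omega)$ multiplication rules for $s>n/2$ suffice, combined with the smooth $\eta$-dependence of $\mathcal{A},\mathcal{J},\mathcal{K}$ from \eqref{eq:JandK}--\eqref{eq:A}. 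The superposition terms $\mathfrak{f}\circ\mathfrak{F}_\eta$ and $\mathcal{T}\circ\mathfrak{F}_\eta|_{\Sigma_b}$ are handled by the composition framework described in Section~\ref{sec:flatten} and \cite{leonitice}. Together with standard Nemytskii smoothness in the supercritical Sobolev range, these give $\Psi\in C^1$.

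At the reference point $(\gamma,0,0,0,0,0;0,0,0)$ with $\gamma\neq 0$ we have $\eta\equiv 0$, $\mathcal{A}=I$, $\mathcal{N}=e_n$, $\mathcal{H}(0)=0$, $s\equiv 0$ and $U^1\equiv U^2\equiv 0$, so $\Psi=0$ by direct substitution. The main analytical obstacle is to show that the partial Fr\'echet derivative $D_{(u,p,\eta)}\Psi$ at this point is an isomorphism from $\mathcal{X}^s$ onto $\mathcal{Y}^s$. A direct computation yields the linear system
\begin{equation*}
\begin{cases}
-\Delta u + \nabla p - \gamma\partial_1 u + (\nabla'\eta,0)=F & \text{in }\Omega,\\
\diverge u = G & \text{in }\Omega,\\
u\cdot e_n + \gamma\partial_1\eta = H & \text{on }\Sigma_b,\\
S(p,u)e_n + \sigma(\Delta'\eta)\,e_n = K & \text{on }\Sigma_b,\\
u=0 & \text{on }\Sigma_0,
\end{cases}
\end{equation*}
which is precisely the steady Stokes capillary-gravity problem with constant $e_1$-transport of strength $\gamma$. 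Because $\gamma\neq 0$, the horizontal Fourier symbol of the kinematic equation is non-degenerate, and the resulting family of boundary-value ODE systems on $(0,b)$ parametrized by the horizontal frequencies can be solved within the anisotropic $X^s$-framework developed in \cite{leonitice} and extended to the present periodic cross-sections earlier in this paper. Establishing this isomorphism is the technical heart of the argument, and is precisely the step that forces us to work with $X^s$ rather than a plain $H^s$ scale and to exclude the case $\Sigma=L_1\T\times\R$ (compare Proposition~\ref{prop: Xs not complete case 3}).

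With the isomorphism of the linearization in hand, the implicit function theorem on Banach spaces delivers the open sets $\mathcal{U}^s,\mathcal{O}^s$ and a $C^1$ (hence locally Lipschitz) solution map, establishing items (2)--(4). The remaining qualitative assertions of item (1) -- classical regularity of $u,p,\eta$ with the stated $\lfloor s-n/2\rfloor$-exponents, the vanishing of $u,p$ and their derivatives as $|x'_{R_\Sigma}|\to\infty$, the pointwise bound $\max_\Sigma|\eta|\le b/2$, and the bi-Lipschitz/$C^{3+\lfloor s-n/2\rfloor}$-diffeomorphism property of $\mathfrak{F}_\eta$ -- then follow from the supercritical Sobolev embedding $H^s\hookrightarrow C^{\lfloor s-n/2\rfloor}_b$, the decay property of $X^s$ along the unbounded factors of $\Sigma$, and a further shrinking of $\mathcal{U}^s$ if necessary to ensure the smallness of $\eta$.
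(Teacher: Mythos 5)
Your proposal follows essentially the same route as the paper's proof: define the solution operator on a product of the parameter space with a small ball $U^s_\delta\subset\mathcal{X}^s$, verify it is $C^1$ into $\mathcal{Y}^s$, identify its partial derivative in $(u,p,\eta)$ at $(\gamma,0,\dots,0;0,0,0)$ with the linear isomorphism $L_{0,\sigma}=\Upsilon_{\gamma,\sigma}$ already established (Theorems~\ref{thm:upsiloniso} and~\ref{thm:M}), apply the implicit function theorem, and take a union over $\gamma_\ast\in\R\setminus\{0\}$. The one step you gloss over is that landing in $\mathcal{Y}^s$ requires checking the divergence--trace compatibility condition \eqref{eq:divtrace} for the nonlinear residual, which reduces to showing $\p_1\bigl(b^2\eta+b\eta^2+\tfrac{1}{3}\eta^3\bigr)\in\dot{H}^{-1}(\Sigma;\R)$ and is precisely where the algebra property of $X^s$ (Theorem~\ref{thm: 1}) is indispensable, rather than for mere $H^s$ membership of the $\eta$-products.
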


A few remarks are in order.  First, we note again that in the physically relevant case $n =3$, we cannot solve \eqref{eq: thm 1 main flattened} in the stated spaces if $\Sigma = L_1 \T \times \R$.  Second, Theorem \ref{thm:main1} is analogous to Theorem 1.1 in \cite{leonitice} for the problem without incline and periodicity, and Theorem 1 in \cite{noahtice} for the multilayer problem.  However, our choice of the function space $\mathcal{X}^s$ is slightly different from the one used there because we have formulated the problem \eqref{eq: thm 1 main flattened} in a slightly different manner, with the $\eta$ term shifted from the dynamic boundary condition into the bulk.  This results in the pressure belonging to a standard Sobolev space rather than a specialized anisotropic one, as in \cite{leonitice,noahtice}.  Third, we can say something about the set of $(\gamma,\kappa)$ parameters for which we can produce solutions,
\begin{equation}
\mathfrak{P}^s =  \{(\gamma,\kappa) \in \R \backslash \{0\} \times \R  \mid  (\gamma,\kappa,\mathcal{T},T,\mathfrak{f},f) \in \mathcal{U}^s \text{ for some } (\mathcal{T},T,\mathfrak{f},f) \}.
\end{equation}
Indeed, an examination of our proof of Theorem \ref{thm:main1} shows that for every $\gamma \in \R \backslash \{0\}$, there exists a $\kappa_0(\gamma) >0$, depending on $\gamma$ and the other physical parameters in a semi-explicit way (see Theorem \ref{thm:M}), such that 
\begin{equation}
 (-\kappa_0(\gamma),\kappa_0(\gamma)) \subseteq \{ \kappa \in \R \mid (\gamma,\kappa) \in \mathfrak{P}^s\}. 
\end{equation}
However, the estimates in Theorem \ref{thm:M} suggest that for each $\kappa \in \R\backslash \{0\}$, the set $\{\gamma \in \R \backslash \{0\} \mid (\gamma,\kappa) \in \mathfrak{P}^s\}$ is bounded, and possibly empty for large $\abs{\kappa}$.  We conjecture that this is indeed the case, but we do not have a complete proof here due to the complicated dependence the operator norm of $L_{0,\sigma}$, defined in \eqref{eq:Lkappa}, on $\gamma$.

Using solutions constructed in the flattened domain via Theorem~\ref{thm:main1}, we can then produce solutions to the unflattened system \eqref{eq:main unflattened}. This leads us to the final result of this paper.

\begin{thm}[Proved later in Section~\ref{sec: main unflattened}]\label{thm:main2}
	Let $\Sigma$ be given by \eqref{sigma}, and suppose that  $\N \ni s > \frac{n}{2}$.  Suppose that either $\sigma > 0$ and $n\ge 2$ or else $\sigma = 0$ and $n = 2$. Let 
	\begin{align} 
		\mathcal{U}^s \subset (\R \setminus \{0\}) \times \R \times H^{s+2}(\Sigma \times \R ; \R^{n \times n}_{\sym}) \times H^{s+\frac{1}{2}}(\Sigma ; \R^{n\times n}_{\sym}) \times H^{s+1}(\Sigma \times \R ; \R^n) \times H^s(\Sigma  ; \R^n) 
	\end{align}
	and $\mathcal{O}^s \subset \mathcal{X}^s$ be the same open sets as in Theorem~\ref{thm:main1}. Then for each $(\gamma,\kappa, \mathcal{T},T,\mathfrak{f},f) \in \mathcal{U}^s$, there exist 
	\renewcommand\labelitemi{\raisebox{0.25ex}{\tiny$\bullet$}}
	\begin{itemize}
	\item a free surface function $\eta \in X^{s+5/2}(\Sigma ; \R) \cap C_0^{\lfloor s - n /2 \rfloor + 2}(\Sigma; \R)$ such that $\max_{\Sigma} \abs{\eta} \le b/2$, and $\mathfrak{F}_\eta$ as defined in \eqref{eq:flattening} a bi-Lipschitz homeomorphism and a $C^{\lfloor s - n/2 \rfloor + 3}$ diffeomorphism, 
	\item a velocity field $v \in \Hzeros{s+2}(\Omega_{b+\eta} ; \R^n) \cap C^{\lfloor s - n/2 \rfloor + 2}(\Omega_{b+\eta} ; \R^n )$,
	\item a pressure $q \in H^{s+1}(\Omega_{b+\eta}; \R) \cap C_b^{\lfloor s - n/2 \rfloor + 1}(\Omega_{b+\eta}; \R)$,
	\item and constants $C, R > 0$, 
	\end{itemize}
	such that the following hold:
	\begin{enumerate}
	\item $(v,q,\eta)$ are classical solutions to 
\begin{align}  
	\begin{cases}
		\diverge S(q,v) -\gamma e_1 \cdot \nabla v - \gamma \kappa x_n \p_1 \eta e_1+ (v + W^1 + W^2 )\cdot \nabla (v+W^1 + W^2)  + (\nabla' \eta,0)  \\ 
		\hspace{0.3in} - \kappa x_n \Delta' \eta(x') e_1 - \kappa (x_n \nabla' \p_1 \eta, \p_1 \eta) = \mathfrak{f} + L_{\Omega_{b+\eta}}f, & \text{in} \; \Omega_{b + \eta} \\
		\diverge{v} + \kappa x_n \p_1 \eta(x') = 0 , & \text{in} \; \Omega_{b + \eta} \\
		 (-\gamma  + s(\eta+b) + \kappa (\eta+b)\eta)  \p_1 \eta = v \cdot \mathcal{N} ,  & \text{on} \; \Sigma_{b + \eta} \\	
		S(q,v) \mathcal{N} =   [-\sigma \mathcal{H}(\eta)I  + \mathcal{T}\vert_{\Sigma_{b+\eta}} + S_{b+\eta}T  + \kappa (b+\eta) (e_1 \otimes (\nabla' \eta,0) + (\nabla' \eta,0)  \otimes e_1) ] \mathcal{N} ,  & \text{on} \; \Sigma_{b + \eta}	\\
		v= 0, & \text{on} \; \Sigma_0.
\end{cases}
\end{align}

	\item $(v\circ \mathfrak{F}_\eta, q \circ \mathfrak{F}_\eta, \eta) \in \mathcal{O}^s \subset \mathcal{X}^s$, 
	\item If $(\gamma_*,\kappa_\ast, \mathcal{T}_*, T_*, \mathfrak{f}_*, f_*) \in U^s$ satisfy 
	\begin{align}
		 \abs{\gamma - \gamma_*} +\abs{\kappa-\kappa_\ast} + \norm{\mathcal{T} - \mathcal{T}_*}_{H^{s+2}} + \norm{T - T_*}_{H^{s+1/2}} + \norm{\mathfrak{f} - \mathfrak{f}_*}_{H^{s+1}} + \norm{f - f_*}_{H^{s}} < R,   
	\end{align}
	then the corresponding solution triple $(v_*, q_*, \eta_*)$ satisfy 
	\begin{multline}
		 \norm{(v\circ\mathfrak{F}_\eta,q\circ\mathfrak{F}_\eta,\eta) - (v_*\circ\mathfrak{F}_{\eta_\ast},q_*\circ\mathfrak{F}_{\eta_\ast},\eta_*)}_{\mathcal{X}^s} \\
		 \le C (\abs{\gamma - \gamma_*} +\abs{\kappa-\kappa_\ast} + \norm{\mathcal{T} - \mathcal{T}_*}_{H^{s+2}} + \norm{T - T_*}_{H^{s+1/2}} + \norm{\mathfrak{f} - \mathfrak{f}_*}_{H^{s+1}} + \norm{f - f_*}_{H^{s}}).  
	\end{multline} 
	\end{enumerate}
	 \end{thm}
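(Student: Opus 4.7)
The plan is to obtain $(v,q)$ by pushing forward the flattened solution guaranteed by Theorem~\ref{thm:main1}. First, apply Theorem~\ref{thm:main1} to the given $(\gamma,\kappa,\mathcal{T},T,\mathfrak{f},f) \in \mathcal{U}^s$ to produce the unique $(u,p,\eta) \in \mathcal{O}^s$ solving \eqref{eq: thm 1 main flattened}. By item~(1) of that theorem, $\max_\Sigma |\eta| \le b/2$ and $\mathfrak{F}_\eta$ is a bi-Lipschitz homeomorphism and a $C^{\lfloor s-n/2\rfloor+3}$-diffeomorphism from $\overline{\Omega_b}$ onto $\overline{\Omega_{b+\eta}}$. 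I would then set
\[ v = u\circ \mathfrak{F}_\eta^{-1} \colon \Omega_{b+\eta} \to \R^n, \qquad q = p \circ \mathfrak{F}_\eta^{-1} \colon \Omega_{b+\eta} \to \R, \]
which is the candidate solution.

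Next I would verify that $(v,q,\eta)$ is a classical solution to the unflattened system. The operators $\nabla_\mathcal{A}, \diverge_\mathcal{A}, \mathbb{D}_\mathcal{A}, S_\mathcal{A}$ were introduced in Section~\ref{sec:flatten} precisely so that for smooth $v,q$ one has the pointwise chain-rule identities $(\nabla v)\circ \mathfrak{F}_\eta = \nabla_\mathcal{A} u$, $(\diverge v)\circ \mathfrak{F}_\eta = \diverge_\mathcal{A} u$, $(\mathbb{D}v)\circ\mathfrak{F}_\eta = \mathbb{D}_\mathcal{A} u$, $S(q,v)\circ \mathfrak{F}_\eta = S_\mathcal{A}(p,u)$, and $(\diverge S(q,v))\circ\mathfrak{F}_\eta = \diverge_\mathcal{A} S_\mathcal{A}(p,u)$. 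Combined with the identities $U^i = W^i \circ \mathfrak{F}_\eta$ for $i \in \{1,2\}$ and the facts that $\mathfrak{F}_\eta|_{\Sigma_0} = \mathrm{id}_{\Sigma_0}$ and $\mathfrak{F}_\eta(\Sigma_b) = \Sigma_{b+\eta}$, I can push each line of \eqref{eq: thm 1 main flattened} forward to the corresponding line of the target system. The forcing $\mathfrak{f}\circ\mathfrak{F}_\eta + L_{\Omega_b}f$ becomes $\mathfrak{f} + L_{\Omega_{b+\eta}} f$ by the definition of the $L$ operators, and similarly on the boundary $\mathcal{T}\circ\mathfrak{F}_\eta|_{\Sigma_b} + S_b T$ becomes $\mathcal{T}|_{\Sigma_{b+\eta}} + S_{b+\eta}T$. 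This establishes item~(1).

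For the regularity claimed in the bullet list, composition with the $C^{\lfloor s-n/2\rfloor + 3}$-diffeomorphism $\mathfrak{F}_\eta^{-1}$, whose Jacobian is bounded above and below thanks to the bi-Lipschitz property, preserves the Sobolev scale up to order $s+2$ and the $C^k$ scale up to order $\lfloor s-n/2\rfloor + 2$; this is the standard change-of-variables formula applied inductively on derivatives, using the Sobolev algebra property in $H^s(\Omega_{b+\eta})$ to control the products with Jacobian entries that arise (with Theorem~\ref{thm: 1} governing the analogous estimates on $\Sigma$ through the $\eta$-dependent pieces). The vanishing $v|_{\Sigma_0} = 0$ follows from $u|_{\Sigma_0}=0$ and $\mathfrak{F}_\eta|_{\Sigma_0} = \mathrm{id}$. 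Item~(2) is tautological: by construction $v\circ\mathfrak{F}_\eta = u$ and $q\circ\mathfrak{F}_\eta = p$, so the triple in item~(2) equals $(u,p,\eta) \in \mathcal{O}^s \subset \mathcal{X}^s$.

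For item~(3), the same tautology yields
\[ \|(v\circ\mathfrak{F}_\eta, q\circ\mathfrak{F}_\eta, \eta) - (v_*\circ\mathfrak{F}_{\eta_*}, q_*\circ\mathfrak{F}_{\eta_*}, \eta_*)\|_{\mathcal{X}^s} = \|(u,p,\eta) - (u_*,p_*,\eta_*)\|_{\mathcal{X}^s}, \]
so the Lipschitz estimate follows immediately from item~(4) of Theorem~\ref{thm:main1} by choosing $R>0$ small enough that the $R$-neighborhood of $(\gamma,\kappa,\mathcal{T},T,\mathfrak{f},f)$ inside $\mathcal{U}^s$ lies within a common local Lipschitz neighborhood. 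The main obstacle in this plan is bookkeeping rather than analysis: one must verify line by line that pushing \eqref{eq: thm 1 main flattened} forward under $\mathfrak{F}_\eta^{-1}$ yields precisely \eqref{eq:main unflattened} (with the modified forcing), since the $\gamma\kappa$- and $\kappa$-terms introduced in \eqref{eq:main unflattened} via the perturbation \eqref{eq: mod pressure zeta} must cancel correctly against their $\mathcal{A}$-twisted counterparts produced by the chain rule. Once this matching is confirmed, the rest of the proof is routine.
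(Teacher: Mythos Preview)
Your proposal is correct and follows essentially the same approach as the paper: apply Theorem~\ref{thm:main1} to obtain $(u,p,\eta)$, set $v=u\circ\mathfrak{F}_\eta^{-1}$ and $q=p\circ\mathfrak{F}_\eta^{-1}$, invoke the composition estimates (Theorem~\ref{thm: flattening maps est}) for the regularity of $v,q$, and use the construction of the $\mathcal{A}$-operators to translate the flattened system back to the unflattened one, with the local Lipschitz estimate inherited from $\varpi$. The paper's proof is in fact terser than yours, citing Theorem~\ref{thm: flattening maps est} directly for the Sobolev preservation rather than sketching the inductive change-of-variables argument.
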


\subsection{Discussion of techniques and plan of attack}\label{sec: discussion}

As in \cite{leonitice}, our strategy for producing solutions to \eqref{eq: main flattened} is to employ an implicit function theorem  argument built on the linearization of \eqref{eq: main flattened}, 
\begin{align} \label{eq:linear without decouple}
	\begin{cases}
		\diverge S(p,u)   - \gamma \p_1 u  - \gamma \kappa x_n \p_1 \eta e_1+ (\nabla' \eta, 0) + s(x_n) \p_1 u \\ \hspace{0.3in} + s'(x_n) u_n e_1 + \kappa x_n s(x_n) \p_1 \eta e_1   - \kappa x_n \Delta' \eta e_1  - \kappa ( x_n \nabla' \p_1 \eta, \p_1 \eta)  = f, & \text{in} \; \Omega \\
		\diverge u + \kappa x_n \p_1 \eta= g, & \text{in} \; \Omega \\
		u_n + (\gamma - \frac{\kappa b^2}{2} ) \p_1 \eta = h, & \text{on} \; \Sigma_b \\
		S(p,u) e_n + \sigma \Delta' \eta e_n = k, & \text{on} \; \Sigma_b \\
		u = 0, & \text{on} \; \Sigma_0.
	\end{cases}
\end{align}

When $\kappa = 0$ and $\Sigma = \R^{n-1}$, the strategy for producing solutions to \eqref{eq:main unflattened} and is discussed extensively in Section 1.5 of \cite{leonitice}. To succinctly summarize their approach, a viable strategy for producing solutions to \eqref{eq:2} is to use the implicit function theorem after first proving that  \eqref{eq:linear without decouple} induces an isomorphism $(u,p,\eta) \mapsto (f,g,h,k)$ between a pair of identified function spaces. Since \eqref{eq:linear without decouple} is not a standard elliptic system in the sense considered in Agmon-Douglis-Nirenberg \cite{a-d-n}, as the unknown free surface $\eta$ only appears in the equations imposed on the boundary, we follow the decoupling strategy in \cite{leonitice} and first study the overdetermined system 
\begin{align}\label{eq: intro overdet}
	\begin{cases}
		\diverge S(p,u) - \gamma \p_1 u= f & \text{in} \; \Omega \\
	\diverge u = g, & \text{in} \; \Omega \\
	S(p,u)e_n  = k, & \text{on} \; \Sigma_b \\
	u_n = h, & \text{on} \; \Sigma_b \\
	u= 0, & \text{on} \; \Sigma_{0}
\end{cases}
\end{align}
and its associated compatibility conditions. By the adjoint compatibility condition \eqref{eq: 2nd comp fourier}, the free surface function $\eta$ can be constructed from the data tuple $(f,g,h,k)$ by means of the pseudodifferential equation $\rho_\gamma(\xi) \hat{\eta}(\xi) = \psi(\xi)$ for $\xi \in \hat{\Sigma}$, where $\hat{\Sigma
}$ is the dual group of $\Sigma$ defined via \eqref{eq: hat gamma}, and $\psi, \rho_\gamma$ are defined in \eqref{eq:psi defn}, \eqref{eq:rho} as
\begin{multline}\label{eq: psi rho}
	 \psi(\xi) = \int_0^b \left( \hat{f}(\xi,x_n) \cdot \overline{V(\xi,x_n,-\gamma)} - \hat{g}(\xi,x_n) \overline{Q(\xi,x_n,-\gamma)} \right) \; dx_n - \hat{k}(\xi) \cdot \overline{V(\xi,b,-\gamma)} + \hat{h}(\xi), \\ \rho_\gamma(\xi) = 2\pi i \gamma \xi_1 + (1+4\pi^2 \abs{\xi}^2 \sigma)\overline{m(\xi,-\gamma)}.
\end{multline}
Here $V,Q,m$, defined via \eqref{eq:multiplers}, are symbols of special pseudodifferential operators. In order to solve the pseudodifferential equation for $\eta$, we need the precise asymptotics of $V,Q,m$ on $\hat{\Sigma}$. In particular, we note that when $\gamma \neq 0$ and $\Sigma$ is defined via \eqref{sigma}, the asymptotics of $m$ completely determine the asymptotics of $\rho_{\gamma}$ on $\hat{\Sigma}$, which for $\gamma \neq 0$ are given in Lemma~\ref{lem:rho} as 
\begin{align}\label{eq: intro rho asym}
	\begin{cases}
		\abs{\rho_\gamma(\xi)}^{2} \asymp ( \xi_1^2 + \abs{\xi}^4) \mathbbm{1}_{B(0,1)}(\xi) +(1 +  \abs{\xi}^2) \mathbbm{1}_{B(0,1)^c}(\xi), & \sigma > 0 \\
		\abs{\rho_\gamma(\xi)}^2 \asymp  \abs{\xi}^2\mathbbm{1}_{B(0,1)}(\xi) + (1 + \abs{\xi}^2) \mathbbm{1}_{B(0,1)^c}(\xi), & \sigma = 0, n= 2.
	\end{cases}
\end{align}
With the asymptotics of $\rho_\gamma$ in hand, we define $\hat{\eta}$ via $\hat{\eta}(\xi) = \psi(\xi) / \rho(\xi)$ for $\xi \neq 0$, which by \eqref{eq: intro rho asym} would imply 
\begin{multline}\label{eq: control}
	 \int_{\hat{\Sigma}} \frac{\xi_1^2+\abs{\xi}^4}{\abs{\xi}^2} \abs{\hat{\eta}(\xi)}^2 \mathbbm{1}_{B(0,1)} + (1+\abs{\xi}^2)^{s+5/2} \abs{\eta(\xi)}^2 \mathbbm{1}_{B(0,1)^c}\; d\xi \\ \asymp \int_{\hat{\Sigma}} \frac{\abs{\psi(\xi)}^2}{\abs{\xi}^2}\mathbbm{1}_{B(0,1)} + (1+\abs{\xi}^2)^{s+3/2} \abs{\psi(\xi)}^2 \mathbbm{1}_{B(0,1)} \; d\xi.
\end{multline}
Using the asymptotics of $V,Q$ in Theorem~\ref{thm:pseudo asymp}, and the functional framework built on the specialized anisotropic space $X^s(\Sigma;\R)$ that serves as the container space for the free surface function $\eta$ in Section~\ref{sec: X^s}, we can then utilize the equivalence \eqref{eq: control} to recover $\eta$ via Fourier reconstruction. We note that by the first items of Lemma~\ref{lem: Xs Hs equiv} and Theorem~\ref{thm:Xs}, the free surface function $\eta$ recovered through this process will be regular enough to be a classical function as opposed just being a tempered distribution, which is a crucial requirement for $\eta$ to be utilized in the subsequent nonlinear analysis. 

It is also worth mentioning that in the physically relevant dimension $n = 3$, we deliberately chose to ignore the configuration $\Sigma = (L_1 \T) \times \R$, as in this case we would have $\xi \in B_{\hat{\Sigma}}(0, r) \Longleftrightarrow \xi = (0,\xi_2), \abs{\xi_2} \le r$ for $r = \min(1,L_1^{-1})$. By \eqref{eq: psi rho}, this implies that 
$
	 \abs{\rho_{\gamma}(\xi)}^2 \mathbbm{1}_{B_{\hat{\Sigma}}(0,r)}\asymp \abs{\xi}^4 \implies \abs{\xi}^2 \abs{\hat{\eta}(\xi)}^2 \mathbbm{1}_{B_{\hat{\Sigma}}(0,r)} \asymp \frac{\abs{\psi(\xi)}^2}{\abs{\xi}^2}\mathbbm{1}_{B_{\hat{\Sigma}}(0,r)}.$
Unfortunately, in this scenario we only have $\dot{H}^1$ control over the low frequency modes of $\eta$, and by Proposition~\ref{prop: Xs not complete case 3} the corresponding anisotropic space $X^s((L_1 \T \times \R) ; \R)$ fails to be complete. If one were to consider the completion of this space, elements of the completion would be equivalence classes of tempered distributions modulo polynomials. Since we mandate elements of the container space for the free surface function $\eta$ to be classical functions, neither the space $X^s((L_1 \T \times \R) ; \R)$ nor its completion are suitable for the purposes of this paper. Practically speaking, this means that we cannot employ our techniques assuming a priori periodicity in the direction of incline. For the same reason, our framework also cannot produce stationary solutions in the case when $\gamma = 0$. 

By solving for $\eta$ through the aforementioned approach, we can then solve for $(u,p)$ by means of \eqref{eq:linear without decouple} and the linear isomorphism associated to \eqref{eq: intro overdet}. Fortunately, this is possible as the results from the linear analysis in \cite{leonitice} continue to hold over $\Sigma$ defined in \eqref{sigma}. This is mainly due to the fact that many results in \cite{leonitice} are proved by analyzing the low frequency behavior of functions in $H^s(\R^d;\R)$ and $X^s(\R^d ; \R)$. By Remark~\ref{rem: iso everything} and Remark~\ref{rem:zero fourier}, the analogous 
results for $H^s(\Sigma;\R)$ and $X^s(\Sigma;\R)$ can all be deduced from reducing a similar set of calculations over $\Sigma$ and $\hat{\Sigma}$ to the calculations in \cite{leonitice} over $\R^d$. As such, the results from \cite{leonitice} can be directly ported over with minimal modification of the proofs contained therein. 

Though, as seen in the case $\Sigma = L_1 \T \times \R$, the generalized space $X^s(\Gamma ; \R)$ can fail to be complete depending on the set $\Gamma$ over which it is defined. This is an initial indication that some care needs to be taken in generalizing the space $X^s(\R^d ; \R)$ introduced in \cite{leonitice} over general sets $\Gamma$. We devote the Appendix to developing the necessary tools to carefully define the space $X^s(\Gamma; \R)$ over product domains $\Gamma$ in Section~\ref{sec: X^s}, and also identify the ``good sets'' $\Sigma$ for which the spaces $X^s(\Sigma; \R)$ are compatible with our analysis, which led us to \eqref{sigma}.

Next, we discuss the role of the perturbations \eqref{eq:first perturb}, \eqref{eq: mod pressure zeta}, and in particular the role of \eqref{eq:modU2}. First, by renormalizing the pressure via \eqref{eq: mod pressure}, the vertical gravitational force $-e_n$ shifts from the bulk to the dynamic boundary conditions, appearing as the term $\zeta \nu$ on the right hand side of the fourth equation in \eqref{eq:3}; this term later appears as $\eta \mathcal{N}$ in the fourth equation of \eqref{eq:4}. By further modifying the pressure via \eqref{eq: mod pressure zeta}, we eliminate the term $\eta \mathcal{N}$ from \eqref{eq:4}, at the price of introducing $(\nabla' \eta, 0)$ to the bulk in \eqref{eq:main unflattened} and \eqref{eq: main flattened}. The advantage of this formulation is that the pressure $q$ in Theorem~\ref{thm:main2} lives in a standard Sobolev space $H^s(\Omega ; \R)$, as opposed to the alternative formulation in \cite{leonitice}, for which $q$ belongs to a specialized Sobolev space built on the anisotropic space $X^s(\Sigma; \R)$. 
 
A key difference between the system \eqref{eq:3} and the analogous system in \cite{leonitice} is that upon removing the  background shear flow \eqref{eq:modU1}, we are left with the term $\kappa \eta (e_1 \otimes e_n + e_n \otimes e_1) \mathcal{N}$ appearing in the fourth equation of \eqref{eq:4}. Since $\eta$ is expected to belong to a specialized Sobolev space $X^s(\Sigma; \R)$ that does not coincide with the standard Sobolev space $H^s(\Sigma ; \R)$ in general, attacking the problem at this level would require one to build specialized spaces for the data tuple $(f,g,h,k)$, and also prove that the associated linear maps remain to be isomorphisms in this modified functional framework. By introducing an additional perturbation \eqref{eq:modU2}, we were able to replace this term with terms that are all standard Sobolev in the regime $s > n/2$ thanks to the second item of Theorem~\ref{thm:Xs}, as the non-trivial terms are either derivatives of functions in $X^s(\Sigma; \R)$ or products of functions in $X^s(\Sigma; \R)$ and derivatives of functions in $X^s(\Sigma; \R)$. This approach allows us to directly employ the functional framework from \cite{leonitice} at the price of introducing worse nonlinearities in \eqref{eq:main unflattened} and \eqref{eq: main flattened}.

To construct solutions to \eqref{eq:main unflattened}, we use the associated linear isomorphism in conjunction with the implicit function theorem around the trivial solution. In order to invoke the implicit function theorem, the nonlinear maps associated to \eqref{eq: main flattened} first need to be well-defined on the same spaces used in the linear analysis. This requires some analysis in Section~\ref{sec: product est} to understand the mapping properties of functions in the specialized Sobolev space $X^s(\Sigma ; \R)$, and also of the mean-curvature operator $\mathcal{H}$ defined in \eqref{eq:H}. 

In addition, we also need a divergence-trace type compatibility condition \eqref{eq:divtrace} built into the container space $\mathcal{Y}^s$ defined in \eqref{defn:Ys} to hold; in Theorem~\ref{thm:smoothnessnonlinear}, we show that the compatibility condition requires
\begin{multline}\label{eq: intro est}
	\left[-s(\eta+b)\p_1 \eta -\kappa (\eta+b)\eta \p_1 \eta - \int_0^b J(x',x_n) \kappa \left( x_n + \eta(x') \frac{x_n}{b} \right) \p_1 \eta(\cdot) \; dx_n \right]_{\dot{H}^{-1}}\\ = \left[-\kappa (b^2 \p_1 \eta + b \p_1 \eta^2 + \frac{1}{3} \p_1 \eta^3)\right]_{\dot{H}^{-1}} < \infty,
\end{multline}
where the $\dot{H}^{-1}$ seminorm is defined in \eqref{eq:seminorm}. 

A major obstacle in proving \eqref{eq: intro est} is that low-frequency control of powers of $\eta$ is not immediately evident from the inclusion $\eta \in X^s(\Sigma ; \R)$.  One could in principle attempt to obtain control of powers of $\eta$ by way of Young's convolution inequality applied on the Fourier side.  However, in the model case $\Sigma = \R^d$ with $s > d/2$, this only leads to the inclusion $\eta^k \in H^s(\R^d; \R)$ for $k \ge 2 + \lceil 4/(2d-2) \rceil, d\ge 2$.   Unfortunately, this means that in the physically relevant case $n=3$ (so that $d=n-1 = 2$), this elementary argument only provides control over quartic or higher powers of $\eta$.  However, by the fourth item of Theorem~\ref{thm:Xs}, we know that if $\zeta \in X^s(\Sigma ;\R)$, then $\p_1 \zeta \in \dot{H}^{-1}(\Sigma; \R)$. Thus, a viable strategy for proving \eqref{eq: intro est} is to show that $X^s(\Sigma; \R)$ is an algebra. By Lemma~\ref{lem: Xs Hs equiv}, there are configurations of $\Sigma$ for which $X^s(\Sigma ; \R) = H^s(\Sigma; \R)$, in which case we know $X^s(\Sigma;\R)$ is an algebra for $s > \dim \Sigma /2$. In general, though, we only know that $H^s(\Sigma; \R) \hookrightarrow X^s(\Sigma; \R)$, so further analysis is required to show that $X^s(\Sigma;\R)$ is an algebra.  Fortunately, we are able to establish this in Theorem~\ref{thm: 1}, which is proved later in Section~\ref{sec: X^s algebra}.

As the linearization of \eqref{eq: main flattened} depends on $\kappa$, we also need to identify a parameter regime for $\kappa$ for which the associated linear map remain an isomorphism. To that end, in Section~\ref{sec:kappa est} we study the map 
\begin{multline} \label{eq:intro Lkappa}
	L_{\kappa,\sigma} (u,p,\eta)=( \diverge S(p,u) -  \gamma\kappa x_n \p_1 \eta e_1 - \gamma \p_1 u + (\nabla' \eta, 0) + s(x_n) \p_1 u + s'(x_n) u_n e_1 + \kappa x_n s(x_n) \p_1 \eta e_1 \\- \kappa x_n \Delta' \eta e_1  - \kappa(x_n \p_1 \nabla' \eta , \p_1 \eta), \diverge u + \kappa x_n \p_1 \eta, u_n\rvert_{\Sigma_b} + (\gamma - \frac{\kappa b^2}{2} ) \p_1 \eta,  S(p,u) e_n \rvert_{\Sigma_b}+ \sigma \Delta' \eta e_n )
\end{multline} 
induced by the linearization associated to \eqref{eq: main flattened}, and in Theorem~\ref{thm:M} we show that via a perturbative argument around $\kappa = 0$ that for fixed $\gamma$ and other physical parameters, there exists a $\kappa_0 > 0$ for which $L_{\kappa,\sigma}$ is an isomorphism over appropriate spaces for all $\kappa \in (-\kappa_0, \kappa_0)$. Synthesizing the aforementioned results and employing our strategy of invoking the implicit function theorem leads us to the solvability of \eqref{eq: main flattened} and \eqref{eq:main unflattened}.

Finally, we discuss the strategy for producing solutions to the unflattened system \eqref{eq:main unflattened} using solutions constructed in the flattened domain via Theorem~\ref{thm:main1}. To that end, we use the free surface function $\eta$ to build the flattening map and its inverse defined in \eqref{eq:flattening} and \eqref{eq:inv flattening} to undo the reformulation outlined in Section~\ref{sec:flatten}. This requires some results on the regularity of these maps, which is recorded in Section~\ref{sec: product est}. Fortunately, the same analysis can be adapted from \cite{leonitice} with minimal modification.

\subsection{Notational conventions and outline of article}\label{sec:notation}
In this subsection we discuss the notational conventions adopted throughout this paper. In this paper, $\N$ denotes the natural numbers including 0.  We always use $n \ge 1$ to denote the dimension of the flattened fluid domain $\Omega = \Sigma \times \R$. For $d \ge 1$, we will consider spaces defined over domains of the form 
\begin{align}\label{eq: Gamma f}
	\Gamma = \prod_{i=1}^{d} \Gamma_i, \;\Gamma_i = \R \; \text{or} \; L_i \T, \; L_i > 0, 
\end{align}
where $\Gamma$ is endowed with the natural group, topological, and smooth structures. In fact, the topology on $\Gamma$ is metrizable, and in this paper we equip $\Gamma$ with the metric 
\begin{align}
	 d_{\Gamma}(x,y) = \left( \sum_{i=1}^d d_i (x_i,y_i)^2 \right)^{1/2} \text{ for } d_i(x_i,y_i) = \begin{cases}
		 d_i(x_i, y_i) = \abs{x_i - y_i}, & i \in R_{\Gamma} \\
		 d_i(x_i, y_i) =  \inf \{ \abs{r-s} \mid r \in [x_i], \; s \in [y_i] \}, & i \in T_\Gamma.
	 \end{cases}
\end{align}
We write $\hat{\Gamma}$ to denote the dual group associated to $\Gamma$, defined via 
\begin{align}\label{eq: hat gamma}
	\hat{\Gamma} = \widehat{\prod_{i=1}^{d} \Gamma_i} = \prod_{i=1}^{d} \hat{\Gamma}_i, \; \hat{\Gamma}_i = \begin{cases}
		 \R, & \Gamma_i = \R \\
		 L_i^{-1} \Z, & \Gamma_i = L_i \T,
	\end{cases}
\end{align}
where $\hat{\Gamma}$ is also endowed with the obvious group, topological, and smooth structures. We also endow $\hat{\Gamma}$ with the metric induced by inclusion in $\R^d$:
\begin{align}
	 d_{\hat{\Gamma}}(x,y) = \sum_{i=1}^d \left( \abs{x_i-y_i}^2 \right)^{1/2}.
\end{align}
If $X$ is a metric space, we write $B_X(x,r)=\{ y \in X \mid d_X(x,y) < r \}$ for the open ball.

For $X \in \{\Gamma, \hat{\Gamma} \}$, we write $\mathscr{S}(X ; \C)$ to denote the Schwartz class of complex valued functions over $X$ and $\mathscr{S}'(X; \C)$ to denote the space of complex valued tempered distributions over $X$; a detailed treatment of how to define these space can be found in Appendix~\ref{tempered}. For $f \in \mathscr{S}(X ; \C)$ or $f \in \mathscr{S}'(X; \C)$, we denote its unitary Fourier and inverse Fourier transforms by 
\begin{align}
	 \hat{f}(\xi) = \mathscr{F}_X^{+} \{ f \} (\xi), \quad \check{f}(\xi) = \mathscr{F}_X^{-} \{ f \} (\xi),
\end{align}
where $\mathscr{F}_X^{\pm}$ is defined in \eqref{eq: Fourier def}. Sometimes we will also write 
\begin{align}
	 \mathscr{F}[f] (\xi)= \mathscr{F}_X^{+} \{ f \} (\xi), \quad \mathscr{F}^{-1} [f] (\xi) = \mathscr{F}_X^{-} \{ f \} (\xi).
\end{align}
For $\R \ni s > 1/2$ and a Lipschitz $\upzeta : \Sigma \to \R$ satisfying $\inf \upzeta > 0$, we define 
\begin{align}
	 \Hzeros{s}(\Omega_\upzeta ; \R^n) = \{ u \in H^s(\Omega_\upzeta ; \R^n) \mid u = 0 \; \text{on} \; \Sigma_0\},
\end{align}
where the equality is taken in the trace sense. In the case when $s = 1$ and $\zeta = b$, we endow $\Hzero(\Omega_b; \R^n)$ with the inner product 
\begin{align}
	 (u,v)_{\Hzero} = \frac{1}{2} \int \mathbb{D} u : \mathbb{D} v \; dx,
\end{align}
which generates the same topology as the standard $H^1$-norm by Korn's inequality (see Lemma 2.7 of \cite{beale}). For $k \in \N$, a real Banach space $V$, and a nonempty open set $U \subseteq \Gamma$ we define the space
\begin{align}
	 C^k(U ; V) = \{ f : U \to V \mid Lf \in C^k(\R^d ; V), \; Lf(x) = f([x]) \}.
\end{align}
We define 
\begin{align}
	 C^k_b(U;V) = \{ f : U \to V \mid f \in C^k(U;V) \; \text{and} \; \norm{f}_{C^k_b} < \infty \}, 
\end{align}
where 
\begin{align}
	\norm{f}_{C^k_b} = \sum_{\alpha \in \N^d, \abs{\alpha} \le k} \sup_{x \in U} \norm{\p^\alpha f(x)}_{V}. 
\end{align}
We also define the space $C_0^k(\Gamma;V) \subset C^k_b(\Gamma;V)$ to be the closed subspace 
\begin{align}
	C^k_0(\Gamma;V) = \{ f \in C^k_b(\Gamma;V) \mid \lim_{|x_{R_\Gamma}| \to \infty} \p^\alpha f(x) = 0 \; \text{for all} \; \alpha \in \N^d \; \text{such that} \; \abs{\alpha} \le k \},
\end{align}
where $x_{R_\Gamma} \in \Gamma$ is defined via \eqref{eq: R}. Lastly, we use $L^0(\Gamma; [0,\infty])$ to denote the set of non-negative measurable functions over $\Gamma$. 
	
We conclude this section by giving outline of the article. In Section~\ref{sec:functional}, we introduce the anisotropic Sobolev space $X^s(\Gamma; \R)$ and characterize the space based on the underlying product structure of the domain $\Gamma$. We then state its essential properties, and in Section~\ref{sec: X^s algebra} we prove that for $\Sigma$ defined via \eqref{sigma}, $X^s(\Sigma;\R)$ is an algebra in the supercritical regime $s > \dim \Sigma/2$. 

In Section~\ref{sec:linear analysis}, we first record the isomorphism associated to the overdetermined system \eqref{eq: intro overdet} and the asymptotics of the special pseudodifferential symbols used in the construction of the free surface function $\eta$. This allows us to prove the isomorphism associated to the linear system \eqref{eq:linear without decouple}. We then establish the parameter regime for $\kappa$ for which the flattened system \eqref{eq: main flattened} induces an isomorphism.

In Section~\ref{nonlinear}, we first record some key mapping properties of the anisotropic space $X^s(\Sigma;\R)$ and various nonlinear maps used in the subsequent analysis. Using these preliminary results, we show that the maps induced by \eqref{eq: main flattened} are well-defined and smooth, and we use this in conjunction with the implicit function theorem to produce solutions to \eqref{eq: main flattened}. We conclude the paper by using the solutions from \eqref{eq: main flattened} to produce solutions to \eqref{eq:main unflattened}.

\section{The anisotropic space \texorpdfstring{$X^s(\Gamma; \F)$}{Xs}} \label{sec:functional}

In this section we aim to generalize the anisotropic Sobolev space $X^s(\R^d;\R)$ introduced in Section 5 of \cite{leonitice}. First, in order to define such a space on more general product domains, we need to define a suitable notion of tempered distributions on $\Gamma$ and its Pontryagin dual $\hat{\Gamma}$; a detailed treatment of this can be found in Appendix~\ref{tempered}. Second, in order to understand how these spaces depend on the structure of the product domain $\Gamma$, we study how they behave under permutations of the factors of $\Gamma$; this is treated in Appendix~\ref{appendix: permutation}

\subsection{Definition of $X^s(\Gamma ; \F)$ and its general properties} \label{sec: X^s}
In this subsection we let $\R \ni s \ge 0$, $\N \ni d \ge 1$, and we consider the domain $\Gamma$ defined via \eqref{eq: Gamma f}. Recall that the Japanese bracket $\langle \cdot \rangle: \R^d \to [0,\infty)$ is defined via $\langle \xi \rangle = (1+\abs{\xi}^2)^{1/2}$. In \cite{leonitice}, the anisotropic space $X^s(\R^d; \R)$ is defined in terms of the Fourier multiplier $\omega_s : \R^d \setminus \{0\} \to \R$ given by
\begin{align}\label{eq: defn omega}
	 \omega_s(\xi) = 
		\mathbbm{1}_{B_{\hat{\Gamma}}(0,r)}(\xi)\left( \frac{\abs{\xi_1}^2}{\abs{\xi}^2} + \abs{\xi}^2  \right) + \mathbbm{1}_{\R^d \setminus B_{\hat{\Gamma}}(0,r)}(\xi) \br{\xi}^{2s}.
\end{align}
In this paper, we use the same formula in \eqref{eq: defn omega} to define $\omega_s$ on $\hat{\Gamma}$; in the purely toroidal case when $\hat{\Gamma} = \prod_{i=1}^d L_i^{-1} \Z$, we also define $\omega_s(0) = 1$ so that $\omega_s$ takes on the same value as the standard Sobolev multiplier $\langle \cdot \rangle^{2s}$ at $\xi = 0$.  The function $\omega_s$ is introduced due to its close relation to the symbol $\rho_\gamma$  from \eqref{eq: psi rho}.  To see this, note that 
\begin{align}\label{eq: low high equiv}
	\begin{cases}
		1 \asymp_r \br{\xi}^{s} \asymp_r \br{\xi}^{s-1} &\text{for } \xi \in B_{\hat{\Gamma}}(0,r) \\
		\br{\xi} \asymp_r \abs{\xi} \asymp_r \frac{\abs{\xi_1}}{\abs{\xi}} + \abs{\xi} &\text{for } \xi \in \hat{\Gamma} \setminus B_{\hat{\Gamma}}(0,r),
	\end{cases}
 \end{align}
and so this and \eqref{eq: intro rho asym} show that for $\xi \neq 0$ we have the equivalences
\begin{multline}\label{eq: rho to mu}
	\frac{\abs{\rho_\gamma(\xi)}^2}{\abs{\xi}^2}  \br{\xi}^{2s} \asymp \mathbbm{1}_{B_{\hat{\Gamma}}(0,r)}(\xi)\left( \frac{\abs{\xi_1}^2}{\abs{\xi}^2} + \abs{\xi}^2  \right) + \mathbbm{1}_{\hat{\Gamma} \setminus B_{\hat{\Gamma}}(0,r)}(\xi) \br{\xi}^{2s}  = \omega_s(\xi) \\
	\asymp \mathbbm{1}_{B_{\hat{\Gamma}}(0,r)}(\xi)\left(\frac{\abs{\xi_1}}{\abs{\xi}}+\abs{\xi}\right)^2 \br{\xi}^{2(s-1)}    + \mathbbm{1}_{\hat{\Gamma}\setminus B_{\hat{\Gamma}}(0,r)}(\xi) \br{\xi}^2 \br{\xi}^{2(s-1)} \asymp \left(\frac{\abs{\xi_1}}{\abs{\xi}}+\abs{\xi}\right)^2 \br{\xi}^{2(s-1)},
\end{multline}
where the implicit constants depend only on $r,n, \gamma, \sigma, b$. 

The equivalence \eqref{eq: rho to mu} suggests that we can give equivalent definitions of the space by either using $\omega_s$ or the multiplier $\upmu:\R^d \to\R$ defined by
\begin{align}\label{eq:upmu}
    \upmu(\xi)=\begin{cases}
		\f{|\xi_1|}{|\xi|}+|\xi| &\text{for } \xi \neq 0\\
		1 &\text{for } \xi = 0.
	\end{cases} 
\end{align}
We note that defining $\upmu(0) = 1$ is only relevant in the purely toroidal case when $\hat{\Gamma} = \prod_{i=1}^d L_i^{-1} \Z$, and we do so that the restriction of $\upmu$ over $\hat{\Gamma}$ is well-defined and it takes on the same value as $\langle \cdot \rangle^{2s}$ at $\xi = 0$. We then define the specialized Sobolev space $X^s(\Gamma; \F)$ of order $s\ge 0$ in terms of $\upmu$, to be the function space
\begin{align}\label{def: X^s}
    X^s(\Gamma;\F)=
	\begin{cases}
		\tcb{f\in\mathscr{S}'(\Gamma;\C) \mid \hat{f}\in L^1_{\loc}(\hat{\Gamma};\C), \;f= \overline{f},\;\tnorm{\upmu\tbr{\cdot}^{s-1}\hat{f}}_{L^2}<\infty}, & \F = \R \\
		\tcb{f\in\mathscr{S}'(\Gamma;\C) \mid \hat{f}\in L^1_{\loc}(\hat{\Gamma};\C),\;\tnorm{\upmu\tbr{\cdot}^{s-1}\hat{f}}_{L^2}<\infty}, & \F = \C,
	\end{cases}
\end{align}
with the norm associated to $X^s(\Gamma ; \F)$ defined by 
\begin{align}\label{eq: Xs norm defn}
	\norm{f}_{X^s} = \tnorm{\upmu\tbr{\cdot}^{s-1}\hat{f}}_{L^2}.
\end{align}
The definitions of the class of tempered distributions $\mathscr{S}'(\Gamma ; \C)$ on $\Gamma$ and the Fourier transform on $\mathscr{S}'(\Gamma ; \C)$ are contained in Section \ref{tempered} of the appendix. 

First, we show that if a permutation $g \in S_d$ leaves the first factor of $\Gamma$ fixed, then the restriction of the induced map $\sigma_{g}$ on $X^s(\Gamma; \F)$ is an isometric isomorphism between $X^s(\Gamma; \F)$ and $X^s(g \Gamma ; \F)$. The reordered Cartesian product $g\Gamma$ and the map $\sigma_{g}$ are defined in Definition~\ref{defn: reordering notation}.

\begin{lem}\label{lem: X^s permutation}
	Let $\Gamma$ be defined as in \eqref{eq: Gamma f} and let $S_d$ denote the symmetric group of permutations. Suppose $g \in S_d$ satisfies $g(1) = 1$, and let $g\Gamma$ be the reordered Cartesian product defined as in the first item of Definition~\ref{defn: reordering notation}, $\mathcal{P}_{g}: \Gamma \to g\Gamma$ be the reordering map defined as in the second item of Definition~\ref{defn: reordering notation}. Then the map $\sigma_{g}: X^s(\Gamma ; \F) \to  X^s(g\Gamma; \F)$ defined via $\sigma_{g}(f) = f(\mathcal{P}_{g}^{-1}\cdot)$ is an isometric isomorphism, with its inverse given by the map $\sigma_{g,\Gamma}^{-1}: X^s(g \Gamma ; \F) \to  X^s(\Gamma ; \F)$ defined via $\sigma_{g}^{-1}(f) = f(\mathcal{P}_g \cdot)$. 
	\end{lem}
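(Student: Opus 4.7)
The plan is to reduce everything to a change-of-variables computation on the Fourier side, exploiting the fact that the condition $g(1) = 1$ makes the asymmetric frequency multiplier $\upmu$ invariant under the reordering of coordinates induced by $g$. The Japanese bracket is already invariant under arbitrary coordinate permutations since it only depends on $\abs{\xi}$, so once the Fourier-side identity is in place, the isometry statement is a one-line computation.

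First I would verify at the level of test functions that the reordering $\mathcal{P}_g: \Gamma \to g\Gamma$ is a smooth group isomorphism, so pre-composition with $\mathcal{P}_g^{-1}$ gives a continuous linear bijection $\mathscr{S}(\Gamma; \C) \to \mathscr{S}(g\Gamma; \C)$. By duality, this extends to a bicontinuous linear bijection $\mathscr{S}'(\Gamma; \C) \to \mathscr{S}'(g\Gamma; \C)$, with $\sigma_g^{-1}$ serving as its inverse. The reality condition $f = \overline{f}$ in the case $\F = \R$ is preserved because $\mathcal{P}_g$ is a real map.

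Next I would establish the Fourier-side intertwining. For Schwartz $f$, using the product structure of the Fourier transform on $\Gamma = \prod_{i} \Gamma_i$ built up in Appendix~\ref{tempered}, together with the fact that $\mathcal{P}_g$ only permutes factors, I would compute directly that
\begin{equation}
	\widehat{\sigma_g f} = \hat{f} \circ \widehat{\mathcal{P}_g}^{-1},
\end{equation}
where $\widehat{\mathcal{P}_g}: \hat{\Gamma} \to \widehat{g\Gamma}$ is the analogous reordering map on the dual groups. This identity then extends to all of $\mathscr{S}'(\Gamma; \C)$ by duality. In particular, since $\widehat{\mathcal{P}_g}$ is a measure-preserving homeomorphism, $\widehat{\sigma_g f} \in L^1_{\loc}(\widehat{g\Gamma}; \C)$ if and only if $\hat{f} \in L^1_{\loc}(\hat{\Gamma}; \C)$.

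Finally, the norm computation: applying the change of variables $\xi = \widehat{\mathcal{P}_g} \eta$ in the defining integral \eqref{eq: Xs norm defn} yields
\begin{equation}
	\norm{\sigma_g f}_{X^s(g\Gamma)}^2 = \int_{\widehat{g\Gamma}} \upmu(\xi)^2 \tbr{\xi}^{2(s-1)} \sabs{\hat{f}(\widehat{\mathcal{P}_g}^{-1}\xi)}^2 \, d\xi = \int_{\hat{\Gamma}} \upmu(\widehat{\mathcal{P}_g}\eta)^2 \tbr{\widehat{\mathcal{P}_g}\eta}^{2(s-1)} \sabs{\hat{f}(\eta)}^2 \, d\eta.
\end{equation}
Since $\abs{\widehat{\mathcal{P}_g}\eta} = \abs{\eta}$, the Japanese bracket factor is invariant. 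The assumption $g(1) = 1$ then enters in exactly one place: it forces the first coordinate of $\widehat{\mathcal{P}_g}\eta$ to equal $\eta_1$, so from the formula \eqref{eq:upmu} we get $\upmu(\widehat{\mathcal{P}_g}\eta) = \upmu(\eta)$, and the integral collapses to $\norm{f}_{X^s(\Gamma)}^2$. Running the same argument with $g^{-1}$ (which also fixes $1$) handles $\sigma_g^{-1}$ and shows it is the inverse isometry. The main obstacle is really just the careful bookkeeping required to verify the Fourier intertwining identity on distributional product domains mixing Euclidean and toroidal factors, which is why the groundwork of Appendix~\ref{tempered} is needed; once that identity is granted, everything else is a clean change of variables that exploits the $g(1)=1$ symmetry of $\upmu$.
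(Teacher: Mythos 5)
Your proof is correct and follows essentially the same route as the paper: the whole argument reduces to noting that $\tbr{\mathcal{P}_g\,\cdot} = \tbr{\cdot}$ always and $\upmu(\mathcal{P}_g\,\cdot) = \upmu(\cdot)$ precisely because $g(1)=1$, followed by the Fourier intertwining identity and a measure-preserving change of variables. The only difference is organizational: the paper delegates the Schwartz/distributional bookkeeping and the change-of-variables computation to the general multiplier-invariance result Lemma~\ref{lem: gen X^s permutation} (together with Lemma~\ref{lem: schwartz iso} and \eqref{eq: appendix fourier comp}), whereas you inline that argument.
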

\begin{proof}
We note that $\langle \cdot \rangle = \langle \mathcal{P}_g \cdot \rangle$, and if $g(1) = 1$, then $\upmu(\cdot) = \upmu(\mathcal{P}_g \cdot)$. As a consequence, the Fourier multiplier $\upmu^2(\cdot) \langle \cdot \rangle^{2(s-1)}$ is invariant under $\mathcal{P}_g$, therefore the desired conclusion follows from Lemma~\ref{lem: gen X^s permutation}.
\end{proof}

Next, we characterize the anisotropic space $X^s(\Gamma; \F)$ in relation to the set $R_\Gamma$ defined in the first item of Definition~\ref{defn: RT notation}.

\begin{lem}\label{lem: Xs Hs equiv}
Let $\Gamma$ be defined as in \eqref{eq: Gamma f} with $\dim \Gamma = d \ge 1$ and let $R_\Gamma$ be defined as in \eqref{eq: R}. Let $\Gamma_\ast, \hat{\Gamma}_\ast$ be the canonical reordering of $\Gamma$ and $\hat{\Gamma}$ defined in \eqref{eq: canonical Gamma}, and let $\mathcal{P}_\Gamma: \Gamma \to \Gamma_\ast, \mathcal{P}_{\hat{\Gamma}}: \hat{\Gamma} \to \hat{\Gamma}_\ast$ be the canonical reordering maps defined in \eqref{eq: canonical pGamma}. Then the following hold. 

\begin{enumerate}
	 \item If $R_\Gamma = \varnothing$ or $R_\Gamma = \{1\}$, then $X^s(\Gamma ; \F) = H^s(\Gamma ; \F)$ and $\norm{\cdot}_{X^s}$ and $\norm{\cdot}_{H^s}$ are equivalent norms. In particular, this implies that if $s > d/2$, then $X^s(\Gamma ; \F)$ is an algebra.
	 \item If $1 \in R_\Gamma$ and $\abs{R_\Gamma} \ge 2$, then $X^s(\Gamma ; \F)$ is not closed under rotations in the sense that for any orthogonal matrix $Q \in O(\abs{R_\Gamma})$ is such that $\abs{Q e_1 \cdot e_1 } < 1$, there exists a function $f \in X^s(\Gamma ; \F) \setminus L^2(\Gamma ; \F)$ such that $f(\mathcal{P}_\Gamma^{-1} (Q \oplus I) \mathcal{P}_\Gamma \cdot) \notin X^s(\Gamma ; \F)$, where $Q \oplus I:  \Gamma_{*,R} \oplus \Gamma_{*,T} \to \Gamma_{*,R} \oplus \Gamma_{*,T}$ maps $(x,y)$ to $(Qx,y)$. In particular, this implies that $H^s( \Gamma ; \F) \subsetneq X^s(  \Gamma ; \F)$. 

	 \item If $1 \notin R_\Gamma$ and $R_\Gamma \neq \varnothing$, then $H^s( \Gamma ; \F) \subsetneq X^s(  \Gamma ; \F)$. Furthermore, for all $f \in X^s(\Gamma; \F)$ we have 
	 \begin{align}\label{eq: Xs equiv norm}
		  \norm{f}_{X^s}^2 \asymp \int_{\hat{\Gamma}} \abs{\xi}^2 \br{\xi}^{2(s-1)} \abs{\hat{f}(\xi)}^2 \; d\xi.
	 \end{align} 
\end{enumerate}

\end{lem}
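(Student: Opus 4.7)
The plan is to reduce each item to a pointwise comparison of the $X^s$ multiplier $\upmu(\xi)\langle\xi\rangle^{s-1}$ with the standard Sobolev multiplier $\langle\xi\rangle^s$ on $\hat{\Gamma}$, exploiting that $\upmu \lesssim \langle\cdot\rangle$ always while the reverse inequality can fail only in the low-frequency regime where $|\xi|$ and $|\xi_1|/|\xi|$ are simultaneously small.

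For item 1, I would show that $\upmu \asymp \langle\cdot\rangle$ pointwise on $\hat{\Gamma}$ under either hypothesis on $R_\Gamma$. When $R_\Gamma = \varnothing$, $\hat{\Gamma}$ is a lattice, so $\upmu(0) = \langle 0 \rangle = 1$ and every other $\xi$ has $|\xi| \ge \min_i L_i^{-1}$, giving $\upmu \asymp |\xi| \asymp \langle\xi\rangle$. When $R_\Gamma = \{1\}$, any $\xi$ with small Euclidean norm must have $\xi':=(\xi_2,\dots,\xi_d)=0$, whence $|\xi_1|/|\xi|=1$ and $\upmu = 1 + |\xi_1| \asymp \langle\xi\rangle$; for $\xi' \ne 0$ the bound $|\xi| \ge \min_{j>1} L_j^{-1}$ again yields $\upmu \asymp \langle\xi\rangle$. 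The resulting identity $X^s = H^s$ with equivalent norms transfers the classical Sobolev algebra statement for $s > d/2$ to $X^s$.

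For item 3, the hypothesis $1 \notin R_\Gamma$ makes $\hat{\Gamma}_1$ discrete, so either $\xi_1 = 0$ and $\upmu(\xi) = |\xi|$, or $|\xi_1| \ge L_1^{-1}$, forcing $|\xi| \ge L_1^{-1}$ and again $\upmu \asymp |\xi|$ by direct comparison. Since $R_\Gamma \ne \varnothing$ gives $\hat{\Gamma}$ at least one continuous factor, $\{0\}$ has measure zero, yielding the equivalence \eqref{eq: Xs equiv norm}. For the strict inclusion, I would build $\hat{f}$ supported on $\{\xi_1 = 0\}$ together with $\xi_j = 0$ for every discrete $j$, with profile $\hat{f}(\xi) = |\xi|^{-|R_\Gamma|/2-\varepsilon} \mathbbm{1}_{|\xi|\le 1}$ for some $\varepsilon \in (0,1)$; polar integration on the continuous part gives $|\hat{f}|^2 \notin L^1$ while $|\xi|^2|\hat{f}|^2 \in L^1$.

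Item 2 is the substantive case. Given $Q$ with $|Qe_1 \cdot e_1| < 1$, the projection of $Qe_1$ onto $e_1^\perp$ inside the $R_\Gamma$-subspace is nonzero, so I can fix a unit vector $w = (0, w')$ there with $w \perp e_1$ and $\beta := Qe_1 \cdot w \ne 0$. After canonical reordering, which is an $X^s$-isometry by Lemma~\ref{lem: X^s permutation} since $1 \in R_\Gamma$ keeps the first factor fixed, I define $\hat{f}(\xi) = |P_w(\xi')|^{-(|R_\Gamma|+1)/2}$ on the thin parabolic tube
\begin{equation*}
\{\xi_T = 0,\ 0 < |P_w(\xi')| < P_0,\ |\xi_1| \le P_w(\xi')^2,\ |\xi' - P_w(\xi')w'| \le \delta |P_w(\xi')|\}
\end{equation*}
and zero elsewhere, where $P_w$ denotes scalar projection onto $w'$ and $\delta, P_0 > 0$ are small constants chosen in terms of $|\beta|$; the tube is symmetric under $\xi \mapsto -\xi$, so $f$ is real-valued. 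A routine cylindrical computation in the $R_\Gamma$-subspace establishes $\int |\hat{f}|^2 = \infty$ while $\int \upmu^2 \langle\xi\rangle^{2(s-1)}|\hat{f}|^2 < \infty$, using $\upmu(\xi) \lesssim |P_w(\xi')|$ and $\langle\xi\rangle \asymp 1$ on the support to absorb the singularity; thus $f \in X^s \setminus L^2$. Writing $\mathscr{F}[f\circ R](\eta) = \hat{f}(R\eta)$ and making the orthogonal substitution $\zeta = R\eta$, I compute $\eta_1 = Qe_1 \cdot \zeta_R = \alpha \zeta_1 + \beta P_w(\zeta') + v' \cdot (\zeta' - P_w(\zeta')w')$, where $v \perp \operatorname{span}\{e_1,w\}$; bounding the non-$\beta$ terms by $(P_0+\delta)|P_w(\zeta')|$ and choosing $\delta, P_0 < |\beta|/4$ forces $|\eta_1| \ge \tfrac{|\beta|}{2}|P_w(\zeta')|$ on the support. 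Combined with $|\eta| = |\zeta| \asymp |P_w(\zeta')|$, this yields $\upmu(\eta) \gtrsim |\beta|$, so $\norm{f\circ R}_{X^s}^2 \gtrsim \int |\hat{f}|^2 = \infty$. The strict inclusion $H^s \subsetneq X^s$ in this case again follows from $f \in X^s \setminus L^2$.

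The essential technical hurdle is the balance in item 2: the parabolic width $|\xi_1| \le P_w(\xi')^2$ is sharp enough that $\upmu$ degenerates like $|P_w(\xi')|$ on the support, just barely taming the $|P_w(\xi')|^{-(|R_\Gamma|+1)/2}$ singularity of $\hat{f}$, yet the tube is narrow enough in the $w^\perp$-directions that any rotation with $|Qe_1 \cdot e_1|<1$ tilts the support off of the degeneracy locus of $\upmu$, uncovering the singular profile and destroying membership in $X^s$.
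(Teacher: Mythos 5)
Your items 1 and 3 follow essentially the same route as the paper: a pointwise comparison of $\upmu\tbr{\cdot}^{s-1}$ with $\tbr{\cdot}^{s}$ (resp.\ $|\cdot|\tbr{\cdot}^{s-1}$), using that nonzero frequencies in a discrete factor are bounded away from zero, so that the only place $\upmu$ can degenerate relative to $\tbr{\cdot}$ is the low-frequency regime. (Your explicit radial counterexample $\hat f = |\xi|^{-|R_\Gamma|/2-\ve}\mathbbm{1}_{|\xi|\le 1}$ supported on $\hat\Gamma_R$ for the strict inclusion in item 3 is a welcome addition; the paper leaves this step implicit.) Where you genuinely diverge is item 2. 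The paper does not construct the bad function at all: it cites the third item of Proposition 5.2 of \cite{leonitice} for the Euclidean case $\R^{|R_\Gamma|}$ and then transfers that function to $\Gamma$ by supporting its Fourier transform on $\hat\Gamma_R$ and using the reordering/projection isomorphisms ($\mathcal{P}_{\hat\Gamma}$, $\uppi_{\hat\Gamma}$) to push all integrals back to $\R^{|R_\Gamma|}$. You instead build the counterexample from scratch: a Fourier profile $|P_w(\xi')|^{-(|R_\Gamma|+1)/2}$ on a parabolic tube $\{|\xi_1|\le P_w(\xi')^2,\ |\xi'-P_w(\xi')w'|\le \delta|P_w(\xi')|\}$ aligned with a direction $w\perp e_1$ on which $Qe_1$ has a nonzero component. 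Your exponent and width bookkeeping checks out: the slice measure $\asymp t^{|R_\Gamma|}\,dt$ makes $\int|\hat f|^2$ logarithmically divergent while $\upmu\lesssim t$ on the tube renders $\int\upmu^2\tbr{\cdot}^{2(s-1)}|\hat f|^2$ finite, and after the orthogonal substitution the lower bound $|\eta_1|\ge \tfrac{|\beta|}{2}|P_w(\zeta')|$ (valid once $P_0,\delta<|\beta|/4$) gives $\upmu(\eta)\gtrsim|\beta|$, so the rotated function inherits the full $L^2$ divergence. The trade-off is clear: the paper's route is shorter but rests on an external result, while yours is self-contained and makes transparent exactly how the anisotropy of $\upmu$ is exploited. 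One small point worth stating explicitly in a final write-up: the deduction $H^s\subsetneq X^s$ in items 2 and 3 also uses the elementary inclusion $H^s\hookrightarrow X^s$ coming from $\upmu(\xi)\le 1+|\xi|$, which you use tacitly.
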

\begin{proof}
We note that since $\upmu(\xi) \le 1 + \abs{\xi}$, we always have $H^s(\Gamma ; \R) \subseteq X^s(\Gamma;\R)$. To prove the first item, we note that if $R_\Gamma = \varnothing$, then we are in the purely toroidal case when $\Gamma = \prod_{i=1}^d L_i \T$. Here, the only low frequency mode $\xi \in B_{\hat{\Gamma}}(0,r)$ is $\xi = 0$. By the definition of $\upmu$ at $\xi = 0$ in \eqref{eq:upmu}, we have $\upmu(0) = 1 = \langle 0 \rangle^{2s}$. So by \eqref{eq: low high equiv} we find that for all $\xi \in \hat{\Gamma}, \upmu(\xi) \asymp \langle \xi \rangle^{2s}$ and the desired conclusion follows. If $R_\Gamma = \{1\}$, then $\Sigma = \R \times \prod_{i=1}^{d-1} L_i \T$. In this case, $\xi \in B_{\hat{\Gamma}}(0,r) \Longleftrightarrow \xi = (\xi_1,0)$ for $\abs{\xi_1} < r$. Then for $\xi \in B_{\hat{\Gamma}}(0,r)$, we have $\upmu(\xi) = 1 + \abs{\xi_1}$ and $\langle \xi \rangle = (1 + \abs{\xi_1}^2)^{1/2}$, which implies that $\upmu(\xi) \asymp_r \langle \xi \rangle \mathbbm{1}_{B_{\hat{\Gamma}}(0,r)} + \langle \xi \rangle^{2s}  \mathbbm{1}_{B^c(0,r)} \asymp \langle \xi \rangle^{2s}$ for all $\xi \in \hat{\Gamma}$, and the desired conclusion follows. 

Next we proceed to prove the second item. Let the set $T_\Gamma$ be defined as in \eqref{eq: T}. The case when $T_\Gamma = \varnothing$ follows from the third item of Proposition 5.2 in \cite{leonitice}, so we assume here that $T_\Gamma \neq \varnothing$.  

Since $\abs{R_\Gamma} \ge 2$, by the third item of Proposition 5.2 in \cite{leonitice}, for every $Q \in O(\abs{R_\Gamma})$ such that $\abs{Q e_1 \cdot e_1} < 1$ there exists a function $G \in (X^s(\R^{\abs{R_\Gamma}}; \F) \cap C^\infty_0(\R^{\abs{R_\Gamma}}; \F) ) \setminus L^2(\R^{\abs{R_\Gamma}}; \F)$ such that $G(Q \cdot ) \notin X^s(\R^{\abs{R_\Gamma}}; \F)$. Let $G$ be such a function, $\mathcal{P}_\Gamma$ be the canonical reordering map defined via \eqref{eq: canonical pGamma}, $\hat{\Gamma}_R$ be the set defined via \eqref{eq: Gamma_R}, and $\uppi_{\hat{\Gamma}}$ be the surjective map on $\hat{\Gamma}$ defined via \eqref{eq: uppi_R}. We then consider the measurable function $F: \hat{\Gamma} \to \C$ defined via 
	\begin{align}
			F(\xi) = \begin{cases}
				\hat{G}( \uppi_{\hat{\Gamma}} \xi), & \xi \in \hat{\Gamma}_R\\
				0, & \text{otherwise},
			\end{cases}
	\end{align}
where $\hat{\Gamma}_R$ is defined as in \eqref{eq: Gamma_R}. We note that by Lemma~\ref{rem: iso everything}, $\uppi_{\hat{\Gamma}}$ is an isometric measure-preserving group isomorphism between $\hat{\Gamma}_{*,R}$ and $\R^{|R_{\Gamma_*}|} = \R^{\abs{R_\Gamma}}$. Then by Fubini's theorem, it follows that 
	\begin{multline}
		 \int_{\hat{\Gamma}} \abs{F(\xi)} \; d\xi = \int_{\hat{\Gamma}_{*,R}} \abs{\hat{G}(\uppi_{\hat{\Gamma}_\ast}\xi)} \; d\xi = \int_{\R^{\abs{R_\Gamma}}} \abs{\hat{G}(\omega)} \; d\omega < \infty, 
		 \; \int_{\hat{\Gamma}} \abs{F(\xi)}^2 \; d\xi = \int_{\R^{\abs{R_\Gamma}}} \abs{\hat{G}(\omega)}^2 \; d\omega = \infty.
	\end{multline}
We also note that the Fourier multiplier $\upmu^2(\cdot) \langle \cdot \rangle^{2(s-1)}$ is invariant under $P_{\hat{\Gamma}}$, and since $\uppi_{\hat{\Gamma}} = \uppi_{\hat{\Gamma}_\ast} \circ \mathcal{P}_{\hat{\Gamma}}$, by the same calculations in \eqref{eq: switching calculation} and by Lemma~\ref{rem: iso everything} we have 
	\begin{multline}
		\int_{\hat{\Gamma}} \upmu^2(\xi) \langle \xi \rangle^{2(s-1)} \abs{F(\xi)}^2 \; d\xi = \int_{\hat{\Gamma}_{*,R}} \upmu^2(\xi) \langle \xi \rangle^{2(s-1)} \abs{\hat{G}(\uppi_{\hat{\Gamma}_\ast}\xi)}^2 \; d\xi = \int_{\R^{\abs{R_\Gamma}}} \upmu(\omega)^2 \langle \omega \rangle^{2(s-1)} \abs{\hat{G}(\omega)}^2 \; d\xi < \infty.
	\end{multline}
Hence, the function $f := \check{F} \in X^s(\Gamma ; \F) \setminus L^2(\Gamma; \F)$. In particular, $f \notin H^s(\Gamma ; \F)$. On the other hand, we have 
	\begin{multline}
		\mathscr{F}^+_{\Gamma}[f(\mathcal{P}_{\Gamma}^{-1} (Q \oplus I) \mathcal{P}_{\Gamma}\cdot)](\xi) = \int_{\Gamma} f(\mathcal{P}_{\Gamma}^{-1} (Q \oplus I) \mathcal{P}_{\Gamma} x) e^{-2\pi i \xi \cdot x} \; dx = \int_{\Gamma_\ast} f(\mathcal{P}_{\Gamma}^{-1} (Q \oplus I) x) e^{-2\pi i \mathcal{P}_{\hat{\Gamma}}\xi \cdot x} \; dx \\
		= \int_{\Gamma_\ast} f(\mathcal{P}_{\Gamma}^{-1} x) e^{-2\pi i ((Q \oplus I) \mathcal{P}_{\hat{\Gamma}} \xi) \cdot x} \; dx = \int_{\Gamma} f(x) e^{-2\pi i (\mathcal{P}_{\hat{\Gamma}}^{-1}(Q \oplus I) \mathcal{P}_{\hat{\Gamma}} \xi) \cdot x} \; dx = F(\mathcal{P}_{\hat{\Gamma}}^{-1} (Q \oplus I) \mathcal{P}_{\hat{\Gamma}} \xi).
	\end{multline}
Then by Lemma~\ref{rem: iso everything} and Fubini's theorem,
	\begin{multline}
		\norm{f(\mathcal{P}_{\Gamma}^{-1} (Q \oplus I) \mathcal{P}_{\Gamma} \cdot)}_{X^s(\Gamma)}^2 = \int_{\hat{\Gamma}} \upmu(\xi) \langle \xi \rangle^{2(s-1)} \abs{F(\mathcal{P}_{\hat{\Gamma}}^{-1}(Q \oplus I)  \mathcal{P}_{\hat{\Gamma}} \xi)}^2 \; d\xi = \int_{\hat{\Gamma}_R} \upmu(\xi) \langle \xi \rangle^{2(s-1)} \abs{\hat{G}(\uppi_{\hat{\Gamma}_\ast} (Q \oplus I) \mathcal{P}_{\hat{\Gamma}}  \xi)}^2 \; d\xi \\
		= \int_{\hat{\Gamma}_{*,R}} \upmu(\xi) \langle \xi \rangle^{2(s-1)} \abs{\hat{G}(\uppi_{\hat{\Gamma}_\ast}(Q \oplus I) \xi)}^2 \; d\xi = \int_{\R^{\abs{R_{\Gamma}}}} \upmu(\omega) \langle \omega \rangle^{2(s-1)} \abs{\hat{G} ( Q \omega)} \; d \omega = \infty.
	\end{multline}
This proves the second item.

To prove the last item, we note that if $1 \notin R_\Gamma$ and $R_\Gamma \neq \varnothing$, then $\Gamma_1 = L_1 \T$ and $\Gamma \neq \prod_{i=1}^d L_i \T$. In this setting, $\xi \in B_{\hat{\Gamma}}(0,r) \Longleftrightarrow \xi \in \Gamma_R$ as $\xi_T = 0$ for the low frequencies and $B_{\hat{\Gamma}}(0,r) \setminus\{0\} \neq \varnothing$. In particular, for $\xi \in B_{\hat{\Gamma}}(0,r)$ we have $\xi_1 = 0$, which implies that $\upmu(\xi) = |\xi_{R_{\hat{\Gamma}}}| = \abs{\xi}$ for $\xi \in B_{\hat{\Gamma}}(0,r) \setminus\{ 0\}$, so by \eqref{eq: low high equiv}, we arrive at \eqref{eq: Xs equiv norm}. By \eqref{eq: Xs equiv norm}, it also follows that $H^s( \Gamma ; \F) \subsetneq X^s(  \Gamma ; \F)$.
\end{proof}

Recall that our ultimate aim in introducing $X^s(\Gamma;\F)$ is to use it as the container space for the free surface function $\eta$ in our study of the traveling wave problem. The upshot of Lemma~\ref{lem: Xs Hs equiv} is that the precise structure of the space $X^s(\Gamma; \F)$ is heavily dependent on the form of the domain $\Gamma$, and in particular, the properties of the set $R_\Gamma$. In the first case considered in the lemma, $X^s(\Gamma;\F)$ is the standard Sobolev space $H^s(\Gamma;\F)$, and therefore we may employ standard Sobolev tools in our subsequent analysis. In the second case, even though $X^s(\Gamma;\F)$ is not a standard Sobolev space, we will be able to prove that it enjoys many of the same properties as $H^s(\Gamma;\F)$. However, in the third case, which includes the physically relevant case when $\Gamma = L_1 \T \times \R$, the space $X^s(\Gamma;\F)$ is unfortunately unusable for our subsequent analysis due to a failure of completeness. To justify the last claim, we prove the following proposition. 

\begin{prop}\label{prop: Xs not complete case 3}
	Let $\Gamma$ be defined as in \eqref{eq: Gamma f} and let $R_\Gamma$ be defined as in \eqref{eq: R}. If $1 \notin R_\Gamma$ and $R_\Gamma \neq \varnothing$, then $X^s(\Gamma; \F)$ is complete if and only if $\abs{R_\Gamma} > 2$. 
\end{prop}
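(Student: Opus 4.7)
The plan is to establish both directions using the norm equivalence
$\norm{f}_{X^s}^2 \asymp \int_{\hat{\Gamma}} \abs{\xi}^2 \br{\xi}^{2(s-1)} \abs{\hat{f}(\xi)}^2 \, d\xi$
from the third item of Lemma \ref{lem: Xs Hs equiv}, combined with the observation that the low-frequency locus of $\hat{\Gamma}$ identifies via \eqref{eq: hat gamma} with an open neighborhood of the origin in $\R^{\abs{R_\Gamma}}$ equipped with Lebesgue measure. For the completeness direction, suppose $\abs{R_\Gamma} > 2$ and let $\{f_n\}$ be Cauchy in $X^s$. The norm equivalence tells us that $h_n := \abs{\xi}\br{\xi}^{s-1} \hat{f_n}$ is Cauchy in $L^2(\hat{\Gamma};\C)$, with some limit $h$. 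I will define $F := h/(\abs{\xi}\br{\xi}^{s-1})$ for $\xi \neq 0$ and verify that $F \in L^1_{\loc}(\hat{\Gamma};\C)$. Away from the origin the weight is bounded above and below on compact sets, so $F$ is locally $L^2$; near the origin, Cauchy-Schwarz yields
\begin{equation*}
\int_{B_{\hat{\Gamma}}(0,r)} \abs{F(\xi)} \, d\xi \lesssim \norm{h}_{L^2} \left( \int_0^r \rho^{\abs{R_\Gamma}-3} \, d\rho \right)^{1/2},
\end{equation*}
and the radial integral is finite exactly when $\abs{R_\Gamma} > 2$. Then $f := \mathscr{F}^- F$ lies in $X^s(\Gamma;\F)$ (the realness condition when $\F = \R$ is inherited by passing to an almost-everywhere convergent subsequence of $\{\hat{f_n}\}$), and $f_n \to f$ in $X^s$ by construction.

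For the incompleteness direction, suppose $\abs{R_\Gamma} \in \{1,2\}$ and build a Cauchy sequence lacking an $X^s$-limit by selecting a nonnegative even $g \in L^2(\R^{\abs{R_\Gamma}})$, supported near the origin, for which $g/\abs{\cdot} \notin L^1_{\loc}$. When $\abs{R_\Gamma} = 1$ the choice $g = \mathbbm{1}_{(-1,1)}$ works immediately since $g/\abs{\xi} = \abs{\xi}^{-1}$ is not integrable near zero in one dimension. When $\abs{R_\Gamma} = 2$ a logarithmic correction is required: take $g(\xi) = \abs{\xi}^{-1} (\log \abs{\xi}^{-1})^{-1} \mathbbm{1}_{\abs{\xi} < 1/e}$, and the substitution $u = \log \abs{\xi}^{-1}$ reveals both $g \in L^2$ and $\int g/\abs{\xi} = \infty$. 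I then transport $g$ onto the low-frequency slice $\hat{\Gamma}_R$ and set $\hat{f_n}(\xi) := (g/\abs{\xi_R}) \mathbbm{1}_{\{\abs{\xi_R} > 1/n\}}$, extended by zero outside the low-frequency locus. Each $\hat{f_n}$ is bounded, compactly supported, Hermitian symmetric, and hence lies in $L^1_{\loc}$, so $f_n \in X^s(\Gamma;\F)$; the norm equivalence together with $g \mathbbm{1}_{\abs{\xi_R} > 1/n} \to g$ in $L^2$ shows that $\{f_n\}$ is Cauchy. If some $f \in X^s$ were its limit then the uniqueness of $L^2$ limits forces $\hat{f} = g/\abs{\xi}$ on $\hat{\Gamma}_R$, contradicting the $L^1_{\loc}$ requirement baked into the definition \eqref{def: X^s}.

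The main obstacle is the critical threshold $\abs{R_\Gamma} = 2$, which separates the two regimes and is visible directly from the radial integral above; in the two-dimensional case the need for a sharp logarithmic correction is essential, since no pure power of $\abs{\xi}$ can simultaneously place $g$ in $L^2$ and $g/\abs{\xi}$ outside $L^1_{\loc}$. A secondary subtlety is carefully identifying the Haar measure on $\hat{\Gamma}_R$ with Lebesgue measure on $\R^{\abs{R_\Gamma}}$, so that the low-frequency integrals transfer unambiguously; this is already handled by Remark \ref{rem: iso everything} exactly as in the proof of Lemma \ref{lem: Xs Hs equiv}.
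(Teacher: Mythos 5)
Your argument is correct, and it splits into one half that matches the paper and one half that does not. The completeness direction for $\abs{R_\Gamma}>2$ is essentially the paper's proof: Cauchy in the weighted $L^2$ space, pass to the limit, and use Cauchy--Schwarz with $\int_{B(0,r)}\abs{\omega}^{-2}\,d\omega<\infty$ to verify $L^1_{\loc}$ near the origin. For the incompleteness direction the paper argues indirectly: assuming completeness, it introduces the auxiliary norm $\norm{u}_{*}=\norm{u}_{X^s}+\Vert\hat{u}\Vert_{L^1(B_{\hat{\Gamma}}(0,r))}$, checks completeness under both norms, invokes the bounded inverse theorem to obtain the a priori estimate $\Vert\hat{u}\Vert_{L^1(B_{\hat{\Gamma}}(0,r))}\lesssim\norm{u}_{X^s}$, and then contradicts it with a family of dyadic-shell functions whose $X^s$ norms stay bounded while the low-frequency $L^1$ norms of their Fourier transforms diverge. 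You instead produce a single explicit Cauchy sequence and show its only possible limit in the weighted $L^2$ space would have $\hat{f}=g/\abs{\xi_R}\notin L^1_{\loc}$, violating the defining condition of $X^s(\Gamma;\F)$; this is a genuinely different and more elementary route (no Baire-category input), with the logarithmic correction at $\abs{R_\Gamma}=2$ playing the role of the divergent $\sum_q 2^{q(1-\abs{R_\Gamma}/2)}/q$ in the paper's construction. The paper's version buys the slightly stronger quantitative fact that the $L^1$ control of $\hat{u}$ near the origin must fail; for the proposition as stated both are equally conclusive. Two small polish points: it is cleaner to support $g$ inside $B_{\R^{\abs{R_\Gamma}}}(0,r)$ with $r$ as in \eqref{eq: low freq r}, so the transported functions live entirely in the low-frequency slice (this changes nothing, since only the behavior at the origin matters), and you should say explicitly that $\mu_s$-null sets coincide with Lebesgue-null sets away from $\xi=0$, which is what lets you upgrade uniqueness of the weighted $L^2$ limit to the almost-everywhere identification $\hat{f}=g/\abs{\xi_R}$.
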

\begin{proof}
The proof is a modification of Proposition 1.34 in \cite{chemin}, which characterizes when the homogeneous Sobolev space $\dot{H}^s(\R^d)$ is complete. We define the measure $\mu_s := \abs{\xi}^2 \langle \xi \rangle^{2(s-1)} \; d\xi$, and denote by $L^2_s(\hat{\Gamma} ; \C)$ the complex valued functions in $L^2(\hat{\Gamma} ; \mu_s)$.  

First assume $\abs{R_\Gamma} > 2$, and suppose $\{u_n\}_{n=1}^{\infty}$ is a Cauchy sequence in $X^s(\Gamma; \F)$. Then by \eqref{eq: Xs equiv norm}, $\{\hat{u}_n\}_{n=1}^{\infty}$ is a Cauchy sequence in $L^2_s(\hat{\Gamma}; \C )$, and hence there exists a function $f \in L^2_s(\hat{\Gamma}; \C )$ such that $u_n \to f$ in $L^2_s(\hat{\Gamma}; \C )$. Recall that by \eqref{eq: low freq r}, we have $\xi \in B_{\hat{\Gamma}}(0,r) \Longleftrightarrow \xi \in \hat{\Gamma}_R$, where $\hat{\Gamma}_R$ is defined in \eqref{eq: Gamma_R}. Using Lemma~\ref{rem: iso everything} and the assumption $\abs{R_\Gamma} > 2$, we have 
\begin{align}\label{eq: f temp}
	 \int_{B_{\hat{\Gamma}}(0,r)} \abs{f(\xi)} \; d\xi \le \left( \int_{B_{\hat{\Gamma}}(0,r)} \abs{\xi}^2 \abs{f(\xi)}^2 \; d\xi \right)^{1/2} \left( \int_{B_{\abs{R_\Gamma}}(0,r)} \abs{\omega}^{-2} \; d\omega \right)^{1/2} < \infty,
\end{align}
where in the last equality we have used the isometric measure-preserving isomorphism $\uppi_{\hat{\Gamma}}$ between $B_{\hat{\Gamma}}(0,r)$ and $B_{\R^{\abs{R_\Gamma}}} (0,r)$. Since $\mathbbm{1}_{\hat{\Gamma} \setminus B_{\hat{\Gamma}}(0,r)} f$ belongs to $H^s(\hat{\Gamma} ; \C)$, with \eqref{eq: f temp} we can infer that $f$ defines a tempered distribution. Then $u := \check{f} \in X^s(\Gamma; \F)$ and $u_n \to u$ in $X^s(\Gamma; \F)$. This shows that $X^s(\Gamma; \F)$ is complete. 

Now assume $\abs{R_\Gamma} \le 2$, and suppose for the sake of contradiction that $X^s(\Gamma; \F)$ is complete with respect to the $X^s$ norm defined via \eqref{eq: Xs norm defn}. Consider a new norm on $X^s(\hat{\Gamma} ; \F)$ given by $\norm{u}_{*} = \norm{u}_{X^s} + \norm{\hat{u}}_{L^1(B_{\hat{\Gamma}}(0,r))}$; this norm is well-defined since the definition of $X^s(\Gamma;\F)$ requires $\hat{u} \in L^1_{\loc}(\hat{\Gamma} ; \C)$ for any function $u \in X^s(\Gamma; \F)$. We then claim that $X^s(\Gamma; \F)$ is also complete endowed with the $\norm{\cdot}_{*}$ norm. Indeed, suppose $\{u_n\}_{n=1}^{\infty}$ is a Cauchy sequence in $(X^s(\Gamma; \F), \norm{\cdot}_{*})$, then $\{u_n\}_{n=1}^{\infty}$ is a Cauchy sequence in $(X^s(\Gamma; \F), \norm{\cdot}_{X^s})$ and $\{\hat{u}_n\}_{n=1}^\infty$ is a Cauchy sequence in $L^1(B_{\hat{\Gamma}}(0,r); \C)$. By the assumed completeness of $(X^s(\Gamma, \F), \norm{\cdot}_{X^s})$, there exists a function $u \in (X^s(\Gamma, \F), \norm{\cdot}_{X^s})$ for which $u_n \to u$ in $(X^s(\Gamma, \F), \norm{\cdot}_{X^s})$. Similarly, there exists a function $g \in L^1(B_{\hat{\Gamma}}(0,r); \C)$ for which $\hat{u}_n \to g$ in $L^1(B_{\hat{\Gamma}}(0,r); \C)$. Clearly, $g = \hat{u}$ a.e. in $L^1(B_{\hat{\Gamma}}(0,r); \C)$, therefore we can conclude that $u_n \to u$ in $(X^s(\hat{\Gamma}; \F), \norm{\cdot}_{*})$. This completes the proof of the claim.

We now know that $X^s(\hat{\Gamma}; \F)$ is complete with respect to both $\norm{\cdot}_{X^s}$ and $\norm{\cdot}_{*}$. The identity map $I : (X^s(\hat{\Gamma}; \F), \norm{\cdot}_{*} )$ $\to  (X^s(\hat{\Gamma}; \F), \norm{\cdot}_{X^s} )$ is trivially continuous, thus we can invoke the bounded inverse theorem to deduce the existence of a universal constant $C > 0$ such that $\norm{u}_{*} \le C \norm{u}_{X^s}$ for all functions $u \in X^s(\Gamma;\F)$. In turn, this implies that 
\begin{align}\label{eq: bad ineq}
	\norm{\hat{u}}_{L^1(B_{\hat{\Gamma}}(0,r))} \le C \norm{u}_{X^s} \; \text{for all} \;  u \in X^s(\Gamma;\F).
\end{align}

To derive a contradiction, we construct an explicit function $f \in X^s(\Gamma; \F)$ for which \eqref{eq: bad ineq} is violated. For any $\xi \in \hat{\Gamma}$, we adopt the convention in \eqref{eq: xi R} and write $\xi = \xi_{R_{\hat{\Gamma}}} + \xi_{T_{\hat{\Gamma}}}$. Let $\mathcal{C} = \{ \xi \in \hat{\Gamma} \mid \xi_{T_{\hat{\Gamma}}} = 0 \; \text{and} \; r/2 < |\xi_{R_{\hat{\Gamma}}}| < r \}$, which in particular implies that $2^{-i} \mathcal{C} \cap 2^{-j} \mathcal{C} = \varnothing$ for any $i,j \in \N$ such that $i \neq j$. Now for every $n\ge 1$ we then consider $G_n \in L^1(\hat{\Gamma}; \C)$ defined via 
	\begin{align}
		 G_n(\xi) = \sum_{q=1}^{n} \frac{2^{q(1+\abs{R_\Gamma}/2)}}{q} \mathbbm{1}_{2^{-q} \mathcal{C}} ( \xi).
	\end{align}
	We note in particular that $\overline{G_n(\xi)} = G_n(-\xi)$ and $\supp G_n \subseteq B_{\hat{\Gamma}}(0,r) \subset \hat{\Gamma}$, and so we may define the smooth and bounded function $g_n : \Gamma_\ast \to \R$ via $g_n = \mathscr{F}^{-1}[G_n]$. Let $\mathscr{H}$ denote the Hausdorff measure over $\hat{\Gamma}$. Now we may readily calculate 
	\begin{align}\label{eq: ghat blows up}
		 \norm{\hat{g}_n}_{L^1(B_{\hat{\Gamma}}(0,r))} =  \sum_{q=1}^{n} \frac{2^{q(1+\abs{R_\Gamma}/2)}}{q} 2^{-q \abs{R_\Gamma}} \mathscr{H}^{\abs{R_\Gamma}}(\mathcal{C} ) = \mathscr{H}^{\abs{R_\Gamma}}(\mathcal{C} )  \sum_{q=1}^{n} \frac{2^{q(1-\abs{R_\Gamma}/2)}}{q} \to \infty \; \text{as} \; n \to \infty, 
	\end{align}
since $\abs{R_\Gamma} \le 2$. On the other hand, for any $n \ge 1$ we have 
	\begin{multline}
		 \norm{g_n}_{X^s}^2 \asymp \int_{B_{\hat{\Gamma}}(0,r)} \abs{\xi}^{2} \abs{G_n(\xi)}^2 \; d\xi =  \sum_{q=1}^n \int_{2^{-q} \mathcal{C}} \abs{\omega}^{2} \left( \frac{2^{q(1+\abs{R_\Gamma}/2)}}{q} \right)^2 d \omega \\
		 \asymp \mathscr{H}^{\abs{R_\Gamma}}(\mathcal{C}) \sum_{q=1}^n 2^{-2q} 2^{-q\abs{R_\Gamma}} \left( \frac{2^{q(1+\abs{R_\Gamma}/2)}}{q} \right)^2 = \mathscr{H}^{\abs{R_\Gamma}}(\mathcal{C}) \sum_{q=1}^n \frac{1}{q^2}. 
	\end{multline}
	Therefore, $\sup_{n \ge 1} \norm{g_n}_{X^s} < \infty $ and by \eqref{eq: bad ineq} and \eqref{eq: ghat blows up}, we arrive at a contradiction. Thus, $X^s(\Gamma;\F)$ cannot be complete for $\abs{R_\Gamma} \le 2$. 
\end{proof}

We now proceed to study the space $X^s(\Gamma ; \F)$ in the second case of Lemma~\ref{lem: Xs Hs equiv}. We begin by stating a preliminary result. 

\begin{lem}\label{lem: fhat L1}
	Let $\Gamma$ be defined as in \eqref{eq: Gamma f} and $r$ be defined as in \eqref{eq: low freq r}. Suppose $\R \ni s \ge 0$, $1 \in R_\Gamma$, and $\abs{R_\Gamma} \ge 2$. The following hold.
\begin{enumerate}
	\item For $\upmu: \hat{\Gamma} \to \R$ defined in \eqref{eq:upmu}, we have 
	\begin{align}\label{eq: m reciprocal is in L2 loc near zero}
		 \int_{B_{\hat{\Gamma}}(0,r)} \frac{1}{\upmu^2(\xi)} \; d\xi < \infty. 
	\end{align}
	 \item For $f \in X^s(\Gamma; \F)$ we have the estimate 
	 \begin{align}\label{eq: function}
		  \int_{B_{\hat{\Gamma}}(0,r)} \abs{\hat{f}(\xi) } d \xi + \left(  \int_{\hat{\Gamma} \setminus B_{\hat{\Gamma}}(0,r)} (1+\abs{\xi}^2)^s \abs{\hat{f}(\xi) } \right)^{1/2} \lesssim_{d,s} \norm{f}_{X^s}. 
	 \end{align}
	 In particular, if $s > d/2$ then 
	 \begin{align}\label{eq: X^s super}
		  \Vert\hat{f}\Vert_{L^1(\hat{\Gamma})} \lesssim_{d, s} \norm{f}_{X^s}.  
	 \end{align} 
\end{enumerate}
\end{lem}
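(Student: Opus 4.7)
The plan is to exploit the structure of $B_{\hat\Gamma}(0,r)$ under the hypotheses $1 \in R_\Gamma$ and $|R_\Gamma|\ge 2$. By the choice of $r$ in \eqref{eq: low freq r}, any toroidal frequency in $B_{\hat\Gamma}(0,r)$ must vanish, so low-frequency integration reduces to integration over a Euclidean ball $B(0,r)\subset\R^{d'}$ with $d'=|R_\Gamma|\ge 2$. Under the canonical reordering from Lemma~\ref{lem: X^s permutation}, the first coordinate $\xi_1$ remains the first coordinate, so $\upmu(\xi)=|\xi_1|/|\xi|+|\xi|$ on this ball.

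For the first item, I would pass to polar coordinates $\xi=\rho\omega$ with $\rho\in(0,r)$ and $\omega\in S^{d'-1}$, so that $\upmu(\xi)=|\omega_1|+\rho$. Parametrizing $\omega_1=\cos\theta$ gives, up to the constant surface area of $S^{d'-2}$,
\begin{equation*}
\int_{B_{\hat\Gamma}(0,r)}\frac{d\xi}{\upmu^2(\xi)}\;\asymp\;\int_0^r \rho^{d'-1}\!\left(\int_0^\pi \frac{\sin^{d'-2}\theta}{(|\cos\theta|+\rho)^2}\,d\theta\right)d\rho.
\end{equation*}
The inner integral is dominated by $\int_{-1}^1 \frac{(1-t^2)^{(d'-3)/2}}{(|t|+\rho)^2}\,dt \lesssim \rho^{-1}$, the factor $1/\rho$ coming from the non-integrable singularity at $t=0$ and the weight being harmless near $\pi/2$. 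Substituting back yields $\int_0^r \rho^{d'-2}\,d\rho<\infty$, which is finite precisely because $d'\ge 2$. This is the one step where the assumption $|R_\Gamma|\ge 2$ is essential and is the main obstacle of the proof.

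For the second item, I would apply Cauchy--Schwarz on each region separately. On $B_{\hat\Gamma}(0,r)$, noting $\langle\xi\rangle\asymp 1$, we get
\begin{equation*}
\int_{B_{\hat\Gamma}(0,r)}|\hat f(\xi)|\,d\xi\le \Bigl(\int_{B_{\hat\Gamma}(0,r)}\upmu^2\langle\xi\rangle^{2(s-1)}|\hat f|^2\,d\xi\Bigr)^{1/2}\Bigl(\int_{B_{\hat\Gamma}(0,r)}\frac{d\xi}{\upmu^2\langle\xi\rangle^{2(s-1)}}\Bigr)^{1/2}\lesssim_{d,s}\|f\|_{X^s},
\end{equation*}
where the second factor is finite by the first item. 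On $\hat\Gamma\setminus B_{\hat\Gamma}(0,r)$, the inequalities $|\xi|\le\upmu(\xi)\le|\xi|+1\le(1+1/r)|\xi|$ give $\upmu(\xi)\asymp_r\langle\xi\rangle$, hence $\upmu^2\langle\xi\rangle^{2(s-1)}\asymp\langle\xi\rangle^{2s}$ there, and the high-frequency estimate $\bigl(\int_{\hat\Gamma\setminus B_{\hat\Gamma}(0,r)}\langle\xi\rangle^{2s}|\hat f|^2\,d\xi\bigr)^{1/2}\lesssim\|f\|_{X^s}$ follows directly from the definition \eqref{eq: Xs norm defn}.

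Finally, when $s>d/2$, I would control the $L^1$ norm on the high-frequency region by Cauchy--Schwarz with weight $\langle\xi\rangle^{-2s}$:
\begin{equation*}
\int_{\hat\Gamma\setminus B_{\hat\Gamma}(0,r)}|\hat f|\,d\xi\le\Bigl(\int_{\hat\Gamma\setminus B_{\hat\Gamma}(0,r)}\langle\xi\rangle^{-2s}\,d\xi\Bigr)^{1/2}\Bigl(\int_{\hat\Gamma\setminus B_{\hat\Gamma}(0,r)}\langle\xi\rangle^{2s}|\hat f|^2\,d\xi\Bigr)^{1/2}.
\end{equation*}
The first factor is finite since integration/summation over $\hat\Gamma$ of $\langle\xi\rangle^{-2s}$ converges for $2s>d$ (comparing the toroidal sum with its Riemann integral), and the second factor is $\lesssim\|f\|_{X^s}$ by the previous step. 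Adding the low-frequency bound completes the proof of \eqref{eq: X^s super}.
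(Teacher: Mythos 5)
Your proposal is correct and follows essentially the same route as the paper: reduce the low-frequency integral to the Euclidean ball $B_{\R^{|R_\Gamma|}}(0,r)$ via the vanishing of toroidal frequencies, establish $1/\upmu^2\in L^2(B(0,r))$, and then apply Cauchy--Schwarz separately on the low- and high-frequency regions using $\upmu\asymp\langle\xi\rangle$ away from the origin. The only difference is that you verify the integrability of $1/\upmu^2$ directly by a polar-coordinate computation (correctly isolating where $|R_\Gamma|\ge 2$ is used), whereas the paper cites Proposition 5.2 of Leoni--Tice for that Euclidean fact.
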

\begin{proof}
For the first item, we note that by Lemma~\ref{rem: iso everything}, 
\begin{align}
	\int_{B_{\hat{\Gamma}}(0,r)} \frac{1}{\upmu^2(\xi)} \; d\xi \asymp \int_{B_{\hat{\Gamma}}(0,r)} \frac{\abs{\xi}^2}{\xi_1^2 + \abs{\xi}^4} \; d\xi = \int_{B_{\hat{\Gamma}}(0,r)}  \frac{\abs{\xi_{R_{\hat{\Gamma}}}}^2}{\xi_1^2 + \abs{\xi_{R_{\hat{\Gamma}}}}^4} \; d\xi = \int_{B_{\R^{|R_{\hat{\Gamma}}|}}(0,r)}  \frac{\abs{\omega}^2}{\omega_1^2 + \abs{\omega}^4} \; d\omega,
\end{align}
where in the last equality we used the isometric measure-preserving group isomorphism $\uppi_{\hat{\Gamma}}$ between $B_{\hat{\Gamma}}(0,r)$ and $B_{\R^{|R_{\hat{\Gamma}}|}}(0,r)$. The latter integral can readily be verified to be finite; see for instance, Proposition 5.2 of \cite{leonitice}. 

For the second item, we note that by \eqref{eq: low high equiv}, we have
\begin{align}\label{eq: Xs equiv mu and bracket}
	 \norm{f}_{X^s}^2 \asymp \int_{B_{\hat{\Gamma}}(0,r)} \upmu^2(\xi) \abs{\hat{f}(\xi)}^2 \; d\xi + \int_{\hat{\Gamma} \setminus B_{\hat{\Gamma}}(0,r)} \langle \xi \rangle^{2s} \abs{\hat{f}(\xi)}^2 \; d\xi.
\end{align}
Then by Cauchy-Schwarz, \eqref{eq: m reciprocal is in L2 loc near zero}, and \eqref{eq: Xs equiv mu and bracket} we have 
\begin{align}\label{eq: hat f low frequency}
	 \int_{B_{\hat{\Gamma}}(0,r)} \abs{\hat{f}(\xi)} \; d\xi \le \left( \int_{B_{\hat{\Gamma}}(0,r)} \frac{1}{\upmu^2(\xi)} \right)^{1/2} \left(  \int_{B_{\hat{\Gamma}}(0,r)}\upmu^2(\xi) \abs{\hat{f}(\xi)}^2 \; d\xi \right)^{1/2} \lesssim \norm{f}_{X^s}.  
\end{align}
Then \eqref{eq: function} follows immediately from \eqref{eq: Xs equiv mu and bracket} and \eqref{eq: hat f low frequency}.

\end{proof}

The next theorem records the fundamental completeness and embedding properties of the space $X^s(\Gamma; \F)$ in the first and second cases considered in Lemma~\ref{lem: Xs Hs equiv}. 

\begin{thm}\label{thm: X^s gen}
	Suppose $\R \ni s \ge 0$. Let $\Gamma$ be defined as in \eqref{eq: Gamma f} and let $R_\Gamma$ be defined as in \eqref{eq: R}. Assume $R_\Gamma = \varnothing, R_\Gamma = \{1\}$, or $1 \in R_\Gamma$ and $\abs{R_\Gamma} \ge 2$. Then the following hold.
	\begin{enumerate}
		\item $X^s(\Gamma; \F)$ is a Hilbert space.  
		
		\item The Schwartz space $\mathscr{S}(\Gamma ; \F)$, as defined in Definition~\ref{defn:schwartz}, is dense in $X^s(\Gamma ; \F)$.  
		
		\item If $t \in \R$ and $s < t$, then we have the continuous inclusion $X^t(\Gamma; \F) \hookrightarrow X^s(\Gamma\; \F)$.

		\item We have the continuous inclusion $H^s(\Gamma; \F) \hookrightarrow X^s(\Gamma ;\F)$. 
		
		\item If $s \ge 1$, then there exists a constant $c > 0$ depending on $d,s$, and in the toroidal cases on $L_i$, such that 
		\begin{align}\label{eq: grad Xs}
			 \norm{\nabla f}_{H^{s-1}} \le c \norm{f}_{X^s}.  
		\end{align}
		In particular, this implies that the map $\nabla: X^s(\Gamma; \F) \to H^{s-1}(\Gamma; \F^d)$ is continuous. 
	\end{enumerate}
\end{thm}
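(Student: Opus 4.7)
The plan is to split the proof according to the two cases permitted by the hypothesis. In the first case, where $R_\Gamma = \varnothing$ or $R_\Gamma = \{1\}$, Lemma~\ref{lem: Xs Hs equiv} item~(1) gives $X^s(\Gamma;\F) = H^s(\Gamma;\F)$ with equivalent norms, so all five assertions reduce immediately to standard Sobolev theory on $\Gamma$ (completeness, density of $\mathscr{S}$, Japanese-bracket monotonicity, trivial inclusion, and the fact that $\p_i$ has symbol $2\pi i\xi_i$ of size $\lesssim \langle\xi\rangle$). The remainder of the proof focuses on the substantive case $1 \in R_\Gamma$ and $|R_\Gamma|\ge 2$.

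For item~(1), I would consider the multiplication map $T : X^s(\Gamma;\F)\to L^2(\hat\Gamma;\C)$ given by $Tf = \upmu\langle\cdot\rangle^{s-1}\hat f$. By construction $T$ is an isometry onto its range. The crux is showing the range is closed, or equivalently, that a Cauchy sequence in $X^s$ has a limit defined by a genuine tempered distribution on $\Gamma$. Given a Cauchy $\{f_n\}\subset X^s$, the sequence $\{Tf_n\}$ converges in $L^2(\hat\Gamma;\C)$ to some $g$; set $\hat f := g/(\upmu\langle\cdot\rangle^{s-1})$, which is unambiguously defined off the null set $\{\xi=0\}$ (handled separately in the toroidal factor where $\upmu(0)=1$). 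The obstacle is verifying $\hat f \in L^1_{\loc}(\hat\Gamma;\C)$ so that $\hat f$ (and hence its inverse transform) is a tempered distribution. This follows by Cauchy--Schwarz combined with Lemma~\ref{lem: fhat L1} item~(1), which gives $\int_{B_{\hat\Gamma}(0,r)} \upmu^{-2}\,d\xi < \infty$; outside $B_{\hat\Gamma}(0,r)$ the weight is bounded below, so $g/(\upmu\langle\cdot\rangle^{s-1})\in L^2_{\loc}$ there. Finally, the real-valuedness condition $\hat f(-\xi)=\overline{\hat f(\xi)}$ (for $\F=\R$) passes to the $L^2$ limit, so $f := \check{\hat f}\in X^s(\Gamma;\F)$ and $f_n\to f$ in $X^s$.

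For item~(2), given $f\in X^s(\Gamma;\F)$, I would approximate $\hat f$ on the Fourier side by truncating high frequencies via $\chi_R(\xi)\hat f(\xi)$ for a smooth cutoff $\chi_R\to 1$, and simultaneously excising a shrinking neighborhood of $\xi=0$ (only needed in the non-toroidal directions) via a cutoff $\psi_\delta$ vanishing near the origin. The product $\chi_R\psi_\delta\hat f$ is compactly supported and in $L^2$; convolving with a mollifier on the Fourier side produces smooth, rapidly decaying approximants whose inverse transforms lie in $\mathscr{S}(\Gamma;\C)$. Convergence in $X^s$ is verified by dominated convergence on the integral defining $\|\cdot\|_{X^s}^2$, using $\upmu^2\langle\cdot\rangle^{2(s-1)}|\hat f|^2 \in L^1(\hat\Gamma)$ as the dominating envelope. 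Symmetrizing under $\xi\mapsto -\xi$ preserves the reality constraint when $\F=\R$.

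Items (3), (4), and (5) follow from pointwise multiplier comparisons on $\hat\Gamma$ combined with Plancherel. For (3), since $\langle\xi\rangle\ge 1$ and $s<t$, one has $\upmu^2(\xi)\langle\xi\rangle^{2(s-1)}\le \upmu^2(\xi)\langle\xi\rangle^{2(t-1)}$, so $\|f\|_{X^s}\le\|f\|_{X^t}$. For (4), the trivial bound $\upmu(\xi)\le 1+|\xi|\le \sqrt{2}\,\langle\xi\rangle$ (valid for $\xi\ne 0$, and at $\xi=0$ in the toroidal directions by the convention $\upmu(0)=1$) yields $\upmu(\xi)\langle\xi\rangle^{s-1}\lesssim\langle\xi\rangle^s$, hence $H^s\hookrightarrow X^s$. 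For (5), the bound $\upmu(\xi)\ge|\xi|$ (from $\upmu(\xi)=|\xi_1|/|\xi|+|\xi|$ for $\xi\ne 0$, and trivially at $\xi=0$) gives $|\xi|^2\langle\xi\rangle^{2(s-1)}\le \upmu^2(\xi)\langle\xi\rangle^{2(s-1)}$, so by Plancherel
\begin{equation*}
\|\nabla f\|_{H^{s-1}}^2 = \int_{\hat\Gamma} 4\pi^2|\xi|^2\langle\xi\rangle^{2(s-1)}|\hat f(\xi)|^2\,d\xi \;\lesssim\; \|f\|_{X^s}^2,
\end{equation*}
proving \eqref{eq: grad Xs}. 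The only step that requires genuine care beyond bookkeeping is the completeness argument in item (1), and the key input making it go through is the local integrability estimate \eqref{eq: m reciprocal is in L2 loc near zero} from Lemma~\ref{lem: fhat L1}.
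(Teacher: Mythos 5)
Your proposal is correct and follows essentially the same route as the paper: the degenerate cases reduce to $H^s$ via Lemma~\ref{lem: Xs Hs equiv}, and completeness in the case $1\in R_\Gamma$, $|R_\Gamma|\ge 2$ is obtained by passing to the limit in the weighted $L^2$ space and using the local integrability of $1/\upmu^2$ from Lemma~\ref{lem: fhat L1} to recover a genuine tempered distribution. The only difference is that the paper delegates items (2)--(5) to the arguments of Theorem 5.6 of \cite{leonitice}, whereas you supply the standard Fourier-side truncation/mollification and multiplier-comparison arguments explicitly; these are all sound.
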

\begin{proof}
	In the case when $R_\Gamma = \varnothing$ or $R_\Gamma = \{1\}$, by the first item of Lemma~\ref{lem: Xs Hs equiv}, all five items follow from standard Sobolev theory as $X^s(\Gamma; \F)$ is the standard Sobolev space $H^s(\Gamma; \F)$.

	Next suppose that $1 \in R_\Gamma$ and $\abs{R_\Gamma} \ge 2$. If a sequence $\{f_n\}_{n=1}^\infty \subset X^s(\Gamma;\F)$ is Cauchy, then there exists $F \in L^2_s(\Gamma; \F)$ for which $\hat{f}_n \to F$ in $L^2_s(\Gamma;\F)$ as $n \to \infty$. The second item of Lemma~\ref{lem: fhat L1} guarantees that $F \subseteq L^1(B_{\hat{\Gamma}}(0,r); \C) + L^2(\hat{\Gamma} \setminus B_{\hat{\Gamma}}(0,r) ; \C)$, therefore $f := \check{F} \in X^s(\Gamma; \F)$ is well-defined and it is easy to verify that $f$ is real-valued in the case when $\F = \R$, as realness is preserved in the limit. This implies $f_n \to f$ in $X^s(\Gamma;\F)$ as $n \to \infty$, and it follows then that $X^s(\Gamma ; \F)$ is complete. This proves the first item. Following the arguments of Theorem 5.6 of \cite{leonitice}, the first item in turn implies the other fundamental properties listed in the second to fifth items.  
\end{proof}

We conclude this subsection by summarizing some additional properties of the space $X^s(\Sigma; \R)$, where $\Sigma$ is defined as in \eqref{sigma}.

\begin{thm}\label{thm:Xs}
	Let $\R \ni s \ge 0$ and let $\Sigma$ be defined as in \eqref{sigma}. Assume $R_\Sigma = \varnothing, R_\Sigma = \{1\}$, or $1 \in R_\Sigma$ and $\abs{R_\Sigma} \ge 2$. Then following hold.
	\begin{enumerate}
		\item (Low-high frequency decomposition) For every $f \in X^s(\Sigma ; \R)$ and $t > 0$, we can write $f=f_{l,t}+f_{h,t}$, where $f_{l,t}=\mathscr{F}^{-1} [\mathbbm{1}_{B(0,t)}\mathscr{F}[f] ] \in C^\infty_0(\Sigma ; \C)$ and $f_{h,t}=\mathscr{F}^{-1} [\mathbbm{1}_{\hat{\Gamma}\setminus B(0,t)}\mathscr{F}[f] ]\in H^s(\Sigma ; \C)$. Furthermore, we have the estimates
		\begin{align}\label{eq: low high Xs estimates}
			 \norm{f_{l,t}}_{C^k_b} = \sum_{\abs{\alpha} \le k} \norm{\p^\alpha f_{l,R}}_{L^\infty} \lesssim \norm{f_{l,R}}_{X^s} \; \text{and} \; \norm{f_{h,t}}_{H^s} \lesssim \norm{f_{h,t}}_{X^s}.     
		\end{align}
		\item  (Supercritical specialized Sobolev times standard Sobolev is standard Sobolev) If $s > d/2$, then for any $f \in X^s(\Gamma; \F), g \in H^s(\Sigma; \F)$ we have $fg \in H^s(\Sigma ; \F)$, and there exists a constant $c = c(d,s) > 0$ for which 
		\begin{align}\label{eq: supercrit XsHs prod}
			 \norm{fg}_{H^s} \le c \norm{f}_{X^s} \norm{g}_{H^s} \; \text{for all } f \in X^s(\Sigma; \F) \;\text{and} \; g \in H^s(\Sigma; \F).
		\end{align}
		\item ($e_1$-derivatives of specialized Sobolev are $\dot{H}^{-1}$ bounded) If $s \ge 1$, then there exists a constant $c = c(d,s) > 0$ such that 
		\begin{align}
			 [\p_1 \eta]_{\dot{H}^{-1}} \le c \norm{f}_{X^s} \; \text{for all } f \in X^s(\Sigma ; \F).
		\end{align}
		This implies that the map $\p_1 : X^{s}(\Sigma; \F) \to \dot{H}^{-1}(\Sigma; \F) \cap H^{s-1}(\Sigma ; \F)$ is continuous and injective.
	\end{enumerate}
\end{thm}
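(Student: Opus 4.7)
The plan is to prove the three items in order, leveraging the elementary pointwise inequalities $\upmu(\xi)\le 1+|\xi|$ and $|\xi_1|/|\xi|\le \upmu(\xi)$ (for $\xi\neq 0$), the equivalence \eqref{eq: low high equiv} between $\upmu\langle\cdot\rangle^{s-1}$ and its high/low frequency models, and the auxiliary $L^1$ control of $\hat f$ in a low-frequency ball supplied by Lemma~\ref{lem: fhat L1}.

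For the low-high decomposition in item 1, I would fix $t>0$ and define $f_{l,t}$ and $f_{h,t}$ by Fourier multipliers as stated. To obtain the $C^k_b$ estimate on $f_{l,t}$ it suffices to check that $\hat f\in L^1(B_{\hat\Sigma}(0,t);\C)$ with $L^1$ norm bounded by $C_t\|f\|_{X^s}$, since then Fourier inversion yields $f_{l,t}\in C^\infty(\Sigma;\C)$ with $\p^\alpha f_{l,t}(x)=\mathscr{F}^{-1}[(2\pi i\xi)^\alpha\mathbbm{1}_{B(0,t)}\hat f](x)$ and
\begin{equation*}
\|\p^\alpha f_{l,t}\|_{L^\infty}\le \|(2\pi\xi)^\alpha \mathbbm{1}_{B(0,t)}\hat f\|_{L^1}\lesssim_{t,\alpha}\|\hat f\|_{L^1(B(0,t))}.
\end{equation*}
The $L^1$ bound splits into the ball $B(0,r)$, where \eqref{eq: function} applies directly, and (if $t>r$) the compact annulus $B(0,t)\setminus B(0,r)$, on which $\upmu\langle\xi\rangle^{s-1}$ is bounded away from zero so Cauchy--Schwarz converts the $X^s$ control into $L^1$ control. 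For $f_{h,t}$, on $\hat\Sigma\setminus B(0,t)$ the weight $\upmu(\xi)\langle\xi\rangle^{s-1}$ is comparable to $\langle\xi\rangle^s$ (with constants depending on $t$, using \eqref{eq: low high equiv} when $t\ge r$ and the uniform lower bound $\upmu\gtrsim t$ on $B(0,r)\setminus B(0,t)$ otherwise), so Plancherel delivers $\|f_{h,t}\|_{H^s}\lesssim \|f_{h,t}\|_{X^s}$.

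For item 2, I would apply item 1 with $t=r$ to split $f=f_{l,r}+f_{h,r}$ and then estimate $fg=f_{l,r}g+f_{h,r}g$ in two pieces. The second term is controlled by the algebra property of $H^s(\Sigma;\F)$ for $s>d/2$, giving $\|f_{h,r}g\|_{H^s}\lesssim \|f_{h,r}\|_{H^s}\|g\|_{H^s}\lesssim \|f\|_{X^s}\|g\|_{H^s}$. The first term is handled by a Moser-type multiplier estimate $\|f_{l,r}g\|_{H^s}\lesssim \|f_{l,r}\|_{W^{k,\infty}}\|g\|_{H^s}$ for any integer $k\ge s$, followed by the $C^k_b$ bound from item 1; since $s\in\mathbb N$ in our intended application this is immediate, and for general $s$ one can either interpolate or invoke a fractional Leibniz inequality. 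Item 3 is the cleanest of the three: the identity $[\p_1 f]_{\dot H^{-1}}=\|2\pi i(\xi_1/|\xi|)\hat f\|_{L^2}$, the pointwise bound $|\xi_1|/|\xi|\le \upmu(\xi)$, and the trivial inequality $\upmu(\xi)\le \upmu(\xi)\langle\xi\rangle^{s-1}$ valid for $s\ge 1$ yield the stated estimate by Plancherel directly from \eqref{eq: Xs norm defn}. Continuity into $H^{s-1}$ is already contained in item 5 of Theorem~\ref{thm: X^s gen}, and injectivity follows from the fact that $\p_1 f=0$ forces $\xi_1\hat f=0$, which annihilates $\hat f$ on a set of full measure whenever the first factor of $\Sigma$ gives continuous $\xi_1$.

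I do not expect genuine obstacles here, only bookkeeping of constants; the main care needed is in item 1, where the constants in the $C^k_b$ bound must depend explicitly but harmlessly on $t$, $k$, $s$, and $d$, and in item 2, where one has to verify that the smooth cutoff $f_{l,r}$ really is a bounded multiplier on $H^s$ with norm controlled by its $W^{k,\infty}$ seminorm for $k\ge\lceil s\rceil$. Both points are standard and amount to carefully tracking how the decomposition constants from \eqref{eq: low high equiv} propagate through the argument.
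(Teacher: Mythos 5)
Your proposal is correct, but it takes a structurally different route from the paper: the paper's proof of this theorem is essentially a two-line reduction, first disposing of the cases $R_\Sigma=\varnothing$ and $R_\Sigma=\{1\}$ via Lemma~\ref{lem: Xs Hs equiv} (where $X^s=H^s$), and then, for $1\in R_\Sigma$ with $\abs{R_\Sigma}\ge 2$, invoking Lemma~\ref{rem: iso everything} and Remark~\ref{rem:zero fourier} to transplant the corresponding results of Leoni--Tice (their Theorems 5.5, 5.6, and 5.12 over $\R^d$) to general $\hat\Sigma$. You instead supply the underlying arguments directly: Lemma~\ref{lem: fhat L1} plus Cauchy--Schwarz on the compact annulus for the $L^1$ control of $\hat f$ at low frequency, Plancherel with the comparison $\langle\xi\rangle\lesssim_t\upmu(\xi)$ off $B(0,t)$ for the high part, the $H^s$ algebra plus a Moser-type multiplier bound for item 2, and the pointwise inequality $\abs{\xi_1}/\abs{\xi}\le\upmu(\xi)$ for item 3. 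This is a legitimate and arguably more transparent proof; what the paper's approach buys is brevity and the guarantee that constants and edge cases match the cited source, while yours buys self-containedness. Two small points to tighten: (i) in item 1 your $L^1$ argument immediately gives $f_{l,t}\in C_b^\infty$, but the claimed membership in $C_0^\infty$ (decay as $\abs{x_{R_\Sigma}}\to\infty$) additionally needs a Riemann--Lebesgue step, which follows from $\mathbbm{1}_{B(0,t)}\hat f\in L^1$ but should be said; (ii) your injectivity argument in item 3 correctly requires $\xi_1$ to range over a continuum, i.e.\ $\Sigma_1=\R$, and indeed in the purely toroidal case $R_\Sigma=\varnothing$ the map $\p_1$ kills all modes with $\xi_1=0$, so strictly speaking injectivity there holds only modulo such modes (the paper glosses over this by citing standard $H^s$ theory); your caveat is the more careful reading.
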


\begin{proof}
If $R_\Sigma = \varnothing$ or $R_\Sigma = \{ 1 \}$, then by the first item of Lemma~\ref{lem: Xs Hs equiv}, $X^s(\Sigma; \R)$ is the standard Sobolev space $H^s(\Sigma;\R)$. All three items then follow from standard results on $H^s(\Sigma ; \R)$. Suppose $1 \in R_\Sigma$ and $\abs{R_\Sigma} \ge 2$. We note that by Lemma~\ref{rem: iso everything} and Remark~\ref{rem:zero fourier}, the calculations performed over $\R^d$ in \cite{leonitice} are also valid over the low frequencies belonging to general $\hat{\Gamma}$, therefore all three items follow from minimal modifications of Theorem 5.5, the last item of Theorem 5.6, and Theorem 5.12 of \cite{leonitice}.
\end{proof}

\subsection{The anisotropic space \texorpdfstring{$X^s(\Sigma; \F)$}{Xs} as an algebra} \label{sec: X^s algebra}

Let $\Sigma$ be defined as in \eqref{sigma}. In this subsection we prove that $X^s(\Sigma; \F)$ is an algebra for $s > \frac{d}{2}$.  To prove this we will first adapt the anisotropic Littlewood-Paley techniques used in \cite{benoit} to prove that $X^s(\R^d ; \C)$ is an algebra.  This special case turns out to be sufficient for deducing the result in the general case when $\R^d$ is replaced by $\Sigma$ and $\C$ is replaced by $\R$.

First, recall that the multiplier $\upmu$ defined in \eqref{eq:upmu} satisfies $1/\upmu \in L^2(B(0,1))$ by the first item of Lemma~\ref{lem: fhat L1}. Next, we consider the functional $I: (L^0(\R^d; [0, \infty]))^3 \to [0,\infty]$ defined via
\begin{align}\label{eq: I}
	I[F,G,H]=\int_{B(0,1)^2}\f{\upmu(\xi+\eta)}{\upmu(\xi)\upmu(\eta)}F(\xi)G(\eta)H(\xi+\eta)\;d\xi\;d\eta,
\end{align}
where $L^0(\R^d; [0, \infty])$ denote the non-negative measurable functions on $\R^d$. Our goal is to use the same formula \eqref{eq: I} to define a trilinear functional over $(L^2(\R^d; \C))^3$, but for now we only define $I$ over non-negative measurable functions so that $I$ is clearly well-defined.

The next lemma shows that $I$ induces a bounded trilinear functional over $(L^2(\R^d; \C))^3$ into $\C$ as long as $I$ is bounded over $(L^0(\R^d; [0,\infty]))^3$ into $\R$.

\begin{lem}\label{lem: I over nonnegative}
Suppose there exists a constant $c > 0$ such that
\begin{align}\label{eq: I est over nonneg real}
	I[F,G,H] \le c \norm{F}_{L^2} \norm{G}_{L^2} \norm{H}_{L^2} \; \text{for all} \; F,G,H \in L^2(\R^d;[0,\infty]),
\end{align}
where $I$ is the functional defined in \eqref{eq: I}. Then $I$ induces a bounded trilinear map defined over $(L^2(\R^d; \C))^3$ into $\C$ via the same formula in \eqref{eq: I}. Furthermore, there exists a constant $C > 0$ such that 
\begin{align}\label{eq: I est over C}
	\abs{I[F,G,H]} \le C \norm{F}_{L^2} \norm{G}_{L^2} \norm{H}_{L^2}  \; \text{for all} \;  F,G,H \in L^2(\R^d;\C). 
\end{align}
\end{lem}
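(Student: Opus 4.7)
The plan is to extend from nonnegative functions to complex functions by the standard triangle-inequality trick, exploiting the fact that the kernel $\upmu(\xi+\eta)/(\upmu(\xi)\upmu(\eta))$ is pointwise nonnegative on $B(0,1)^2$. Indeed, from the definition \eqref{eq:upmu} we have $\upmu>0$ everywhere on $\R^d$ (with $\upmu(0)=1$ and $\upmu(\xi)\ge |\xi|>0$ otherwise), so the kernel is a well-defined nonnegative weight.

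First I would verify absolute integrability. Given $F,G,H\in L^2(\R^d;\C)$, the moduli $|F|,|G|,|H|$ lie in $L^2(\R^d;[0,\infty))$ with identical $L^2$-norms. Applying the hypothesis \eqref{eq: I est over nonneg real} to the triple $(|F|,|G|,|H|)$ yields
\begin{equation*}
\int_{B(0,1)^2}\frac{\upmu(\xi+\eta)}{\upmu(\xi)\upmu(\eta)}\,|F(\xi)|\,|G(\eta)|\,|H(\xi+\eta)|\,d\xi\,d\eta\;\le\;c\,\|F\|_{L^2}\|G\|_{L^2}\|H\|_{L^2}\;<\;\infty.
\end{equation*}
Since the kernel is nonnegative, this shows that the complex integrand defining $I[F,G,H]$ via the same formula \eqref{eq: I} is absolutely integrable on $B(0,1)^2$, so $I[F,G,H]\in\C$ is well-defined. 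The triangle inequality for Lebesgue integrals then gives
\begin{equation*}
|I[F,G,H]|\;\le\;I[\,|F|,|G|,|H|\,]\;\le\;c\,\|F\|_{L^2}\|G\|_{L^2}\|H\|_{L^2},
\end{equation*}
which is \eqref{eq: I est over C} with $C=c$.

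Once absolute convergence is in hand, trilinearity of $I$ on $(L^2(\R^d;\C))^3$ is an immediate consequence of the linearity of the Lebesgue integral in each argument (no complex conjugates appear in \eqref{eq: I}, so the map is genuinely trilinear rather than sesquilinear). There is no real obstacle in this lemma: it is a bookkeeping step that upgrades the nonnegative-function bound to the complex-valued setting needed for the subsequent Littlewood-Paley decomposition. The only point that required any thought was checking that $\upmu$ never vanishes and that the kernel keeps its nonnegative sign, so that pointwise domination by moduli is legitimate; with that verified, the rest is mechanical.
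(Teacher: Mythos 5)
Your proof is correct, and it takes a different (and in fact sharper) route than the paper's. The paper argues by decomposition and linearity: for real inputs it writes $F = F^{+} - F^{-}$ (and similarly for $G,H$), expands $I[F,G,H]$ into the eight terms $(-1)^{j+k+l}I[F_j,G_k,H_l]$ with nonnegative entries, and bounds each by the hypothesis; it then repeats the trick with real and imaginary parts to pass to complex inputs, ending with a constant of the form $C = 64c$. You instead exploit the pointwise nonnegativity of the kernel $\upmu(\xi+\eta)/(\upmu(\xi)\upmu(\eta))$ directly: applying the hypothesis to $(|F|,|G|,|H|)$ gives absolute integrability of the complex integrand in one stroke, and the triangle inequality $\abs{\int f} \le \int \abs{f}$ yields $\abs{I[F,G,H]} \le I[\abs{F},\abs{G},\abs{H}] \le c\norm{F}_{L^2}\norm{G}_{L^2}\norm{H}_{L^2}$, i.e.\ the sharp constant $C=c$. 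Trilinearity then follows from linearity of the integral exactly as you say. Both arguments rest on the same underlying fact — the kernel is a nonnegative weight — but yours uses it more efficiently; the paper's decomposition would be the fallback in a setting where the kernel changed sign and domination by moduli were unavailable. Your checks that $\upmu > 0$ everywhere and that the hypothesis applies to $[0,\infty)$-valued functions (a subclass of the $[0,\infty]$-valued functions in the hypothesis) are the right points to verify, and they go through.
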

\begin{proof}
We first consider the case when $F,G,H \in (L^2(\R^d ; \R))^3$. Then by decomposing $F = F_0 - F_1$ where $F_0 = F^+$ and $F_1 = F^-$ are the positive and negative parts of $F$, and similarly for $G,H$, we have 
\begin{align}
	 FGH = \sum_{j,k,l \in \{0,1\} } (-1)^{j+k+l}F_j G_k H_l.
\end{align}
By linearity, 
\begin{align}\label{eq: I decomp pos neg}
	 I[F,G,H] = \sum_{j,k,l \in \{0,1\} } (-1)^{j+k+l} I[F_j, G_k, H_l ].
\end{align}
By assumption, $\abs{I[F_j, G_k, H_l ]} \le c \norm{F_j}_{L^2} \norm{G_k}_{L^2} \norm{H_l}_{L^2}$ for all $j,k,l \in \{0,1\}$, thus by \eqref{eq: I decomp pos neg} $I$ is bounded over $F,G,H \in (L^2(\R^d;\R))^3$ and $\abs{I[F,G,H]} \lesssim \norm{F}_{L^2} \norm{G}_{L^2} \norm{H}_{L^2}$ for all $F,G,H \in L^2(\R^d;\R)$. 

In the general case when $F,G,H \in (L^2(\R^d ; \C))^3$, we write $F = F_0 + i F_1$ where $F_0, F_1 \in L^2(\R^d; \R)$, and similarly for $G,H$. Then 
\begin{align}
	 FGH = \sum_{j,k,l \in \{0,1\} } (i)^{j+k+l} F_j G_k H_l,
\end{align}
and by linearity again we have 
\begin{align}
	 I[F,G,H] = \sum_{j,k,l \in \{0,1\} } (i)^{j+k+l} I[F_j, G_k, H_l].
\end{align}
By the first case and following the same line of reasoning, $I$ is bounded over $(L^2(\R^d;\C))^3$ into $\C$ and there exists a constant $C > 0$ for which \eqref{eq: I est over C} holds. 
\end{proof}

Next, we claim that supercritical specialized Sobolev space is an algebra if and only if the functional defined via  \eqref{eq: I} is bounded over $(L^2(\R^d; [0,\infty]))^3$ into $[0,\infty]$. 
\begin{prop}\label{proposition on reduction to boundedness of a trilinear functional} Assume $s>d/2$. Then $X^s(\R^d;\C)$ is an algebra if and only if for the mapping $I$ defined via \eqref{eq: I}, there exists a constant $c > 0$ such that for all $F,G,H \in L^2(\R^d; [0,\infty])$ we have the estimate $I[F,G,H]\le c \norm{F}_{L^2}\norm{G}_{L^2}\norm{H}_{L^2}$.
\end{prop}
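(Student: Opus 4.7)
The plan is to prove the two directions separately. By Lemma~\ref{lem: I over nonnegative}, it suffices throughout to work with non-negative $F, G, H \in L^2(\R^d)$, so the trilinear estimate hypothesis amounts to $I[F,G,H] \le c\norm{F}_{L^2}\norm{G}_{L^2}\norm{H}_{L^2}$ for such triples. The common thread in both directions is a dual reformulation: writing $\widehat{fg} = \hat{f} \ast \hat{g}$, pairing $\upmu\tbr{\cdot}^{s-1}\tabs{\widehat{fg}}$ against a non-negative $H \in L^2$ of unit norm, and substituting $\eta = \zeta - \xi$, one arrives at
\begin{equation*}
\norm{fg}_{X^s} \le \sup_H \int\int \frac{\upmu(\xi+\eta)\tbr{\xi+\eta}^{s-1}}{\upmu(\xi)\upmu(\eta)\tbr{\xi}^{s-1}\tbr{\eta}^{s-1}} F(\xi)\, G(\eta)\, H(\xi+\eta)\, d\xi\, d\eta,
\end{equation*}
where $F = \upmu\tbr{\cdot}^{s-1}\tabs{\hat{f}}$ and $G = \upmu\tbr{\cdot}^{s-1}\tabs{\hat{g}}$ satisfy $\norm{F}_{L^2} = \norm{f}_{X^s}$ and $\norm{G}_{L^2} = \norm{g}_{X^s}$.

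For the forward direction, I would assume $X^s(\R^d;\C)$ is an algebra and fix non-negative $F, G, H \in L^2$. Define $f, g$ via $\hat{f} = F\mathbbm{1}_{B(0,1)}/\upmu$ and $\hat{g} = G\mathbbm{1}_{B(0,1)}/\upmu$; these functions are genuinely locally integrable because $1/\upmu \in L^2(B(0,1))$ by Lemma~\ref{lem: fhat L1}, so $f$ and $g$ are bona fide tempered distributions. Using that $\tbr{\cdot}^{s-1}$ is bounded above and below on $B(0,1)$, one checks directly that $\norm{f}_{X^s} \lesssim \norm{F}_{L^2}$ and $\norm{g}_{X^s} \lesssim \norm{G}_{L^2}$. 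After the substitution $\zeta = \xi + \eta$ and using that $\hat{f} \ast \hat{g}$ is supported in $B(0,2)$, I identify $I[F,G,H] = \int_{B(0,2)} \upmu(\zeta) H(\zeta) \widehat{fg}(\zeta)\, d\zeta$. Cauchy--Schwarz, together with the boundedness of $\tbr{\cdot}^{s-1}$ on $B(0,2)$, bounds this by $C\norm{H}_{L^2}\norm{fg}_{X^s}$, and the algebra hypothesis completes the argument.

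For the reverse direction, I would assume the trilinear estimate holds, take $f, g \in X^s$, and estimate the dual supremum above by splitting the $(\xi,\eta)$ domain into the four regions $LL, LH, HL, HH$ according to whether $|\xi|, |\eta|$ are $\le 1$ or $>1$. On $LL$, all Japanese brackets are $\asymp 1$, so the kernel reduces to $\upmu(\xi+\eta)/(\upmu(\xi)\upmu(\eta))$ and the contribution is controlled by $I[F\mathbbm{1}_{B(0,1)}, G\mathbbm{1}_{B(0,1)}, H]$, to which the hypothesis applies directly. On $LH$ (symmetrically $HL$), the estimates $\upmu(\eta) \asymp \tbr{\eta}$, $\tbr{\xi} \asymp 1$, and $\tbr{\xi+\eta}^s \lesssim \tbr{\eta}^s$ (via the Peetre inequality) collapse the kernel to $\lesssim 1/\upmu(\xi)$; Cauchy--Schwarz in $\eta$ bounds the inner integral by $\norm{G}_{L^2}\norm{H}_{L^2}$, and the outer integral over $\xi \in B(0,1)$ then contributes a factor $\norm{F}_{L^2}\norm{1/\upmu}_{L^2(B(0,1))}$, finite by Lemma~\ref{lem: fhat L1}. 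On $HH$, the kernel is $\lesssim \tbr{\xi+\eta}^s/(\tbr{\xi}^s\tbr{\eta}^s) \lesssim \tbr{\xi}^{-s} + \tbr{\eta}^{-s}$ via Peetre, and the supercritical condition $s > d/2$ ensures $\tbr{\cdot}^{-s} \in L^2(\R^d)$; recasting as a convolution and invoking Young's inequality yields the required trilinear bound. Summing the four regions gives $\norm{fg}_{X^s} \lesssim \norm{f}_{X^s}\norm{g}_{X^s}$.

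The main obstacle is handling the low-frequency interactions. The weight $1/\upmu$ is singular at the origin when $d \ge 2$, so a naive paraproduct decomposition based solely on standard Sobolev norms fails on the $LL$ region. The hypothesis on $I$ is engineered precisely to handle this, while the square-integrability of $1/\upmu$ near the origin from Lemma~\ref{lem: fhat L1} is what bridges the low- and high-frequency regimes via the $LH$ and $HL$ terms.
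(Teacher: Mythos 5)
Your proof is correct, and both directions follow the same overall strategy as the paper: the direction ``algebra $\Rightarrow$ boundedness of $I$'' is essentially identical (define $\hat f=F\mathbbm{1}_{B(0,1)}/\upmu$, $\hat g=G\mathbbm{1}_{B(0,1)}/\upmu$, identify $I[F,G,H]$ with $\int\upmu\,\widehat{fg}\,H$, and apply Cauchy--Schwarz plus the algebra property), and in the converse direction both arguments isolate the low--low frequency interaction as the only place where the boundedness of $I$ is genuinely needed. The difference is in how the remaining interactions are handled: the paper decomposes $f=f_0+f_1$ at the level of functions and disposes of every product $f_ig_j$ with $i+j\ge 1$ by citing the already-proved product estimate $\norm{fg}_{H^s}\lesssim\norm{f}_{X^s}\norm{g}_{H^s}$ from Theorem~\ref{thm:Xs}, then treats $f_0g_0$ by a Riesz-representation duality argument; you instead work entirely on the Fourier side with a single dual kernel formulation and four regions $LL$, $LH$, $HL$, $HH$, reproving the mixed and high--high bounds by hand via Peetre's inequality, Cauchy--Schwarz, and Young's inequality. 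Your route is more self-contained but duplicates work the paper has already packaged into \eqref{eq: supercrit XsHs prod}; the paper's route is shorter given its toolbox. One step you should make explicit: the dual reformulation $\norm{fg}_{X^s}\le\sup_H\int\!\!\int(\cdots)$ presupposes $\widehat{fg}=\hat f\ast\hat g$ with $\hat f\ast\hat g\in L^1_{\loc}$, which in the supercritical regime follows from $\hat f,\hat g\in L^1(\R^d)$ by \eqref{eq: X^s super}; this also certifies that $fg$ is a tempered distribution with locally integrable Fourier transform, so that $fg$ genuinely lies in $X^s$ and not merely satisfies a formal norm bound.
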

\begin{proof}
Assume first that $I$ is bounded over $(L^2(\R^d; [0,\infty]))^3$. By Lemma~\ref{lem: I over nonnegative}, $I$ induces a bounded trilinear functional over $(L^2(\R^d; \C))^3$ into $\C$ via the same formula \eqref{eq: I}, and there exists a constant $C > 0$ such that 
\begin{align}\label{eq: I bounded over L2 C}
	\abs{I[F,G,H]} \le C \norm{F}_{L^2} \norm{G}_{L^2} \norm{H}_{L^2},
\end{align} 
for all $F,G,H \in L^2(\R^d;\C)$. 

Now suppose that $f,g\in X^s(\R^d ; \C)$. By the second item in Theorem~\ref{thm:Xs}, we may write $f=f_0+f_1$ where $f_0=\mathscr{F}^{-1} [\mathbbm{1}_{B(0,1)}\mathscr{F}[f] ] \in  C^\infty_0(\R^d ; \C)$ and $f_1=\mathscr{F}^{-1} [\mathbbm{1}_{\R^d\setminus B(0,1)}\mathscr{F}[f] ]\in H^s(\R^d ; \C)$, and similarly for $g$. Then we have the decomposition
\begin{align}
fg = f_0 g_0 + f_0 g_1 + f_1 g_0 + f_1 g_1.
\end{align}
If $1\le i+j$, by the fact that we are in the supercritical regime $s > d/2$, $f_1, g_1 \in H^s(\R^d ; \C)$, \eqref{eq: low high Xs estimates}, and \eqref{eq: supercrit XsHs prod} we have $f_i g_j \in H^s$ with the estimate 
\begin{align}\label{eq: high low/high Xs est}
	\norm{f_ig_j}_{H^s} \lesssim\norm{f}_{X^s}\norm{g}_{X^s}.
\end{align}
Thus, it remains to use the boundedness of $I$ to understand the product $f_0 g_0$. Note that by the first item of Theorem~\ref{thm:Xs}, we have the inclusions $\hat{f}_0, \hat{g}_0 \in L^1(\R^d;\C)$, and hence Young's inequality implies that $\hat{f}_0 \ast \hat{g}_0 \in L^1(\R^d;\C)$; also, $\supp(\hat{f}_0 \ast \hat{g}_0) \subseteq B(0,2)$.

Now let $\varphi\in L^2(\R^d;\C) \cap L^\infty(\R^d;\C)$.  Since $\hat{f}_0$ and $\hat{g}_0$ are supported in $B(0,1)$ and $\upmu$ is locally bounded, we may employ Tonelli's theorem and a change of variables to see that 
\begin{multline}
\int_{\R^d}\upmu(\hat{f_0}\ast \hat{g_0})\varphi=\int_{\R^d}\int_{\R^d}\upmu(\xi+\eta)\hat{f_0}(\xi)\hat{g_0}(\eta)\varphi(\xi+\eta)\;d\xi\;d\eta\\=\int_{B(0,1)^2}\upmu(\xi+\eta)\hat{f_0}(\xi)\hat{g_0}(\eta)\varphi(\xi+\eta)\;d\xi\;d\eta=I[\upmu \hat{f_0},\upmu \hat{g_0},\varphi].
\end{multline}
Therefore, by \eqref{eq: I bounded over L2 C} we have 
\begin{align}
\babs{\int_{\R^d}\upmu(\hat{f_0}\ast \hat{g_0})\varphi}\le C\tnorm{\upmu \hat{f_0}}_{L^2}\tnorm{\upmu \hat{g_0}}_{L^2}\tnorm{\varphi}_{L^2},
\end{align}
but by the density of $L^2(\R^d;\C) \cap L^\infty(\R^d;\C)$ in $L^2(\R^d;\C)$, the left side of this expression extends to define a bounded linear functional on $L^2(\R^d;\C)$ obeying the same estimate.  
Upon invoking Riesz's representation theorem, we deduce that $\upmu(\hat{f_0}\ast \hat{g_0})\in L^2(\R^d;\C)$ with
\begin{align}\label{eq: low/low Xs est}
\tnorm{\upmu(\hat{f_0}\ast \hat{g_0})}_{L^2}\le C\tnorm{\upmu \hat{f_0}}_{L^2}\tnorm{\upmu \hat{g_0}}_{L^2}\le C\norm{f}_{X^s}\norm{g}_{X^s}.
\end{align}
As $\m{supp}(\hat{f_0}\ast \hat{g_0})\subseteq B(0,2)$, we have that $\norm{\tbr{\cdot}^{s-1}\upmu(\hat{f_0}\ast \hat{g_0})}_{L^2}  \le 5^{\f{s-1}{2}}\norm{\upmu(\hat{f_0}\ast \hat{g_0})}_{L^2}$. Thus, using  \eqref{eq: high low/high Xs est}, \eqref{eq: low/low Xs est}, and the fourth item of Theorem \ref{thm:Xs}  we find that
\begin{multline}\label{eq: Xs product bound}
\norm{fg}_{X^s} \le \sum_{0 \le i + j \le 2} \norm{f_i g_j}_{X^s}   
= \tnorm{\upmu\tbr{\cdot}^{s-1}(\hat{f_0}\ast \hat{g_0})}_{L^2}+ 
\sum_{1\le i+j\le2} \norm{f_i g_j}_{X^s} \\
\lesssim
\tnorm{\upmu(\hat{f_0}\ast \hat{g_0})}_{L^2}+\sum_{1\le i+j\le2} \norm{f_i g_j}_{H^s}
\lesssim\norm{f}_{X^s}\norm{g}_{X^s}.
\end{multline}
Thus, $X^s(\R^d;\C)$ is an algebra.

Conversely, assume that $X^s(\R^d;\C)$ is an algebra. Let $F,G,H\in L^2(\R^d;[0,\infty])$ and observe first that $I[F,G,H]=I[F\mathbbm{1}_{B(0,1)}, G\mathbbm{1}_{B(0,1)},H]$ and then that $\mathscr{F}^{-1}[F\mathbbm{1}_{B(0,1)}/\upmu],\mathscr{F}^{-1}[G\mathbbm{1}_{B(0,1)}/\upmu]\in X^s(\R^d;\C)$. Therefore, by Cauchy-Schwarz and the boundedness of products in $X^s$ we have
\begin{multline}\label{eq: Xs implies I bounded}
I[F,G,H]=I[F\mathbbm{1}_{B(0,1)}, G\mathbbm{1}_{B(0,1)},H]=\int_{\R^d}\upmu\sp{(F\mathbbm{1}_{B(0,1)}/\upmu)\ast (G\mathbbm{1}_{B(0,1)}/\upmu)}H\\
\le\tnorm{\upmu\tp{(F\mathbbm{1}_{B(0,1)}/\upmu)\ast (G\mathbbm{1}_{B(0,1)}/\upmu)}}_{L^2}\norm{H}_{L^2}=\tnorm{\mathscr{F}^{-1}[F\mathbbm{1}_{B(0,1)}/\upmu]\mathscr{F}^{-1}[G\mathbbm{1}_{B(0,1)}/\upmu]}_{X^s}\norm{H}_{L^2}\\
\lesssim\tnorm{\mathscr{F}^{-1}[F\mathbbm{1}_{B(0,1)}/\upmu]}_{X^s}\tnorm{\mathscr{F}^{-1}[G\mathbbm{1}_{B(0,1)}/\upmu]}_{X^s}\norm{H}_{L^2}\lesssim\norm{F}_{L^2}\norm{G}_{L^2}\norm{H}_{L^2}.
\end{multline}
Hence, $I$ is bounded  over $(L^2(\R^d ; [0,\infty]))^3$ and the proof is complete.
\end{proof}

Thus, by Lemma~\ref{lem: I over nonnegative} and Proposition~\ref{proposition on reduction to boundedness of a trilinear functional}, to prove that $X^s(\R^d ; \C)$ is an algebra it remains to show that the functional $I$ defined over $(L^0(\R^d;[0,\infty]))^3$ via \eqref{eq: I} satisfies \eqref{eq: I est over nonneg real}. For the rest of this subsection we assume that $F,G,H \in L^2(\R^d; [0,\infty])$, and we now estimate the $L^2$-boundedness of the operator $I$. 

First, we introduce a decomposition of the frequency space $\R^d \times \R^d$. 

\begin{defn}[Squared frequency space and functional decomposition]\label{definition of squared frequency space decomposition}
We write $I$ as the sum of two operators $I=I_0+I_1$, where each $I_i$ is accounting for the contribution from a special portion of squared frequency space.
\begin{enumerate}
\item We identify the following `good' and `bad' sets. First we define the `good' set $E_0$ via 
\begin{align}\label{eq:good E}
E_0=\tcb{(\xi,\eta)\in B(0,1)^2\;:\;|\xi|+|\eta|\le3||\xi|-|\eta||}.
\end{align}
Next we define the `bad' set $E_1$ via
\begin{align}\label{eq:bad E}
E_1=\tcb{(\xi,\eta)\in B(0,1)^2\;:\;|\xi|+|\eta|>3||\xi|-|\eta||}.
\end{align}
Note that $B(0,1)^2=E_0\cup E_1$.
\item For $i \in \{0,1\}$, we define the functionals $I_i : (L^0(\R^d; [0,\infty]))^3 \to [0,\infty]$ via 
\begin{align}\label{eq: Is}
I_i[F,G,H]=\int_{E_i}\f{\upmu(\xi+\eta)}{\upmu(\xi)\upmu(\eta)}F(\xi)G(\eta)H(\xi+\eta)\;d\xi\;d\eta. 
\end{align}
Clearly, $I_i$ is well-defined, and we have the identity $I=I_0+I_1$.
\end{enumerate}
\end{defn}
Next, we analyze the set $E_1$ as defined in \eqref{eq:bad E}.

\begin{lem}\label{lemma on equivalence in E1} 
The inclusion $(\xi,\eta)\in E_1$ is equivalent to the bounds $\f12|\eta| < |\xi| < 2|\eta|$, and for $(\xi,\eta) \in E_1$ we have the estimate $|\xi+\eta| < 3|\xi|$.
\end{lem}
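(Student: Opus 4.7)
The plan is to verify both claims by a direct algebraic case analysis on $a=|\xi|$ and $b=|\eta|$. The defining inequality of $E_1$ reads $a+b>3|a-b|$, and I would resolve the absolute value by splitting into cases.

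First, in the case $a\ge b$, the inequality becomes $a+b>3(a-b)$, equivalent to $a<2b$; combined with $a\ge b$, this gives $b\le a<2b$. Symmetrically, when $a<b$, the inequality becomes $a+b>3(b-a)$, equivalent to $a>b/2$; combined with $a<b$, this gives $b/2<a<b$. The union of the two cases is exactly $b/2<a<2b$, i.e., $\tfrac12|\eta|<|\xi|<2|\eta|$. For the converse direction, starting from $b/2<a<2b$ I would run the same case analysis in reverse, noting first that $a\ge b$ implies $a<2b$, so $a+b>3a-3b=3|a-b|$, and $a<b$ implies $a>b/2$, so $a+b>3b-3a=3|a-b|$. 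This establishes the first equivalence (and in particular shows that points of $E_1$ necessarily satisfy $|\xi|,|\eta|>0$, since the strict inequality fails when either vanishes).

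For the second assertion, the triangle inequality yields $|\xi+\eta|\le|\xi|+|\eta|$, and invoking the upper bound $|\eta|<2|\xi|$ obtained from the first part gives $|\xi+\eta|<|\xi|+2|\xi|=3|\xi|$.

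There is no real obstacle: this is a purely elementary lemma whose function is to make explicit, on the "bad" set $E_1$ where $|\xi|$ and $|\eta|$ are comparable, that $|\xi+\eta|$ is automatically controlled by $|\xi|$ (equivalently, by $|\eta|$). This comparability is what will later permit a favorable pointwise estimate on the integrand $\upmu(\xi+\eta)/(\upmu(\xi)\upmu(\eta))$ defining $I_1$ in the anisotropic Littlewood–Paley argument.
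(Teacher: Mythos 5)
Your proof is correct and follows essentially the same route as the paper: a direct algebraic verification of the equivalence (the paper squares $|\xi|+|\eta|>3\big||\xi|-|\eta|\big|$ and factors the resulting quadratic, whereas you split cases on the sign of $|\xi|-|\eta|$, but these are interchangeable elementary manipulations), followed by the triangle inequality combined with $|\eta|<2|\xi|$ for the second bound. No gaps.
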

	\begin{proof}
	The second bound follows from the equivalence and the triangle inequality, so it suffices to prove the equivalence. For this we note that 
	\begin{align}
	(\xi,\eta) \in E_1 \Longleftrightarrow	\abs{\xi} + \abs{\eta} > 3\abs{\abs{\xi}-\abs{\eta}} \Longleftrightarrow \abs{\xi}^2 - \frac{5}{2} \abs{\xi} \abs{\eta} + \abs{\eta}^2 < 0 \Longleftrightarrow \f12|\eta| < |\xi| < 2|\eta|  .
	\end{align}
	\end{proof}

	By Lemma~\ref{lemma on equivalence in E1}, one can get a good geometric understanding of $E_0$ and $E_1$ by examining the plot in $(\abs{\xi},\abs{\eta})$ space given in Figure \ref{fig_1}.

\begin{figure}[h]
	\centering
	\includegraphics[width=0.3\textwidth]{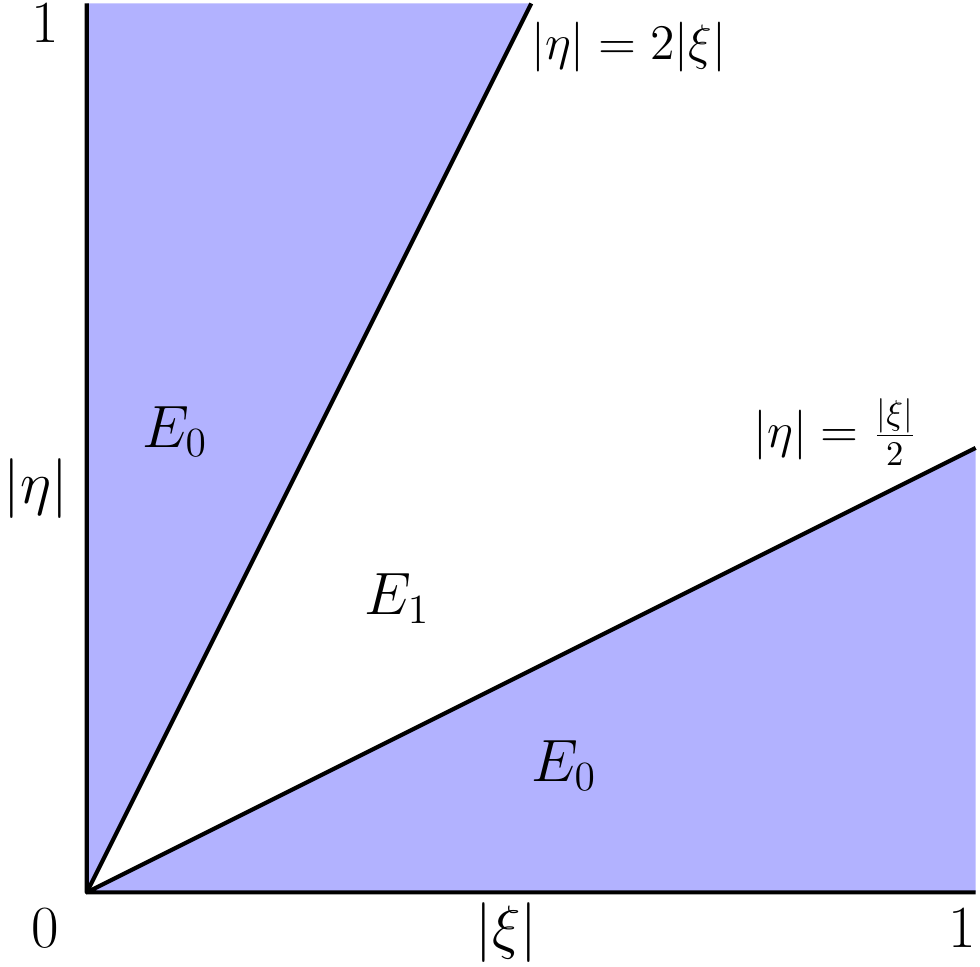} 
	\caption{A schematic breakdown of the sets $E_0$ and $E_1$}\label{fig_1}
\end{figure} 
The region $E_0$ is dubbed the good region because of the following lemma.
\begin{lem}[Multiplier subadditivity in $E_0$]\label{lemma on multiplier subadditivty}
If $(\xi,\eta)\in E_0$, then $\upmu(\xi+\eta)\le 3(\upmu(\xi)+\upmu(\eta))$.
\end{lem}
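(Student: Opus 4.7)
The plan is to exploit the fact that $E_0$ is precisely the set where one of $|\xi|$, $|\eta|$ dominates the other by a factor of at least two. Indeed, writing out $|\xi|+|\eta|\le 3||\xi|-|\eta||$ gives, upon squaring, $|\xi|^2 - \tfrac{5}{2}|\xi||\eta| + |\eta|^2 \ge 0$, which (taking into account the sign of $|\xi|-|\eta|$) is equivalent to $\max(|\xi|,|\eta|) \ge 2\min(|\xi|,|\eta|)$. So $(\xi,\eta)\in E_0$ if and only if either $|\xi|\ge 2|\eta|$ or $|\eta|\ge 2|\xi|$. By the symmetry of $\upmu$ and of the claim in $\xi,\eta$, I would reduce to the case $|\xi|\ge 2|\eta|$. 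The edge case $\xi=0$ or $\eta=0$ is immediate from the convention $\upmu(0)=1$, and $(\xi,-\xi)\in E_0$ forces $\xi=0$, so we may also assume $\xi+\eta \neq 0$ in what follows.

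Next I would control the modulus contribution: by the triangle inequality,
\begin{equation*}
|\xi+\eta|\le |\xi|+|\eta| \le \upmu(\xi)+\upmu(\eta),
\end{equation*}
since $|\zeta|\le \upmu(\zeta)$ whenever $\zeta\ne 0$ (and trivially otherwise). The harder term is the anisotropic ratio $|(\xi+\eta)_1|/|\xi+\eta|$. The key estimate here is the lower bound $|\xi+\eta|\ge |\xi|-|\eta|\ge |\xi|/2$, which together with $|\eta|\le|\xi|/2\le|\xi+\eta|$ yields $|\xi|/|\xi+\eta|\le 2$ and $|\eta|/|\xi+\eta|\le 1$. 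Consequently,
\begin{equation*}
\frac{|(\xi+\eta)_1|}{|\xi+\eta|} \le \frac{|\xi_1|}{|\xi+\eta|} + \frac{|\eta_1|}{|\xi+\eta|} = \frac{|\xi_1|}{|\xi|}\cdot\frac{|\xi|}{|\xi+\eta|} + \frac{|\eta_1|}{|\eta|}\cdot\frac{|\eta|}{|\xi+\eta|} \le 2\cdot\frac{|\xi_1|}{|\xi|}+\frac{|\eta_1|}{|\eta|} \le 2\upmu(\xi)+\upmu(\eta).
\end{equation*}

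Combining the two estimates gives $\upmu(\xi+\eta)\le 3\upmu(\xi)+2\upmu(\eta)\le 3(\upmu(\xi)+\upmu(\eta))$, which is the desired conclusion in this case; the case $|\eta|\ge 2|\xi|$ follows by symmetry. The only step requiring care is the anisotropic ratio, since a priori $|(\xi+\eta)_1|/|\xi+\eta|$ could blow up when $|\xi+\eta|$ is small relative to $|\xi|,|\eta|$; the whole point of restricting to $E_0$ is precisely to prevent the near-cancellation in $\xi+\eta$, ensuring $|\xi+\eta|$ is comparable to the larger of $|\xi|,|\eta|$. This is exactly why $E_0$ is the ``good'' region.
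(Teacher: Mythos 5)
Your proof is correct and rests on the same two ingredients as the paper's: the reverse triangle inequality $|\xi+\eta|\ge \big||\xi|-|\eta|\big|$ to rule out cancellation in the sum, followed by splitting $|(\xi+\eta)_1|\le|\xi_1|+|\eta_1|$ and distributing the denominators. The paper does this in one symmetric line directly from the defining inequality of $E_0$ (which gives $1/\big||\xi|-|\eta|\big|\le 3/(|\xi|+|\eta|)$), whereas you detour through the equivalent characterization $\max(|\xi|,|\eta|)\ge 2\min(|\xi|,|\eta|)$ and a case split; both are fine, and your explicit handling of the degenerate points $\xi=0$, $\eta=0$, $\xi+\eta=0$ (where the paper's displayed chain would divide by zero) is if anything slightly more careful.
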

\begin{proof}
If $|\xi|+|\eta|\le 3||\xi|-|\eta||$, then the reverse triangle inequality $|\xi+\eta|\ge||\xi|-|\eta||$ allows us to deduce
\begin{align}
\upmu(\xi+\eta)=\f{|\xi_1+\eta_1|}{|\xi+\eta|}+|\xi+\eta|\le\f{|\xi_1|+|\eta_1|}{||\xi|-|\eta||}+|\xi|+|\eta|
\le 3\f{|\xi_1|+|\eta_1|}{|\xi|+|\eta|}+|\xi|+|\eta|\le 3(\upmu(\xi)+\upmu(\eta)).
\end{align}
\end{proof}
This gives us the boundedness of $I_0$.

\begin{prop}\label{proposition on boundedness of I0}
For all $F,G,H\in L^2(\R^d;[0,\infty))$ we have the estimate
\begin{align}
I_0[F,G,H]\le6\norm{1/\upmu}_{L^2(B(0,1))}\norm{F}_{L^2}\norm{G}_{L^2}\norm{H}_{L^2}.
\end{align}
\end{prop}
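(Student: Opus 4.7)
The plan is to combine the subadditivity estimate from Lemma~\ref{lemma on multiplier subadditivty} with a standard $L^2 \times L^2 \times L^2$ convolution-type bound that exploits the $L^2$-integrability of $1/\upmu$ on $B(0,1)$ recorded in Lemma~\ref{lem: fhat L1}.

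First I would observe that on $E_0$ the subadditivity inequality $\upmu(\xi+\eta)\le 3(\upmu(\xi)+\upmu(\eta))$ gives the pointwise bound
\begin{equation}
\frac{\upmu(\xi+\eta)}{\upmu(\xi)\upmu(\eta)} \le \frac{3}{\upmu(\eta)} + \frac{3}{\upmu(\xi)} \quad \text{on } E_0,
\end{equation}
so it suffices to bound each of the two resulting integrals. Since $E_0 \subseteq B(0,1)^2$ is symmetric under the swap $(\xi,\eta)\mapsto(\eta,\xi)$, it further suffices to bound
\begin{equation}
J[F,G,H] = \int_{B(0,1)^2} \frac{F(\xi)G(\eta)H(\xi+\eta)}{\upmu(\eta)}\,d\xi\,d\eta
\end{equation}
by $\norm{1/\upmu}_{L^2(B(0,1))}\norm{F}_{L^2}\norm{G}_{L^2}\norm{H}_{L^2}$ and then invoke symmetry.

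The main technical step is a two-fold application of Cauchy--Schwarz. For fixed $\xi \in B(0,1)$, applying Cauchy--Schwarz in $\eta$ with the weight $1/\upmu(\eta)$ isolated yields
\begin{equation}
\int_{B(0,1)} \frac{G(\eta)H(\xi+\eta)}{\upmu(\eta)}\,d\eta \le \norm{1/\upmu}_{L^2(B(0,1))}\left(\int_{\R^d} G(\eta)^2 H(\xi+\eta)^2\,d\eta\right)^{1/2}.
\end{equation}
Multiplying by $F(\xi)\mathbbm{1}_{B(0,1)}(\xi)$, integrating in $\xi$, and applying Cauchy--Schwarz in $\xi$, followed by Tonelli's theorem and the translation-invariance of Lebesgue measure, gives
\begin{equation}
J[F,G,H] \le \norm{1/\upmu}_{L^2(B(0,1))}\norm{F}_{L^2}\left(\int_{\R^d}\int_{\R^d} G(\eta)^2 H(\xi+\eta)^2\,d\eta\,d\xi\right)^{1/2} = \norm{1/\upmu}_{L^2(B(0,1))}\norm{F}_{L^2}\norm{G}_{L^2}\norm{H}_{L^2}.
\end{equation}
The symmetric estimate for the $1/\upmu(\xi)$ term is identical after swapping the roles of $\xi$ and $\eta$, and combining both with the factor $3$ from subadditivity yields the claimed constant $6$.

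There is no serious obstacle here; the integrability $1/\upmu \in L^2(B(0,1))$ (guaranteed by Lemma~\ref{lem: fhat L1}) is precisely what makes the Cauchy--Schwarz step close, and the subadditivity of $\upmu$ on $E_0$ is exactly what converts the apparently unfavorable numerator $\upmu(\xi+\eta)$ into a sum whose reciprocal-type pieces $1/\upmu(\xi)$ and $1/\upmu(\eta)$ decouple the three variables. The complementary region $E_1$, where such subadditivity fails, is the genuinely delicate part of the argument but is handled separately from this proposition.
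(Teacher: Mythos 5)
Your proof is correct and follows essentially the same route as the paper's: subadditivity of $\upmu$ on $E_0$, enlargement of the domain to $B(0,1)^2$, and two applications of Cauchy--Schwarz together with Tonelli's theorem and translation invariance. The only cosmetic difference is the order in which you pair the factors in Cauchy--Schwarz (you isolate $1/\upmu(\eta)$ against $G(\eta)H(\xi+\eta)$ first, whereas the paper first bounds the convolution $\int G(\eta)H(\xi+\eta)\,d\eta$ by $\norm{G}_{L^2}\norm{H}_{L^2}$), and both yield the same constant $6\norm{1/\upmu}_{L^2(B(0,1))}$.
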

\begin{proof}
By applying Lemma~\ref{lemma on multiplier subadditivty}, Tonelli's theorem, and Cauchy-Schwarz, we find
\begin{multline}
I_0[F,G,H]\le3\int_{E_0}\f{1}{\upmu(\xi)}F(\xi)G(\eta)H(\xi+\eta)\;d\xi\;d\eta+3\int_{E_0}\f{1}{\upmu(\eta)}F(\xi)G(\eta)H(\xi+\eta)\;d\xi\;d\eta\\\le3\int_{B(0,1)^2}\f{1}{\upmu(\xi)}F(\xi)G(\eta)H(\xi+\eta)\;d\xi\;d\eta + 3\int_{B(0,1)^2}\f{1}{\upmu(\eta)}F(\xi)G(\eta)H(\xi+\eta)\;d\xi\;d\eta\\
\le 3\norm{H}_{L^2}\bp{\norm{G}_{L^2}\int_{B(0,1)}\f{1}{\upmu(\xi)}F(\xi)\;d\xi + \norm{F}_{L^2}\int_{B(0,1)}\f{1}{\upmu(\eta)}G(\eta) \; d\eta}\\\le6\norm{1/\upmu}_{L^2(B(0,1))}\norm{F}_{L^2}\norm{G}_{L^2}\norm{H}_{L^2}.
\end{multline}
\end{proof}

We now further decompose the set $E_1$ and the functional $I_1$, as defined in \eqref{eq:bad E} and \eqref{eq: Is}.
\begin{defn}
We make the following definitions for $m,n\in\N$.
\begin{enumerate}
\item We define the set
\begin{align}
	E_{m,n}=\tcb{(\xi,\eta)\in E_1\;:\;2^{-m-1}\le|\xi|\le2^{-m}\;\;\text{and}\;\;2^{-n+1}\le|\xi+\eta|\le2^{-n+2}}.
\end{align}
\item We define the functional $I_{m,n} : (L^0(\R^d;\R))^3 \to [0,\infty]$ via 
\begin{align}
	I_{m,n}[F,G,H]=\int_{E_{m,n}}\f{\upmu(\xi+\eta)}{\upmu(\xi)\upmu(\eta)}F(\xi)G(\eta)H(\xi+\eta)\;d\xi\;d\eta.
\end{align}
\end{enumerate}
\end{defn}
The dyadic decomposition of $E_1$ gives us the following.
\begin{lem}\label{labels are difficult}The following hold.
\begin{enumerate}
\item We have the equalities
\begin{align}
\bigcup_{m=0}^\infty\bigcup_{n=m}^\infty E_{m,n}=E_1\quad\text{and}\quad\sum_{m=0}^\infty\sum_{n=m}^\infty I_{m,n}=I_1.
\end{align}
\item Let $F,G,H \in L^2(\R^d; [0,\infty])$. Then 
\begin{align}
I_{m,n}[F,G,H]\le I_1[F_m,G_m,H_n],
\end{align}
for the functions
\begin{align}
F_m=F\mathbbm{1}_{2^{-m-1}A},\quad G_m=G\mathbbm{1}_{2^{-m-1}A},\quad\text{and}\quad H_n=H\mathbbm{1}_{2^{-n}A},
\end{align}
where $A$ is the annulus $A=\tcb{x\in\R^d\;:\;1/2\le|x|\le4}$.
\item
We have $\mathcal{L}^{d}$-almost everywhere the inequality
\begin{align}
\sum_{m\in\N}\mathbbm{1}_{2^{-m-1}A}\le5\mathbbm{1}_{B(0,2)}
\end{align}
\end{enumerate}
\end{lem}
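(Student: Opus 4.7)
The plan is to verify the three assertions in order, each being a direct geometric bookkeeping calculation built on the characterization of $E_1$ provided by Lemma~\ref{lemma on equivalence in E1}.

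For item~(1), the inclusion $\bigcup_{m=0}^\infty \bigcup_{n=m}^\infty E_{m,n} \subseteq E_1$ is built into the definition of $E_{m,n}$. For the reverse inclusion, given $(\xi,\eta) \in E_1$, Lemma~\ref{lemma on equivalence in E1} gives $\frac{1}{2}|\eta| < |\xi| < 2|\eta|$, so in particular $\xi \ne 0$; since $|\xi| \in (0,1]$, there exists $m \in \N$ with $2^{-m-1} \le |\xi| \le 2^{-m}$. Off the $\mathcal{L}^{2d}$-null hyperplane $\{\xi + \eta = 0\}$ I would choose $n \in \Z$ satisfying $2^{-n+1} \le |\xi + \eta| \le 2^{-n+2}$. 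To see that $n \ge m$, I would invoke the second assertion of Lemma~\ref{lemma on equivalence in E1} to obtain $|\xi + \eta| < 3|\xi| \le 3 \cdot 2^{-m}$; combining with $2^{-n+1} \le |\xi+\eta|$ yields $2^{m-n+1} < 3$, and since $\log_2 3 < 2$ this forces $m - n + 1 \le 1$, i.e.\ $n \ge m$. (The bound $n \ge 0$ is automatic from $n \ge m \ge 0$.) The identity $\sum_{m=0}^\infty \sum_{n=m}^\infty I_{m,n} = I_1$ then follows by Tonelli's theorem, as the integrand is non-negative and the $E_{m,n}$ are pairwise disjoint up to a Lebesgue null set coming from the dyadic sphere coincidences and the hyperplane $\xi+\eta=0$.

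For item~(2), I would pin down where $\xi$, $\eta$, and $\xi+\eta$ live on $E_{m,n}$. The dyadic bound $2^{-m-1} \le |\xi| \le 2^{-m}$ immediately gives $\xi \in 2^{-m-1} A$, and combining with $\frac{1}{2}|\xi| < |\eta| < 2|\xi|$ from Lemma~\ref{lemma on equivalence in E1} yields $2^{-m-2} < |\eta| < 2^{-m+1}$, so also $\eta \in 2^{-m-1} A$. The condition $2^{-n+1} \le |\xi+\eta| \le 2^{-n+2}$ rewrites as $\xi + \eta \in 2^{-n} A$. Consequently, on $E_{m,n}$ the integrand of $I_{m,n}[F,G,H]$ agrees pointwise with that of $I_1[F_m, G_m, H_n]$, and enlarging the integration domain from $E_{m,n}$ to $E_1$ preserves the inequality since the integrand is non-negative. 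For item~(3), the set $2^{-m-1} A = \{x \in \R^d : 2^{-m-2} \le |x| \le 2^{-m+1}\}$ is contained in $B(0,2)$ for every $m \ge 0$, and for $x \in B(0,2) \setminus \{0\}$ the indices $m \in \N$ with $x \in 2^{-m-1} A$ are exactly the integers in $[-\log_2|x| - 2,\, -\log_2|x| + 1]$, an interval of length $3$ containing at most $4$ integers, comfortably bounded by $5$.

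There is no deep technical obstacle in this lemma; it is combinatorial bookkeeping that sets the stage for the dyadic summation used in the boundedness of $I_1$. The only delicate point is the derivation of $n \ge m$ in item~(1), which is precisely the quantitative content of the bound $|\xi+\eta| < 3|\xi|$ guaranteed by Lemma~\ref{lemma on equivalence in E1}, and it is this control that makes the ``bad'' region $E_1$ amenable to a convergent anisotropic Littlewood-Paley decomposition.
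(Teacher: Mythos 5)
Your proof is correct and follows essentially the same route as the paper's: all three items reduce to the characterization of $E_1$ in Lemma~\ref{lemma on equivalence in E1} (the bound $|\xi+\eta|<3|\xi|$ forcing $n\ge m$, equivalently $E_{m,n}=\varnothing$ for $n<m$; the comparability $\tfrac12|\eta|<|\xi|<2|\eta|$ placing $\xi,\eta,\xi+\eta$ in the stated annuli) together with elementary dyadic counting for the overlap bound. Your explicit treatment of the null sets in item (1) (the hyperplane $\xi+\eta=0$ and the annulus boundaries) is slightly more careful than the paper's terse argument, but the substance is identical.
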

\begin{proof}
For the first item we note that the second estimate of Lemma~\ref{lemma on equivalence in E1} implies that if $n < m$ then $E_{m,n} = \es$, and hence $I_{m,n}=0$. The second item follows from the first item of Lemma~\ref{lemma on equivalence in E1} and nonnegativity. To prove the third item, we note that if $x\in  B(0,2)\setminus\tcb{0}$, there exists a unique $n\in\N$ such that $|x|\in2^{-n}[1,2)$. It is clear that $\mathbbm{1}_{2^{-m-1}A}(x)=0$ whenever $m\ge0$ and $|m-n|>2$. The inequality now follows.
\end{proof}
Using the preceding lemma we arrive at the following proposition. 
\begin{prop}\label{case 1}Let $d\in\N$. The following hold:
\begin{enumerate}
\item If $d\ge 4$, there exists a constant $ C_d > 0$ such that for all $F,G,H\in L^2(\R^d; [0,\infty])$ we have the estimate $I_1[F,G,H]\le C_d\norm{F}_{L^2}\norm{G}_{L^2}\norm{H}_{L^2}$.
\item If $d\in\tcb{2,3}$, there exists a constant $ C_{1,d} > 0$ such that for all $F,G,H\in L^2(\R^d; [0,\infty])$ we have the estimate
\begin{align}
	\sum_{m=0}^\infty\sum_{n=2m+1}^\infty I_{m,n}[F,G,H]\le C_{1,d}\norm{F}_{L^2}\norm{G}_{L^2}\norm{H}_{L^2}.
\end{align}
\end{enumerate}
\end{prop}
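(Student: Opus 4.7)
My plan is to handle both items with a single Cauchy--Schwarz argument, differing only in the lower index of the $n$-sum.

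\textbf{Change of variables and Cauchy--Schwarz.} In $I_{m,n}[F,G,H]$ I substitute $\zeta=\xi+\eta$. The condition $(\xi,\zeta-\xi)\in E_{m,n}$ forces $|\xi|,|\zeta-\xi|\in[2^{-m-1},2^{-m}]$ and $|\zeta|\in[2^{-n+1},2^{-n+2}]$ (the $E_1$ constraint from Lemma~\ref{lemma on equivalence in E1} is automatic under these annular conditions), so
\begin{align*}
I_{m,n}[F,G,H]=\int_{|\zeta|\asymp 2^{-n}}\upmu(\zeta)H(\zeta)\left(\int\frac{F(\xi)G(\zeta-\xi)}{\upmu(\xi)\upmu(\zeta-\xi)}\mathbbm{1}_{|\xi|,|\zeta-\xi|\asymp 2^{-m}}\,d\xi\right)d\zeta.
\end{align*}
Cauchy--Schwarz on the inner $\xi$-integral, combined with translation invariance of Lebesgue measure, bounds that inner integral by $\norm{F_m/\upmu}_{L^2}\norm{G_m/\upmu}_{L^2}$, where $F_m,G_m$ are the localizations from Lemma~\ref{labels are difficult}. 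Since $\upmu(\xi)\ge|\xi|\gtrsim 2^{-m}$ on $\supp F_m$, I get $\norm{F_m/\upmu}_{L^2}\lesssim 2^m\norm{F_m}_{L^2}$ and likewise for $G_m$. Collapsing the essentially disjoint annuli $\{|\zeta|\asymp 2^{-n}\}$ for $n\ge N$ into $\{|\zeta|\le 2^{-N+2}\}$, then using $\upmu\le 2$ on $B(0,1)$ together with a final Cauchy--Schwarz in $\zeta$, yields
\begin{align*}
\sum_{n\ge N}I_{m,n}[F,G,H]\lesssim 2^{2m}\norm{F_m}_{L^2}\norm{G_m}_{L^2}\,\bigl(\meas B(0,2^{-N+2})\bigr)^{1/2}\norm{H}_{L^2}\lesssim 2^{2m-Nd/2}\norm{F_m}_{L^2}\norm{G_m}_{L^2}\norm{H}_{L^2}.
\end{align*}

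\textbf{Choice of $N$ and summation in $m$.} For item (1) I take $N=m$, producing the prefactor $2^{m(2-d/2)}$, which is bounded by $1$ precisely when $d\ge 4$. For item (2) I take $N=2m+1$, producing a prefactor $\lesssim 2^{m(2-d)}$, which is bounded by $1$ when $d\ge 2$; this is exactly where the restriction $n\ge 2m+1$ pays off. In either case the resulting prefactor $c_m\le 1$, so Cauchy--Schwarz in $m$ combined with the orthogonality bound $\sum_m\norm{F_m}_{L^2}^2\lesssim\norm{F}_{L^2}^2$ from the third item of Lemma~\ref{labels are difficult} gives
\begin{align*}
\sum_m c_m\norm{F_m}_{L^2}\norm{G_m}_{L^2}\le\left(\sum_m\norm{F_m}_{L^2}^2\right)^{1/2}\left(\sum_m\norm{G_m}_{L^2}^2\right)^{1/2}\lesssim\norm{F}_{L^2}\norm{G}_{L^2},
\end{align*}
completing both items.

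\textbf{Main obstacle.} The crucial step is the Cauchy--Schwarz factorization of the $\upmu$ weights across the $\xi$- and $\zeta$-integrals: this shows that the naive pointwise loss $K\lesssim 2^{2m}$ on $E_{m,n}$ is compensated by the smallness $2^{-Nd/2}$ of the ball in $\zeta$ when $N$ is sufficiently large. The restriction $n\ge 2m+1$ in item (2) is precisely what provides the extra smallness needed to survive in dimensions $d\in\{2,3\}$; the complementary range $m\le n\le 2m$ is not amenable to this scheme and must be treated by finer dyadic analysis in a separate proposition.
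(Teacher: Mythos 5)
Your proof is correct, and it reaches the paper's key quantitative bound $I_{m,n}[F,G,H]\lesssim 2^{2m-nd/2}\tnorm{F_m}_{L^2}\tnorm{G_m}_{L^2}\tnorm{H_n}_{L^2}$ by a genuinely different mechanism in the central step. The paper first freezes the factor $\upmu(\xi+\eta)/(\upmu(\xi)\upmu(\eta))\lesssim 2^{2m}$ and then extracts the gain $2^{-nd/2}$ from a tiling of $\xi$-space by cubes $Q_\ell$ of side $2^{-n}$ (with $\eta$ forced into the reflected enlarged cubes $\tilde Q_\ell$), followed by several applications of Cauchy--Schwarz; the factor $2^{-nd/2}$ appears there as $\tnorm{\mathbbm{1}_{Q_0}}_{L^2}$. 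You instead pass to $\zeta=\xi+\eta$, bound the inner integral uniformly in $\zeta$ by $\tnorm{F_m/\upmu}_{L^2}\tnorm{G_m/\upmu}_{L^2}\lesssim 2^{2m}\tnorm{F_m}_{L^2}\tnorm{G_m}_{L^2}$ (an $L^2\ast L^2\to L^\infty$ Young-type bound), and recover $2^{-Nd/2}$ from the measure of the ball supporting $\zeta$ via one Cauchy--Schwarz in $\zeta$. This is more elementary and avoids the tiling combinatorics entirely; the two single-scale bounds coincide, and your summations in $n$ (collapsing annuli) and in $m$ (Cauchy--Schwarz plus the finite-overlap bound from the third item of Lemma~\ref{labels are difficult}) match the paper's. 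The trade-off is scope: the paper's tiling argument is the prototype for the rectangle decomposition $R_{p,\pi}(\al)$ needed in Proposition~\ref{case 2}, where the anisotropy of $\upmu$ in the $\xi_1$ variable must be exploited and a uniform-in-$\zeta$ convolution bound is too lossy — a limitation you correctly flag. Two cosmetic points: the annular conditions do not literally force the $E_1$ constraint (nor does $\eta=\zeta-\xi$ lie in $[2^{-m-1},2^{-m}]$ but rather in the enlarged annulus $2^{-m-1}A$), but since all integrands are nonnegative, enlarging the domain of integration only strengthens the upper bound, so nothing is lost.
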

\begin{proof}
Let $m,n,d\in\N$ with $2\le d$ and $m\le n$. According to the second item of Lemma~\ref{labels are difficult}, we have
\begin{align}
I_{m,n}[F,G,H]\le I_1[F_m,G_m,H_n]=\int_{E_1}\f{\upmu(\xi+\eta)}{\upmu(\xi)\upmu(\eta)}F_m(\xi)G_m(\eta)H_n(\xi+\eta)\;d\xi\;d\eta.
\end{align}
For $(\xi,\eta)\in E_1$, the right hand integrand vanishes except possibly when the following inequalities hold: $2^{-m-2}\le|\xi|\le 2^{-m+1}$, $2^{-m-2}\le|\eta|\le 2^{-m+1}$, and $2^{-n-1}\le|\xi+\eta|\le2^{-n+2}$. This provides the elementary estimate
\begin{align}
\f{\upmu(\xi+\eta)}{\upmu(\xi)\upmu(\eta)}\le\f{1+|\xi+\eta|}{|\xi||\eta|}\le 2^{2m+4}\tp{1+2^{-n+2}}\lesssim 2^{2m},
\end{align}
and hence
\begin{equation}\label{whoa}
I_{m,n}[F,G,H]\lesssim 2^{2m}\int_{E_1}F_m(\xi)G_{m}(\eta)H_n(\xi+\eta)\;d\xi\;d\eta.
\end{equation}
Now, for $\ell\in\Z^d$ let $Q_\ell$ denote the closed cube centered at $2^{-n}\ell$ of side length $2^{-n}$ and $\tilde{Q}_\ell$ denote the closed cube centered at $-2^{-n}\ell$ of side length $9\cdot 2^{-n}$. By construction, almost everywhere we have that
\begin{equation}\label{fish}
1=\sum_{\ell\in\Z^d}\mathbbm{1}_{Q_\ell}\quad\text{and}\quad 9^d=\sum_{
\ell\in\Z^d}\mathbbm{1}_{\tilde{Q}_\ell},
\end{equation}
and we may compute 
\begin{equation}\label{fish2}
\max_{\ell\in\Z^d}\tnorm{\mathbbm{1}_{Q_\ell}}_{L^2} = \tnorm{\mathbbm{1}_{Q_0}}_{L^2}=2^{-nd/2}.
\end{equation}
Notice that if $(\xi,\eta)\in E_1$ is such that the integrand on the right hand side of~\eqref{whoa} is nonzero and $\xi\in Q_\ell$, then
\begin{align}
|\eta+2^{-n}\ell|_{\infty}\le|\eta+\xi|+ \abs{-\xi+2^{-n}\ell}_{\infty}\le 2^{-n+2}+2^{-n-1} = \f{9}{2}\cdot 2^{-n},
\end{align}
which in particular implies that $\eta\in\tilde{Q}_\ell$. Hence, 
\begin{equation}\label{whoa2}
\int_{E_1}F_m(\xi)G_m(\eta)H_n(\xi+\eta)\;d\xi\;d\eta\le\sum_{\ell\in\Z^d}\int_{B(0,1)^2}F_m(\xi)\mathbbm{1}_{Q_\ell}(\xi)G_m(\eta)\mathbbm{1}_{\tilde{Q}_\ell}(\eta)H_n(\xi+\eta)\;d\xi\;d\eta.
\end{equation}
By Tonelli's theorem, repeated applications of Cauchy-Schwarz, and the identities in \eqref{fish} and \eqref{fish2} we then find that
\begin{multline}\label{whoa3}
\sum_{\ell\in\Z^d}\int_{B(0,1)^2}F_m(\xi)\mathbbm{1}_{Q_\ell}(\xi)G_m(\eta)\mathbbm{1}_{\tilde{Q}_\ell}(\eta)H_n(\xi+\eta)\;d\xi\;d\eta\\=\sum_{\ell\in\Z^d}\int_{B(0,1)}F_m(\xi)\mathbbm{1}_{Q_\ell}(\xi)\int_{B(0,1)}G_m(\eta)\mathbbm{1}_{\tilde{Q}_\ell}(\eta)H_n(\xi+\eta)\;d\eta\;d\xi\\\le\sum_{\ell\in\Z^d}\int_{B(0,1)}F_m(\xi)\mathbbm{1}_{Q_\ell}(\xi)\tnorm{G_m\mathbbm{1}_{\tilde{Q}_\ell}}_{L^2}\norm{H_n}_{L^2}\;d\xi\le\tnorm{H_n}_{L^2}\tnorm{\mathbbm{1}_{Q_0}}_{L^2}\sum_{\ell\in\Z^d}\tnorm{F_m\mathbbm{1}_{Q_\ell}}_{L^2}\tnorm{G_m\mathbbm{1}_{\tilde{Q}_\ell}}_{L^2}\\
\le2^{-nd/2}\tnorm{H_n}_{L^2}\bp{\int_{\R^d}|F_m(\xi)|^2\sum_{\ell\in\Z^d}\mathbbm{1}_{Q_\ell}(\xi)\;d\xi}^{1/2}\bp{\int_{\R^d}|G_m(\eta)|^2\sum_{\ell\in\Z^d}\mathbbm{1}_{\tilde{Q}_\ell}(\eta)\;d\eta}^{1/2}\\
\lesssim 2^{-nd/2}\tnorm{F_m}_{L^2}\tnorm{G_m}_{L^2}\tnorm{H_n}_{L^2}.
\end{multline}
Next we combine equations~\eqref{whoa}, \eqref{whoa2}, and~\eqref{whoa3} to deduce that
\begin{equation}\label{cat}
 I_{m,n}[F,G,H]\lesssim 2^{2m-nd/2}\tnorm{F_m}_{L^2}\tnorm{G_m}_{L^2}\tnorm{H_n}_{L^2}.
\end{equation}

Now we break to cases based on the dimension. First, if $d\ge 4$, then there is no issue in summing over the set $\tcb{(m,n)\in\N^2\;:\;m\le n}$.  Indeed, we use Cauchy-Schwarz along with the third item of Lemma~\ref{labels are difficult} to find
\begin{multline}
\sum_{m=0}^\infty\sum_{n=m}^\infty I_{m,n}[F,G,H]\lesssim\sum_{m=0}^\infty\sum_{n=m}^\infty2^{2m-nd/2}\tnorm{F_m}_{L^2}\tnorm{G_m}_{L^2}\tnorm{H_n}_{L^2}\\\le\sum_{m=0}^\infty2^{2m}\tnorm{F_m}_{L^2}\tnorm{G_m}_{L^2}\bp{\sum_{n=m}^\infty2^{-nd}}^{1/2}\bp{\int_{\R^d}|H(\zeta)|^2\sum_{n=m}^\infty\mathbbm{1}_{2^{-n}A}(\zeta)\;d\zeta}^{1/2}\\
\lesssim\tnorm{H}_{L^2}\sum_{m=0}^\infty2^{(2-d/2)m}\tnorm{F_m}_{L^2}\tnorm{G_m}_{L^2}\lesssim\tnorm{F}_{L^2}\tnorm{G}_{L^2}\tnorm{H}_{L^2}.
\end{multline}
This completes the proof of the first item.

For the second item, we let $d\in\tcb{2,3}$ in equation~\eqref{cat} and sum over the set $\tcb{\tp{m,n}\in\N^2\;:\;2m+1\le n}$. Arguing as above, we arrive at the bound
\begin{multline}
\sum_{m=0}^\infty\sum_{n=2m+1}^\infty I_{m,n}[F,G,H]\lesssim\tnorm{H}_{L^2}\sum_{m=0}^\infty2^{(2-d)m}\bp{\sum_{n=2m+1}^\infty2^{-d(n-2m)}}^{1/2}\tnorm{F_m}_{L^2}\tnorm{G_m}_{L^2}\\\lesssim\tnorm{F}_{L^2}\tnorm{G}_{L^2}\tnorm{H}_{L^2}.
\end{multline}
This completes the proof of the second item.
\end{proof}

It remains to study the functional $\sum_{m=0}^\infty\sum_{n=m}^{2m}I_{m,n}$ in the case $d\in\tcb{2,3}$. We do this in the subsequent proposition.
\begin{prop}\label{case 2}
Let $d\in\tcb{2,3}$. There exists a constant $ C_{2,d} > 0$ such that for all $F,G,H\in L^2(\R^d; [0,\infty))$  we may estimate
\begin{align}
\sum_{m=0}^{\infty}\sum_{n=m}^{2m}I_{m,n}[F,G,H]\le C_{2,d}\norm{F}_{L^2}\norm{G}_{L^2}\norm{H}_{L^2}.
\end{align}
\end{prop}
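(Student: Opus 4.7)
The plan is to refine the dyadic cube decomposition of Proposition~\ref{case 1} using a sharper pointwise bound on the multiplier $\frac{\upmu(\xi+\eta)}{\upmu(\xi)\upmu(\eta)}$. The constraint $n\le 2m$ is equivalent to $|\xi+\eta|\gtrsim|\xi|^2$, which precludes the ultra-low-frequency interactions handled by the second item of Proposition~\ref{case 1}. The crude bound $\frac{\upmu(\xi+\eta)}{\upmu(\xi)\upmu(\eta)}\lesssim 2^{2m}$ used there gives $I_{m,n}\lesssim 2^{m(2-d/2)}\|F_m\|\|G_m\|\|H_n\|$ at $n=m$, which is not summable for $d\in\{2,3\}$, so a finer multiplier analysis is essential.

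I would start from the pointwise identity $|\xi_j|\le|\xi|\upmu(\xi)$ (for $j=1$), giving
\begin{equation*}
\upmu(\xi+\eta)\le\frac{|\xi_1|+|\eta_1|}{|\xi+\eta|}+|\xi+\eta|\le\frac{|\xi|\upmu(\xi)}{|\xi+\eta|}+\frac{|\eta|\upmu(\eta)}{|\xi+\eta|}+|\xi+\eta|.
\end{equation*}
Dividing by $\upmu(\xi)\upmu(\eta)$ and invoking $|\xi|\sim|\eta|\sim 2^{-m}$, $|\xi+\eta|\sim 2^{-n}$ splits $I_{m,n}\le A_{m,n}+B_{m,n}+C_{m,n}$, where the first two pieces are symmetric, the multiplier for $C_{m,n}$ is bounded by $|\xi+\eta|/(\upmu(\xi)\upmu(\eta))\lesssim 2^{2m-n}$, and the multiplier for $A_{m,n}$ is bounded by $|\xi|/(|\xi+\eta|\upmu(\eta))\lesssim 2^{n-m}/\upmu(\eta)$. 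For $C_{m,n}$, the cube decomposition estimate $\int_{E_1}F_mG_mH_n\lesssim 2^{-nd/2}\|F_m\|\|G_m\|\|H_n\|$ from Proposition~\ref{case 1} yields $C_{m,n}\lesssim 2^{2m-n(1+d/2)}\|F_m\|\|G_m\|\|H_n\|$; applying Cauchy-Schwarz in $n\in[m,2m]$ and the overlapping bound from Lemma~\ref{labels are difficult}(iii) gives $\sum_m\sum_{n=m}^{2m}C_{m,n}\lesssim\|F\|\|G\|\|H\|$ for $d\in\{2,3\}$.

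The main obstacle is controlling $A_{m,n}$ (and by symmetry $B_{m,n}$), since the naive bound $1/\upmu(\eta)\le 1/|\eta|\lesssim 2^m$ yields $A_{m,n}\lesssim 2^{n(1-d/2)}\|F_m\|\|G_m\|\|H_n\|$, which fails to sum over $m\le n\le 2m$ in $d\in\{2,3\}$. To overcome this I would further subdivide $E_{m,n}$ according to the dyadic size of $\upmu(\eta)$, writing $E_{m,n}=\bigcup_{k\ge 0}E_{m,n,k}$ with $\upmu(\eta)\sim 2^{-k}$ on $E_{m,n,k}$ (where $k$ ranges up to roughly $m$), and running the cube argument separately on each piece. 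Since $\upmu(\eta)\sim|\eta_1|/|\eta|+|\eta|$, the constraint $\upmu(\eta)\sim 2^{-k}$ combined with $|\eta|\sim 2^{-m}$ localizes $\eta_1$ to an interval of length $\sim 2^{-m-k}$ when $k\le m$, producing a sub-region of measure $\sim 2^{-m-k}\cdot 2^{-m(d-1)}=2^{-md-k}$, strictly smaller than the naive $2^{-md}$. This extra smallness, which is essentially the quantitative content of the integrability of $1/\upmu^2$ from Lemma~\ref{lem: fhat L1}(i), should produce geometric decay in $k$ that compensates the loss $2^{n-m}\cdot 2^k$ coming from the multiplier.

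Carrying out this three-part decomposition and then combining with Cauchy-Schwarz in $n$, the overlap estimate from Lemma~\ref{labels are difficult}(iii), and Cauchy-Schwarz in $m$ against $\|F_m\|_{L^2}$ and $\|G_m\|_{L^2}$, should close the argument. The technical heart lies in verifying that the exponents $2^{n-m+k-nd/2}\cdot(\text{measure correction})$ obtained on each $E_{m,n,k}$ remain summable over the triple index set $\{(m,n,k):0\le m\le n\le 2m,\ 0\le k\lesssim m\}$ in the two physically relevant dimensions. Once established, this together with Propositions~\ref{proposition on reduction to boundedness of a trilinear functional}, \ref{proposition on boundedness of I0}, and \ref{case 1} completes the proof that $X^s(\R^d;\C)$, and hence $X^s(\Sigma;\F)$ via Lemma~\ref{lem: X^s permutation}, is an algebra for $s>d/2$.
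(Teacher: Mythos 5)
Your reduction of the problem is sound up to a point: the splitting of the multiplier via $|\xi_1+\eta_1|\le|\xi_1|+|\eta_1|$ and $|\xi_j|\le|\xi|\upmu(\xi)$ into three pieces is legitimate, and your treatment of the $C_{m,n}$ piece (multiplier $\lesssim 2^{2m-n}$ combined with the isotropic cube bound $2^{-nd/2}$) does close for $d\in\{2,3\}$. The gap is in the $A_{m,n}$/$B_{m,n}$ pieces, which is exactly where the difficulty of the proposition lives. You decompose according to $\upmu(\eta)\sim 2^{-k}$ and assert that the smallness of the level set $\{|\eta|\sim 2^{-m},\ \upmu(\eta)\sim 2^{-k}\}$, of measure $\sim 2^{-md-k}$, "should produce geometric decay in $k$" beating the multiplier loss $2^{k}$. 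This is not substantiated, and the heuristic does not deliver it as stated: $G$ is an arbitrary nonnegative $L^2$ density and may concentrate all of its mass on the small level set, so the smallness of the set cannot by itself force decay of $\lv G_m\mathbbm{1}_{\{\upmu\sim 2^{-k}\}}\rv_{L^2}$. The only mechanism by which measure smallness enters an $L^2$-based trilinear estimate is through the $L^2$ norm of an indicator, i.e.\ through the \emph{square root} of the measure, which yields a gain of at most $2^{-k/2}$ against a loss of $2^{k}$. Summing $2^{k}\lv G_{m,k}\rv_{L^2}$ over $k\lesssim m$ by Cauchy--Schwarz reproduces the naive factor $2^{m}$ you were trying to avoid. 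To actually win you must also exploit that the restriction $\upmu(\eta)\sim 2^{-k}$ localizes $\eta_1$ to an interval of length $2^{-m-k}$, hence (via $|\xi_1+\eta_1|\lesssim 2^{-n}$) localizes $\xi_1$ and $\xi_1+\eta_1$ as well, and then run the convolution--Cauchy--Schwarz argument on a grid that is \emph{anisotropic} in the first frequency coordinate. None of this is carried out in the proposal, and the summability over the triple index set is precisely the point at issue.

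For comparison, the paper's proof covers $E_{m,n}$ by rectangles $R_{p,\pi}(\alpha)$ of side $2^{-2m}$ in the $\xi_1$ direction and $2^{-n}$ in the remaining directions. The grid index $p$ then quantifies $|\xi_1|$, giving the lower bounds $\upmu(\xi)\gtrsim 2^{-m}\max\tcb{1,|p|}$, $\upmu(\eta)\gtrsim 2^{-m}\max\tcb{1,|q|}$ and the upper bound $\upmu(\xi+\eta)\lesssim 2^{n-2m}(\max\tcb{1,|p|}+\max\tcb{1,|q|})$, so that the multiplier is controlled by $2^{n}(1/\max\tcb{1,|p|}+1/\max\tcb{1,|q|})$; the crucial gain is the $\ell^2$ summability of $1/\max\tcb{1,|p|}$ over $p\in\Z$, which is the rigorous counterpart of the decay you hoped to extract from the level sets of $\upmu$. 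Your level-set decomposition is morally a co-area version of the same idea, but until the anisotropic localization and the resulting summations over $(m,n,k,p,q)$ are verified, the proof is not complete.
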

\begin{proof}
Let $m,n\in\N$ with $m\le n\le 2m$. We begin by estimating the right hand side of the following inequality of Lemma~\ref{labels are difficult}, $I_{m,n}[F,G,H]\le I_1[F_m,G_m,H_n]$, by decomposing into a rectangular grid. To this end we define the following family of rectangles indexed by $p\in\Z$, $\pi\in\Z^{d-1}$, and $\al\in[1,\infty)$, via 
\begin{align}
R_{p,\pi}(\al)=2^{-2m}[-\al/2+p,\al/2+p]\times 2^{-n}\prod_{\nu=1}^{d-1}[-\al/2+\pi_\nu,\al/2+\pi_\nu].
\end{align}
Suppose that $(\xi,\eta)\in E_1$ are such that the integrand of $I_1[F_m,G_m,H_n]$ is nonzero and that additionally $\xi\in R_{p,\pi}(1)$ and $\eta\in R_{q,\sig}(1)$ for some $p,q\in\Z$ and $\pi,\sig\in\Z^{d-1}$. Then we obtain the inequalities
\begin{align}
2^{-n}(\pi_\nu+\sig_\nu-1/2)\le\xi_{\nu+1}+\eta_{\nu+1} \le 2^{-n}(\pi_\nu+\sig_\nu+1/2) \text{ for all }1\le\nu\le d-1,
\end{align}
and upon pairing these with the bound $|\xi+\eta|\le2^{-n+2}$ we deduce that
\begin{align}
|\pi_\nu+\sig_\nu|\le|\pi_\nu+\sig_\nu-2^{n}(\xi_{\nu+1}+\eta_{\nu+1})|+2^{n}|\xi_{\nu+1}+\eta_{\nu+1}|\le9/2  \text{ for all }1\le\nu\le d-1.
\end{align}
In particular, this implies that $\eta\in R_{q,-\pi}(9)$, which when combined with the inclusion $\xi\in R_{p,\pi}(1)$ further implies that $\xi+\eta\in R_{p+q,0}(10)$.  Therefore, we may estimate
\begin{equation}\label{blue}
I_{m,n}[F,G,H]\le\sum_{p,q\in\Z}\sum_{\pi\in\Z^{d-1}}\int_{E_1}\f{\upmu(\xi+\eta)}{\upmu(\xi)\upmu(\eta)}F_m(\xi)\mathbbm{1}_{R_{p,\pi}(1)}(\xi)G_m(\eta)\mathbbm{1}_{R_{q,-\pi}(9)}(\eta)H_n(\xi+\eta)\mathbbm{1}_{R_{p+q,0}(10)}(\xi+\eta)\;d\xi\;d\eta.
\end{equation}

For fixed $p$, $q$, and $\pi$, we now want to estimate the factor $\upmu(\xi+\eta)/\upmu(\xi)\upmu(\eta)$ appearing in the integrand in \eqref{blue}. First we note that if $\xi \in R_{p,\pi}(1) \cap 2^{-m-1}A$, then $|2^{2m}\xi_1-p|\le 1/2$, which implies that $2^{2m}|\xi_1|\ge\max\tcb{0,|p|-1/2}$ and hence 
\begin{equation}\label{hello}
\upmu(\xi)=\f{|\xi_1|}{|\xi|}+|\xi|\ge2^{-m-1}(2^{2m}|\xi_1|+1/2)\gtrsim 2^{-m}\max\tcb{1,|p|}.
\end{equation}
Similarly, for $\eta \in  R_{q,-\pi}(9) \cap 2^{-m-1}A$, we have the bound 
\begin{equation}\label{goodbye}
\upmu(\eta) \ge 2^{-m-1} (\max\{|q|-9/2,0\} +1/2)  \gtrsim 2^{-m}\max\tcb{1,|q|}.
\end{equation}
Finally, for $\xi+ \eta \in R_{p+q,0}(10) \cap 2^{-n}A$, we estimate $\upmu(\xi+\eta)$ from above, recalling that $m\le n$:
\begin{multline}\label{hello goodbye}
\upmu(\xi+\eta)\le2^{n+1}|\xi_1+\eta_1|+2^{-n+2}=2^{n-2m+1}(2^{2m}|\xi_1+\eta_1|+2^{2(m-n)+1}) \\
\le 2^{n-2m+1}(|p+q|+5+2)
\lesssim 2^{n-2m}\tp{\max\tcb{1,|p|}+\max\tcb{1,|q|}}.
\end{multline}
Upon piecing equations~\eqref{hello}, \eqref{goodbye}, and~\eqref{hello goodbye} all together, we obtain the bound
\begin{equation}\label{hound}
\f{\upmu(\xi+\eta)}{\upmu(\xi)\upmu(\eta)}\lesssim 2^n\bp{\f{1}{\max\tcb{1,|p|}}+\f{1}{\max\tcb{1,|q|}}}
\end{equation}
for $\xi \in R_{p,\pi}(1) \cap 2^{-m-1}A$ and $\eta \in  R_{q,-\pi}(9) \cap 2^{-m-1}A$.

Next, we combine~\eqref{blue} and~\eqref{hound} and then use Tonelli's theorem and Cauchy-Schwarz to obtain
\begin{multline}\label{nobody}
2^{-n}I_{m,n}[F,G,H]\le\\\sum_{p,q\in\Z}\sum_{\pi\in\Z^{d-1}}\f{1}{\max\tcb{1,|p|}}\int_{B(0,1)}F_m(\xi)\mathbbm{1}_{R_{p,\pi}(1)}(\xi)\int_{B(0,1)}G_m(\eta)\mathbbm{1}_{R_{q,-\pi}(9)}(\eta)H_n(\xi+\eta)\mathbbm{1}_{R_{p+q,0}(10)}(\xi+\eta)\;d\eta\;d\xi\\
+\sum_{p,q\in\Z}\sum_{\pi\in\Z^{d-1}}\f{1}{\max\tcb{1,|q|}}\int_{B(0,1)}F_m(\xi)\mathbbm{1}_{R_{p,\pi}(1)}(\xi)\int_{B(0,1)}G_m(\eta)\mathbbm{1}_{R_{q,-\pi}(9)}(\eta)H_n(\xi+\eta)\mathbbm{1}_{R_{p+q,0}(10)}(\xi+\eta)\;d\eta\;d\xi\\
\le\sum_{p,q\in\Z}\sum_{\pi\in\Z^{d-1}}\bp{\f{1}{\max\tcb{1,|p|}}+\f{1}{\max\tcb{1,|q|}}}\int_{B(0,1)}F_m(\xi)\mathbbm{1}_{R_{p,\pi}(1)}(\xi)\tnorm{G_m\mathbbm{1}_{R_{q,-\pi}(9)}}_{L^2}\tnorm{H_n\mathbbm{1}_{R_{p+q,0}(10)}}_{L^2}\;d\xi\\
\le\sum_{p,q\in\Z}\sum_{\pi\in\Z^{d-1}}\bp{\f{1}{\max\tcb{1,|p|}}+\f{1}{\max\tcb{1,|q|}}}\tnorm{\mathbbm{1}_{R_{p,\pi}(1)}}_{L^2}\tnorm{F_m\mathbbm{1}_{R_{p,\pi}(1)}}_{L^2}\tnorm{G_m\mathbbm{1}_{R_{q,\-\pi}(9)}}_{L^2}\tnorm{H_n\mathbbm{1}_{R_{p+q,0}(10)}}_{L^2}\\
=2^{-m-n(d-1)/2}\sum_{p,q\in\Z}\sum_{\pi\in\Z^{d-1}}\bp{\f{1}{\max\tcb{1,|p|}}+\f{1}{\max\tcb{1,|q|}}}\tnorm{F_m\mathbbm{1}_{R_{p,\pi}(1)}}_{L^2}\tnorm{G_m\mathbbm{1}_{R_{q,\-\pi}(9)}}_{L^2}\tnorm{H_n\mathbbm{1}_{R_{p+q,0}(10)}}_{L^2}.
\end{multline}
What remains, before we sum over $m$ and $n$, is a few more applications of Cauchy-Schwarz, one for each sum above. First we handle the sum over $\pi\in\Z^{d-1}$ with
\begin{multline}\label{uncle albert}
\sum_{\pi\in\Z^{d-1}}\tnorm{F_m\mathbbm{1}_{R_{p,\pi}(1)}}_{L^2}\tnorm{G_m\mathbbm{1}_{R_{q,\-\pi}(9)}}_{L^2}\\\le\bp{\int_{\R^{d}}|F_m(\xi)|^2\sum_{\pi\in\Z^{d-1}}\mathbbm{1}_{R_{p,\pi}(1)}(\xi)\;d\xi}^{1/2}\bp{\int_{\R^{d}}|G_m(\eta)|^2\sum_{\pi\in\Z^{d-1}}\mathbbm{1}_{R_{q,-\pi}(9)}(\eta)\;d\eta}^{1/2}\\
\lesssim\tnorm{F_m\mathbbm{1}_{\bigcup_{\pi\in\Z^{d-1}}R_{p,\pi}(1)}}_{L^2}\tnorm{G_m\mathbbm{1}_{\bigcup_{\pi\in\Z^{d-1}}R_{q,-\pi}(9)}}_{L^2}.
\end{multline}
Next we consider the sums over $p,q\in\Z$. First, we consider the one containing $1/\max\tcb{1,|p|}$,
\begin{multline}\label{three legs}
\sum_{p,q\in\Z}\f{1}{\max\tcb{1,|p|}}\tnorm{F_m\mathbbm{1}_{\bigcup_{\pi\in\Z^{d-1}}R_{p,\pi}(1)}}_{L^2}\tnorm{G_m\mathbbm{1}_{\bigcup_{\pi\in\Z^{d-1}}R_{q,-\pi}(9)}}_{L^2}\tnorm{H_n\mathbbm{1}_{R_{p+q,0}(10)}}_{L^2}\\
\lesssim\tnorm{G_m\mathbbm{1}_{\bigcup_{q\in\Z}\bigcup_{\pi\in\Z^{d-1}}R_{q,-\pi}(9)}}_{L^2}\sum_{p\in\Z}\f{1}{\max\tcb{1,|p|}}\tnorm{F_m\mathbbm{1}_{\bigcup_{\pi\in\Z^{d-1}}R_{p,\pi}(1)}}_{L^2}\tnorm{H_n\mathbbm{1}_{\bigcup_{q\in\Z}R_{p+q,0}(10)}}_{L^2}\\
\le\tnorm{G_m}_{L^2}\tnorm{H_n}_{L^2}\bp{\sum_{p\in\Z}\f{1}{\max\tcb{1,p^2}}}^{1/2}\tnorm{F_m\mathbbm{1}_{\bigcup_{p\in\Z}\bigcup_{\pi\in\Z^{d-1}}R_{p,\pi}(1)}}\lesssim\tnorm{F_m}_{L^2}\tnorm{G_m}_{L^2}\tnorm{H_n}_{L^2}.
\end{multline}
Similarly, we consider the one containing $1/\max\tcb{1,|q|}$, 
\begin{multline}\label{smile away}
\sum_{p,q\in\Z}\f{1}{\max\tcb{1,|q|}}\tnorm{F_m\mathbbm{1}_{\bigcup_{\pi\in\Z^{d-1}}R_{p,\pi}(1)}}_{L^2}\tnorm{G_m\mathbbm{1}_{\bigcup_{\pi\in\Z^{d-1}}R_{q,-\pi}(9)}}_{L^2}\tnorm{H_n\mathbbm{1}_{R_{p+q,0}(10)}}_{L^2}\\\lesssim\tnorm{F_m}_{L^2}\tnorm{H_n}_{L^2}\sum_{q\in\Z}\f{1}{\max\tcb{1,|q|}}\tnorm{G_m\mathbbm{1}_{\bigcup_{\pi\in\Z^{d-1}}R_{q,-\pi}(9)}}_{L^2}\lesssim\tnorm{F_m}_{L^2}\tnorm{G_m}_{L^2}\tnorm{H_n}_{L^2}.
\end{multline}
Synthesizing equations~\eqref{nobody}, \eqref{uncle albert}, \eqref{three legs}, and~\eqref{smile away} we find that
\begin{align}
I_{m,n}[F,G,H]\lesssim 2^{-m+n(3-d)/2}\tnorm{F_m}_{L^2}\tnorm{G_m}_{L^2}\tnorm{H_n}_{L^2}.
\end{align}
Hence, we may sum over $m\le n\le 2m$ to bound
\begin{multline}
\sum_{n=m}^{2m}I_{m,n}[F,G,H]\lesssim2^{-m}\bp{\sum_{n=m}^{2m}2^{n(3-d)}}^{1/2}\tnorm{H}_{L^2}\tnorm{F_m}_{L^2}\tnorm{G_m}_{L^2}\\
\le 2^{-m}\tnorm{H}_{L^2}\tnorm{F_m}_{L^2}\tnorm{G_m}_{L^2} \cdot \begin{cases}
	\sqrt{m+1} & \text{if} \; d = 3 \\
	 2^{m+1/2} & \text{if} \;d=2
\end{cases}\lesssim\tnorm{H}_{L^2}\tnorm{F_m}_{L^2}\tnorm{G_m}_{L^2},
\end{multline}
and then finally sum over $m \ge 0$ to see that
\begin{align}
\sum_{m=0}^\infty\sum_{n=m}^{2m}I_{m,n}[F,G,H]\lesssim\tnorm{H}_{L^2}\sum_{m\in\N}\tnorm{F_m}_{L^2}\tnorm{G_m}_{L^2}\lesssim\tnorm{F}_{L^2}\tnorm{G}_{L^2}\tnorm{H}_{L^2},
\end{align}
which is the stated estimate.
\end{proof}

By synthesizing the previous results, we immediately arrive at the following result.

\begin{thm}[Boundedness of $I$]\label{thm:boundedness of I}
The functional $I: L^2(\R^d ; [0,\infty))^3 \to [0,\infty]$ defined in \eqref{eq: I} is bounded and satisfies the estimate \eqref{eq: I est over C}. 
\end{thm}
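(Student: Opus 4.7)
The plan is to assemble the boundedness of $I$ by piecing together the estimates already proven in the propositions leading up to the theorem, and then extending from nonnegative measurable functions to complex-valued $L^2$ functions via Lemma~\ref{lem: I over nonnegative}. The basic strategy is to exploit the decomposition $I = I_0 + I_1$ from Definition~\ref{definition of squared frequency space decomposition}, where $I_0$ integrates over the ``good'' set $E_0$ on which $\upmu$ is subadditive, and $I_1$ integrates over the ``bad'' set $E_1$ where $|\xi| \asymp |\eta|$.

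First I would dispose of $I_0$ by invoking Proposition~\ref{proposition on boundedness of I0} directly, which gives the estimate with constant $6\norm{1/\upmu}_{L^2(B(0,1))}$, a finite quantity by the first item of Lemma~\ref{lem: fhat L1}. For $I_1$, I would split into cases based on the dimension. When $d \ge 4$, the first item of Proposition~\ref{case 1} provides the estimate directly. When $d \in \{2,3\}$, I would use the identity $I_1 = \sum_{m=0}^{\infty} \sum_{n=m}^{\infty} I_{m,n}$ from the first item of Lemma~\ref{labels are difficult} and split the inner sum as
\begin{equation}
\sum_{n=m}^{\infty} I_{m,n} = \sum_{n=m}^{2m} I_{m,n} + \sum_{n=2m+1}^{\infty} I_{m,n},
\end{equation}
then apply the second item of Proposition~\ref{case 1} to control the second piece and Proposition~\ref{case 2} to control the first piece. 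Summing the three contributions yields a constant $C_d > 0$ such that $I[F,G,H] \le C_d \norm{F}_{L^2}\norm{G}_{L^2}\norm{H}_{L^2}$ for all $F,G,H \in L^2(\R^d;[0,\infty))$.

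Finally, I would invoke Lemma~\ref{lem: I over nonnegative}, whose hypothesis is precisely the estimate just established over nonnegative measurable functions. The lemma then extends $I$ to a bounded trilinear functional on $(L^2(\R^d;\C))^3$ satisfying the complex estimate \eqref{eq: I est over C} via decomposition of complex $L^2$ functions into their real and imaginary, and positive and negative, parts. No step in this synthesis presents a genuine obstacle, since all the hard work has been done in the preceding propositions; the main care is in ensuring the dyadic summation in the $d \in \{2,3\}$ case is correctly partitioned so that both Proposition~\ref{case 1}(ii) and Proposition~\ref{case 2} apply without overlap and with the full range $n \ge m$ covered.
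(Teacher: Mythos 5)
Your proposal is correct and matches the paper's own proof, which likewise assembles Theorem~\ref{thm:boundedness of I} by combining Lemma~\ref{labels are difficult} with Propositions~\ref{proposition on boundedness of I0}, \ref{case 1}, and \ref{case 2}, and then passes to the complex estimate \eqref{eq: I est over C} via the extension argument of Lemma~\ref{lem: I over nonnegative}. The partition of the dyadic sum into $m \le n \le 2m$ and $n \ge 2m+1$ for $d \in \{2,3\}$ is exactly the one used in the paper.
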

\begin{proof}
We simply combine Lemma~\ref{labels are difficult} and Propositions ~\ref{proposition on boundedness of I0}, \ref{case 1}, \ref{case 2}, and~\ref{proposition on reduction to boundedness of a trilinear functional}.
\end{proof}

With Theorem~\ref{thm:boundedness of I} in hand, we are ready to prove Theorem~\ref{thm: 1} as a corollary.

\begin{proof}[Proof of Theorem~\ref{thm: 1}]
	By Lemma~\ref{lem: I over nonnegative}, Proposition~\ref{proposition on reduction to boundedness of a trilinear functional}, and
	Theorem~\ref{thm:boundedness of I}, the anisotropic space $X^s(\R^k ; \F)$ is an algebra for $s > k/2$ for $\F = \C$ and $\F = \R$. Let $\Sigma$ be defined as in \eqref{sigma} and let $d = \dim \Sigma$. By the first item of Lemma~\ref{lem: Xs Hs equiv}, it suffices show that $X^s(\Sigma; \F)$ is an algebra for $s > d/2$ for the case when $1 \in R_\Sigma$ and $\abs{R_\Sigma} \ge 2$, as the anisotropic space coincides with the standard Sobolev space otherwise. To show this we first argue similarly as in Proposition~\ref{proposition on reduction to boundedness of a trilinear functional} to show that $X^s(\Sigma; \F)$ is an algebra if and only if for any $F,G \in X^s(\Sigma; \F)$, the estimate 
	\begin{align}\label{eq: f_0g_0 Lambda}
		 \norm{F_0 G_0 }_{X^s(\Sigma)} \lesssim   \norm{F_0 }_{X^s(\Sigma)}\norm{G_0 }_{X^s(\Sigma)}
	\end{align}
	holds for $F_0 = \mathscr{F}^{-1}[\mathbbm{1}_{B_{\hat{\Sigma}}(0,r)}\mathscr{F}[F]], G_0 = \mathscr{F}^{-1}[\mathbbm{1}_{B_{\hat{\Sigma}}(0,r)}\mathscr{F}[G]]$. 
	
	Let $\Sigma_\ast, \hat{\Sigma}_\ast$ be the reordered Cartesian products of $\Sigma$ defined via \eqref{eq: canonical pGamma}, and $\mathcal{P}_\Sigma, \mathcal{P}_{\hat{\Sigma}}$ be the canonical reordering maps defined by \eqref{eq: canonical pGamma}. We first note that since $F_0, G_0 \in X^s(\Sigma; \F)$, by Lemma~\ref{lem: X^s permutation} the functions $F_{0,\Sigma_\ast} = F_0(\mathcal{P}_{\Sigma}^{-1} \cdot), G_{0,\Sigma_\ast} =G_0(\mathcal{P}_{\Sigma}^{-1} \cdot)$ belong to $X^s(\Sigma_\ast ; \F)$ with $\norm{F_{0}}_{X^s(\Sigma)} = \norm{F_{0,\Sigma_\ast}}_{X^s(\Sigma_\ast)}, \norm{G_{0}}_{X^s(\Sigma)} = \norm{G_{0,\Sigma_\ast}}_{X^s(\Sigma_\ast)}$, and by \eqref{eq: appendix fourier comp} we have 
	\begin{align}\label{eq: fourier f lambda}
		 \mathscr{F}_{\Sigma_\ast}^+[F_{0,\Sigma_\ast}](\cdot) = \mathscr{F}^+_{\Sigma}[F_0](\mathcal{P}_{\hat{\Gamma}}^{-1} \cdot) , \; \mathscr{F}_{\Sigma_\ast}^+[G_{0,\Sigma_\ast}](\cdot) = \mathscr{F}^+_{\Sigma}[G_0](\mathcal{P}_{\hat{\Gamma}}^{-1} \cdot).
	\end{align}
	We also have $\norm{F_0 G_0}_{X^s(\Sigma)} = \norm{\sigma_\Sigma F_0 G_0 }_{X^s(\Sigma_\ast)} = \norm{F_{0,\Sigma_\ast} G_{0, \Sigma_\ast}}_{X^s(\Sigma_\ast)}$, where $\sigma_\Sigma$ is defined via \eqref{eq: canonical sigma}. So to establish \eqref{eq: f_0g_0 Lambda} it suffices to establish the estimate 
	\begin{align}\label{eq: X^s algebra desired est}
		\norm{F_{0,\Sigma_\ast} G_{0, \Sigma_\ast}}_{X^s(\Sigma_\ast)} \lesssim \norm{F_{0,\Sigma_\ast}}_{X^s(\Sigma_\ast)} \norm{G_{0,\Sigma_\ast}}_{X^s(\Sigma_\ast)}.  
	\end{align}
	By \eqref{eq: fourier f lambda}, we have $\supp \mathscr{F}^+_{\Sigma_\ast}[F_{0,\Sigma_\ast}], \supp \mathscr{F}^+_{\Sigma_\ast}[G_{0,\Sigma_\ast}] \subseteq B_{\hat{\Sigma}_\ast}(0,r)$. Since $\xi \in B_{\hat{\Sigma}_\ast}(0,r) \Longleftrightarrow \xi \in  \hat{\Sigma}_{*,R}$, by Fubini's theorem we then have 
	\begin{multline}
		\mathscr{F}^+_{\Sigma_\ast}[F_{0,\Sigma_\ast} G_{0,\Sigma_\ast}] (\omega, k) = \int_{\R^{|R_{\hat{\Sigma}_\ast}|}} \sum_{m \in \prod_{i \in T_{\hat{\Sigma}_\ast}} L_i^{-1} \Z} \mathscr{F}^+_{\Sigma_\ast}[F_{0,\Sigma_\ast}](\omega - \eta, k - m) \mathscr{F}^+_{\Sigma_\ast}[G_{0,\Sigma_\ast}](\eta,m) d\eta \\
		= \int_{\R^{|R_{\hat{\Sigma}_\ast}|}} \mathscr{F}^+_{\Sigma_\ast}[F_{0,\Sigma_\ast}](\omega - \eta, k) \mathscr{F}^+_{\Sigma_\ast}[G_{0,\Sigma_\ast}] (\eta, k) \; d\eta  = \delta_{k0}\int_{\R^{|R_{\hat{\Sigma}_\ast}|}} \mathscr{F}^+_{\Sigma_\ast}[F_{0,\Sigma_\ast}](\omega - \eta, 0) \mathscr{F}^+_{\Sigma_\ast}[G_{0,\Sigma_\ast}](\eta, 0) \; d\eta,
	\end{multline}
	where $\delta_{k0}$ is the Kronecker delta. This shows that $\supp \mathscr{F}^+_{\Sigma_\ast}[F_{0,\Sigma_\ast} G_{0,\Sigma_\ast}] \subseteq 
	B_{\hat{\Sigma}_\ast}(0,r)$. By Lemma~\ref{rem: iso everything}, $\uppi_{\hat{\Sigma}_\ast}$ is an isometric measure-preserving group isomorphism between $B_{\hat{\Sigma}_\ast}(0,r)$ and $B_{\R^{|R_{\hat{\Sigma}_\ast}|}}(0,r)$, and therefore 
	\begin{multline}
		 \norm{F_{0,\Sigma_\ast} G_{0,\Sigma_\ast}}_{X^s(\Sigma_\ast)}^2 = \int_{B_{\hat{\Sigma}_\ast}(0,r)} \upmu^2(\xi) \langle \xi \rangle^{2(s-1)} \abs{\mathscr{F}^+_{\Sigma_\ast}[F_{0,\Sigma_\ast} G_{0,\Sigma_\ast}](\xi)}^2 \; d\xi  \\ = \int_{B_{\R^{|R_{\hat{\Sigma}_\ast}|}}(0,r)} \upmu^2(\omega) \langle \omega \rangle^{2(s-1)}\abs{\mathscr{F}^+_{\Sigma_\ast}[F_{0,\Sigma_\ast} G_{0,\Sigma_\ast}](\omega, 0)}^2 \; d\omega 
		 = \norm{F_{0,\Sigma_\ast}(\cdot, 0) G_{0,\Sigma_\ast}(\cdot, 0)}^2_{X^s(\R^{|R_{\hat{\Sigma}_\ast}|})}. 
	\end{multline}
	Next we note that since $F_{0,\Sigma_\ast} \in X^s(\Sigma_\ast ; \F)$, we have the inclusions 
\begin{equation}
	F_{0,\Sigma_\ast}(\cdot, 0) \in \mathscr{S}'(\R^{|R_{\Sigma_\ast}|}; \C) \text{ and } \mathscr{F}^+_{\R^{|R_{\Sigma_\ast}|}}[F_{0,\Sigma_\ast}](\cdot, 0) \in L^1_{\loc}(\R^{|R_{\Sigma_\ast}|}; \C), 
\end{equation}
and we know that $\overline{F_{0,\Sigma_\ast}(\cdot,0)} = F_{0,\Sigma_\ast}(\cdot,0)$, and $\supp \mathscr{F}^+_{\R^{|R_{\Sigma_\ast}|}}[F_{0,\Sigma_\ast}](\cdot, 0) \subseteq B_{\R^{|R_{\hat{\Sigma}_\ast}|}}(0,r)$. By Lemma~\ref{rem: iso everything} again, we have 
	\begin{multline}\label{eq: X^s est same}
		 \norm{F_{0,\Sigma_\ast}(\cdot,0)}_{X^s(\R^{|R_{\Sigma_\ast}|})}^2 = \int_{B_{\R^{|R_{\hat{\Sigma}_\ast}|}}(0,r)} \upmu^2(\omega) \langle \omega \rangle^{2(s-1)}\abs{\mathscr{F}^+_{\Sigma_\ast}[F_{0,\Sigma_\ast}](\omega, 0)}^2 \; d\omega \\
		 = \int_{B_{\hat{\Sigma}_\ast}(0,r)} \upmu^2(\xi) \langle \xi \rangle^{2(s-1)} \abs{\mathscr{F}^+_{\Sigma_\ast}[F_{0,\Sigma_\ast}](\xi)}^2 \; d\xi = \norm{F_{0,\Sigma_\ast}}_{X^s(\Sigma_\ast)}^2.  
	\end{multline}
	This shows that $F_{0,\Sigma_\ast}(\cdot, 0) \in X^s(\R^{|R_{\Sigma_\ast}|}; \F)$, and similarly we can conclude that $G_{0,\Sigma_\ast}(\cdot, 0) \in X^s(\R^{|R_{\Sigma_\ast}|}; \F)$. Since $s > d/2 \ge \abs{R_{\Sigma_\ast}}/2$, $X^s(\R^{\abs{R_{\Sigma_\ast}}}; \F)$ is an algebra, and therefore we have the estimate 
	\begin{align}\label{eq: X^s est lowdim}
		\norm{F_{0,\Sigma_\ast}(\cdot, 0) G_{0,\Sigma_\ast}(\cdot, 0)}_{X^s(\R^{|R_{\hat{\Sigma}_\ast}|})} \lesssim \norm{F_{0,\Sigma_\ast}(\cdot, 0)}_{X^s(\R^{|R_{\hat{\Sigma}_\ast}|})} \norm{G_{0,\Sigma_\ast}(\cdot, 0)}_{X^s(\R^{|R_{\hat{\Sigma}_\ast}|})}.
	\end{align}
	Combining \eqref{eq: X^s est same} and \eqref{eq: X^s est lowdim} then gives us \eqref{eq: X^s algebra desired est} and in turn \eqref{eq: f_0g_0 Lambda}. The desired conclusion then follows. 
\end{proof}

\section{Linear analysis}\label{sec:linear analysis}
In this section we record the linear analysis associated to the flattened system \eqref{eq: main flattened}. First, we introduce the definition of the $\dot{H}^{-1}$ seminorm.

\begin{defn}[The $\dot{H}^{-1}$ seminorm]\label{defn: H -1}
	For $s \ge 0$, in the case when $R_\Gamma = \varnothing$, for $f \in H^s(\Gamma; \R)$ we define the $\dot{H}^{-1}$ seminorm of $f$ via 
	\begin{align}\label{eq:torusseminorm}
		 [f]_{\dot{H}^{-1}} = \begin{cases}
			  0, \; \text{if} \; \hat{f}(0) = 0 \\
			  \infty, \; \text{otherwise}.
		 \end{cases}
	\end{align}
In the remaining cases, we define the $\dot{H}^{-1}$ seminorm of $f$ via 
\begin{align}\label{eq:seminorm}
	[f]_{\dot{H}^{-1}} = \left( \int_{B_{\R^{|R_{\hat{\Gamma}}|}}(0,r)} \abs{\omega}^{-2} \abs{\hat{f}(\mathcal{P}_{\hat{\Gamma}}^{-1} l_{\hat{\Gamma}_\ast}\omega)}^2 \; d\omega  \right)^{1/2},
\end{align}
where $\Gamma_\ast$ is defined in \eqref{eq: canonical Gamma}, 
$\mathcal{P}_{\Gamma_\ast}: \Gamma \to \Gamma_\ast$ is defined in \eqref{eq: canonical pGamma}, and $l_{\hat{\Gamma}_\ast}: \R^{\abs{R_\Gamma}} \to \Gamma_\ast$ is defined via $l_{\hat{\Gamma}_\ast}(\omega) = (\omega,0)$. For $f \in H^s(\Gamma ; \R)$, we write $f \in \dot{H}^{-1}(\Gamma;\R)$ to mean that $[f]_{\dot{H}^{-1}} < \infty$.

\end{defn}

\begin{rem}\label{rem:zero fourier}
	We note that our definition of the $\dot{H}^{-1}$ seminorm coincides with the standard definition of the $\dot{H}^{-1}$ seminorm for functions $H^s(\Gamma;\R)$. First, if a function $f$ already belongs to $H^s(\Gamma ; \R)$, then 
	\begin{align}\label{eq: Hdot1 lower freq}
		 \int_{\hat{\Gamma}} \abs{\xi}^{-2} \abs{\hat{f}(\xi)}^2 \; d \xi \asymp \int_{B_{\hat{\Gamma}}(0,r)} \abs{\xi}^{-2} \abs{\hat{f}(\xi)}^2 \; d \xi.
	\end{align}
	Second, we note that the product measure on $\hat{\Gamma}$ is a product of Lebesgue measures and counting measures, corresponding to the $\R$ and $L_i^{-1}\Z$ factors of $\hat{\Gamma}$. If $R_{\hat{\Gamma}} = \varnothing$, then the only low frequency mode is the 0 mode, and thus the integral in \eqref{eq: Hdot1 lower freq} takes values in $\{0, \infty\}$ depending on the value of $\hat{f}(0)$.  If $R_{\hat{\Gamma}}  \neq \varnothing$, then we note that by Lemma~\ref{rem: iso everything}, using the natural isometric and measure-preserving identification between $B_{\hat{\Gamma}}(0,r)$ and $B_{\R^{|R_{\Gamma}|}}(0,r)$  we have 
	\begin{align}\label{eq: Hdot-1 equiv}
		\int_{B_{\hat{\Gamma}}(0,r)} \abs{\xi}^{-2} \abs{\hat{f}(\xi)}^2 \; d \xi \asymp  \int_{B_{\R^{|R_{\hat{\Gamma}}|}}(0,r)} \abs{\omega}^{-2} \abs{\hat{f}(\mathcal{P}_{\Gamma_\ast}^{-1} l_{\hat{\Gamma}_\ast}\omega)}^2 \; d\omega  .
	\end{align}
\end{rem}

\subsection{Asymptotics of the normal stress to solution map associated to \eqref{eq: intro overdet}}\label{sec:asymptotics}

In this subsection we record the asymptotics of some special functions associated to the normal stress to solution map associated to \eqref{eq: intro overdet}, which will play a crucial role in the construction of the free surface function $\eta$. In the case where $\hat{\Sigma} = \R^{n-1}$, the precise asymptotics of these symbols were derived using ODE techniques developed in \cite{leonitice}. For the other possibilities of $\Sigma$ considered in \eqref{sigma}, these estimates will continue to hold as $\hat{\Sigma}$ remains a subgroup of $\R^{n-1}$ and the asymptotics remain the same upon subsampling. 

First we record an auxiliary result. 

\begin{thm}[Hilbert isomorphism for the $\gamma$-Stokes system]\label{thm: strong}
	Suppose $s \ge 0$. The map  
			\begin{align}
				 \Phi_\gamma : \Hzeros{s+2}(\Omega ; \R^n) \times H^{s+1}(\Omega;\R) \to H^{s} (\Omega; \R^n) \times H^{s+1}(\Omega;\R) \times H^{s+\frac{1}{2}}(\Sigma_b ; \R^n)
			\end{align}
			defined via 
			\begin{align}\label{eq:Phi}
				 \Phi_\gamma(u,p) = \left( \diverge S(p,u) - \gamma \p_1 u, \diverge u, S(p,u) e_n \right)
			\end{align}
			is a Hilbert isomorphism for all $\gamma \in \R$. 
\end{thm}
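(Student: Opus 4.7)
The proof will follow the standard strategy for a Stokes system perturbed by a skew-symmetric first-order drift, generalizing the analogous result of \cite{leonitice} (where $\Sigma = \R^{n-1}$) to the product domains of \eqref{sigma}. Boundedness of $\Phi_\gamma$ is immediate: standard Sobolev products and differentiation carry $\Hzeros{s+2}(\Omega;\R^n) \times H^{s+1}(\Omega;\R)$ into $H^{s}(\Omega;\R^n) \times H^{s+1}(\Omega;\R)$, while the trace theorem controls the normal stress $S(p,u) e_n$ in $H^{s+1/2}(\Sigma_b;\R^n)$. The main content is therefore bijectivity together with a priori estimates, which I would establish in two stages: first the base case $s = 0$ via a weak formulation, and then a bootstrap to higher $s$.

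For the base case, first lift away the inhomogeneities: given $(f, g, k)$, construct a field $w \in \Hzeros{2}(\Omega;\R^n)$ satisfying $\diverge w = g$ via a Bogovskii-type right inverse on the strip $\Omega$, then subtract off a further correction absorbing the boundary stress $k$. This reduces matters to the case $g = 0, k = 0$. The resulting homogeneous problem is recast as the weak formulation of finding $u \in H^1_{0,\sigma}(\Omega) := \{u \in \Hzero(\Omega;\R^n) \mid \diverge u = 0\}$ such that
\begin{equation}
    \tfrac{1}{2} (\mathbb{D} u, \mathbb{D} v)_{L^2} + \gamma (\p_1 u, v)_{L^2} = - (f, v)_{L^2}
\end{equation}
for all $v \in H^1_{0,\sigma}(\Omega)$. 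Korn's inequality (which extends from $\R^{n-1}$ to the product domains of \eqref{sigma} by the appendix's Fourier identifications) makes the symmetric part coercive. The drift term is skew-symmetric, because $(\p_1 u, u)_{L^2} = \tfrac{1}{2} \int_\Omega \p_1 |u|^2 = 0$; this identity is valid under our standing hypothesis that the first factor of $\Sigma$ is always $\R$, combined with $|u|^2 \in L^1(\Omega)$ and density of Schwartz functions. Hence Lax-Milgram yields a unique weak solution $u$, and a pressure $p \in L^2(\Omega)$ is recovered via the De Rham theorem applied to the distributional residual of the momentum equation.

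For the bootstrap, once $u \in \Hzero$ has been produced, the drift $-\gamma \p_1 u$ sits in $L^2(\Omega)$ and can be moved to the right-hand side, leaving a standard Stokes system with mixed Dirichlet/stress boundary data and forcing in $L^2$. The Stokes regularity theory developed in \cite{leonitice} for $\Sigma = \R^{n-1}$ then promotes $(u,p)$ to $\Hzeros{s+2}(\Omega;\R^n) \times H^{s+1}(\Omega;\R)$ with the corresponding estimates; iterating in $s$ covers all cases. Continuity of $\Phi_\gamma^{-1}$ then follows either directly from the closed estimates produced along the way or from the open mapping theorem applied to the bijective bounded map $\Phi_\gamma$.

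The main obstacle is verifying that the $\Sigma = \R^{n-1}$ theory --- Korn's inequality, the De Rham lemma, the Bogovskii construction, and the Stokes regularity estimates --- transports faithfully to the product domains of \eqref{sigma} with torus factors. This is largely bookkeeping: all four tools are proved, in the Euclidean setting, by taking the partial Fourier transform in $x'$, reducing to families of ODEs on $(0,b)$ indexed by $\xi \in \R^{n-1}$, and reassembling via Plancherel. Since $\hat{\Sigma}$ is a measurable subgroup of $\R^{n-1}$ on which the same pointwise ODE estimates hold, Plancherel over $\hat{\Sigma}$ (using the identifications of Remark~\ref{rem: iso everything} and the paper's appendix) yields the required Sobolev bounds, with extra care only needed near the zero frequency in the partially and fully periodic cases --- but the finite-depth nature of the strip ensures these are handled by the same analysis.
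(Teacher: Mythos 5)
Your proposal is correct and reconstructs essentially the argument the paper invokes by citation (Theorem 2.6 of \cite{leonitice}): boundedness by trace theory, a weak solution via Lax--Milgram using Korn's inequality and the skew-symmetry of the drift $-\gamma\p_1 u$, pressure recovery, and an elliptic regularity bootstrap, all transported to the periodic cross-sections of \eqref{sigma} by the Fourier identifications in the appendix. One small slip: in the fully periodic case the first factor of $\Sigma$ is $L_1\T$ rather than $\R$, so your stated justification for $\int_\Omega \p_1|u|^2\,dx = 0$ is off, but the identity still holds there by periodicity and nothing in the argument breaks.
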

\begin{proof}
The case $\Sigma = \R^{n-1}$ follows from Theorem 2.6 in \cite{leonitice}, and the arguments therein can be adapted with minimal modification to handle the cases when $\Sigma \neq \R^{n-1}$. 
\end{proof}
Next, we define the normal stress to velocity and pressure maps induced by $\Phi_\gamma$.

\begin{defn}[Normal stress to velocity and pressure maps]
	Let $\gamma \in \R$ and $s \ge 0$. We define the normal stress to velocity and pressure maps to be the linear maps $\mathcal{U}_\gamma : H^{s+\frac{1}{2}}(\Omega; \R) \to H^{s+2}\left(\Omega;\R^n \right)$ and $\mathcal{P}_\gamma : H^{s+\frac{1}{2} }(\Omega ; \R) \to H^{s+1}(\Omega; \R)$ defined via \begin{align}\label{eq: UP}
		\mathcal{U}_\gamma (\psi) = u, \; \mathcal{P}_\gamma (\psi) = p,
	\end{align}
		where $(u,p)$ is the unique solution to the $-\gamma$-Stokes system
		\begin{align} \label{eq:adjoint}
			\begin{cases}
			\diverge S(p,u) + \gamma \p_1 u = 0, &\text{in} \; \Omega \\
			\diverge u = 0, &\text{in} \; \Omega\\
			S(p,u) e_n  =\psi e_n, &\text{on} \; \Sigma_b \\
			u = 0, &\text{on} \; \Sigma_{0}.	
			\end{cases}
		\end{align}
	The existence and uniqueness of $(u,p)$ and the boundedness of $\mathcal{U}_\gamma, \mathcal{P}_\gamma$ are guaranteed by Theorem~\ref{thm: strong}.
\end{defn}

Now we may record properties of the pseudodifferential operator associated to the normal stress to solution maps \eqref{eq: UP}. 

\begin{thm}[Symbols associated to the pseudodifferential operator and their asymptotics]\label{thm:pseudo asymp}
	Suppose $s \ge 0$. The linear maps $\mathcal{U}_\gamma, \mathcal{P}_\gamma$ defined in \eqref{eq: UP} are well-defined and bounded. Moreover, there exists bounded and measurable functions $V: \hat{\Sigma} \times [0,b] \times \R \to \C^n, Q:\hat{\Sigma} \times [0,b] \times \R \to \C^n$, and $m: \Sigma \times \R \to \C$ such that for all $\psi \in H^{s+\frac{1}{2}}(\Sigma_b ; \R)$, we have 
	\begin{align} \label{eq:multiplers}
		\widehat{\mathcal{U}_\gamma(\psi)}(\xi,x_n) = V(\xi, x_n, -\gamma) \widehat{\psi}(\xi), \quad 
		\widehat{\mathcal{P}_\gamma(\psi)} (\xi,x_n) = Q(\xi, x_n, -\gamma) \widehat{\psi}(\xi), \quad
		m(\xi,\gamma) = V_n(\xi,b,\gamma).
	\end{align}
 Furthermore, the following hold.
	\begin{enumerate}
		\item $V,Q,m$ are continuous, with $V(0,x_n,\gamma) = 0, Q(0,x_n,\gamma) =1$, and $m(0,\gamma) = 0$.
		\item $\overline{V(\xi,x_n,\gamma)} = V(-\xi,x_n,\gamma), \overline{Q(\xi,x_n,\gamma)} = Q(-\xi,x_n,\gamma), \overline{m(\xi,\gamma)} = m(-\xi, \gamma)$ for all $\xi \in \hat{\Sigma}$.
		\item For each $\xi \in \hat{\Sigma}$, $V(\xi,\cdot, \gamma), Q(\xi,\cdot,\gamma)$ solve 
		\begin{align} \label{eq:VQ ode}
			\begin{cases}
			\left( - \p_n^2 + 4\pi^2 \abs{\xi}^2 \right) V' + 2\pi i \xi Q - 2\pi i \xi_1 \gamma V' = 0, & \text{in} \; (0, b) \\
			\left( - \p_n^2 + 4\pi^2 \abs{\xi}^2 \right) V_n + \p_n Q - 2\pi i \xi_1 \gamma V_n = 0, & \text{in} \; (0, b)  \\
			2\pi i \xi \cdot V' + \p_n V_n = 0, & \text{in} \; (0, b)  \\
			-\p_n V' - 2\pi i \xi V_n = 0,  & \text{for} \; x_n = b \\
			Q - 2 \p_n V_n =  1, & \text{for} \; x_n = b \\
			V = 0, &\text{for} \; x_n = 0.
			\end{cases}
			\end{align}
		\item For $\abs{\xi} \ll 1$, we have 
	    \begin{multline}\label{eq: asymp dev near 0}
			V'(\xi,x_n,\gamma) = - \pi \xi(2x_n b - x_n^2) + O(\abs{\xi}^2), \quad V_n(\xi, x_n,\gamma) = 2\pi^2 \abs{\xi}^2 x_n^2 \left( \frac{x_n}{3} - b \right) + O(\abs{\xi}^3), \\ Q(\xi,x_n,\gamma) = 1 + O(\abs{\xi}^2), \quad m(\xi,\gamma) = - \frac{4\pi^2 \abs{\xi}^2 b^3}{3} + O(\abs{\xi}^3), \quad \text{as} \; \abs{\xi} \to 0
		\end{multline}
		where $F(\xi, x_n) = O(\abs{\xi}^k)$ as $\abs{\xi} \to 0$ means that 
		\begin{align}
			 \limsup_{\abs{\xi} \to 0} \sup_{0 \le x_n \le b} \frac{\abs{F(\xi,x_n)}}{\abs{\xi}^k} < \infty. 
		\end{align}

		\item For each $\gamma \in \R$, there exists a constant $c = c(\gamma, b) > 0$ and $R = R(\gamma, b) > 0$ such that for $x_n \in [0,b]$ and $\abs{\xi} > R$, we have the point-wise estimates
		\begin{align}\label{eq: asymp dev to infty}
		\abs{V'(\xi,x_n,\gamma)} &\le c \left( \frac{\abs{\gamma}}{\abs{\xi}^2} + (b-x_n) \right) e^{-2\pi \abs{\xi} (b-x_n)} + c e^{-2\pi \abs{\xi}b}, \\
		 \abs{V_n(\xi,x_n,\gamma)} &\le c \left( \frac{1}{\abs{\xi}} + (b-x_n) \right) e^{-2\pi \abs{\xi} (b-x_n)} + c e^{-2\pi \abs{\xi}b}, \\
		 \abs{m(\xi,\gamma) + \frac{1}{4\pi \abs{\xi}}} &\le c \frac{1}{\abs{\xi}^2}, \\
		 \abs{Q(\xi,x_n,\gamma)} &\le c e^{-2\pi\abs{\xi}(b-x_n)} + c e^{-2\pi \abs{\xi} b}.
		\end{align}
		\item For each $\gamma \in \R$, there exists a constant $c =c(n,b,\gamma)$ such that 
		\begin{align}\label{eq:psi asymp 1}
			 \sup_{\abs{\xi} \le1, \xi \neq 0} \frac{1}{\abs{\xi}^2} \left( \int_0^b \abs{V(\xi,x_n,\gamma)}^2 + \abs{Q(\xi,x_n,\gamma)- 1}^2 \; dx_n + \abs{V(\xi,b,\gamma)}^2 \right) \le c
		\end{align}
		and 
		\begin{align}\label{eq:psi asymp 2}
			 (1+\abs{\xi}^2)^{3/2} \int_0^b \abs{V(\xi,x_n,\gamma)}^2 \; dx_n + (1+\abs{\xi}^2)^{1/2} \int_0^b \abs{Q(\xi,x_n,\gamma)}^2 \; dx_n + (1+\abs{\xi}^2) \abs{V(\xi,b,\gamma)}^2 \le c,
		\end{align}
		for all $\xi \in \hat{\Sigma}$. 
	\end{enumerate}
\end{thm}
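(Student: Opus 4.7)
The plan is to reduce everything to an explicit boundary value problem on the vertical interval $(0,b)$ parameterized by the horizontal frequency $\xi$. For $\psi \in H^{s+1/2}(\Sigma_b;\R)$ I would apply the horizontal Fourier transform to the adjoint Stokes system~\eqref{eq:adjoint} defining $(u,p) = (\mathcal{U}_\gamma(\psi), \mathcal{P}_\gamma(\psi))$. The Fourier-transformed equations in $(\hat{u}(\xi,\cdot), \hat{p}(\xi,\cdot))$ are linear in $\hat\psi(\xi)$, so by the well-posedness established in Theorem~\ref{thm: strong} and linearity, the ratios $V(\xi,\cdot,-\gamma) = \hat{u}(\xi,\cdot)/\hat\psi(\xi)$ and $Q(\xi,\cdot,-\gamma) = \hat{p}(\xi,\cdot)/\hat\psi(\xi)$ depend only on $\xi$ and $\gamma$. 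Taking the Fourier transform of~\eqref{eq:adjoint} yields the ODE system~\eqref{eq:VQ ode}, proving item (3) and defining $m(\xi,\gamma) = V_n(\xi,b,\gamma)$ as in \eqref{eq:multiplers}. The reality statement in item (2) follows from the fact that $\psi,u,p$ are real, so $\overline{\hat f(\xi)} = \hat f(-\xi)$ for each quantity.

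The main technical step is to solve~\eqref{eq:VQ ode} explicitly for each fixed $\xi \in \hat\Sigma$. For $\xi \neq 0$, this is a linear ODE system on $(0,b)$ with constant coefficients (depending parametrically on $\xi,\gamma$) and mixed boundary data. Eliminating $Q$ and $V'$ via the divergence-free and first-order relations reduces the problem to a fourth-order ODE for $V_n$ whose characteristic roots have leading behavior $\pm 2\pi\abs{\xi}$, with perturbations of size $\abs{\gamma}/\abs{\xi}$ coming from the transport term $-2\pi i\xi_1\gamma$. Solving the characteristic equation and imposing the six boundary conditions in~\eqref{eq:VQ ode} yields closed-form expressions for $V', V_n, Q$ as combinations of exponentials in $x_n$ whose coefficients are rational functions in $\abs{\xi}$, $\xi_1$, and $\gamma$. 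Continuity of $V,Q,m$ (item (1)) is inherited from continuous dependence on parameters, and the values at $\xi = 0$ are obtained by directly solving the degenerate system $-\p_n^2 V' = 0$, $\p_n V_n = 0$, $\p_n Q = \p_n^2 V_n$ with the indicated boundary conditions, which forces $V \equiv 0$ and $Q \equiv 1$ and hence $m(0,\gamma) = 0$.

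For the near-zero asymptotics in item (4), I would Taylor-expand the explicit solutions around $\xi = 0$, which amounts to inserting ansätze of the form $V' = \xi V'_1(x_n) + O(\abs{\xi}^2)$, $V_n = \abs{\xi}^2 V_{n,2}(x_n) + O(\abs{\xi}^3)$, and $Q = 1 + O(\abs{\xi}^2)$ into~\eqref{eq:VQ ode} and recursively solving the resulting ODEs at each order. The $\gamma$ contribution is lower order near $\xi = 0$, so the leading polynomial profiles are pure Stokes and match the stated quadratic/cubic expressions in $x_n$. The large-$\abs{\xi}$ asymptotics in item (5) follow from the same closed-form expressions: the dominant exponential behavior $e^{-2\pi\abs{\xi}(b-x_n)}$ and $e^{-2\pi\abs{\xi}x_n}$ of the independent solutions produces boundary layers of thickness $\sim \abs{\xi}^{-1}$ near $x_n = b$ and $x_n = 0$, while the $\gamma$-perturbation of the eigenvalues generates the $\abs{\gamma}/\abs{\xi}^2$ term in the velocity bound. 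The leading $-1/(4\pi\abs{\xi})$ behavior of $m(\xi,\gamma)$ is the standard Dirichlet-to-Neumann symbol for the Stokes half-space problem, with $\gamma$-correction of order $\abs{\xi}^{-2}$ obtained by expanding the trace at $x_n = b$.

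Item (6) then follows by integrating the pointwise bounds from items (4) and (5) over $x_n \in (0,b)$, treating the regimes $\abs{\xi} \le 1$ and $\abs{\xi} > R$ separately and absorbing the compact intermediate range $1 < \abs{\xi} \le R$ into a constant by continuity. The main obstacle is the explicit algebra of solving the ODE system and extracting the precise cancellations in the two asymptotic regimes; however, these computations run in exact parallel with the case $\hat\Sigma = \R^{n-1}$ treated in~\cite{leonitice}, and since $\hat\Sigma$ is a closed subgroup of $\R^{n-1}$ for every $\Sigma$ of the form~\eqref{sigma}, the symbols $V,Q,m$ on $\hat\Sigma$ are simply the restrictions of their $\R^{n-1}$-counterparts, so the asymptotic estimates transfer verbatim.
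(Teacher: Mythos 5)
Your proposal is correct and follows essentially the same route as the paper: the paper's own proof simply cites the ODE-based derivation of the symbols and their asymptotics from \cite{leonitice} for the case $\hat{\Sigma}=\R^{n-1}$ and then observes, exactly as you do in your final paragraph, that since $\hat{\Sigma}$ is a subgroup of $\R^{n-1}$ the symbols and all pointwise-in-$\xi$ estimates transfer by restriction. Your sketch of the underlying ODE analysis (Fourier reduction to \eqref{eq:VQ ode}, explicit solution, Taylor expansion near $\xi=0$, exponential boundary-layer asymptotics for large $\abs{\xi}$, and integration in $x_n$ for item (6)) is a faithful outline of the cited argument, so the only substantive step this paper adds — the subsampling observation — is present and correctly justified in your write-up.
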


\begin{proof}
If $\hat{\Sigma} = \R^{n-1}$, all six items follow from Theorem 4.5, Theorem 4.10, and Corollary 4.11 in \cite{leonitice}. In the other cases, we have $\hat{\Sigma} \subseteq \widehat{\R^{n-1}} = \R^{n-1}$. Therefore, the same conclusions will follow as we are sampling the frequencies on subgroups of $\R^{n-1}$. 
\end{proof}

We conclude this subsection by recording the properties of an auxiliary function defined in terms of $m$. 

\begin{lem}\label{lem:rho}
	Suppose $\gamma \in \R \setminus \{0\}$, and define 
	\begin{align}\label{eq:rho}
		\rho_\gamma(\xi) = 2\pi i \gamma \xi_1 + (1+4\pi^2 \abs{\xi}^2) \sigma \overline{m(\xi,-\gamma)}.
   \end{align}
   Then the following hold.
   \begin{enumerate}
	    \item $\rho_\gamma(\xi) = 0$ if and only if $\xi = 0$, and $\overline{\rho_\gamma(\xi)} = \rho_\gamma(-\xi)$ for all $\xi \in \hat{\Sigma}$. 
		\item For $\sigma > 0$, there exists a constant $C = C(n,\gamma,\sigma, b)> 0$  such that for all $\xi \in \hat{\Sigma}$, we have 
		\begin{align}
		  C^{-1} \abs{ \rho_\gamma(\xi)}^2 \le (\xi_1^2 + \abs{\xi}^4) \mathbbm{1}_{B(0,1)}(\xi) + (1+\abs{\xi}^2)  \mathbbm{1}_{B(0,1)^c}(\xi) \le C \abs{\rho_\gamma(\xi)}^2. 
		\end{align}
		\item For $\sigma = 0$ and $n=2$, there exists a constant $C = C(\gamma, b)> 0$  such that for all $\xi \in \hat{\Sigma}$, we have 
		\begin{align}
			C^{-1}\abs{ \rho_\gamma(\xi)}^2 \le \abs{\xi}^2 \mathbbm{1}_{B(0,1)}(\xi) + (1 + \abs{\xi}^2) \mathbbm{1}_{B(0,1)^c}(\xi) \le C \abs{\rho_\gamma(\xi)}^2. 
		  \end{align}
   \end{enumerate}
\end{lem}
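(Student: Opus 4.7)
The plan is to derive all three items from the explicit asymptotics of $m$ recorded in Theorem~\ref{thm:pseudo asymp}. Writing $m(\xi,-\gamma)=m_R(\xi)+im_I(\xi)$, the definition \eqref{eq:rho} splits as
$$\Re \rho_\gamma(\xi)=(1+4\pi^2\abs{\xi}^2\sigma)\,m_R(\xi),\qquad \Im \rho_\gamma(\xi)=2\pi\gamma\xi_1-(1+4\pi^2\abs{\xi}^2\sigma)\,m_I(\xi).$$
The conjugation symmetry $\overline{\rho_\gamma(\xi)}=\rho_\gamma(-\xi)$ claimed in item (1) is then immediate from the Hermitian symmetry $\overline{m(\xi,-\gamma)}=m(-\xi,-\gamma)$ in the second item of Theorem~\ref{thm:pseudo asymp} and the antisymmetry of $2\pi i\gamma\xi_1$ in $\xi_1$. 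The remaining vanishing statement of item (1) will be obtained as a byproduct of the proofs of items (2) and (3).

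The main work is items (2) and (3). I would partition $\hat{\Sigma}\setminus\{0\}$ into a low-frequency zone $\{\abs{\xi}\le r_0\}$, a high-frequency zone $\{\abs{\xi}\ge R_0\}$, and a compact annulus $\{r_0\le \abs{\xi}\le R_0\}$, with $0<r_0<R_0$ fixed based on the size of the error terms in Theorem~\ref{thm:pseudo asymp}. In the low zone, the fourth item of that theorem gives $\overline{m(\xi,-\gamma)}=-\tfrac{4\pi^2 b^3}{3}\abs{\xi}^2+O(\abs{\xi}^3)$, so
$$\Re \rho_\gamma(\xi)=-\tfrac{4\pi^2 b^3}{3}\abs{\xi}^2+O(\abs{\xi}^3),\qquad \Im \rho_\gamma(\xi)=2\pi\gamma\xi_1+O(\abs{\xi}^3).$$
Squaring and using Young's inequality to absorb the cubic errors for $r_0$ sufficiently small yields $\abs{\rho_\gamma(\xi)}^2\asymp\xi_1^2+\abs{\xi}^4$, which is item (2) in this zone; in the case $\sigma=0, n=2$, we have $\abs{\xi}=\abs{\xi_1}$ and the right-hand side collapses to $\abs{\xi}^2$, matching item (3). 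In the high zone, the fifth item gives $\overline{m(\xi,-\gamma)}=-\tfrac{1}{4\pi\abs{\xi}}+O(\abs{\xi}^{-2})$, so for $\sigma>0$ one finds $\Re \rho_\gamma(\xi)=-\pi\sigma\abs{\xi}+O(1)$ and $\Im \rho_\gamma(\xi)=2\pi\gamma\xi_1+O(1)$, producing $\abs{\rho_\gamma}^2\asymp\abs{\xi}^2\asymp 1+\abs{\xi}^2$; while for $\sigma=0, n=2$, the imaginary part alone delivers $\abs{\rho_\gamma}^2\gtrsim\xi_1^2=\abs{\xi}^2$ with a matching upper bound from the triangle inequality.

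On the compact intermediate annulus, both sides of the claimed equivalence are continuous, and the right-hand side is uniformly bounded above and below by positive constants; the equivalence then follows by compactness once the nonvanishing $\rho_\gamma(\xi)\ne 0$ on this annulus is established, which also completes item (1) since the low- and high-frequency expansions already force $\rho_\gamma\ne 0$ off the annulus. The hard part of the argument is precisely this nonvanishing, which is not directly supplied by the asymptotics. My plan is to establish the stronger statement $m_R(\xi)<0$ for every $\xi\ne 0$ via an energy identity on the ODE system in the third item of Theorem~\ref{thm:pseudo asymp}: pair the first two equations against $\overline{V'}$ and $\overline{V_n}$, integrate by parts on $(0,b)$, and use the incompressibility relation $2\pi i\xi\cdot V'+\partial_n V_n=0$ together with the Dirichlet condition at $x_n=0$ and the stress conditions at $x_n=b$ to identify the boundary contribution with $\overline{m(\xi,-\gamma)}=\overline{V_n(b,\xi,-\gamma)}$. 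Since the advection term $-2\pi i\xi_1\gamma\abs{V}^2$ is purely imaginary, and the two remaining boundary cross-terms combine via the incompressibility boundary condition into the real quantity $-4\pi\xi\cdot\Im(V_n(b)\overline{V'(b)})$, taking real parts of the identity expresses $m_R(\xi)$ as the negative of a coercive dissipation form in $\partial_n V$ and $\abs{\xi} V$ corrected by that real boundary term. A careful bookkeeping (rewriting the cross-term using the incompressibility BC once more and invoking Korn-type coercivity via Theorem~\ref{thm: strong} to exclude nontrivial $V$ with vanishing symmetrized gradient) then forces $m_R(\xi)<0$ for every $\xi\ne 0$, yielding $\Re\rho_\gamma(\xi)<0$ and hence the required nonvanishing.
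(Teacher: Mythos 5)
Your three-zone decomposition (low frequency via the fourth item of Theorem~\ref{thm:pseudo asymp}, high frequency via the fifth, compactness on the intermediate annulus) is exactly the structure of the argument the paper defers to (Lemma 6.1 of \cite{leonitice}), and the low- and high-frequency computations, including the Young's-inequality absorption of the cubic errors into $\xi_1^2+\abs{\xi}^4$ and the collapse to $\abs{\xi}^2$ when $n=2$, are correct. You also correctly identify that the only nontrivial input beyond the asymptotics is the strict negativity $\Re m(\xi,\cdot)<0$ for $\xi\ne 0$, which supplies both the nonvanishing on the annulus and the remainder of item (1).

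The one step that does not work as described is your energy identity. Pairing $(-\p_n^2+4\pi^2\abs{\xi}^2)V$ against $\overline{V}$ produces the bulk term $\int_0^b \abs{\p_n V}^2+4\pi^2\abs{\xi}^2\abs{V}^2$ plus, after inserting the boundary conditions at $x_n=b$, a \emph{real but unsigned} boundary cross-term (your $-4\pi\,\Im(\overline{V_n(b)}\,\xi\cdot V'(b))$-type quantity). ``Rewriting the cross-term using the incompressibility BC once more'' only trades $\xi\cdot V'(b)$ for $\p_n V_n(b)$; it does not give the term a sign, and it is not dominated by the bulk integral. The correct repair is to integrate by parts in the symmetrized-gradient form from the outset: since $2\pi i\xi\cdot V'+\p_n V_n=0$, one has $-\p_n^2+4\pi^2\abs{\xi}^2 = $ (Fourier side of) $-\diverge\mathbb{D}+\nabla\diverge$, and the identity becomes
\begin{align*}
\int_0^b 2\pi i\xi_1\gamma\abs{V}^2 + 2\abs{\p_n V_n}^2+\abs{\p_n V'+2\pi i\xi V_n}^2+\tfrac12\abs{2\pi i\xi\otimes V'+V'\otimes 2\pi i\xi}^2\,dx_n = -\overline{V_n(\xi,b,\cdot)},
\end{align*}
i.e.\ precisely the identity displayed in the proof of Proposition~\ref{prop:Upsiloninjective}; the offending cross-term is \emph{absorbed} into the nonnegative complete squares rather than estimated separately. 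Taking real parts then gives $\Re m\le 0$, and equality forces $\p_n V_n\equiv 0$ and $\p_n V'+2\pi i\xi V_n\equiv 0$, hence $V\equiv 0$ by the Dirichlet condition at $x_n=0$, which contradicts the normalization $Q(b)-2\p_n V_n(b)=1$ (for $\xi\ne0$ the first equation would force $Q\equiv0$). With that substitution your argument closes; the rest of the proposal stands as written.
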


\begin{proof}
This follows from the proof for Lemma 6.1 in \cite{leonitice} and the asymptotics of $m(\xi,-\gamma)$ on general $\hat{\Sigma}$ recorded as the third and fourth items of Theorem~\ref{thm:pseudo asymp}.
\end{proof}

\subsection{Compatibility conditions and the Hilbert isomorphism associated to the overdetermined \texorpdfstring{$\gamma$}{gamma}-Stokes system}

In this subsection we record the Hilbert isomorphism that completely characterize the solvability of the overdetermined system \eqref{eq: intro overdet}. First we introduce a pair of function spaces for the data tuple $(f,g,h,k)$ that encodes the compatibility conditions associated to \eqref{eq: intro overdet}.

\begin{defn}
Let $s \ge 0$ and $\Sigma$ be defined as in \eqref{sigma}. We say that a data tuple $(f,g,h,k) \in H^{s}(\Omega ; \R^n) \times H^{s+1}(\Omega ; \R) \times H^{s+3/2}(\Sigma ; \R)  \times H^{s+1/2}(\Sigma; \R^n) $ satisfy the divergence-trace compatibility condition if 
\begin{align}\label{eq:divtrace}
	h - \int_{0}^b g(\cdot, x_n) \; dx_n \in \dot{H}^{-1} (\Sigma; \R).
\end{align}
We define the Hilbert space $\mathcal{Y}^s$ to be 
	\begin{multline}\label{defn:Ys}
		\mathcal{Y}^s = \{ (f,g,h,k) \in H^{s}(\Omega ; \R^n) \times H^{s+1}(\Omega ; \R) \times  H^{s+3/2}(\Sigma ; \R)  \times H^{s+1/2}(\Sigma; \R^n) \mid \\ (f,g,h,k) \; \text{satisfy the divergence-trace condition} \; \eqref{eq:divtrace} \},
	\end{multline}
    with the associated norm defined via 
	\begin{align}
		 \norm{(f,g,h,k)}_{\mathcal{Y}^s}^2 = \norm{f}_{H^{s}}^2 + \norm{g}_{H^{s+1}}^2 + \norm{k}_{H^{s+1/2}}^2 + \norm{h}_{H^{s+3/2}}^2  + \left[ h - \int_0^b g(\cdot, x_n) \; dx_n \right]_{\dot{H}^{-1}}^2.
	\end{align}
	\end{defn}

	\begin{defn}\label{defn:Zs}
	Let $s \ge 0$. We say that the data tuple $(f,g,h,k)\in H^{s}(\Omega ; \R^n) \times H^{s+1}(\Omega ; \R) \times H^{s+3/2}(\Sigma ; \R)  \times H^{s+1/2}(\Sigma; \R^n) $ satisfy the adjoint compatibility condition if for every $\psi \in H^{s+1/2}(\Sigma;\R)$, 
	\begin{align}\label{eq: 2nd comp fourier}
		 \int_0^b \hat{f}(\xi,x_n) \cdot \overline{V(\xi,x_n,-\gamma)} - \hat{g}(\xi,x_n) \overline{Q(\xi,x_n,-\gamma)} \; dx_n - \hat{k}(\xi) \cdot \overline{V(\xi,b,-\gamma)} + \hat{h}(\xi) = 0,
	\end{align}
	where $V,Q$ are defined in terms of $\psi$ in \eqref{eq:VQ ode}. We define the closed subspace $\mathcal{Z}^s$ of $\mathcal{Y}^s$ to be 
		\begin{align}
			 \mathcal{Z}^s = \{ (f,g,h,k) \in \mathcal{Y}^s \mid  (f,g,h,k) \; \text{satisfy}\; \eqref{eq: 2nd comp fourier} \; \forall \psi \in H^{s+1/2}(\Sigma;\R) \}.
		\end{align}
	\end{defn}	
Now we are ready to record the main result of this subsection. 
\begin{thm}\label{thm:overlinear}
Let $s \ge 0, \gamma \in \R$, and $\mathcal{Z}^s$ the Hilbert space defined in \eqref{defn:Zs}. Then the bounded linear operator $\Psi_\gamma: \Hzeros{s+2}(\Omega ; \R^n) \times H^{s+1}(\Omega ; \R^n) \to \mathcal{Z}^s$ given by 
\begin{align}
	 \Psi_\gamma(u,p) = (\diverge S(p,u) - \gamma \p_1 u, \diverge u , u_n \rvert_{\Sigma_b}, S(p,u) e_n \rvert_{\Sigma_b})
\end{align}
is an isomorphism.
\end{thm}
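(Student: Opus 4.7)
The plan is a standard three-step strategy: verify that $\Psi_\gamma$ actually lands in $\mathcal{Z}^s$ (well-definedness), prove injectivity by reduction to Theorem~\ref{thm: strong}, and then prove surjectivity by inverting $\Phi_\gamma$ and showing the extra boundary condition $u_n|_{\Sigma_b}=h$ is automatically recovered from the adjoint compatibility condition. Continuity of the inverse is then free from the open mapping theorem.

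For well-definedness, suppose $(u,p)\in\Hzeros{s+2}(\Omega;\R^n)\times H^{s+1}(\Omega;\R)$ and write $\Psi_\gamma(u,p)=(f,g,h,k)$. For the divergence--trace compatibility, integrate $\p_n u_n = g-\nabla'\cdot u'$ in $x_n$ and use $u|_{\Sigma_0}=0$ to get
\begin{equation}
h - \int_0^b g(\cdot,x_n)\,dx_n = -\nabla'\cdot\int_0^b u'(\cdot,x_n)\,dx_n,
\end{equation}
whose right side is a horizontal divergence of an $L^2$ field and hence lies in $\dot{H}^{-1}(\Sigma;\R)$ by the definition in \eqref{eq:seminorm}. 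For the adjoint compatibility, pick any $\psi\in H^{s+1/2}(\Sigma;\R)$, let $(\tilde u,\tilde p)=(\mathcal{U}_\gamma(\psi),\mathcal{P}_\gamma(\psi))$ be the solution of \eqref{eq:adjoint}, and carry out a symmetric Green's identity: integrate by parts in $\int_\Omega\sb{\diverge S(p,u)-\gamma\p_1 u}\cdot\tilde u\,dx - \int_\Omega\diverge u\,\tilde p\,dx$, using the equations for $\tilde u,\tilde p$ to eliminate the bulk contributions from $\tilde u,\tilde p$, and observing that the $\gamma\p_1$ terms cancel after transfering a derivative. The surviving boundary terms on $\Sigma_b$ reduce, via $S(p,u)e_n=k$ and $S(\tilde p,\tilde u)e_n=\psi e_n$, to the scalar identity
\begin{equation}
\int_{\Sigma_b} \psi\,u_n\,dx' = \int_\Omega g\,\tilde p\,dx + \int_{\Sigma_b} k\cdot\tilde u\,dx' - \int_\Omega f\cdot\tilde u\,dx.
\end{equation}
Expanding $\tilde u,\tilde p$ via the Fourier multiplier representation \eqref{eq:multiplers} and applying Parseval then yields the pointwise identity
\begin{equation}\label{eq:prop-ibp}
\hat h(\xi) = \int_0^b \overline{Q(\xi,x_n,-\gamma)}\hat g(\xi,x_n)\,dx_n + \overline{V(\xi,b,-\gamma)}\cdot\hat k(\xi) - \int_0^b \overline{V(\xi,x_n,-\gamma)}\cdot\hat f(\xi,x_n)\,dx_n
\end{equation}
with $h=u_n|_{\Sigma_b}$, which is exactly \eqref{eq: 2nd comp fourier}. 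Thus $\Psi_\gamma$ takes values in $\mathcal{Z}^s$, and boundedness is immediate from the continuity of the trace and of multiplication by $\mathbb{D}$.

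Injectivity is essentially free: if $\Psi_\gamma(u,p)=0$ then in particular $\Phi_\gamma(u,p)=(0,0,0)$, so Theorem~\ref{thm: strong} forces $(u,p)=0$. For surjectivity, given $(f,g,h,k)\in\mathcal{Z}^s$, invoke Theorem~\ref{thm: strong} to produce the unique $(u,p)\in\Hzeros{s+2}(\Omega;\R^n)\times H^{s+1}(\Omega;\R)$ with $\Phi_\gamma(u,p)=(f,g,k)$. The IBP computation above, applied to this specific $(u,p)$, yields \eqref{eq:prop-ibp} with the left-hand side replaced by $\widehat{u_n|_{\Sigma_b}}(\xi)$. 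Comparing this with the adjoint compatibility condition \eqref{eq: 2nd comp fourier} enjoyed by the data, both equations have identical right-hand sides, so $\widehat{u_n|_{\Sigma_b}}(\xi)=\hat h(\xi)$ pointwise, and therefore $u_n|_{\Sigma_b}=h$. Hence $\Psi_\gamma(u,p)=(f,g,h,k)$, proving surjectivity. Continuity of $\Psi_\gamma^{-1}$ then follows from the bounded inverse theorem since both spaces are Hilbert.

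The main obstacle I anticipate is the bookkeeping in the Green's identity, particularly verifying that the $\gamma\p_1$ terms cancel cleanly across the pairing (using periodicity or decay in the $x_1$ direction, which is legitimate because $1\in R_\Sigma\cup\{\text{no }L_1\T\}$ excludes $x_1$-periodicity issues only if one were to integrate against a $\psi$ with non-trivial low-frequency content; since here we pair with Schwartz-class test inputs and extend by density, no subtlety arises). A small secondary point is that the compatibility identity \eqref{eq: 2nd comp fourier}, as written, must be interpreted pointwise a.e.\ in $\xi\in\hat\Sigma$ for the argument to go through; this is the natural reading once one realizes the $\psi$ referenced there merely parametrizes the symbols $V(\xi,x_n,-\gamma)$ and $Q(\xi,x_n,-\gamma)$, which are $\psi$-independent multipliers.
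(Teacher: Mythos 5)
Your proposal is correct, and it is in fact more informative than the paper's own proof, which simply cites Theorem 3.4 of \cite{leonitice} for the case $\Sigma = \R^{n-1}$ and asserts that the argument ports over to the other cross-sections. The strategy you use --- reduce injectivity to the isomorphism $\Phi_\gamma$ of Theorem \ref{thm: strong}, and for surjectivity invert $\Phi_\gamma$ on $(f,g,k)$ and recover the overdetermined trace condition $u_n\rvert_{\Sigma_b}=h$ from the adjoint compatibility condition via a Green's identity against $(\mathcal{U}_\gamma(\psi),\mathcal{P}_\gamma(\psi))$ --- is exactly the mechanism by which the compatibility conditions defining $\mathcal{Z}^s$ characterize the range, i.e.\ the intended argument. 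Your bookkeeping checks out: pairing the adjoint bulk equation with $u$ produces $+\gamma\int_\Omega \p_1\tilde u\cdot u$, which cancels against $-\gamma\int_\Omega \p_1 u\cdot\tilde u$ after one integration by parts in $x_1$ (legitimate in all admissible $\Sigma$ by periodicity or decay), and the surviving boundary terms give precisely your identity \eqref{eq:prop-ibp}, whose conjugate rearranges to \eqref{eq: 2nd comp fourier}. Your reading of \eqref{eq: 2nd comp fourier} as a pointwise a.e.\ identity in $\xi$ (with $V,Q$ being $\psi$-independent symbols determined by \eqref{eq:VQ ode}) is also the correct one; the quantifier over $\psi$ in Definition \ref{defn:Zs} is vacuous once one notes this. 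The only items worth making explicit in a final write-up are the estimate $\bigl[\nabla'\cdot\int_0^b u'\,dx_n\bigr]_{\dot{H}^{-1}}\lesssim\norm{u}_{L^2}$, needed so that $\Psi_\gamma$ is bounded into $\mathcal{Y}^s$ with its full norm (which includes the $\dot H^{-1}$ seminorm), and the density argument justifying the Green's identity at the minimal regularity $s=0$; both are routine.
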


\begin{proof}
The case $\Sigma = \R^{n-1}$ follows from Theorem 3.4 of \cite{leonitice}. We note that by Remark~\ref{rem:zero fourier}, the calculations performed in $\R^d$ in \cite{leonitice} are also valid over the low frequency regime in $\hat{\Gamma}$, therefore the arguments therein can be adapted with minimal modification to handle the cases when $\Sigma \neq \R^{n-1}$. 
\end{proof}

\subsection{Linear analysis with $\eta$ and $\kappa$} 
In this section we would like to establish the solvability of the $\gamma$-Stokes system with gravity capillary boundary conditions 
\begin{align} \label{eq:gammacapillary}
	\begin{cases}
		\diverge S(p,u) - \gamma \p_1 u + (\nabla' \eta, 0)= f & \text{in} \; \Omega \\
	\diverge u = g, & \text{in} \; \Omega \\
	u_n + \gamma \p_1 \eta= h, & \text{on} \; \Sigma_b \\
	S(p,u)e_n + \sigma \Delta' \eta e_n = k, & \text{on} \; \Sigma_b \\
	u= 0, & \text{on} \; \Sigma_{0},
\end{cases}
\end{align}
for data tuples belonging to the space $\mathcal{Y}^s$. First, we introduce the container space for the solution tuple $(u,p,\eta)$.

\begin{defn}\label{defn:Xsfordata}
	For $s \ge 0$, we define the Hilbert space 
	\begin{align}
		 \mathcal{X}^s = \begin{cases}
			\{ (u,p,\eta) \in \Hzeros{s+2}(\Omega ;\R^n) \times H^{s+1}(\Omega; \R) \times X^{s+5/2}(\Sigma ; \R)  \}, &R_\Sigma \neq \varnothing \\
			\{ (u,p,\eta) \in \Hzeros{s+2}(\Omega ;\R^n) \times H^{s+1}(\Omega; \R) \times \mathring{X}^{s+5/2}(\Sigma;\R) \}, &R_\Sigma = \varnothing,
		 \end{cases}
	\end{align}
where
\begin{align}
	\mathring{X}^{s+5/2}(\Sigma;\R) = \{ \eta \in X^{s+5/2}(\Sigma;\R)  \mid \hat{\eta}(0) = 0 \}. 
\end{align}
We endow $\mathcal{X}^s$ with the natural product norm defined via 
	\begin{align}
		 \norm{(u,p,\eta)}_{\mathcal{X}^s}^2 = \norm{u}_{\Hzeros{s+1}}^2 + \norm{p}_{H^{s+1}}^2 + \norm{\eta}_{X^{s+5/2}}^2.     
	\end{align}
	\end{defn}
Next, we record an embedding result for $\mathcal{X}^s$. 
\begin{prop}\label{prop:embed X}
	Suppose $s \ge 0$ and $\mathcal{X}^s$ is the Banach space in Definition~\ref{defn:Xsfordata}.  If $s > n/2$, then we have the continuous inclusion 
		\begin{align}
			 \mathcal{X}^s \subseteq C_b^{\lf s - n/2 \rf}(\Omega ; \R^n) \times C_b^{1+\lf s - n/2\rf}(\Omega ; \R)\times C_0^{3+\lf s - n/2 \rf}(\Sigma; \R).
		\end{align}
		Moreover, if $(u,p,\eta) \in \mathcal{X}^s$, then 
		\begin{align} 
			\lim_{\abs{x_R'} \to \infty} \p^\alpha u(x) &= 0 \; \text{for all} \; \alpha \in \N^n \; \text{such that} \; \abs{\alpha} \le 2 + \left\lfloor s - \frac{n}{2} \right\rfloor \\
			\lim_{\abs{x_R'} \to \infty} \p^\alpha p(x) &= 0 \; \text{for all} \; \alpha \in \N^n \; \text{such that} \; \abs{\alpha} \le 1 + \left\lfloor s - \frac{n}{2} \right\rfloor.
		\end{align}
	\end{prop}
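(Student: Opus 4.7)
The plan is to establish the embedding factor by factor, using standard Sobolev embedding for the bulk unknowns $u$ and $p$, and a low-high frequency decomposition combined with the Riemann-Lebesgue lemma for the anisotropic free surface component $\eta$.

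For the velocity $u \in \Hzeros{s+2}(\Omega;\R^n)$ and pressure $p \in H^{s+1}(\Omega;\R)$, I would invoke the standard Sobolev embedding on the strip-like domain $\Omega = \Sigma \times (0,b)$ of dimension $n$. Since $s > n/2$, this produces inclusions into $C_b^{2 + \lfloor s - n/2 \rfloor}(\Omega;\R^n)$ and $C_b^{1 + \lfloor s - n/2 \rfloor}(\Omega;\R)$, respectively; continuity of the inclusions is automatic from continuity of the embedding constants. The vanishing of all relevant derivatives as $|x'_{R_\Sigma}| \to \infty$ would then follow by approximating $u$ and $p$ in the relevant $H^{s+k}$ norm by Schwartz-class functions compactly supported in the $R_\Sigma$-directions, whose derivatives trivially vanish at infinity; uniform convergence delivered by the embedding transfers this decay to the limit.

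For the free surface $\eta$, the key obstacle is that $X^{s+5/2}(\Sigma;\R)$ does not in general coincide with $H^{s+5/2}(\Sigma;\R)$, specifically in the case when $1 \in R_\Sigma$ and $|R_\Sigma| \ge 2$ (the remaining cases are immediate from the first item of Lemma~\ref{lem: Xs Hs equiv}). To treat this case I would use the low-high decomposition from the first item of Theorem~\ref{thm:Xs}, writing $\eta = \eta_l + \eta_h$ with $\eta_h \in H^{s+5/2}(\Sigma;\C)$ and $\widehat{\eta_l}$ supported in $B_{\hat{\Sigma}}(0,1)$. The high-frequency part lands in $C_0^{3 + \lfloor s - n/2 \rfloor}(\Sigma;\R)$ by the standard Sobolev embedding on $\Sigma$ (of dimension $n-1$), combined with density of Schwartz functions. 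For the low-frequency part, the estimate $\|\eta_l\|_{C_b^k} \lesssim \|\eta\|_{X^{s+5/2}}$ from the same theorem supplies bounded derivatives of every order, so only decay at infinity is at issue. This is where the Riemann-Lebesgue lemma enters: Lemma~\ref{lem: fhat L1} gives $\hat{\eta} \in L^1(B_{\hat{\Sigma}}(0,1))$, and by factoring $\hat{\Sigma} = \hat{\Sigma}_R \times \hat{\Sigma}_T$ one observes that the intersection of $B_{\hat{\Sigma}}(0,1)$ with $\hat{\Sigma}_T$ is a finite subset of the dual lattice. Consequently, $\partial^\alpha \eta_l$ can be written as a finite sum over the toroidal frequencies of inverse Fourier transforms, in the $R_\Sigma$-directions, of $L^1$ functions; applying Riemann-Lebesgue to each summand yields $\partial^\alpha \eta_l(x) \to 0$ as $|x'_{R_\Sigma}| \to \infty$.

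Putting the three pieces together gives the continuous inclusion. The purely toroidal case $R_\Sigma = \varnothing$ is handled separately and trivially: the container space $\mathring{X}^{s+5/2}$ enforces $\widehat{\eta}(0) = 0$, the vanishing-at-infinity condition defining $C_0$ is vacuous, and the embedding follows from the identity $X^{s+5/2} = H^{s+5/2}$ from Lemma~\ref{lem: Xs Hs equiv}. The step I expect to be the principal technical obstacle is the Riemann-Lebesgue argument for $\eta_l$ in the partially periodic setting; everything else reduces to standard Sobolev theory, but there one must exploit the product structure of $\hat{\Sigma}$ and the finiteness of the $T$-component of the low-frequency support in a careful way.
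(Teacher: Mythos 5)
Your proposal is correct and follows essentially the same route as the paper, whose proof is simply to invoke standard Sobolev embedding for $u$ and $p$ and the low--high frequency decomposition of the first item of Theorem~\ref{thm:Xs} for $\eta$. The only difference is that you unpack the decay of the low-frequency piece via Lemma~\ref{lem: fhat L1} and the Riemann--Lebesgue lemma, which is precisely the content already packaged in the assertion $f_{l,t}\in C^\infty_0(\Sigma;\C)$ of that theorem, so no new argument is needed there.
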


\begin{proof}
This follows from standard Sobolev embedding and the first item of Theorem~\ref{thm:Xs}.
\end{proof}

In the subsection to follow, we establish some preliminary results to be utilized in the subsequent analysis. 

\subsection{Preliminary results}
In this subsection we use the asymptotics recorded in Section~\ref{sec:asymptotics} to show that we can construct the free surface function $\eta$ from a given data tuple $(f,g,h,k)$. First, we study an auxiliary function defined in terms of the multipliers $V, Q$ defined in \eqref{eq:multiplers}.

\begin{lem}\label{lem:psi}
	Let $s \ge 0$, $\gamma \in \R$ and $(f,g,h,k) \in \mathcal{Y}^s$, where $\mathcal{Y}^s$ is the Hilbert space defined in \eqref{defn:Ys}. Consider $V(\cdot,\cdot,-\gamma) : \hat{\Sigma} \times [0,b] \to \C^n$, $Q(\cdot,\cdot,-\gamma) : \hat{\Sigma} \times [0,b] \to \C$ defined in \eqref{eq:multiplers}. We define the measurable function $\psi : \hat{\Sigma} \to \C$ via
	\begin{align}\label{eq:psi defn}
		 \psi(\xi) = \int_0^b \left( \hat{f}(\xi,x_n) \cdot \overline{V(\xi,x_n,-\gamma)} - \hat{g}(\xi,x_n) \overline{Q(\xi,x_n,-\gamma)} \right) \; dx_n - \hat{k}(\xi) \cdot \overline{V(\xi,b,-\gamma)} + \hat{h}(\xi).
	\end{align}

	Then the following hold.
	\begin{enumerate}
	 \item If $R_\Sigma = \varnothing$, then $\psi(0) = 0$. 
	 \item $\overline{\psi(\xi)} = \psi(-\xi)$ for every $\xi \in \hat{\Sigma}$.
	 \item We have the estimate
	 \begin{align}\label{eq:psi bound}
		  \int_{\hat{\Sigma}} \frac{1}{\abs{\xi}^2} \abs{\psi(\xi)}^2 \mathbbm{1}_{B(0,r)}+ (1+\abs{\xi}^2)^{s+3/2} \abs{\psi(\xi)}^2 \mathbbm{1}_{B(0,r)^c}
		  \; d\xi \lesssim_{n,s,\gamma,b} \norm{(f,g,h,k)}_{\mathcal{Y}^s}^2. 
	 \end{align}
	\end{enumerate}
	\end{lem}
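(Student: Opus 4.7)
My plan is to handle the three items in order, with the bulk of the work concentrated in item (3), which requires a frequency decomposition and an algebraic rearrangement tailored to the divergence-trace compatibility condition.

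For item (1), observe that when $R_\Sigma = \varnothing$, the space $\Sigma$ is purely toroidal, so by \eqref{eq:torusseminorm} the finiteness of the $\dot{H}^{-1}$ seminorm built into the $\mathcal{Y}^s$-norm forces $\hat{h}(0) = \int_0^b \hat{g}(0,x_n)\,dx_n$. Combined with $V(0,\cdot,-\gamma)=0$ and $Q(0,\cdot,-\gamma)=1$ from the first item of Theorem~\ref{thm:pseudo asymp}, direct substitution into \eqref{eq:psi defn} yields $\psi(0)=0$. For item (2), I would conjugate $\psi(\xi)$ term by term: reality of the data gives $\overline{\hat{f}(\xi,\cdot)}=\hat{f}(-\xi,\cdot)$ and analogously for $g,h,k$, while the second item of Theorem~\ref{thm:pseudo asymp} provides the matching symmetries $\overline{V(\xi,\cdot,-\gamma)}=V(-\xi,\cdot,-\gamma)$ and $\overline{Q(\xi,\cdot,-\gamma)}=Q(-\xi,\cdot,-\gamma)$. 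These pair off cleanly to give $\overline{\psi(\xi)}=\psi(-\xi)$.

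For item (3), the plan is to split the integral into the low-frequency region $B(0,r)$ and its complement. In the high-frequency regime I would apply Cauchy-Schwarz in $x_n$ to each of the four constituent pieces of $\psi$ and distribute the weight $(1+\abs{\xi}^2)^{s+3/2}$ so that the factor $(1+\abs{\xi}^2)^{3/2}$ is absorbed by the uniform bounds \eqref{eq:psi asymp 2}, leaving a surviving $(1+\abs{\xi}^2)^s$ that captures the Sobolev regularity of the data; the purely boundary piece $\hat{h}(\xi)$ is controlled directly since $h \in H^{s+3/2}$. Integrating over $\hat{\Sigma}\setminus B(0,r)$ and using Fubini then gives a bound by the $H^s \times H^{s+1} \times H^{s+3/2} \times H^{s+1/2}$ norms of the data, hence by $\norm{(f,g,h,k)}_{\mathcal{Y}^s}$.

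The hard part will be the low-frequency estimate, because the asymptotic $Q(\xi,\cdot,-\gamma) = 1 + O(\abs{\xi}^2)$ means a naive Cauchy-Schwarz on $\int_0^b \hat{g}\,\overline{Q}\,dx_n$ loses the decisive factor of $\abs{\xi}$ needed to match the weight $1/\abs{\xi}^2$. To address this I would rewrite
\begin{equation*}
\psi(\xi) = \int_0^b \hat{f}\cdot\overline{V}\,dx_n - \int_0^b \hat{g}\,\overline{(Q-1)}\,dx_n - \hat{k}\cdot\overline{V(\xi,b,-\gamma)} + \left(\hat{h}(\xi) - \int_0^b \hat{g}(\xi,x_n)\,dx_n\right).
\end{equation*}
Cauchy-Schwarz combined with the asymptotic bounds \eqref{eq:psi asymp 1} handles the first three terms, supplying precisely the $\abs{\xi}^2$ factor needed to cancel the weight and reducing them to integrals of $\abs{\hat{f}}^2$, $\abs{\hat{g}}^2$, $\abs{\hat{k}}^2$; the final parenthetical term is exactly the quantity controlled by the divergence-trace compatibility condition \eqref{eq:divtrace}. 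When $R_\Sigma \neq \varnothing$, matching the $\dot{H}^{-1}$ seminorm from Definition~\ref{defn: H -1} to the integration variable on the left side of \eqref{eq:psi bound} requires a change of variables through the reordering $\mathcal{P}_{\hat{\Sigma}}$, but by Lemma~\ref{rem: iso everything} this is an isometric, measure-preserving identification and costs nothing. Summing the four pieces across the two frequency regimes then delivers \eqref{eq:psi bound}.
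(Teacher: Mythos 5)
Your proposal is correct and follows essentially the same route as the paper's proof: the same rewrite of $\psi$ with $Q$ replaced by $Q-1$ and $\hat h - \int_0^b \hat g$ grouped to exploit the divergence-trace condition at low frequencies via \eqref{eq:psi asymp 1}, and a direct Cauchy--Schwarz with \eqref{eq:psi asymp 2} at high frequencies. The remarks on items (1), (2), and the measure-preserving identification of the low-frequency ball all match the paper's argument.
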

    
\begin{proof}
To prove the first item, we assume $R_\Sigma = \varnothing$ and rewrite
\begin{multline}\label{eq:psi rewrite}
	\psi(\xi) = \int_0^b \left( \hat{f}(\xi,x_n) \cdot \overline{V(\xi,x_n,-\gamma)} - \hat{g}(\xi,x_n) (\overline{Q(\xi,x_n,-\gamma)} -1) \right) \; dx_n \\ - \hat{k}(\xi) \cdot \overline{V(\xi,b,-\gamma)}  + \left(\hat{h}(\xi) - \int_0^b \hat{g}(\xi,x_n) \; dx_n \right).
\end{multline}
We note that since $h,g$ must satisfy \eqref{eq:divtrace}, by Remark~\ref{rem:zero fourier} we have $\hat{h}(0) - \int_0^b \hat{g}(0,x_n) \; dx_n  = 0$. We also note that by the first item of Theorem~\ref{thm:pseudo asymp}, $\overline{V(0,x_n,-\gamma)} = 0$ and $\overline{Q(\xi,x_n,-\gamma)} = 0$. The first item follows immediately from these two observations. 

To prove the second item, we first note that by using the second item of Theorem~\ref{thm:pseudo asymp} and by using the fact that $f,g,k,h$ are real-valued,
\begin{multline}
	 \overline{\psi(\xi)} = \int_0^b \left( \overline{\hat{f}(\xi,x_n)} \cdot V(\xi,x_n,-\gamma) - \overline{\hat{g}(\xi,x_n)} Q(\xi,x_n,-\gamma) \right) \; dx_n - \overline{\hat{k}(\xi)} \cdot V(\xi,b,-\gamma) + \overline{\hat{h}(\xi)} 
	 \\
	 = \int_0^b \left( \hat{f}(-\xi,x_n) \cdot \overline{V(-\xi,x_n,-\gamma)}
	 - \hat{g}(-\xi,x_n) \overline{Q(-\xi,x_n,-\gamma)} \right) \; dx_n - \hat{k}(-\xi) \cdot \overline{V(-\xi,b,-\gamma)} + \hat{h}(-\xi) = \psi(-\xi).
\end{multline}
To prove the third, we first rewrite $\psi$ again as in \eqref{eq:psi rewrite} and apply the Cauchy-Schwarz inequality to obtain
\begin{multline}\label{eq:psi bound comb 1}
	 \abs{\psi(\xi)}^2 \lesssim  \left( \int_0^b \abs{\hat{f}(\xi,x_n)}^2 \; d\xi \right) \left( \int_0^b \abs{V(\xi,x_n)}^2 \; d\xi \right)  +   \left( \int_0^b \abs{\hat{g}(\xi,x_n)}^2 \; d\xi \right) \left( \int_0^b \abs{Q(\xi,x_n, - \gamma) - 1}^2 \; d\xi \right) \\
	 + \abs{\hat{k}(\xi)}^2 \abs{V(\xi,b,-\gamma)}^2 + \abs{\hat{h}(\xi) - \int_0^b \hat{g}(\xi,x_n) \; dx_n}^2.
\end{multline}
We note that by the definition of the $\dot{H}^{-1}$ seminorm, we have 
\begin{align}\label{eq:psi bound comb 2}
	 \int_{B(0,r)} \frac{1}{\abs{\xi}^2} \abs{\hat{h}(\xi) - \int_0^b \hat{g}(\xi,x_n) \; dx_n}^2 \; d\xi \le \left[ h - \int_0^b g(\cdot, x_n) \; dx_n \right]_{\dot{H}^{-1}}^2. 
\end{align}
Combining \eqref{eq:psi bound comb 1}, \eqref{eq:psi bound comb 2} with \eqref{eq:psi asymp 1}, Tonelli's theorem, and Parseval's theorem then gives us 
\begin{multline}\label{eq:psi bound fin 1}
	 \int_{B(0,r)} \frac{1}{\abs{\xi}^2} \abs{\psi(\xi)}^2 \; d \xi \lesssim_{n,\gamma,b} \int_{B(0,r)} \int_0^b \left( \abs{\hat{f}(\xi,x_n)}^2 + \abs{\hat{g}(\xi,x_n)}^2 \right) dx_n d\xi  \\ + \int_{B(0,r)} \abs{\hat{k}(\xi)}^2 \; d\xi + \left[ h - \int_0^b g(\cdot, x_n) \; dx_n \right]_{\dot{H}^{-1}}^2 \lesssim_{n,\gamma,b} \norm{(f,g,h,k)}_{\mathcal{Y}^s}^2. 
\end{multline}
If $\abs{\xi} \ge 1$, then we apply the Cauchy-Schwarz inequality directly to \eqref{eq:psi defn} and obtain 
\begin{multline}\label{eq:psi bound comb 3}
	\abs{\psi(\xi)}^2 \lesssim  \left( \int_0^b \abs{\hat{f}(\xi,x_n)}^2 \; d\xi \right) \left( \int_0^b \abs{V(\xi,x_n)}^2 \; d\xi \right)  +   \left( \int_0^b \abs{\hat{g}(\xi,x_n)}^2 \; d\xi \right) \left( \int_0^b \abs{Q(\xi,x_n, - \gamma)}^2 \; d\xi \right) \\
	+ \abs{\hat{k}(\xi)}^2 \abs{V(\xi,b,-\gamma)}^2 + \abs{\hat{h}(\xi)}^2.
\end{multline}
Combining \eqref{eq:psi bound comb 3} with \eqref{eq:psi asymp 2}, Tonelli's theorem, and Parseval's theorem then gives us 
\begin{multline}\label{eq:psi bound fin 2}
	 \int_{B(0,1)^c} (1+\abs{\xi}^2)^{s+3/2} \abs{\psi(\xi)}^2 \; d\xi \lesssim_{n,\gamma,b} \int_0^b \int_{\hat{\Sigma}} (1+\abs{\xi}^2)^s \abs{\hat{f}(\xi,x_n)}^2 \; d\xi dx_n + \int_0^b \int_{\hat{\Sigma}} (1+\abs{\xi}^2)^s \abs{\hat{g}(\xi,x_n)}^2 \; d\xi dx_n \\ + \int_{\hat{\Sigma}} (1+\abs{\xi}^2)^{s+1/2} \abs{\hat{k}(\xi)}^2 \; d\xi +  \int_{\hat{\Sigma}} (1+\abs{\xi}^2)^{s+3/2} \abs{\hat{h}(\xi)}^2 \; d\xi \lesssim_{n,\gamma,b} \int_0^b \norm{f(\cdot,x_n)}^2_{H^{s}(\Sigma)} \; dx_n \\ + \int_0^b \norm{g(\cdot,x_n)}^2_{H^{s}(\Sigma)} \; dx_n + \norm{k}_{H^{s+1/2}}^2 + \norm{h}_{H^{s+3/2}}^2 \lesssim_{n,s,\gamma,b} \norm{(f,g,h,k)}_{\mathcal{Y}^s}^2.   
\end{multline}
\end{proof}

Next we study the linear map $\Upsilon_{\gamma,\sigma} : \mathcal{X}^s \to \mathcal{Y}^s$ defined via 
\begin{align} \label{eq:upsilon}
	\Upsilon_{\gamma,\sigma}(u,p,\eta) = (\diverge S(p,u) - \gamma \p_1 u + (\nabla' \eta , 0), \diverge u, u_n \rvert_{\Sigma_b} + \gamma \p_1 \eta, S(p,u) e_n \rvert_{\Sigma_b} + \sigma \Delta'\eta  e_n)
\end{align}
which is the solution operator corresponding to the system \eqref{eq:gammacapillary}. The next result shows that this linear map is well-defined, bounded, and also injective. 

\begin{prop}\label{prop:Upsiloninjective}
Suppose $\gamma \in \R \setminus \{0\}$, $\sigma \ge 0$, and $s \ge 0$. Then the linear map $\Upsilon_{\gamma,\sigma}: \mathcal{X}^s \to \mathcal{Y}^s$ defined in \eqref{eq:upsilon} is well-defined, continuous, and injective.
\end{prop}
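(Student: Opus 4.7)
The plan is to verify well-definedness, continuity, and injectivity in turn; the first two reduce to bookkeeping with the estimates already established in the paper, while injectivity is the main content.

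For well-definedness and continuity, given $(u,p,\eta)\in\mathcal{X}^s$, the $(u,p)$-contributions to $\Upsilon_{\gamma,\sigma}(u,p,\eta)$ lie in the correct factors of $\mathcal{Y}^s$ by standard bulk and trace Sobolev estimates. The $\eta$-contributions $(\nabla'\eta,0)$, $\gamma\p_1\eta$, and $\sigma\Delta'\eta$ are controlled by applying the gradient estimate in item 5 of Theorem~\ref{thm: X^s gen} once and then twice:
\[
\norm{\nabla\eta}_{H^{s+3/2}}\lesssim\norm{\eta}_{X^{s+5/2}},\qquad \norm{\Delta'\eta}_{H^{s+1/2}}\lesssim\norm{\eta}_{X^{s+5/2}}.
\]
Linearity of these bounds yields continuity once well-definedness is established.

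The divergence-trace compatibility requires $h-\int_0^b g(\cdot,x_n)\,dx_n\in\dot{H}^{-1}(\Sigma;\R)$ for $(f,g,h,k):=\Upsilon_{\gamma,\sigma}(u,p,\eta)$. Using Fubini and the no-slip condition $u\rvert_{\Sigma_0}=0$,
\[
\int_0^b\diverge u\,dx_n=\nabla'\cdot W + u_n\rvert_{\Sigma_b},\qquad W(x'):=\int_0^b u'(x',x_n)\,dx_n\in L^2(\Sigma;\R^{n-1}),
\]
so the expression in question simplifies to $\gamma\p_1\eta-\nabla'\cdot W$. Item 3 of Theorem~\ref{thm:Xs} provides $[\p_1\eta]_{\dot{H}^{-1}}\lesssim\norm{\eta}_{X^{s+5/2}}$, while $\widehat{\nabla'\cdot W}(\xi)=2\pi i\xi'\cdot\hat{W}(\xi)$ supplies a factor of $\abs{\xi}$ that absorbs the $\abs{\xi}^{-1}$ weight in the $\dot{H}^{-1}$ seminorm, bounding the low-frequency integral by $\norm{W}_{L^2}^2$. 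In the purely toroidal case $R_\Sigma=\varnothing$, the $\dot{H}^{-1}$ seminorm is $\{0,\infty\}$-valued, and I would check directly that the zeroth Fourier mode of $h-\int_0^b g\,dx_n$ vanishes, using $\widehat{\p_1\eta}(0)=0$ together with the identity $\int_0^b\p_n\hat{u}_n(0,x_n)\,dx_n=\hat{u}_n(0,b)$.

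For injectivity — the main obstacle — suppose $\Upsilon_{\gamma,\sigma}(u,p,\eta)=0$. The plan is to introduce the modified pressure $\tilde p(x',x_n):=p(x',x_n)+\eta(x')$, extending $\eta$ trivially in $x_n$. Using $\diverge u=0$ and the identity $S(\tilde p,u)=S(p,u)+\eta I$, a short computation shows that $(u,\tilde p)$ solves the $\gamma$-Stokes system
\[
\diverge S(\tilde p,u)-\gamma\p_1 u=0,\quad \diverge u=0\ \text{in }\Omega,\quad u\rvert_{\Sigma_0}=0,\quad S(\tilde p,u)e_n\rvert_{\Sigma_b}=(\eta-\sigma\Delta'\eta)e_n,
\]
together with the kinematic condition $u_n\rvert_{\Sigma_b}=-\gamma\p_1\eta$. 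Taking the horizontal Fourier transform at each fixed $\xi\in\hat{\Sigma}$ reduces the former to the boundary-value ODE \eqref{eq:VQ ode}; after accounting for the sign difference between our $-\gamma\p_1 u$ term and the $+\gamma\p_1 u$ of \eqref{eq:adjoint}, the unique solvability asserted by Theorem~\ref{thm:pseudo asymp} yields
\[
\hat{u}_n(\xi,b)=m(\xi,\gamma)(1+4\pi^2\sigma\abs{\xi}^2)\hat{\eta}(\xi).
\]
Equating with $\hat{u}_n(\xi,b)=-2\pi i\gamma\xi_1\hat{\eta}(\xi)$ gives $\alpha_\gamma(\xi)\hat{\eta}(\xi)=0$, where $\alpha_\gamma(\xi):=m(\xi,\gamma)(1+4\pi^2\sigma\abs{\xi}^2)+2\pi i\gamma\xi_1$. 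The conjugation symmetry $m(\xi,\gamma)=\overline{m(-\xi,\gamma)}$ from Theorem~\ref{thm:pseudo asymp} item 2 identifies $\alpha_\gamma(\xi)$ with $\rho_{-\gamma}(-\xi)$, after which Lemma~\ref{lem:rho} ensures $\alpha_\gamma$ vanishes only at $\xi=0$. Hence $\hat{\eta}$ is supported at the origin; in the purely toroidal case the constraint $\hat{\eta}(0)=0$ built into $\mathring{X}^{s+5/2}$ forces $\eta\equiv 0$, and in all other cases $\{0\}$ is a Lebesgue null set in $\hat{\Sigma}$ and the same conclusion follows. With $\eta=0$ the remaining equations reduce to $\Phi_\gamma(u,p)=0$ in the sense of Theorem~\ref{thm: strong}, whence $(u,p)=(0,0)$ by the isomorphism asserted there. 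The delicate step is marshalling the sign and conjugation identities among $V$, $m$, and $\rho$ so that $\alpha_\gamma$ acquires the exact form covered by Lemma~\ref{lem:rho}.
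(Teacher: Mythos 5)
Your treatment of well-definedness, continuity, and the divergence--trace condition matches the paper's argument and is fine. The injectivity argument, however, has a genuine gap at its central step. You derive the relation $\hat{u}_n(\xi,b) = m(\xi,\gamma)(1+4\pi^2\sigma\abs{\xi}^2)\hat{\eta}(\xi)$ by identifying $(u,\tilde p)$ with the output of the normal-stress-to-solution map at the datum $\psi = \eta - \sigma\Delta'\eta$, citing ``the unique solvability asserted by Theorem~\ref{thm:pseudo asymp}.'' But that theorem only asserts that the multipliers $V,Q$ exist, are bounded, and solve the ODE \eqref{eq:VQ ode} for each $\xi$; it does not assert that the fixed-frequency boundary value problem has a \emph{unique} solution. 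Nor can you fall back on the Hilbert isomorphism of Theorem~\ref{thm: strong}: that theorem requires $\tilde p\in H^{s+1}(\Omega;\R)$ and $\psi\in H^{s+1/2}(\Sigma;\R)$, whereas $\tilde p = p+\eta$ and $\psi=\eta-\sigma\Delta'\eta$ involve $\eta$ itself, which lies only in $X^{s+5/2}(\Sigma;\R)$ and, when $1\in R_\Sigma$ and $\abs{R_\Sigma}\ge 2$, need not belong to $L^2$ at all (Lemma~\ref{lem: Xs Hs equiv}). So the global uniqueness theory does not apply to the pair $(u,\tilde p)$, and the frequency-wise uniqueness you invoke is precisely the statement that has to be proved.

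The missing ingredient is a direct uniqueness argument for the homogeneous fixed-$\xi$ problem, which is what the paper supplies: setting $w=\hat u(\xi,\cdot)$ and $q=\hat{\tilde p}(\xi,\cdot)$, pairing the equations with $\overline{w}$ and integrating by parts over $(0,b)$ yields
\begin{equation*}
\int_0^b -2\pi i\gamma\xi_1\abs{w}^2 + 2\abs{\p_n w_n}^2 + \abs{\p_n w' + 2\pi i\xi w_n}^2 + \tfrac12\abs{2\pi i\xi\otimes w'+w'\otimes 2\pi i\xi}^2\,dx_n = -2\pi i\xi_1\gamma(1+4\pi^2\abs{\xi}^2\sigma)\abs{\hat\eta(\xi)}^2,
\end{equation*}
whose real part, combined with $w(0)=0$, forces $w\equiv 0$, then $q\equiv 0$ and $\hat\eta(\xi)=0$ for a.e.\ $\xi\neq 0$ (with the $\xi=0$ mode in the purely toroidal case handled separately via $\mathring{X}^{s+5/2}$). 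If you insert this energy identity to justify frequency-wise uniqueness, your route closes, but at that point you have reproduced the paper's proof and the detour through $m$, $\alpha_\gamma$, and Lemma~\ref{lem:rho} is superfluous. (I note that the downstream steps are sound once the multiplier identity is granted: $\alpha_\gamma(\xi)=\overline{\rho_{-\gamma}(\xi)}$ is nonzero for $\xi\neq 0$ by Lemma~\ref{lem:rho}, $\hat\eta\in L^1_{\loc}$ supported on a null set gives $\eta=0$, and then Theorem~\ref{thm: strong} kills $(u,p)$.)
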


\begin{proof}
	
We first check that the map is well-defined and continuous. By the fifth item of Theorem~\ref{thm: X^s gen}, the first component $f = \diverge S(p,u) - \gamma \p_1 u + (\nabla' \eta, 0)$ belongs to $H^{s}(\Omega ; \R^n)$ with the estimate
\begin{align}
	 \norm{f}_{H^s} \lesssim \norm{u}_{\Hzeros{s+2}} + \norm{p}_{H^{s+1}} + \norm{\eta}_{X^{s+5/2}}.
\end{align}
The second component $g = \diverge u$ belongs to $H^{s+1}(\Omega ; \R)$ with the estimate $\norm{g}_{H^{s+1}} \lesssim \norm{u}_{\Hzeros{s+2}}$. By the fifth item of Theorem~\ref{thm: X^s gen} and standard trace theory, we deduce that the third component $k = S(p,u) e_n \rvert_{\Sigma_b} + \sigma \Delta'\eta e_n$ belongs to $H^{s+1/2}(\Sigma ; \R^n)$ with the estimate 
\begin{multline}
	 \norm{k}_{H^{s+1/2}} \le \norm{p}_{H^{s+1/2}(\Sigma)} + \norm{\mathbb{D} u}_{H^{s+1/2}(\Sigma)} + \norm{\sigma \Delta' \eta}_{H^{s+1/2}(\Sigma)} \lesssim \norm{p}_{H^{s+1}(\Omega)} + \norm{u}_{\Hzeros{s+2}} + \norm{\eta}_{X^{s+5/2}}.  
\end{multline}
By the fifth item of Theorem~\ref{thm: X^s gen} and standard trace theory, the fourth component $h =  u_n \rvert_{\Sigma_b} + \gamma \p_1 \eta$ belongs to $H^{s+3/2}(\Sigma_b; \R)$ with the estimate
\begin{align}
	 \norm{h}_{H^{s+3/2}} \lesssim \norm{u}_{\Hzeros{s+2}} + \norm{\eta}_{X^{s+5/2}}. 
\end{align}
Next we note that since $g = \diverge u$, 
\begin{align}
	 u_n(x',b) - \int_0^b g(x',x_n) \; dx_n = \int_0^b \p_n u_n(x',x_n) \; dx_n - \int_0^b g(x',x_n) = - \diverge' \int_0^b u'(x',x_n) \; dx_n
\end{align}
for a.e. $x' \in \Sigma$. Writing $H^{s+3}(\Sigma ; \R) \ni R (x')=  \int_0^b u'(x',x_n) \; dx_n$, we first note that if $R_\Gamma = \varnothing$, then $\widehat{\diverge R} (0) = 0$ and $\widehat{\gamma \p_1 \eta}(0) = 0$. Then we note that 
\begin{align}
	\left[ u_n- \int_0^b g(\cdot, x_n) \; dx_n \right]_{\dot{H}^{-1}} = \left[ \diverge R\right]_{\dot{H}^{-1}} \lesssim \norm{u}_{L^2},
\end{align}
therefore 
\begin{align}
	\left[ h - \int_0^b g(\cdot, x_n) \; dx_n \right]_{\dot{H}^{-1}} \le \left[ u_n- \int_0^b g(\cdot, x_n) \; dx_n \right]_{\dot{H}^{-1}} + [\gamma \p_1 \eta]_{\dot{H}^{-1}} \lesssim \norm{u}_{\Hzeros{s+2}} + \norm{\eta}_{X^{s+5/2}}.  
\end{align}
Combining the estimates above shows that $ \Upsilon_{\gamma,\sigma}$ is well-defined and continuous. 

To show that $\Upsilon_{\gamma,\sigma}$ is injective, we suppose $(u, p, \eta) \in \mathcal{X}^s$ and  $\Upsilon_{\gamma,\sigma} \left( u, p, \eta \right) = 0$. We note that if $\tilde{p} = p - \eta$, then $\nabla \tilde{p} = \nabla p - (\nabla' \eta,0)$ and $\tilde{p}I = p I - \eta I$. Therefore $\Upsilon_{\gamma,\sigma} (u,p,\eta) =0$ if and only if $(u,\tilde{p},\eta)$ satisfies 
	\begin{align}\label{eq:upsilon=0}
		\begin{cases}
			\diverge S(\tilde{p},u) - \gamma \p_1 u = 0, & \text{in} \; \Omega \\
			\diverge u = 0, & \text{in} \; \Omega\\
			S(\tilde{p},u) e_n = (\eta - \sigma \Delta' \eta) e_n, & \text{on} \; \Omega_b\\
			u_n + \gamma \p_1 \eta = 0, & \text{on} \; \Sigma_b \\
			u = 0, & \text{on} \; \Sigma_0.
		\end{cases}
	\end{align}
    We note that by Tonelli's theorem, Parseval's theorem, and the fifth item of Theorem~\ref{thm: X^s gen} we have $\hat{u}(\xi, \cdot) \in H^{s}((0,b);\C^n)$ and $\widehat{\tilde{p}}(\xi,\cdot) \in H^1((0,b);\C)$, for a.e. $\xi \in \hat{\Sigma}$. By the second item in Theorem~\ref{thm:Xs}, $\hat{\eta} \in L^1(\Sigma; \R) + L^2(\Sigma, (1+\abs{\xi}^2)^{(s+5/2)/2} d\xi; \R)$. Thus, we may apply the horizontal Fourier transform to \eqref{eq:upsilon=0} to deduce for  a.e. $\xi \in \hat{\Sigma}$, $w = \hat{u}(\xi,\cdot), q = \widehat{\tilde{p}}(\xi,\cdot)$ satisfies
	\begin{align} \label{eq:odesys}
	\begin{cases}
	\left( - \p_n^2 + 4\pi^2 \abs{\xi}^2 \right) w' + 2\pi i \xi q - 2\pi i \xi_1 \gamma w' = 0, & \text{in} \; (0, b) \\
	\left( - \p_n^2 + 4\pi^2 \abs{\xi}^2 \right) w_n + \p_n q - 2\pi i \xi_1 \gamma w_n = 0, & \text{in} \; (0, b)  \\
	2\pi i \xi \cdot w' + \p_n w_n = 0, & \text{in} \; (0, b)  \\
	-\p_n w' - 2\pi i \xi w_n = 0,  & \text{for} \; x_n = b \\
	q - 2 \p_n w_n = (1+4\pi^2 \abs{\xi}^2 \sigma) \hat{\eta}, & \text{for} \; x_n = b \\
	w_n + 2\pi i \xi_1 \gamma \hat{\eta} = 0,  & \text{for} \; x_n = b \\
	w = 0, &\text{for} \; x_n = 0.
	\end{cases}
	\end{align}

	First consider the special case when $R_\Sigma = \varnothing$ and $\xi = 0$.  In this case, the third and sixth equations imply that  $w_n \equiv 0$. Since $\eta \in \mathring{X}^{s+5/2}(\Sigma;\R)$, we have $\hat{\eta} (0) = 0$ and therefore by the second and fifth equations, $q \equiv 0$. The first, fourth, and last equations then tell us that $w' \equiv 0$. Therefore, we can conclude that at $\xi = 0$, $w, q \equiv 0$.

	Next consider the general case for which $\xi \in \hat{\Sigma} \setminus \{ 0 \}$ and \eqref{eq:odesys} holds. Using \eqref{eq:odesys} and  performing integration by parts (for further details, see Proposition 4.1 in \cite{leonitice}.), we deduce that 
\begin{multline}
		\int_0^b - \gamma 2\pi i \xi_1 \abs{w}^2 + 2 \abs{\p_n w_n}^2 + \abs{\p_n w' + 2\pi i \xi w_n}^2 + \frac{1}{2} \abs{2 \pi i \xi \otimes w' + w' \otimes 2\pi i \xi}^2 \; dx_n = - 2\pi i \xi_1 \gamma(1+4\pi^2\abs{\xi}^2 \sigma) \abs{\hat{\eta}(\xi)}^2. 
\end{multline}	 
		By taking the real part of this expression, we see that we must have for a.e. $\xi \in \hat{\Sigma}$, $\p_n w_n \equiv 0$ and $\p_n w' + 2\pi i \xi w_n \equiv 0$ in $(0,b)$, but since $w(0) = 0$, we must have $w \equiv 0$ in $[0,b]$. Then by the first equation, we must have $q \equiv 0$. By the second to last equation, we find that $\eta \equiv 0$. From this we find that $(u,p,\eta) = (0,0,0)$, so we can conclude that $\Upsilon_{\gamma, \sigma}$ is injective.

\end{proof}

Next we show that $\Upsilon_{\gamma,\sigma}$ is surjective. To do so we must construct the free surface function $\eta$ from a given data tuple $(f,g,h,k) \in \mathcal{Y}^s$. For the reader's convenience, we record this construction in the next subsection. 

\subsection{Construction of the free surface function and the isomorphism associated to \eqref{eq:gammacapillary}}

\begin{lem}[Construction of $\eta$ in the presence of surface tension]\label{lem:etaconstruct1}
Suppose $\gamma \in \R \setminus \{0\}$, $\sigma > 0$ and $n \ge 2$, $s \ge 0$. Then for every $(f,g,h,k) \in \mathcal{Y}^s$, there exists an $\eta \in X^{s+\frac{5}{2}}(\Sigma;\R)$ for which the modified data tuple
	\begin{align}\label{eq:moddata}
			(f - (\nabla' \eta, 0), g, h - \gamma \p_1 \eta, k - \sigma \Delta'\eta e_n)
			 \in H^{s} (\Omega;\R^n) \times H^{s+1}(\Omega; \R) \times H^{s+\frac{3}{2}}(\Sigma_b; \R) \times H^{s+\frac{1}{2}}(\Sigma_b; \R^n) 
	\end{align}
	belongs to the range of $\Upsilon_{\gamma,\sigma}$ defined in \eqref{eq:upsilon}. Moreover, we have the estimate 
	\begin{align}\label{eq:lem eta est}
		 \norm{\eta}_{X^{s+\frac{5}{2}}} \lesssim \norm{(f,g,h,k)}_{\mathcal{Y}^s}. 
	\end{align}
\end{lem}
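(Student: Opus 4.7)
The plan is to construct $\eta$ via Fourier reconstruction by setting $\hat{\eta}(\xi) = \psi(\xi)/\rho_\gamma(\xi)$ for $\xi \in \hat{\Sigma} \setminus \{0\}$ and $\hat{\eta}(0) = 0$, where $\psi$ is the quantity defined in \eqref{eq:psi defn} and $\rho_\gamma$ is the symbol defined in \eqref{eq:rho}. Once I show that this defines an element $\eta \in X^{s+5/2}(\Sigma;\R)$ obeying \eqref{eq:lem eta est}, and that the modified data tuple \eqref{eq:moddata} lies in the subspace $\mathcal{Z}^s$, the surjectivity half of Theorem~\ref{thm:overlinear} produces the pair $(u,p) \in \Hzeros{s+2}(\Omega;\R^n) \times H^{s+1}(\Omega;\R)$ with $\Psi_\gamma(u,p)$ equal to the modified tuple \eqref{eq:moddata}, and hence $\Upsilon_{\gamma,\sigma}(u,p,\eta) = (f,g,h,k)$.

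The motivation for this choice of $\hat{\eta}$ comes from a Fourier-side computation. Let $\psi_{\mathrm{mod}}(\xi)$ denote the left-hand side of \eqref{eq: 2nd comp fourier} applied to the modified data, so that vanishing of $\psi_{\mathrm{mod}}$ for every $\xi \in \hat{\Sigma}$ is exactly the adjoint compatibility condition defining $\mathcal{Z}^s$. Using $\widehat{\nabla'\eta} = 2\pi i \xi \hat{\eta}$, $\widehat{\Delta'\eta} = -4\pi^2|\xi|^2\hat{\eta}$, together with the identity
\begin{equation}\label{eq:key id}
	2\pi i \xi \cdot \int_0^b \overline{V'(\xi,x_n,-\gamma)}\;dx_n = \overline{m(\xi,-\gamma)},
\end{equation}
which follows from integrating the divergence-free equation (third line of \eqref{eq:VQ ode}) and applying the boundary conditions $V(\xi,0,-\gamma)=0$ and $V_n(\xi,b,-\gamma) = m(\xi,-\gamma)$, a bookkeeping calculation collapses the $\eta$-contributions coming from $-(\nabla'\eta,0)$, $-\sigma\Delta'\eta\, e_n$, and $-\gamma\p_1\eta$ into precisely $\rho_\gamma(\xi)\hat{\eta}(\xi)$, yielding $\psi_{\mathrm{mod}}(\xi) = \psi(\xi) - \rho_\gamma(\xi)\hat{\eta}(\xi)$. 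This vanishes by construction. The divergence-trace piece of the $\mathcal{Z}^s$ membership is automatic since $(h - \gamma\p_1\eta) - \int_0^b g\,dx_n = [h - \int_0^b g\,dx_n] - \gamma\p_1\eta$, with both terms in $\dot{H}^{-1}(\Sigma;\R)$ by the assumed membership $(f,g,h,k) \in \mathcal{Y}^s$ and by the third item of Theorem~\ref{thm:Xs} applied to $\eta$.

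To establish that $\eta$ is a real-valued element of $X^{s+5/2}(\Sigma;\R)$ with the norm estimate \eqref{eq:lem eta est}, realness follows from the Hermitian symmetries $\overline{\psi(\xi)} = \psi(-\xi)$ of Lemma~\ref{lem:psi} and $\overline{\rho_\gamma(\xi)} = \rho_\gamma(-\xi)$ of Lemma~\ref{lem:rho}. The norm estimate follows by combining the symbol equivalence \eqref{eq: rho to mu}, which rewrites $\upmu^2(\xi)\langle\xi\rangle^{2s+3}/|\rho_\gamma(\xi)|^2$ as $|\xi|^{-2}\mathbbm{1}_{B(0,r)}(\xi) + \langle\xi\rangle^{2s+3}\mathbbm{1}_{B(0,r)^c}(\xi)$ up to constants, with the $\psi$-estimate \eqref{eq:psi bound} from Lemma~\ref{lem:psi}. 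Local integrability of $\hat{\eta}$ near the origin, needed so that $\eta$ defines a tempered distribution in accordance with \eqref{def: X^s}, follows by a Cauchy--Schwarz argument in the style of Lemma~\ref{lem: fhat L1}, pairing $\upmu(\xi)|\hat{\eta}(\xi)|$ with $\upmu(\xi)^{-1}$. In the special case $R_\Sigma = \varnothing$, the first item of Lemma~\ref{lem:psi} gives $\psi(0)=0$, so the convention $\hat{\eta}(0) = 0$ is consistent and places $\eta$ inside $\mathring{X}^{s+5/2}(\Sigma;\R)$.

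The main technical obstacle is the identity $\psi_{\mathrm{mod}} = \psi - \rho_\gamma\hat{\eta}$, which is pure Fourier-side algebra but leans crucially on \eqref{eq:key id}; once this identity is in hand, the rest is bookkeeping with the asymptotics of $V$, $Q$, $m$, and $\rho_\gamma$ recorded in Section~\ref{sec:asymptotics}.
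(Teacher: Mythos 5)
Your proposal is correct and follows essentially the same route as the paper: defining $\hat{\eta} = \psi/\rho_\gamma$, deducing realness and the $X^{s+5/2}$ bound from the Hermitian symmetries and the asymptotics of $\rho_\gamma$ and $\psi$, and verifying the adjoint compatibility condition for the modified tuple via the identity $\overline{m(\xi,-\gamma)} = \int_0^b 2\pi i \xi\cdot \overline{V'(\xi,x_n,-\gamma)}\,dx_n$ before invoking Theorem~\ref{thm:overlinear}. The paper presents the compatibility check by rewriting $0 = \psi - \rho_\gamma\hat{\eta}$ into the form \eqref{eq: 2nd comp fourier} for the modified data, which is the same computation you describe read in the opposite direction.
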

\begin{proof}
	Given $(f,g,h,k) \in \mathcal{Y}^s$, we propose to define $\eta \in X^{s+\frac{5}{2}}(\Sigma; \R)$ through
	\begin{align}\label{eq: defn eta hat}
		 \hat{\eta}(\xi) = \begin{cases}
			\frac{\psi(\xi)}{\rho_\gamma(\xi)}, &\xi \neq 0 \\
			0, & \xi = 0,
		 \end{cases}
	\end{align}
	where $\rho_\gamma$ is defined in \eqref{eq:rho} and $\psi$ is defined in terms of $(f,g,h,k)$ in \eqref{eq:psi defn}. We note that the choice of $\hat{\eta}(0) = 0$ is only relevant in the case when $R_{\Gamma} = \varnothing$.
	
	We first note that by the first item of Lemma~\ref{lem:rho} and the second item of Lemma~\ref{lem:psi}, we have $\overline{\hat{\eta}(\xi)} =  \hat{\eta}(-\xi)$. Furthermore, by the second item of Lemma~\ref{lem:rho} and the third item of Lemma~\ref{lem:psi}, we have the estimate  
	\begin{multline}\label{eq: eta 1}
			 \int_{\hat{\Sigma}} \left(  \frac{\xi_1^2 + \abs{\xi}^4}{\abs{\xi}^2} \mathbbm{1}_{B(0,r)} + (1+\abs{\xi}^2)^{s+\frac{5}{2}} \mathbbm{1}_{B(0,r)^c}\right)  \abs{\hat{\eta}(\xi)}^2 \; d\xi   
			\\ \lesssim \int_{\hat{\Sigma}} \left( \frac{1}{\abs{\xi}^2} \mathbbm{1}_{B(0,r)} + (1+\abs{\xi}^2)^{s+\frac{3}{2}} \mathbbm{1}_{B(0,r)^c}\right) \abs{\rho_\gamma(\xi)}^2 \abs{\hat{\eta}(\xi)}^2 \; d\xi
			\\
			 \lesssim \int_{\hat{\Sigma}} \left( \frac{1}{\abs{\xi}^2} \mathbbm{1}_{B(0,r)} + (1+\abs{\xi}^2)^{s+\frac{3}{2}} \mathbbm{1}_{B(0,r )^c}\right) \abs{\psi(\xi)}^2\; d\xi  \lesssim \norm{(f,g,h,k)}_{\mathcal{Y}^s}^2.
	\end{multline}
	Consequently, if we define $\eta = (\hat{\eta})^\vee$, then by \eqref{eq: eta 1}, and $\overline{\hat{\eta}(\xi)} = \hat{\eta}(\xi)$, we have $\eta \in X^{s+5/2}(\Sigma;\R)$ with the estimate \eqref{eq:lem eta est}. 
	
	To conclude our proof it suffices to show that the modified data given in \eqref{eq:moddata} belongs to the range of $\Upsilon_{\gamma,\sigma}(u,p,\eta)$. By Theorem~\ref{thm:overlinear}, it suffices to show that the modified data tuple has the desired regularity and satisfies the divergence-trace compatibility condition \eqref{eq:divtrace} and the adjoint compatibility condition \eqref{eq: 2nd comp fourier}. We note that since $\eta \in X^{s+5/2}(\Sigma; \R)$, by the fifth item of Theorem~\ref{thm: X^s gen} and the third item of Theorem~\ref{thm:Xs}, we have $(f, g, h - \gamma \p_1 \eta, k - \sigma \Delta'\eta e_n)
	\in H^{s} (\Omega;\R^n) \times H^{s+1}(\Omega; \R) \times H^{s+\frac{3}{2}}(\Sigma_b; \R) \times H^{s+\frac{1}{2}}(\Sigma_b; \R^n)$ with $(h - \gamma \p_1 \eta) - \int_0^b g(\cdot, x_n) \in \dot{H}^{-1}(\Sigma ; \R)$. To check \eqref{eq: 2nd comp fourier}, we write $0= \psi - \rho_\gamma \hat{\eta}$ for $\xi \neq 0$ and use the definition of $\psi$ and $\rho_\gamma$ to obtain 
	\begin{multline}\label{eq: eta 2}
		 0 = \int_0^b \left( \hat{f}(\xi,x_n) \cdot \overline{V(\xi,x_n,-\gamma)} - \hat{g}(\xi,x_n) \overline{Q(\xi,x_n,-\gamma)}  \right) \; dx_n \\ - (\hat{k}(\xi) + (1 + 4\pi^2 \abs{\xi}^2 \sigma) \hat{\eta}(\xi) e_n )\cdot \overline{V(\xi,b,-\gamma)}  + \hat{h}(\xi) - 2\pi i \xi_1 \gamma \hat{\eta}(\xi).
	\end{multline} 
	Using the third equation in \eqref{eq:VQ ode} we have $2\pi i \xi \cdot V'(\xi,x_n, -\gamma) + \p_n V_n (\xi,x_n,-\gamma) = 0$, and since $V(\xi,0,-\gamma) =0$, we have 
	\begin{align}
		 \overline{V_n(\xi,b,-\gamma)} = \int_0^b \p_n \overline{V_n(\xi, x_n, -\gamma)} \; dx_n = \int_0^b 2\pi i \xi \cdot \overline{V'(\xi,x_n,-\gamma)} \; dx_n.
	\end{align}
	Thus, we can rewrite \eqref{eq: eta 2} as 
	\begin{multline}\label{eq: eta 3}
		0 = \int_0^b \left( \left( \hat{f}(\xi,x_n) - ( \widehat{\nabla' \eta}(\xi), 0) \right) \cdot \overline{V(\xi,x_n,-\gamma)} - \hat{g}(\xi,x_n) \overline{Q(\xi,x_n,-\gamma)}  \right) \; dx_n \\ - (\hat{k}(\xi) + \widehat{\sigma \Delta' \eta}(\xi) e_n )\cdot \overline{V(\xi,b,-\gamma)}  + \hat{h}(\xi) - 2\pi i \xi_1 \gamma \hat{\eta}(\xi),
   \end{multline} 
   and the desired conclusion follows immediately. 
\end{proof}

For the special case of $n =2$, we can also construct the free surface function $\eta$ in the case without surface tension.

\begin{lem}[Construction of the free surface function without surface tension]\label{lem:etaconstruct2}
	Suppose $\gamma \in \R \setminus \{0\}$, $\sigma = 0$ and $n = 2$, $s \ge 0$. Then for every $(f,g,h,k) \in \mathcal{Y}^s$, there exists an $\eta \in H^{s+\frac{5}{2}}(\Sigma;\R)$ for which the modified data tuple
	\begin{align}
		(f - \p_1 \eta e_1 , g, h - \gamma \p_1 \eta, k)
		 \in H^{s} (\Omega;\R^n) \times H^{s+1}(\Omega ; \R) \times H^{s+\frac{3}{2}}(\Sigma_b ; \R) \times H^{s+\frac{1}{2}}(\Sigma_b; \R^n) 
\end{align}
	belongs to the range of $\Upsilon_{\gamma,\sigma}$ defined in \eqref{eq:upsilon}. Moreover, we have the estimate 
	\begin{align}\label{eq: eta est 2}
		 \norm{\eta}_{X^{s+\frac{5}{2}}} \lesssim \norm{(f,g,h,k)}_{\mathcal{Y}^s}. 
	\end{align}
\end{lem}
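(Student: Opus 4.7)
The plan is to follow essentially the same strategy as the proof of Lemma \ref{lem:etaconstruct1}, with the key change being the use of the $\sigma = 0, n = 2$ asymptotics of $\rho_\gamma$ recorded in the third item of Lemma \ref{lem:rho}. First I would define $\eta$ through the Fourier transform by setting
\begin{align}
\hat{\eta}(\xi) = \begin{cases} \psi(\xi)/\rho_\gamma(\xi), & \xi \neq 0, \\ 0, & \xi = 0,\end{cases}
\end{align}
where $\psi$ is given by \eqref{eq:psi defn} and $\rho_\gamma$ is given by \eqref{eq:rho} with $\sigma = 0$. The assignment at $\xi = 0$ is only meaningful in the case $\Sigma = L\T$ (i.e., $R_\Sigma = \varnothing$), where by the first item of Lemma \ref{lem:psi} we have $\psi(0) = 0$ already, consistent with the prescription $\hat{\eta}(0) = 0$. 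Real-valuedness of $\eta$ follows from the identities $\overline{\psi(\xi)} = \psi(-\xi)$ (the second item of Lemma \ref{lem:psi}) and $\overline{\rho_\gamma(\xi)} = \rho_\gamma(-\xi)$ (the first item of Lemma \ref{lem:rho}).

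Next I would derive the norm estimate \eqref{eq: eta est 2}. Since $n = 2$ forces $d = \dim \Sigma = 1$ and either $R_\Sigma = \{1\}$ or $R_\Sigma = \varnothing$, the first item of Lemma \ref{lem: Xs Hs equiv} gives $X^{s+5/2}(\Sigma;\R) = H^{s+5/2}(\Sigma;\R)$ with equivalent norms, so it suffices to bound the standard Sobolev norm. Using the third item of Lemma \ref{lem:rho} we have $\abs{\rho_\gamma(\xi)}^2 \asymp \abs{\xi}^2$ for $\abs{\xi} \le 1$ and $\abs{\rho_\gamma(\xi)}^2 \asymp 1 + \abs{\xi}^2$ for $\abs{\xi} > 1$, so that
\begin{align}
(1 + \abs{\xi}^2)^{s + 5/2} \lesssim \left( \abs{\xi}^{-2} \mathbbm{1}_{B(0,1)}(\xi) + (1 + \abs{\xi}^2)^{s + 3/2} \mathbbm{1}_{B(0,1)^c}(\xi) \right) \abs{\rho_\gamma(\xi)}^2.
\end{align}
Multiplying by $\abs{\hat{\eta}(\xi)}^2 = \abs{\psi(\xi)}^2 / \abs{\rho_\gamma(\xi)}^2$, integrating over $\hat{\Sigma}$, and applying the third item of Lemma \ref{lem:psi} yields
\begin{align}
\norm{\eta}_{H^{s+5/2}}^2 \lesssim \int_{\hat{\Sigma}} \left( \tfrac{\abs{\psi(\xi)}^2}{\abs{\xi}^2} \mathbbm{1}_{B(0,1)} + (1 + \abs{\xi}^2)^{s + 3/2} \abs{\psi(\xi)}^2 \mathbbm{1}_{B(0,1)^c} \right) d\xi \lesssim \norm{(f,g,h,k)}_{\mathcal{Y}^s}^2.
\end{align}

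Finally, I would show that the modified data tuple $(f - \p_1 \eta e_1, g, h - \gamma \p_1 \eta, k)$ lies in the range of $\Upsilon_{\gamma,0}$, which by Theorem \ref{thm:overlinear} reduces to verifying the divergence-trace compatibility condition \eqref{eq:divtrace} and the adjoint compatibility condition \eqref{eq: 2nd comp fourier}. The divergence-trace condition is immediate since $\eta \in H^{s+5/2}(\Sigma;\R)$ implies $\p_1\eta \in \dot{H}^{-1}(\Sigma;\R)$ (trivially in the toroidal case because $\widehat{\p_1\eta}(0)=0$, and by elementary arguments in the $\Sigma = \R$ case). For the adjoint condition, I would start from the identity $0 = \psi(\xi) - \rho_\gamma(\xi) \hat{\eta}(\xi)$ valid for $\xi \neq 0$, expand using the definitions of $\psi$ and $\rho_\gamma$ with $\sigma = 0$, and use the third equation in \eqref{eq:VQ ode} together with the boundary condition $V(\xi, 0, -\gamma) = 0$ to write $\overline{V_n(\xi, b, -\gamma)} = \int_0^b 2\pi i \xi \cdot \overline{V'(\xi, x_n, -\gamma)} \, dx_n$, which lets us absorb the $2\pi i \xi_1 \gamma \hat{\eta}(\xi)$ term into an $\hat{h} - \gamma \widehat{\p_1 \eta}$ piece and the $2\pi i \xi \hat{\eta}(\xi)$ term into an $\hat{f} - \widehat{\p_1 \eta} e_1$ piece, exactly matching the modified data tuple. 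The toroidal case $\xi = 0$ is handled separately using the vanishing properties in the first item of Theorem \ref{thm:pseudo asymp} and the first item of Lemma \ref{lem:psi}.

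There is no serious obstacle here: the argument is strictly parallel to Lemma \ref{lem:etaconstruct1}, and if anything is simpler because the surface tension term disappears and $X^{s+5/2}$ collapses to a standard Sobolev space in dimension $n = 2$. The only mild subtlety is the bookkeeping in the adjoint compatibility identity, where one must carefully track which terms survive when $\sigma = 0$ so that the resulting expression corresponds precisely to the modified data tuple in the statement.
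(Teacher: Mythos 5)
Your proposal is correct and follows essentially the same route as the paper's proof: define $\hat{\eta}=\psi/\rho_\gamma$, use the third items of Lemma~\ref{lem:rho} and Lemma~\ref{lem:psi} for the bound \eqref{eq: eta est 2} (noting $X^{s+5/2}=H^{s+5/2}$ here via Lemma~\ref{lem: Xs Hs equiv}), and verify the adjoint compatibility condition by expanding $0=\psi-\rho_\gamma\hat\eta$ and using $\overline{V_n(\xi,b,-\gamma)}=\int_0^b 2\pi i\xi\cdot\overline{V'}\,dx_n$ to fold the $m$-term into the $\hat f-\widehat{\p_1\eta}e_1$ piece. No gaps.
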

\begin{proof}
We follow the construction in the previous lemma and propose to define $\eta$ via \eqref{eq: defn eta hat}. Then $\overline{\hat{\eta}(\xi)} = \hat{\eta}(-\xi)$, $\hat{\eta}(0) = 0$, and by the third item of Lemma~\ref{lem:rho} and the third item of Lemma~\ref{lem:psi}, we have 
\begin{multline}\label{eq: eta 0}
	\int_{\hat{\Sigma}} \left(  (1+\abs{\xi}^2) \mathbbm{1}_{B(0,1)} + (1+\abs{\xi}^2)^{s+\frac{5}{2}} \mathbbm{1}_{B(0,1)^c}\right)  \abs{\hat{\eta}(\xi)}^2 \; d\xi   
	\\ \lesssim \int_{\hat{\Sigma}} \left( \frac{1}{\abs{\xi}^2} \mathbbm{1}_{B(0,1)} + (1+\abs{\xi}^2)^{s+\frac{3}{2}} \mathbbm{1}_{B(0,1)^c}\right) \abs{\rho_\gamma(\xi)}^2 \abs{\hat{\eta}(\xi)}^2 \; d\xi
   \\
	\lesssim \int_{\hat{\Sigma}} \left( \frac{1}{\abs{\xi}^2} \mathbbm{1}_{B(0,1)} + (1+\abs{\xi}^2)^{s+\frac{3}{2}} \mathbbm{1}_{B(0,1)^c}\right) \abs{\psi(\xi)}^2\; d\xi  \lesssim \norm{(f,g,h,k)}_{\mathcal{Y}^s}^2.
\end{multline}
Consequently, we may define $\eta = (\hat{\eta})^\vee$. By \eqref{eq: eta 0} and Lemma~\ref{lem: Xs Hs equiv}, we have $\eta \in H^{s+5/2}(\Sigma ; \R) = X^{s+5/2}(\Sigma;\R)$ with the estimate \eqref{eq: eta est 2}. By the third item of Theorem~\ref{thm:Xs}, we have $(f - \p_1 \eta e_1 , g, h - \gamma \p_1 \eta, k ) \in H^{s} (\Omega;\R^n) \times H^{s+1}(\Omega; \R) \times H^{s+\frac{3}{2}}(\Sigma_b; \R) \times H^{s+\frac{1}{2}}(\Sigma_b; \R^n)$ and $(h - \gamma \p_1 \eta) - \int_0^b g(\cdot, x_2) \; dx_2 \in \dot{H}^{-1}(\Sigma; \R)$. 
To conclude we follow the same line of calculations as in Lemma~\ref{lem:etaconstruct1} to arrive at 
\begin{multline}
	0 = \int_0^b  \left( (\hat{f}(\xi,x_2) - 2 \pi i \xi_1 \hat{\eta} e_1)  \cdot \overline{V(\xi,x_2,-\gamma)} - \hat{g}(\xi,x_2) \overline{Q(\xi,x_2,-\gamma)}  \right) \; dx_2  - \hat{k}(\xi)\cdot \overline{V(\xi,b,-\gamma)}  \\ + \hat{h}(\xi) - 2\pi i \xi_1 \gamma \hat{\eta}(\xi),
\end{multline} 
which verifies the overdetermined compatibility condition \eqref{eq: 2nd comp fourier}.
\end{proof}

Now we are ready to prove that $\Upsilon_{\gamma,\sigma}: \mathcal{X}^s \to \mathcal{Y}^s$ is an isomorphism when $\sigma > 0$ and $n \ge 2$, and when $\sigma = 0$ and $n = 2$.

\begin{thm}[Existence and uniqueness of solutions to \eqref{eq:gammacapillary}]\label{thm:upsiloniso}
Suppose $\gamma \in \R \setminus \{0\}, s \ge 0$.
\begin{enumerate}
	 \item (Isomorphism in the case with surface tension) If $\sigma > 0$ and $n \ge 2$, then the bounded linear map $\Upsilon_{\gamma,\sigma} : \mathcal{X}^s \to \mathcal{Y}^s$ defined in \eqref{eq:upsilon} is an isomorphism.
	 \item (Isomorphism in the case without surface tension) If $\sigma = 0$ and $n = 2$, then the bounded linear map $\Upsilon_{\gamma,0} : \mathcal{X}^s \to \mathcal{Y}^s$ defined in \eqref{eq:upsilon} is an isomorphism.
\end{enumerate}

\end{thm}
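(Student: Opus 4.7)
The plan is to combine injectivity from Proposition~\ref{prop:Upsiloninjective} with surjectivity obtained from the free surface construction in Lemmas~\ref{lem:etaconstruct1} and~\ref{lem:etaconstruct2} together with the overdetermined Hilbert isomorphism in Theorem~\ref{thm:overlinear}. Since $\Upsilon_{\gamma,\sigma}$ is already known to be a bounded linear injection between Hilbert spaces, the open mapping theorem will promote bijectivity to an isomorphism, so only surjectivity needs to be addressed.

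Fix a target $(f,g,h,k) \in \mathcal{Y}^s$. In case (1), with $\sigma > 0$ and $n \ge 2$, I will apply Lemma~\ref{lem:etaconstruct1} to produce $\eta \in X^{s+5/2}(\Sigma;\R)$, defined via the Fourier relation $\hat{\eta} = \psi/\rho_\gamma$, satisfying $\norm{\eta}_{X^{s+5/2}} \lesssim \norm{(f,g,h,k)}_{\mathcal{Y}^s}$ and such that the modified data tuple
\begin{equation}
(f - (\nabla'\eta,0),\, g,\, h - \gamma \p_1 \eta,\, k - \sigma \Delta'\eta \, e_n)
\end{equation}
not only lies in $H^s(\Omega;\R^n) \times H^{s+1}(\Omega;\R) \times H^{s+3/2}(\Sigma;\R) \times H^{s+1/2}(\Sigma;\R^n)$ but verifies the divergence-trace condition~\eqref{eq:divtrace} and the adjoint compatibility condition~\eqref{eq: 2nd comp fourier}. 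In other words, this modified tuple belongs to the space $\mathcal{Z}^s$ of Definition~\ref{defn:Zs}. Invoking Theorem~\ref{thm:overlinear} then yields a unique $(u,p) \in \Hzeros{s+2}(\Omega;\R^n) \times H^{s+1}(\Omega;\R)$ solving $\Psi_\gamma(u,p) = (f-(\nabla'\eta,0), g, h-\gamma\p_1\eta, k-\sigma\Delta'\eta \, e_n)$, together with the estimate $\norm{u}_{H^{s+2}} + \norm{p}_{H^{s+1}} \lesssim \norm{(f,g,h,k)}_{\mathcal{Y}^s}$. Adding the $\eta$-dependent terms back in then gives $\Upsilon_{\gamma,\sigma}(u,p,\eta) = (f,g,h,k)$, which proves surjectivity in case (1). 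Case (2), with $\sigma = 0$ and $n = 2$, proceeds identically but replaces Lemma~\ref{lem:etaconstruct1} with Lemma~\ref{lem:etaconstruct2}; here $X^{s+5/2}(\Sigma;\R) = H^{s+5/2}(\Sigma;\R)$ by the first item of Lemma~\ref{lem: Xs Hs equiv} because $R_\Sigma \in \{\varnothing, \{1\}\}$ in dimension two.

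Since essentially all of the analytical heavy lifting—ODE asymptotics for the symbols $V$, $Q$, $m$, coercivity of $\rho_\gamma$, the overdetermined isomorphism, and the construction of $\eta$—has already been done in the preceding results, there is no genuine obstacle. The only subtlety worth flagging is the zero-frequency case when $R_\Sigma = \varnothing$, where the container space $\mathcal{X}^s$ requires $\hat{\eta}(0) = 0$ (the $\mathring{X}^{s+5/2}$ condition from Definition~\ref{defn:Xsfordata}); this is automatic from the construction formula \eqref{eq: defn eta hat}, which sets $\hat{\eta}(0) = 0$ by fiat and is consistent with Lemma~\ref{lem:psi}(1) forcing $\psi(0) = 0$ via the $\dot{H}^{-1}$ part of the $\mathcal{Y}^s$ norm. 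With this check in place, the chain Lemma~\ref{lem:etaconstruct1} or~\ref{lem:etaconstruct2} $\Rightarrow$ Theorem~\ref{thm:overlinear} $\Rightarrow$ open mapping completes both items simultaneously.
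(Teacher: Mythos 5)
Your proposal is correct and follows essentially the same route as the paper: injectivity from Proposition~\ref{prop:Upsiloninjective}, surjectivity by constructing $\eta$ via Lemma~\ref{lem:etaconstruct1} (or Lemma~\ref{lem:etaconstruct2} when $\sigma=0$, $n=2$) and then solving for $(u,p)$ with Theorem~\ref{thm:overlinear}. Your extra remarks on the open mapping theorem and the zero-frequency condition $\hat{\eta}(0)=0$ in the purely toroidal case are correct checks that the paper leaves implicit.
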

\begin{proof}
To prove the first item, by Proposition~\ref{prop:Upsiloninjective}, it suffices to show that $\Upsilon_{\gamma,\sigma}$ is surjective. Suppose $(f,g,h,k) \in \mathcal{Y}^s$, and define the free surface function $\eta \in X^{s+5/2}(\Sigma ; \R)$ by the construction in Lemma~\ref{lem:etaconstruct1}. By Theorem~\ref{thm:overlinear}, there exists $(u,p) \in \Hzeros{s+2}(\Omega ; \R^n) \times H^{s+1}(\Omega ; \R)$ such that $\Psi_\gamma (u,p) = (\diverge S(p,u) - \gamma \p_1 u, \diverge u , u_n \rvert_{\Sigma_b}, S(p,u) e_n \rvert_{\Sigma_b}) = (f - (\nabla' \eta, 0), g, h - \gamma \p_1 \eta, k - \sigma \Delta'\eta e_n)$. Therefore, we find that $\Upsilon_{\gamma,\sigma} (u,p, \eta) = (f,g,h,k)$. This shows that $\Upsilon_{\gamma,\sigma}$ is surjective, and it follows that $\Upsilon_{\gamma,\sigma}$ is an isomorphism. 

To prove the second item we follow the same argument as above, using Lemma~\ref{lem:etaconstruct2} in place of Lemma~\ref{lem:etaconstruct1} and $\Upsilon_{\gamma,0}$ in place of $\Upsilon_{\gamma,\sigma}$.
\end{proof}

\subsection{Parameter regime for the linear isomorphism associated to \eqref{eq: main flattened}}\label{sec:kappa est}

In this subsection we consider the linearization of the flattened system \eqref{eq: main flattened} around the trivial solution $u=0,p=0,\eta =0$, which is given by the map $\mathcal{X}^s \ni (u,p,\eta) \mapsto   L_{\kappa, \sigma} \in \mathcal{Y}^s$ defined in \eqref{eq:intro Lkappa}. Our goal is to show that there is a parameter regime in terms of $\kappa$ for which $L_{\kappa, \sigma}$ remains an isomorphism between $\mathcal{X}^s$ and $\mathcal{Y}^s$. To achieve this we first define $L_{0,\sigma}, M_{\kappa} : \mathcal{X}^s \to \mathcal{Y}^s$ via 
\begin{multline} \label{eq:Lkappa}
	L_{0,\sigma} (u,p,\eta)= \Upsilon_{\gamma,\sigma}(u,p,\eta)=  (\diverge S(p,u) - \gamma \p_1 u + (\nabla' \eta, 0), \diverge u, u_n \rvert_{\Sigma_b} + \gamma \p_1 \eta , S(p,u) e_n \rvert_{\Sigma_b} + \sigma \Delta'\eta e_n),
\end{multline}
and
\begin{multline} \label{eq:defMkappa}
	M_\kappa (u, p,\eta) = (- \gamma \kappa x_n \p_1 \eta e_1 + \kappa  s_0(x_n) \p_1 u + \kappa s_0'(x_n) u_n e_1 +  \kappa^2 x_n s_0(x_n) \p_1 \eta e_1 - \kappa x_n \Delta' \eta e_1  - \kappa ( x_n \nabla' \p_1 \eta, \p_1 \eta)  ,\\ \kappa x_n \p_1 \eta, - \frac{\kappa b^2}{2} \p_1 \eta,  0),
\end{multline}
so that $L_{\kappa,\sigma} = L_{0,\sigma} + M_\kappa = L_{0,\sigma} (I+L_{0,\sigma}^{-1} M_\kappa)$, where $L_{\kappa,\sigma}$ is defined via \eqref{eq:intro Lkappa} and $s_0$ is defined via \eqref{eq: shear}. Our goal in this subsection is to show that $\norm{M_\kappa} < \frac{1}{\norm{L_{0,\sigma}}^{-1}}$ for sufficiently small $\kappa$. 

We first establish a preliminary lemma. While the following result is not at all surprising and follows from routine applications of Tonelli's theorem and H\"{o}lder's inequality, we record the proof below to show that the universal constants in the estimates are purely combinatorial and do not depend on the physical parameters. 

 \begin{lem}\label{lem:simpleprod}
	 Let $\Z \ni n \ge 2$ and suppose $f \in C^\infty ([0,b] ; \R)$.  The following hold.	 
	 \begin{enumerate}
	 \item If $g \in H^k(\Omega ; \R^n)$ for some integer $k \ge 0$ and for any $i \in \N, j \in \N^{n}$ with $\abs{j} \le k$ we define $(\p^if \p^j g)(x) = \p^i f (x_n) \p^jg(x)$, then we have  
	 \begin{align} 
		 \norm{\p^i f \p^j g }_{L^2} \le \norm{\p^i f}_{L^\infty} \norm{g}_{H^j}. 
	 \end{align}
	 Consequently, $fg \in H^k(\Omega ; \R^n)$ and there exists a combinatorial constant $C = C(k) > 0$ such that
	 \begin{align} \label{eq: simple eq 1}
		\norm{f g}_{H^k} \le C \max_{0 \le i \le k} \norm{\p^i f}_{L^\infty} \norm{g}_{H^k}.
	\end{align}
	 \item If $g \in H^k(\Sigma ; \R)$ for some integer $k \ge 0$ and for any $i \in \N, j \in \N^{n-1}$ with $\abs{j} \le k$ we define $(\p^i f \p^j g)(x) = \p^i f (x_n) \p^j g(x')$, then we have  
	 \begin{align} 
		 \norm{\p^i f \p^j g }_{L^2} \le \norm{\p^i f}_{L^2} \norm{g}_{H^j}.
	 \end{align}
	 Consequently, $fg \in H^k(\Omega ; \R)$ and there exists a combinatorial constant $C = C(k) > 0$ such that 
	 \begin{align} \label{eq: simple eq 2}
		\norm{f g}_{H^k} \le C \max_{0 \le i \le k} \norm{\p^i f}_{L^2} \norm{g}_{H^k}.
	\end{align}
	 \end{enumerate}
 \end{lem}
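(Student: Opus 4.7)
The plan is to prove both items by a direct application of Fubini/Tonelli combined with the Leibniz rule, keeping careful track of the fact that $f = f(x_n)$ depends only on the vertical variable while the two cases differ in whether $g$ depends on the full variable $x\in\Omega$ or only on the horizontal variable $x'\in\Sigma$. In both cases, because $f$ only depends on $x_n$, when we differentiate the product $fg$ with $\partial^\alpha$ for $\alpha \in \N^n$, only vertical derivatives can fall on $f$, so the Leibniz expansion reduces to the one-variable formula in $x_n$, namely
\begin{equation}
\partial^\alpha(fg) \;=\; \sum_{i=0}^{\alpha_n}\binom{\alpha_n}{i}\,(\partial_{x_n}^{i}f)\,(\partial^{\alpha - i e_n}g).
\end{equation}
So it suffices to prove the first (pointwise-product) bound in each item, and then the estimates \eqref{eq: simple eq 1}--\eqref{eq: simple eq 2} will follow by summing over $|\alpha| \le k$ with a purely combinatorial constant $C = C(k)$.

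For item 1, I would use Tonelli's theorem to slice $\Omega = \Sigma \times (0,b)$ and pull the $L^\infty([0,b])$ norm of $\partial^i f$ out of the integral over $x_n$:
\begin{equation}
\|\partial^i f \,\partial^j g\|_{L^2(\Omega)}^2 \;=\; \int_\Sigma \int_0^b |\partial^i f(x_n)|^2 |\partial^j g(x',x_n)|^2 \,dx_n\,dx' \;\le\; \|\partial^i f\|_{L^\infty([0,b])}^2 \,\|\partial^j g\|_{L^2(\Omega)}^2,
\end{equation}
and then estimate $\|\partial^j g\|_{L^2(\Omega)} \le \|g\|_{H^{|j|}(\Omega)} \le \|g\|_{H^k(\Omega)}$.

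For item 2, $g$ is defined on $\Sigma$, so $\partial^j g$ is independent of $x_n$ (here $j \in \N^{n-1}$) and the integrals in $x'$ and $x_n$ separate cleanly:
\begin{equation}
\|\partial^i f \,\partial^j g\|_{L^2(\Omega)}^2 \;=\; \Bigl(\int_0^b |\partial^i f(x_n)|^2\,dx_n\Bigr) \Bigl(\int_\Sigma |\partial^j g(x')|^2\,dx'\Bigr) \;=\; \|\partial^i f\|_{L^2([0,b])}^2 \,\|\partial^j g\|_{L^2(\Sigma)}^2.
\end{equation}
This time there is no need to pass to $L^\infty$ in $x_n$ because $|\partial^j g|^2$ is constant in $x_n$. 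Combining with the Leibniz expansion and summing over $|\alpha|\le k$ yields \eqref{eq: simple eq 2} with $C=C(k)$ depending only on the binomial coefficients.

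There is no real obstacle here: the argument is elementary, and the only delicate aspect to flag is the emphasis that the constant $C(k)$ is a pure combinatorial constant independent of $b$, $f$, $g$, $\Sigma$, and any physical parameter. This independence is important because this lemma will be applied later to functions $f$ built from $s_0(x_n) = bx_n - x_n^2/2$ and its derivatives, where tracking the dependence on $b$ through a clean separation of the $f$-norm from the universal constant matters when we later take $\kappa\to 0$ in the perturbative argument of Section~\ref{sec:kappa est}.
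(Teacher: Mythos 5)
Your proposal is correct and follows essentially the same route as the paper: Tonelli's theorem plus H\"older's inequality to obtain the pointwise-product bounds (pulling out the $L^\infty([0,b])$ norm of $\partial^i f$ in item 1, and exploiting the clean separation of the $x'$ and $x_n$ integrals in item 2), followed by the Leibniz rule and a sum over multi-indices to get \eqref{eq: simple eq 1} and \eqref{eq: simple eq 2} with a purely combinatorial constant. Your explicit observation that only vertical derivatives can fall on $f$ is a slightly cleaner way of writing the Leibniz expansion than the paper's, but the substance is identical.
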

\begin{proof}
	To prove the first item, we note that by Tonelli's theorem and H\"{o}lder's inequality, 
	\begin{multline} 
		\norm{\p^i f \p^j g}_{L^2} = \left( \int_{\Sigma} \int_0^b   \abs{\p^i f(x_n)}^2 \abs{\p^jg(x)}^2  \; dx_n dx'\right)^{\frac{1}{2}} \le \norm{\p^i f}_{L^{\infty}(0,b)} \left( \int_{\Sigma} \int_0^b \abs{\p^j g(x)}^2 \; dx_n d x'  \right)^{\frac{1}{2}} 
		\\ \le \norm{\p^i f}_{L^{\infty}(0,b)} \norm{\p^j g}_{L^2\left( \Sigma \times (0,b) \right)} \le \norm{\p^i f}_{L^\infty} \norm{g}_{H^{j}}.
\end{multline}
Then by the Leibniz formula, we have 
\begin{align} 
	\norm{f g}_{H^k} \le\sum_{\substack{0 \le \abs{j} \le k, \; m \le j \in \N^{n} }} \binom{j}{m} \norm{\p^{j} f}_{L^\infty} \norm{\p^{j-m} g}_{L^2} \le C \max_{0 \le i \le k \in \N} \norm{\p^i f}_{L^\infty} \norm{g}_{H^k}
\end{align}	
where $C = C(k) > 0$ is a combinatorial constant depending only on $k$. 
To prove the second item, we note that by Tonelli's theorem and H\"{o}lder's inequality again, 
\begin{align} 
	\norm{\p^i f \p^j g}_{L^2}  = \left( \int_0^b   \abs{\p^i f(x_n)}^2 \; dx_n  \int_{\Sigma}  \abs{\p^j g(x')}^2 \; dx'\right)^{\frac{1}{2}} 	     
	= \norm{\p^i f}_{L^2} \norm{\p^j g}_{L^2} 
	\le \norm{\p^i f}_{L^2} \norm{g}_{H^{j}}. 
\end{align}
Then by the Leibniz formula,
\begin{align} 
	\norm{f g}_{H^k} \le \sum_{\substack{0 \le \abs{j} \le k, \; m \le j \in \N^{n-1} }} \binom{j}{m} \norm{\p^
{j} f}_{L^2} \norm{\p^{j-m} g}_{L^2} \le C \max_{0 \le i \le k \in \N} \norm{\p^i f}_{L^2} \norm{g}_{H^k}.
\end{align}	
\end{proof}

This gives us an immediate corollary that will prove useful in the next section. 

\begin{cor}[Multiplication with smooth functions in the vertical variable]\label{cor: mult xn}
	Let $\Sigma$ be defined as in \eqref{sigma} and suppose $\R \ni s \ge 0$. If $\varphi \in C^\infty([0,b]; \R)$, then the map $T_\varphi: H^s(\Sigma; \R) \to H^s(\Omega ; \R)$ defined via 
	\begin{align}
		 T_\varphi f (x) = \varphi(x_n) f(x')
	\end{align}
	 is well-defined and bounded. 
\end{cor}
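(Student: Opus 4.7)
The plan is to reduce everything to the product-type estimate already provided by Lemma~\ref{lem:simpleprod} when $s$ is an integer, and to handle fractional $s$ by a Fourier-analytic extension/restriction argument.

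First, for integer $s = k \in \N$, I would invoke the second item of Lemma~\ref{lem:simpleprod} directly with $f = \varphi$ and $g$ replaced by an arbitrary $h \in H^k(\Sigma; \R)$. Since $\varphi \in C^\infty([0,b]; \R)$, each derivative $\partial^i \varphi$ lies in $C([0,b]; \R) \hookrightarrow L^\infty([0,b]; \R) \hookrightarrow L^2([0,b]; \R)$, so the right-hand side of \eqref{eq: simple eq 2} is finite and we get
\begin{equation}
 \norm{T_\varphi h}_{H^k(\Omega)} \le C(k)\,\max_{0 \le i \le k} \norm{\partial^i \varphi}_{L^2([0,b])} \norm{h}_{H^k(\Sigma)}.
\end{equation}

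For general $s \ge 0$, I would first extend $\varphi$ to a function $\tilde{\varphi} \in C^\infty_c(\R; \R)$ satisfying $\tilde{\varphi}|_{[0,b]} = \varphi$ (for instance via Seeley extension followed by a compactly-supported cutoff). Defining $\tilde{T}_{\tilde{\varphi}}: H^s(\Sigma; \R) \to H^s(\Sigma \times \R; \R)$ by $\tilde{T}_{\tilde{\varphi}} h(x',x_n) = \tilde{\varphi}(x_n) h(x')$, I would compute on the Fourier side for $h \in \mathscr{S}(\Sigma; \R)$ that the full Fourier transform factors as
\begin{equation}
 \mathscr{F}_{\Sigma \times \R}[\tilde{T}_{\tilde{\varphi}} h](\xi,\eta) = \hat{h}(\xi)\,\hat{\tilde{\varphi}}(\eta), \qquad \xi \in \hat{\Sigma},\ \eta \in \R.
\end{equation}
Applying the elementary inequality $(1 + |\xi|^2 + |\eta|^2)^s \lesssim_s (1 + |\xi|^2)^s (1 + |\eta|^2)^s$ and Tonelli's theorem then yields
\begin{equation}
 \norm{\tilde{T}_{\tilde{\varphi}} h}_{H^s(\Sigma \times \R)}^2 \lesssim_s \norm{\tilde{\varphi}}_{H^s(\R)}^2 \norm{h}_{H^s(\Sigma)}^2,
\end{equation}
which is finite since $\tilde{\varphi} \in C^\infty_c(\R)$. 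By density of the Schwartz class (available on $\Sigma$ by Theorem~\ref{thm: X^s gen}), this extends to all $h \in H^s(\Sigma; \R)$. Finally, since $T_\varphi h = (\tilde{T}_{\tilde{\varphi}} h)|_\Omega$ and the restriction operator $H^s(\Sigma \times \R; \R) \to H^s(\Omega; \R)$ is continuous with norm at most $1$ (or a universal constant depending on $b$, via the standard definition of $H^s$ on a strip as the restriction space), we conclude
\begin{equation}
 \norm{T_\varphi h}_{H^s(\Omega)} \le C(s, \varphi)\,\norm{h}_{H^s(\Sigma)}.
\end{equation}

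There is no serious obstacle here: the only mild technicality is verifying that the separation-of-variables estimate on $\Sigma \times \R$ is valid in the mixed continuous/toroidal setting, but this is immediate from the definition of $H^s$ via the Fourier multiplier $\langle \cdot \rangle^{2s}$ on the dual group $\hat{\Sigma} \times \R$ and the product-measure structure of the frequency space. Alternatively, one could avoid the fractional argument altogether and conclude by complex interpolation between the integer cases already handled by Lemma~\ref{lem:simpleprod}, but the Fourier argument above gives the explicit dependence on $\varphi$ directly.
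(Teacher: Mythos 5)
Your proof is correct, and for the fractional case it takes a genuinely different route from the paper. The paper disposes of the corollary in two lines: integer $s$ via the second item of Lemma~\ref{lem:simpleprod} (exactly as you do), and real $s$ by interpolating between consecutive integer cases --- precisely the alternative you mention in your closing sentence. Your main argument instead extends $\varphi$ to $\tilde{\varphi}\in C^\infty_c(\R;\R)$, observes that $\mathscr{F}[\tilde{\varphi}\otimes h]=\hat{h}\otimes\hat{\tilde{\varphi}}$, uses the submultiplicativity $(1+\abs{\xi}^2+\abs{\eta}^2)^s\le(1+\abs{\xi}^2)^s(1+\abs{\eta}^2)^s$ for $s\ge 0$ together with Tonelli, and then restricts from $\Sigma\times\R$ to $\Omega$. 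This is sound; the only point you should make explicit is that the boundedness of the restriction operator $H^s(\Sigma\times\R;\R)\to H^s(\Omega;\R)$ for non-integer $s$ relies on identifying $H^s(\Omega)$ with the restriction (quotient) space, which holds for the smooth strip $\Omega=\Sigma\times(0,b)$ but is a definitional convention one should name. What your route buys is an explicit, quantitative dependence of the operator norm on $\norm{\tilde{\varphi}}_{H^s(\R)}$ valid uniformly in $s$, and it avoids invoking the interpolation machinery; what the paper's route buys is brevity and the fact that it never leaves the bounded interval $[0,b]$, so no extension operator is needed. Either argument is acceptable.
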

\begin{proof}
 This follows directly from the second item of Lemma~\ref{lem:simpleprod} when $s$ is an integer. By interpolation, the second item also holds when $s$ is real-valued. 
\end{proof}

Now we are ready to prove the main result of this subsection.

 \begin{thm}\label{thm:M} 
	 Let $\R \ni s \ge 0, \kappa \in \R$ and consider the linear map $M_\kappa: \mathcal{X}^s \to \mathcal{Y}^s$ as defined in \eqref{eq:defMkappa}. Then the following hold.

	\begin{enumerate}
		\item There exists a constant $C > 0$ depending on $s,n$, and in the periodic cases on $L_i$, such that 
		\begin{align} \label{eq:Mbound}
			\norm{M_\kappa}_{\mathcal{L}(\mathcal{X}^s ; \mathcal{Y}^s)} \le C \left(\max \{ b^{\frac{1}{2}}, b^{\frac{7}{2}} \} \abs{\kappa}^2  + \left(\abs{\gamma} \max\{b^{1/2}, b^{3/2}\} + \max \{1,b^{2} \} \right) \abs{\kappa}\right). 
		\end{align}
		\item There exists $\kappa_0 > 0$ depending on $\gamma,s,b,n$, and in the toroidal cases on $L_i$, such that for all $\kappa \in (-\kappa_0, \kappa_0)$, the map $L_{\kappa,\sigma} : \mathcal{X}^s \to \mathcal{Y}^s$ as defined in \eqref{eq:Lkappa} is an isomorphism.
	\end{enumerate}
 \end{thm}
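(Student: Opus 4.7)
The plan is to establish item (1) by a direct term-by-term mapping estimate using the tools already recorded in the paper, and then deduce item (2) via a standard Neumann-series perturbation argument around the isomorphism $L_{0,\sigma}=\Upsilon_{\gamma,\sigma}$.

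\textbf{Plan for item (1).} Each output component of $M_\kappa(u,p,\eta)$ is a sum of terms of the form $\varphi(x_n)\,\partial^{\alpha} u$, $\varphi(x_n)\,\partial^{\alpha}\eta$, or $\varphi(x_n)\,u_n$, where $\varphi$ is a polynomial in $x_n$ of degree at most $3$ (arising from $x_n$, $s_0(x_n)=bx_n-x_n^2/2$, $x_n s_0(x_n)$, and $s_0'(x_n)=b-x_n$). I would handle every such term by invoking Corollary~\ref{cor: mult xn}, which gives continuity of the multiplication operator $T_\varphi: H^s(\Sigma;\R)\to H^s(\Omega;\R)$ with operator norm controlled by $\max_{0\le i\le k}\|\partial^i\varphi\|_{L^2([0,b])}$ when the input is independent of $x_n$, and by the first item of Lemma~\ref{lem:simpleprod} with norm $\max_{0\le i \le k}\|\partial^i\varphi\|_{L^\infty([0,b])}$ when the input already depends on $x$. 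Elementary calculations give $\|x_n^j\|_{L^\infty([0,b])}=b^j$, $\|x_n^j\|_{L^2([0,b])}\le b^{j+1/2}$, $\|s_0\|_{L^\infty([0,b])}\lesssim b^2$, $\|s_0'\|_{L^\infty([0,b])}\lesssim b$, and similarly for derivatives, which accounts for the factors $\max\{b^{1/2},b^{3/2}\}$, $\max\{1,b^2\}$, and $\max\{b^{1/2},b^{7/2}\}$ in \eqref{eq:Mbound}. The regularity counting is: $\partial_1 u\in H^{s+1}$, $u_n\in H^{s+2}$, and by the fifth item of Theorem~\ref{thm: X^s gen} together with Theorem~\ref{thm:Xs}, the derivatives $\partial_1\eta$, $\nabla'\partial_1\eta$, $\Delta'\eta$ all lie in the appropriate $H^{s+\cdot}(\Sigma;\R)$ scales derived from $\eta\in X^{s+5/2}(\Sigma;\R)$, so every term lands in its prescribed target space.

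\textbf{Divergence-trace compatibility.} The delicate point in showing $M_\kappa(u,p,\eta)\in\mathcal{Y}^s$ is the condition \eqref{eq:divtrace}. For the output $(f,g,h,k)=M_\kappa(u,p,\eta)$, one computes
\begin{equation}
h-\int_0^b g(\cdot,x_n)\,dx_n = -\tfrac{\kappa b^2}{2}\partial_1\eta - \kappa\,\partial_1\eta\int_0^b x_n\,dx_n = -\kappa b^2\,\partial_1\eta,
\end{equation}
so the needed $\dot H^{-1}$ control reduces exactly to the continuity of $\partial_1: X^{s+5/2}(\Sigma;\R)\to \dot H^{-1}(\Sigma;\R)$, which is the third item of Theorem~\ref{thm:Xs}. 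Collecting the bounds of the previous paragraph and grouping by powers of $\kappa$ yields \eqref{eq:Mbound}, with the $\kappa^2$ coefficient arising only from the term $\kappa^2 x_n s_0(x_n)\partial_1\eta\,e_1$.

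\textbf{Plan for item (2).} By Theorem~\ref{thm:upsiloniso}, $L_{0,\sigma}=\Upsilon_{\gamma,\sigma}:\mathcal{X}^s\to\mathcal{Y}^s$ is a Banach-space isomorphism in both regimes under consideration, so $L_{0,\sigma}^{-1}$ is bounded with norm depending on $\gamma,\sigma,b,n,s$. Writing
\begin{equation}
L_{\kappa,\sigma}=L_{0,\sigma}+M_\kappa=L_{0,\sigma}\bigl(I+L_{0,\sigma}^{-1}M_\kappa\bigr),
\end{equation}
item (1) shows that $\|L_{0,\sigma}^{-1}M_\kappa\|_{\mathcal{L}(\mathcal{X}^s)}\to 0$ as $\kappa\to 0$. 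Choosing $\kappa_0>0$ so that $\|L_{0,\sigma}^{-1}M_\kappa\|<1$ for $|\kappa|<\kappa_0$, the Neumann series furnishes a bounded inverse for $I+L_{0,\sigma}^{-1}M_\kappa$, and hence $L_{\kappa,\sigma}$ is an isomorphism for all such $\kappa$.

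\textbf{Expected main obstacle.} The mapping estimates themselves are routine given the machinery already proved, so the only nontrivial points are (a) verifying the divergence-trace compatibility after the nontrivial cancellation above, which is precisely what makes the perturbation $M_\kappa$ land in $\mathcal{Y}^s$ rather than in a larger space without the $\dot H^{-1}$ constraint, and (b) bookkeeping the dependence on $b$ through the distinct $L^\infty$ versus $L^2$ weights produced by Lemma~\ref{lem:simpleprod}, which dictates the $\max\{b^{1/2},b^{7/2}\}$ and $\max\{b^{1/2},b^{3/2}\}$ factors in \eqref{eq:Mbound}.
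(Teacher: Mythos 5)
Your proposal follows essentially the same route as the paper: term-by-term mapping estimates via Lemma~\ref{lem:simpleprod} and Corollary~\ref{cor: mult xn}, tracking the $L^2$ and $L^\infty$ norms of $x_n$, $s_0$, and $x_n s_0$ to get the $b$-dependence in \eqref{eq:Mbound}, followed by the Neumann-series perturbation $L_{\kappa,\sigma}=L_{0,\sigma}(I+L_{0,\sigma}^{-1}M_\kappa)$ for item (2); your explicit cancellation $h-\int_0^b g(\cdot,x_n)\,dx_n=-\kappa b^2\,\partial_1\eta$ verifying the divergence-trace condition is a point the paper leaves implicit. The only step you omit is that Lemma~\ref{lem:simpleprod} is stated for integer regularity, so non-integer $s$ requires the interpolation between $\mathcal{X}^m$ and $\mathcal{X}^{m+1}$ that the paper carries out at the end of its proof of item (1).
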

\begin{proof}
	To prove the first item, we first suppose that $(u,p,\eta) \in \mathcal{X}^{s}$ for some integer $s \ge 0$ and consider $(f,g,h,k) = M_{\kappa} (u,p,\eta)$, where $M_\kappa$ is defined in \eqref{eq:defMkappa}. Then by the fifth item of Theorem~\ref{thm: X^s gen}, the third item of Theorem~\ref{thm:Xs}, and Lemma~\ref{lem:simpleprod} we have $(f,g,h,k) = M_{\kappa} ( u, p,\eta) \in \mathcal{Y}^{s}$. Recall that $s_0(x_n) = bx_n - \frac{x_n^2}{2}$ and note that $s_0^{(i)}(x_n) = 0$ for $i \ge 3$ and $(x_n s_0(x_n))^{(i)} = 0$ for $i \ge 4$. Therefore, by Lemma~\ref{lem:simpleprod} and the fifth item of Theorem~\ref{thm: X^s gen} we have 
\begin{multline} 
	\norm{f}_{H^s} \lesssim_s  \abs{\gamma} \abs{\kappa} \max_{0 \le i \le 1} \norm{x_n^{(i)}}_{L^2} \norm{\p_1 \eta}_{H^s}  +\abs{\kappa} \max_{0 \le i \le 2} \norm{s_0^{(i)}}_{L^\infty} \norm{u}_{H^{s+1}} + \abs{\kappa} \max_{1 \le i \le 2} \norm{s_0^{(i)}}_{L^\infty} \norm{u}_{H^s} \\+ \abs{\kappa}^2 \max_{0 \le i \le 3} \norm{(x_n s_0)^{(i)}}_{L^2} \norm{\p_1 \eta}_{H^s} 
			      + \abs{\kappa}  \max_{0 \le i \le 1} \norm{x_n^{(i)}}_{L^2} \norm{\Delta' \eta}_{H^s}  + \abs{\kappa} \max_{0 \le i \le 1} \norm{x_n^{(i)}}_{L^2} \norm{\p_1 \eta}_{H^{s+1}} \\+ \abs{\kappa} \norm{\p_1 \eta}_{H^s}  
			      \lesssim_{s,n} \left[ \max \{ b^{\frac{1}{2}}, b^{\frac{7}{2}} \} \abs{\kappa}^2  + \left(\abs{\gamma} \max\{b^{1/2}, b^{3/2}\} + \max \{1,b^{2} \} \right) \abs{\kappa} \right] \left( \norm{u}_{H^{s+1}} + \norm{\eta}_{X^{s+2}} \right) \\
				  \lesssim_{s,n} \left(  \max \{ b^{\frac{1}{2}}, b^{\frac{7}{2}} \} \abs{\kappa}^2  + \left(\abs{\gamma} \max\{b^{1/2}, b^{3/2}\} + \max \{1,b^{2} \} \right) \abs{\kappa}  \right) \norm{(u,p,\eta)}_{\mathcal{X}^s}.
\end{multline}
 We also have 
 \begin{align} 
	 \norm{g}_{H^{s+1}} \lesssim_s \abs{\kappa} \max \{ b^{\frac{1}{2}}, b^{\frac{3}{2}} \} \norm{\p_1 \eta}_{H^{s+1}} \lesssim_{s,n} \abs{\kappa} \max \{ b^{\frac{1}{2}}, b^{\frac{3}{2}} \} \norm{ \eta}_{X^{s+2}}
 \end{align}
 and $\norm{h}_{H^{s+\frac{3}{2}}} \lesssim_{s,n} \abs{\kappa} b^2 \norm{\eta}_{X^{s+\frac{5}{2}}}$. Therefore, 
 \begin{multline} 
	 \norm{M_\kappa(u,p,\eta)}_{\mathcal{Y}^s} \lesssim_{s,n} \left(\max \{ b^{\frac{1}{2}}, b^{\frac{7}{2}} \} \abs{\kappa}^2  + \left(\abs{\gamma} \max\{b^{1/2}, b^{3/2}\} + \max \{1,b^{2} \} \right) \abs{\kappa}  \right) \norm{(u,p,\eta)}_{\mathcal{X}^s} \\ =: B(\gamma, b, \kappa)\norm{(u,p,\eta)}_{\mathcal{X}^s}
 \end{multline}
 which implies that $\norm{M_\kappa}_{\mathcal{L}(\mathcal{X}^s ; \mathcal{Y}^s)} \lesssim_{s,n}  B(\gamma, b, \kappa)$. We note that by the fifth item of Theorem~\ref{thm: X^s gen}, in the toroidal cases the universal constants in the preceding equations also depend on $L_i$.
 
 For any real valued $s \ge 0$, we may find an integer $m \ge 0$ and some $\theta \in [0,1]$ such that $s = \theta m + (1-\theta)(m+1)$. Then by interpolation, there exists some constant $C > 0$ depending on $s,n$, and in the periodic cases on $L_i$ such that
 \begin{align} \label{eq:kappa C}
	 \norm{M_\kappa}_{\mathcal{L}(\mathcal{X}^s ; \mathcal{Y}^s)} \le \norm{M_\kappa}_{\mathcal{L}(\mathcal{X}^m ; \mathcal{Y}^m)}^{\theta} \norm{M_\kappa}_{\mathcal{L}(\mathcal{X}^{m+1} ; \mathcal{Y}^{m+1})}^{1-\theta}
	 \le C B(\gamma,b,\kappa). 
 \end{align}
 This proves the first item. To prove the second item, we note that 
 \begin{align} \label{eq: kappa cases}
	B(\gamma,b,\kappa) = \max \{ b^{\frac{1}{2}}, b^{\frac{7}{2}} \} \abs{\kappa}^2  + \left(\abs{\gamma} \max\{b^{1/2}, b^{3/2}\} + \max \{1,b^{2} \} \right) \abs{\kappa} = \begin{cases}
		 b^{7/2} \abs{\kappa}^2 + (\abs{\gamma} b^{3/2} + b^2) \abs{\kappa}, & b > 1 \\
		 b^{1/2} \abs{\kappa}^2 + (\abs{\gamma} b^{1/2} + 1) \abs{\kappa}, & 0 < b \le 1
	\end{cases}.
 \end{align}
From \eqref{eq: kappa cases}, we may infer that for $c = C^{-1} \norm{L_{0,\sigma}}^{-1} > 0$, where $C > 0$ is the constant appearing in \eqref{eq:kappa C}, there exists $\kappa_0$ depending on $\gamma,s,b,n$ and in the toroidal cases on $L_i$ for which $B(\gamma,b,\kappa) < c$ if $\kappa \in (-\kappa_0,\kappa_0)$.  

Then by \eqref{eq:kappa C}, if $\kappa \in (-\kappa_0, \kappa_0)$ then $\norm{M_\kappa}_{\mathcal{L}(\mathcal{X}^s, \mathcal{Y}^s)} < \norm{L_{0,\sigma}}_{\mathcal{L}\left( \mathcal{Y}^s ; \mathcal{X}^s \right) }^{-1}$, thus the linear map $L_{0,\sigma}^{-1} M_{\kappa}: \mathcal{X}^s \to \mathcal{X}^s$ as defined in \eqref{eq:Lkappa} satisfies $\norm{ L_{0,\sigma}^{-1} M_\kappa } \le \norm{L_{0,\sigma}^{-1}} \norm{M_\kappa} <  1$.  Hence, $I +  L_{0,\sigma}^{-1} M_\kappa : \mathcal{X}^s \to \mathcal{X}^s$ is an isomorphism, and we conclude that $L_{\kappa,\sigma} = L_{0,\sigma} (I+ L_{0,\sigma}^{-1} M_\kappa)$ is an isomorphism from $\mathcal{X}^s$ to $\mathcal{Y}^s$. 
\end{proof}

\section{Nonlinear analysis} \label{nonlinear}

\subsection{Preliminaries}\label{sec: product est}

We first record a set of results on various maps defined in terms of $\eta$, including the flattening map $\mathfrak{F}_\eta$, that we will use in the subsequent analysis. 
\begin{thm}\label{thm: flattening maps est}
Let $\Sigma$ be defined as in \eqref{sigma}, $\N \ni n \ge 2$, $\N \ni k > \frac{n}{2}$, and $V$ be a real finite dimensional inner product space. 

\begin{enumerate} 
	\item Let $\upzeta \in C_b^{0,1}(\Sigma; \R)$ such that $\inf \upzeta > 0$. Then there exists $r_1 > 0$ depending on $n,b,s,\upzeta$, and in the toroidal cases on $L_i$, such that the maps $\Gamma_1, \Gamma_2 : B_{X^s}(0,r_1) \times H^s(\Omega_\upzeta; \R) \to H^s(\Omega_\upzeta; \R)$ given by 
	\begin{align}
		 \Gamma_1 (f,g) = \frac{g}{b+f}, \quad \Gamma_2(f,g) = \frac{gf}{b+f}
	\end{align}
	are well-defined and smooth. There also exists a constant $r_2 > 0$ depending on $n,s$, and in the toroidal cases on $L_i$, such that the map $\Gamma : B_{X^s(0,r_2)} \to H^{s}(\Omega ; \R^n)$ given by 
	\begin{align}
		 \Gamma(f) = \frac{f}{\sqrt{1 + \abs{f}^2}}
	\end{align}
	is well-defined and smooth. 
\item (The flattening map $\mathfrak{F}_\eta$ and its inverse) Let $\eta \in X^{k+\frac{5}{2}}(\Sigma ; \R)$ be such that $\norm{\eta}_{C^0_b} \le \frac{b}{2}$. Define $\mathfrak{G}_\eta:\Omega_{b + \eta} \to \Omega_b$ via 
\begin{align} 
	\mathfrak{G}_\eta(x) = \left( x', \frac{x_n b}{b+\eta(x')} \right).
\end{align}
Then the following hold. 
\begin{enumerate}
	\item $\mathfrak{G}_\eta \in C^r(\Omega_{b+\eta}; \Omega_b)$ is a diffeomorphism for $r = 3 + \left\lfloor k - \frac{n}{2} \right\rfloor$, with its inverse being $\mathfrak{F}_\eta \in C^r (\Omega ; \Omega_{b+\eta})$. 
	\item If $0 \le s \le k +2$ and $F \in H^s(\Omega_b ; V)$, then $F \circ \mathfrak{G}_\eta \in H^s(\Omega_{b+\eta} ; V)$. Moreover, there exists a constant $c > 0$ depending on $n,s,k,\norm{\eta}_{X^{k+2}}$, and in the toroidal cases on $L_i$, such that 
		\begin{align} 
			\norm{F \circ \mathfrak{G}_\eta}_{H^s(\Omega_{b + \eta}; V)} \le c \norm{F}_{H^s(\Omega_b; V)},
		\end{align}
		and the map $r \mapsto c(n,s,k,r)$ is non-decreasing. 
	\item If $0 \le s \le k+2$ and $F \in H^s(\Omega_{b +\eta}; V)$, then $F \circ \mathfrak{F}_\eta \in H^s(\Omega_b ; V)$. Moreover, there exists a constant $c > 0$ depending on $n,s,k,\norm{\eta}_{X^{k+2}}$, and in the toroidal cases on $L_i$, such that
		\begin{align} 
			\norm{F \circ \mathfrak{F}_\eta}_{H^s(\Omega_b; V)} \le c \norm{F}_{H^s(\Omega_{b+\eta} ; V)},
		\end{align}
		and the map $r \mapsto c(n,s,k,r)$ is non-decreasing. 
\end{enumerate}
	\item ($\omega$-lemma for compositions) For $\eta \in X^{k+\frac{1}{2}}(\Sigma_b; \R)$ we define the flattening map $\mathfrak{F}_\eta$ as in \eqref{eq:flattening}. Then there exists some $0 < \delta_* < 1$ such that the following hold:
	\begin{enumerate}
		\item The map $\Lambda_\Omega: H^{k+1}(\Sigma \times \R; \R^n) \times B_{X^{k+\frac{1}{2}}} (0,\delta_*) \to H^k (\Omega_b ; \R^d)$ given by 
			\begin{align} 
				\Lambda_\Omega (f,\eta) = f \circ \mathfrak{F}_\eta
			\end{align}
			is well-defined and $C^1$, with $D\Lambda_\Omega (f,\eta) (g, \zeta) = \frac{\varphi }{b} \p_n f \circ \mathfrak{F}_\eta \zeta + g \circ \mathfrak{F}_\eta$. 
		\item The map $\mathfrak{S}_b : H^{k+2} (\Sigma \times \R; \R^n) \times B_{X^{s+\frac{3}{2}}} (0,\delta_*) \to H^{s+\frac{1}{2}}(\Sigma_b ; \R^d)$ defined via 
			\begin{align} \label{eq:S_b}
				\mathfrak{S}_b (f,\eta) = f \circ \mathfrak{F}_\eta \rvert_{\Sigma_b}
			\end{align}
			is well-defined and $C^1$, with $D \mathfrak{S}_b (f,\eta) (g,\zeta) \left( \frac{\varphi}{b} \p_n f \circ \mathfrak{F}_\eta \zeta + g \circ \mathfrak{F}_\eta \right)\rvert_{\Sigma_b}$.
			\item The map $\mathfrak{T}_b: H^{k+2}(\Sigma \times \R; \R^{n \times n}_{\sym}) \times B_{X^{k+5/2}}(0,\delta_*) \to H^{k+1/2}(\Sigma_b ; \R^d)$ defined via
			\begin{align}
				\mathfrak{T}_b (\mathcal{T},\eta) = (\mathcal{T} \circ \mathfrak{F}_\eta \rvert_{\Sigma_b})\mathcal{N},
			\end{align}
			where $\mathcal{N}$ is defined via \eqref{eq:N}, is well-defined and $C^1$.
	\end{enumerate}
\end{enumerate}
\end{thm}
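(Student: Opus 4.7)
The plan is to prove the three items in sequence, leveraging the algebra structure of $X^s$ established in Theorem~\ref{thm: 1} and adapting the change-of-variables machinery developed in \cite{leonitice}. The overall strategy is classical, but some care is needed to accommodate the anisotropic space $X^s(\Sigma;\R)$ and the partially periodic cross sections.

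For item (1), I would first invoke the embedding $X^s(\Sigma;\R) \hookrightarrow C_0^{0}(\Sigma;\R)$ for $s > d/2$, which follows by combining the low-high decomposition in the first item of Theorem~\ref{thm:Xs} with the standard Sobolev embedding applied to the high-frequency part, to choose $r_1$ small enough that $\inf(b+f) \ge b/2$ whenever $\|f\|_{X^s} < r_1$. Then I would represent $\Gamma_1(f,g)=g/(b+f)$ via the Neumann-type expansion $\frac{1}{b+f} = \frac{1}{b}\sum_{j=0}^{\infty}(-f/b)^j$, which converges absolutely in $X^s$ by the algebra property from Theorem~\ref{thm: 1}, and then multiply by $g \in H^s$ using the supercritical $X^s \cdot H^s \hookrightarrow H^s$ estimate from the second item of Theorem~\ref{thm:Xs}. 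Smoothness then follows because each partial sum is polynomial in $f$ and linear in $g$, and convergence is uniform on bounded sets. The map $\Gamma_2$ is handled identically, and $\Gamma(f) = f(1+|f|^2)^{-1/2}$ follows by composing $f$ with the real-analytic scalar function $t \mapsto t(1+t^2)^{-1/2}$ using the same algebra-plus-Neumann-series machinery.

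For item (2), I would first deduce from the embedding $X^{k+5/2}(\Sigma;\R) \hookrightarrow C_0^{3+\lfloor k-n/2\rfloor}(\Sigma;\R)$ that $\eta$ is classically $C^r$ with $r = 3+\lfloor k-n/2\rfloor$. Given $\|\eta\|_{C^0_b} \le b/2$, direct inspection of $\nabla\mathfrak{F}_\eta$ in \eqref{eq:JandK} shows that $\mathcal{J} \ge 1/2$ pointwise, so $\mathfrak{F}_\eta$ is a bi-Lipschitz $C^r$ diffeomorphism with inverse $\mathfrak{G}_\eta$. The Sobolev composition bounds are then established by bounding the Jacobian and its derivatives in $C^{s}$-norms in terms of $\|\eta\|_{X^{k+2}}$ (via the same embedding), applying the standard change-of-variables identity $\int |F\circ\mathfrak{G}_\eta|^2 \, dx = \int |F|^2 \mathcal{J}\, dx$ for $L^2$, and controlling higher derivatives by the Faà di Bruno formula together with the algebra and supercritical product estimates of Theorem~\ref{thm:Xs}. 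Monotonicity of the constant in $\|\eta\|_{X^{k+2}}$ is built into the construction.

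For item (3), I would prove the smoothness of $\Lambda_\Omega$ by the usual omega-lemma strategy: continuity follows from item (2), and to identify the derivative, I would note that for $g \in H^{k+1}$ and $\zeta \in X^{k+1/2}$ small, the candidate derivative $D\Lambda_\Omega(f,\eta)(g,\zeta) = \tfrac{x_n}{b}\,\zeta\,(\partial_n f)\circ \mathfrak{F}_\eta + g\circ \mathfrak{F}_\eta$ is a bounded linear map into $H^k$ by item (2) combined with the product estimates from Theorem~\ref{thm:Xs}; the remainder estimate follows by writing $f\circ \mathfrak{F}_{\eta+\zeta} - f\circ\mathfrak{F}_\eta$ as the integral over $[0,1]$ of $\partial_n f \circ \mathfrak{F}_{\eta+t\zeta}\cdot \tfrac{x_n}{b}\zeta$ and passing to the limit. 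The maps $\mathfrak{S}_b$ and $\mathfrak{T}_b$ are then handled by composing $\Lambda_\Omega$ with the bounded trace map $H^{k+1}(\Omega) \to H^{k+1/2}(\Sigma_b)$, together with the smoothness of multiplication by $\mathcal{N} = (-\nabla'\eta,1)$ established via the algebra property and the fifth item of Theorem~\ref{thm: X^s gen}.

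The main obstacle will be the omega-lemma step for $\mathfrak{T}_b$, where $\mathcal{T}\circ\mathfrak{F}_\eta$ must be contracted with $\mathcal{N}$ whose components lie in $X^{k+3/2}$ rather than a standard Sobolev space. Here I would rely crucially on the second item of Theorem~\ref{thm:Xs}, which ensures the product lies in $H^{k+1/2}(\Sigma_b)$ with control by $\|\eta\|_{X^{k+5/2}}$, and on Theorem~\ref{thm: 1} for repeated products appearing in derivatives of the composition. The rest is bookkeeping of multilinear remainders.
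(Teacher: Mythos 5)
Your outline is essentially the same argument as the paper's: the paper does not prove this theorem from scratch but simply cites Theorems 5.16, 5.17, A.12, Corollary 5.21, and Proposition 7.4 of \cite{leonitice} and observes that those proofs carry over with minimal modification to the periodic and partially periodic cross sections, and the steps you describe (smallness plus power series and the algebra property for item (1), bi-Lipschitz change of variables with Fa\`a di Bruno for item (2), and the standard $\omega$-lemma remainder estimate for item (3)) are precisely the content of those cited results.

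Two points in your sketch need tightening. First, the claim that $\frac{1}{b+f}=\frac{1}{b}\sum_{j\ge 0}(-f/b)^j$ ``converges absolutely in $X^s$'' is false as written whenever $R_\Sigma\neq\varnothing$: the $j=0$ term is the constant $1/b$, whose Fourier transform is a Dirac mass and hence does not lie in $X^s(\Sigma;\R)$ (nor in $L^2$). You must peel off the constant, writing $\Gamma_1(f,g)=g/b+\frac{g}{b}\sum_{j\ge 1}(-f/b)^j$, so that only the tail (which does converge in $X^s$ for $r_1$ small, by Theorem~\ref{thm: 1}) is multiplied against $g$; the same remark applies to the leading term of the expansion of $(1+|f|^2)^{-1/2}$ in the analysis of $\Gamma$. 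Second, the product estimate you invoke from the second item of Theorem~\ref{thm:Xs} is stated for two functions on $\Sigma$, whereas in items (1) and (3) one factor lives on the $n$-dimensional domain $\Omega_\upzeta$ or $\Omega_b$ while the $X^s$ factor depends only on $x'$; you need the lifted version of the estimate (the analogue of Theorem 5.17 in \cite{leonitice}), which follows from the $\Sigma$-version by slicing in $x_n$ as in Lemma~\ref{lem:simpleprod}, but this should be stated rather than assumed. Neither issue changes the architecture of the proof.
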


\begin{proof}
	The three items in the case $\Sigma = \R^{d}$ for $d \ge 1$ follow from Theorems 5.16, 5.17, A.12, Corollary 5.21, and Proposition 7.4 in \cite{leonitice}; the proofs therein can be adopted with minimal modification to handle to cases when $\Sigma \neq \R^d$. 
\end{proof}

Now we can synthesize the aforementioned results to show that all the nonlinear maps appearing in \eqref{eq: main flattened} are well-defined and smooth. 

\begin{thm}\label{thm:smoothnessnonlinear}
	Let $\R \ni s > \frac{n}{2}$ and $\sigma \ge 0$. For $\delta > 0$, define the open set of $\mathcal{X}^s$
	\begin{align} 
		U^s_\delta = \{ (u, p ,\eta) \in \mathcal{X}^s \mid \norm{\eta}_{X^{s+\frac{5}{2}}} < \delta\}.
	\end{align}
    For $\gamma \in \R, T \in H^{s+\frac{1}{2}}(\Sigma ; \R^{n\times n}_{\sym})$, $(u,p,\eta) \in U^s_\delta$, $U^1 = W_1 \circ \mathfrak{F}_\eta, U_2 \circ = W_2 \circ \mathfrak{F}_\eta$ with $W_1, W_2$ as defined in \eqref{eq:modU1}, \eqref{eq:modU2}, $\mathfrak{F}_\eta$ as defined in \eqref{eq:flattening}, and $\mathcal{J}, \mathcal{A}, \mathcal{N}, \mathcal{H}$ as defined in \eqref{eq:JandK}, \eqref{eq:A}, \eqref{eq:N} and \eqref{eq:H}, we define $f = \diverge_\mathcal{A} S_\mathcal{A} (p, u) - \gamma  e_1 \cdot  \nabla_\mathcal{A} u  - \gamma \kappa (x_n + \eta \frac{x_n}{b})\p_1 \eta e_1+ \left( u + U^1 + U^2\right) \cdot  \nabla_\mathcal{A} \left( u + U^1 + U^2 \right) + (\nabla' \eta,0) - \kappa \left( x_n + \eta \frac{x_n}{b}\right) \Delta' \eta e_1   - \kappa ( (x_n + \eta \frac{x_n}{b}) \nabla' \p_1 \eta, \p_1 \eta)$, $g = \mathcal{J}[	\diverge_\mathcal{A} u + \kappa \left( x_n + \eta(x') \frac{x_n}{b} \right) \p_1 \eta(x')]$, $h = u\cdot \mathcal{N} - ( - \gamma + s(\eta + b) + \kappa (\eta + b) \eta ) \p_1 \eta$, and $k = S_\mathcal{A}(p, u) \mathcal{N} - [ \sigma \mathcal{H}(\eta) I + T + \kappa (\eta + b)( e_1 \otimes (\nabla'\eta, 0 ) + (\nabla' \eta,0) \otimes e_1) ]\mathcal{N}$. Then $(f,g,h,k) \in \mathcal{Y}^s$, and the map 
	\begin{align}\label{eq:fghkmap}
		\R  \times \R \times H^{s+\frac{1}{2}}(\Sigma; \R^{n\times n}_{\sym}) \times U^s_\delta  \ni (\gamma, T, u, p , \eta)  \mapsto (f,g,h,k) \in \mathcal{Y}^s
	\end{align}
	is smooth. 
\end{thm}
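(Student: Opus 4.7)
The plan is to verify in turn: (i) the Sobolev regularity of each of $f,g,h,k$, (ii) the divergence-trace compatibility condition \eqref{eq:divtrace}, and (iii) smoothness of the resulting map. Stages (i) and (iii) are systematic applications of the multiplier and composition estimates from Theorems~\ref{thm:Xs}, \ref{thm: X^s gen}, \ref{thm: flattening maps est}, and Corollary~\ref{cor: mult xn}. Stage (ii) is the main obstacle and is the reason the algebra property of Theorem~\ref{thm: 1} is indispensable.

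\textbf{Regularity of the components.} Because $W^1\circ\mathfrak{F}_\eta$, $W^2\circ\mathfrak{F}_\eta$, $\mathcal{N}$, and $\mathcal{J}$ are polynomial in $\eta$ with coefficients smooth in $x_n$, every nonlinearity in $(f,g,h,k)$ decomposes into a finite sum of terms of the form $\varphi(x_n)\prod_j \eta^{a_j}(b+\eta)^{-b_j}\,\p^\alpha\eta\,\p^\beta u\,p^\gamma$ with $\varphi\in C^\infty([0,b])$. I would absorb the $\varphi(x_n)$ factor via Corollary~\ref{cor: mult xn}, the reciprocals $(b+\eta)^{-1}$ via the smooth maps $\Gamma_1,\Gamma_2$ of Theorem~\ref{thm: flattening maps est}, and the powers of $\eta$ via Theorem~\ref{thm: 1}. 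Products of an $X^{s+5/2}$-factor with an $H^{s+1}$- or $H^s$-factor land in $H^{s+1}$ or $H^s$ by Theorem~\ref{thm:Xs}(2), while Theorem~\ref{thm: X^s gen}(5) gives $\nabla\eta\in H^{s+3/2}$. This yields $f\in H^s(\Omega;\R^n)$, $g\in H^{s+1}(\Omega;\R)$, and after trace theory $h\in H^{s+3/2}(\Sigma;\R)$, $k\in H^{s+1/2}(\Sigma;\R^n)$; the mean curvature $\mathcal{H}(\eta)=\diverge'\bigl(\Gamma(\nabla'\eta)\bigr)$ is handled by the smooth map $\Gamma$ from Theorem~\ref{thm: flattening maps est}, and the externally given stress $T$ enters linearly as $T\mathcal{N}$.

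\textbf{The compatibility condition.} The main obstacle lies here. The Piola identity $\mathcal{J}\,\diverge_{\mathcal{A}} u=\diverge(\mathcal{J}\mathcal{A}^\top u)$, the explicit values $(\mathcal{J}\mathcal{A}^\top u)\cdot e_n = u_n$ and $(\mathcal{J}\mathcal{A}^\top u)' = (1+\eta/b)\,u'$, together with $u\vert_{\Sigma_0}=0$, yield
\begin{equation*}
\int_0^b g(\cdot,x_n)\,dx_n \;=\; u_n(\cdot,b) \;+\; \diverge'\!\Bigl[(1+\eta/b)\!\int_0^b u'(\cdot,x_n)\,dx_n\Bigr] \;+\; \tfrac{\kappa b^2}{2}(1+\eta/b)^2\,\p_1\eta.
\end{equation*}
Subtracting this from $h = u\cdot\mathcal{N} - \bigl(-\gamma + s(\eta+b) + \kappa(\eta+b)\eta\bigr)\p_1\eta$, the $u_n(\cdot,b)$ terms cancel; using the identity $u'(\cdot,b)\cdot\nabla'\eta = \diverge'\bigl(\eta\,u'(\cdot,b)\bigr) - \eta\,\diverge' u'(\cdot,b)$ and $u\vert_{\Sigma_0}=0$, the residual $u$-dependent contributions collapse to horizontal divergences of $H^{s+1}$-vector fields, which belong to $\dot H^{-1}(\Sigma;\R)$ by the trivial bound $\diverge':L^2\to\dot H^{-1}$. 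The remaining $\eta$-only residue, after substituting $s(x_n)=\kappa(bx_n - x_n^2/2)$ and expanding, collapses algebraically to
\begin{equation*}
\gamma\,\p_1\eta \;-\; \kappa\Bigl(b^2\,\p_1\eta + b\,\p_1(\eta^2) + \tfrac{1}{3}\p_1(\eta^3)\Bigr).
\end{equation*}
By Theorem~\ref{thm: 1}, $X^{s+5/2}$ is an algebra in the supercritical regime $s+5/2>d/2$, so $\eta^2,\eta^3\in X^{s+5/2}$; by Theorem~\ref{thm:Xs}(3), $\p_1$ maps $X^{s+5/2}$ continuously into $\dot H^{-1}(\Sigma;\R)$. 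The divergence-trace compatibility condition therefore holds, with a bound that is polynomial in $\norm{\eta}_{X^{s+5/2}}$.

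\textbf{Smoothness.} Each elementary operation in the construction is smooth between its respective Banach spaces: the bilinear continuous multiplications on $H^s$, on $X^{s+5/2}\times H^s\to H^s$, and on $X^{s+5/2}\times X^{s+5/2}\to X^{s+5/2}$ (Theorems~\ref{thm: 1} and \ref{thm:Xs}); the rational maps $\Gamma_1,\Gamma_2,\Gamma$ (Theorem~\ref{thm: flattening maps est}); and the bounded linear trace, derivative, and $x_n$-multiplier operators. Because $U^1,U^2,\mathcal{N},\mathcal{J}$ and the relevant entries of $\mathcal{A}$ are all polynomial in $\eta$ modulo the $\Gamma_j$ divisions, the general $\omega$-lemma is never invoked on an abstract Sobolev argument, so its $C^1$ ceiling is not encountered. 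A finite composition of smooth maps is smooth, and so the data map \eqref{eq:fghkmap} is $C^\infty$ from its domain into $\mathcal{Y}^s$.
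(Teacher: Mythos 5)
Your overall architecture (regularity of each component, then the divergence–trace condition via the algebra property, then smoothness by composition of smooth elementary maps) is the right one and, for the $\eta$-only residue, lands exactly on the paper's identity $-\kappa\bigl(b^2\p_1\eta+b\,\p_1(\eta^2)+\tfrac13\p_1(\eta^3)\bigr)$ handled by Theorem~\ref{thm: 1} and Theorem~\ref{thm:Xs}(3). However, your Piola computation contains an error that then forces you into a step that does not work. From \eqref{eq:A} one has $(\mathcal{J}\mathcal{A}^{\top}u)_n=\sum_i \mathcal{J}\mathcal{A}_{in}u_i=u_n-\frac{x_n}{b}\nabla'\eta\cdot u'$, not $u_n$. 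Consequently the boundary term produced by $\int_0^b\p_n(\mathcal{J}\mathcal{A}^{\top}u)_n\,dx_n$ is $u\cdot\mathcal{N}\vert_{\Sigma_b}$, which cancels \emph{exactly} against the $u\cdot\mathcal{N}$ in $h$. With your (incorrect) value $u_n(\cdot,b)$ you are left with a spurious residual $-\nabla'\eta\cdot u'(\cdot,b)$, and your proposed repair via $\nabla'\eta\cdot u'=\diverge'(\eta u')-\eta\,\diverge'u'$ fails: the second term $\eta\,\diverge'u'(\cdot,b)$ is \emph{not} a horizontal divergence, and as a product of an $X^{s+5/2}$ function with an $H^{s+1/2}$ function it is only known to lie in $H^{s+1/2}(\Sigma;\R)$, which carries no $\dot{H}^{-1}$ information. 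Obtaining low-frequency control of such products is precisely the difficulty the whole framework is built to avoid, so the claim that ``the residual $u$-dependent contributions collapse to horizontal divergences'' is a genuine gap as written. The fix is simply to correct the formula for $(\mathcal{J}\mathcal{A}^{\top}u)_n$, after which the problematic term never appears.

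Two smaller remarks. First, the paper does not redo the Piola computation: it cites the earlier result (Theorem 7.3 of \cite{leonitice}) for the well-definedness, compatibility, and smoothness of the ``standard'' block $(\diverge_\mathcal{A}S_\mathcal{A}(p,u)+(u-\gamma e_1)\cdot\nabla_\mathcal{A}u,\ \mathcal{J}\diverge_\mathcal{A}u,\ u\cdot\mathcal{N}+\gamma\p_1\eta,\ S_\mathcal{A}(p,u)\mathcal{N}-(\sigma\mathcal{H}(\eta)I+T)\mathcal{N})$ and only verifies the new $\kappa$-dependent terms; your from-scratch treatment is more self-contained but is exactly where the error crept in. Second, your observation that the $\omega$-lemma (with its $C^1$ ceiling) is not needed here because the forcing compositions $\mathfrak{f}\circ\mathfrak{F}_\eta$, $\mathcal{T}\circ\mathfrak{F}_\eta$ do not appear in this theorem is correct and consistent with the paper, which reserves those for the $C^1$ solution map in the proof of Theorem~\ref{thm:main1}.
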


\begin{proof}
By the first item in Theorem~\ref{thm:Xs} and standard Sobolev embedding, there exists a constant $\delta_0 > 0$ depending on $n,s,b$, and in the toroidal cases on $L_i$, such that if $s > n/2$ and $\eta \in X^{s+5/2}(\Sigma; \R)$ with $\norm{\eta}_{X^{s+5/2}}< \delta_0$, then $\norm{\eta}_{C^0_b} \le b/2$. We define $\delta = \min \{ \delta_0, r_1, r_2/c_1\}$, where $r_1, r_2$ are the radii from the first item of Theorem~\ref{thm: flattening maps est}, and $c_1$ is the embedding constant from \eqref{eq: grad Xs}.
By Theorem 7.3 in \cite{leonitice} and standard results from the theory of Sobolev spaces, the map 
\begin{multline}
	\R  \times \R \times H^{s+\frac{1}{2}}(\Sigma ; \R^{n\times n}_{\sym}) \times U^s_\delta  \ni (\gamma, T, u, p , \eta)  \mapsto \\
	(\diverge_\mathcal{A} S_\mathcal{A}(p,u) + (u- \gamma e_1)\cdot \nabla_\mathcal{A} u  + u \cdot \nabla_{\mathcal{A}} u, \mathcal{J}\diverge_{\mathcal{A}}u, u \cdot \mathcal{N} + \gamma\p_1 \eta, S_{\mathcal{A}}(p,u) \mathcal{N} - (\sigma \mathcal{H}(\eta)I + T ) \mathcal{N}  ) \in \mathcal{Y}^s 
\end{multline}
is well-defined and smooth, so it suffices to show that the map
\begin{multline}
	\R  \times H^{s+\frac{1}{2}}(\Sigma ; \R^{n\times n}_{\sym}) \times U^s_\delta  \ni (\gamma, T, u, p , \eta)  \mapsto  \\
	(-\gamma \kappa  \mathfrak{F}_\eta(x',x_n)e_n  \p_1 \eta e_1 + u \cdot \nabla_\mathcal{A} (U^1 + U^2) + (U^1 + U^2) \cdot \nabla_{\mathcal{A}}(u + U^1 + U^2) \\+ (\nabla' \eta, 0)- \kappa \mathfrak{F}_\eta(x',x_n)e_n \Delta' \eta(x') e_1 - \kappa(\mathfrak{F}_\eta(x',x_n)e_n\p_1 \nabla' \eta, \p_1),
	 \mathcal{J} \kappa ( x_n + \eta(x') \frac{x_n}{b}) \p_1 \eta(x'), \\
	 -s(\eta+b)\p_1 \eta -\kappa (\eta+b)\eta \p_1 \eta, 
	 \; \kappa (\eta + b) \left[e_1 \otimes (\nabla'\eta, 0 )  + (\nabla' \eta,0) \otimes e_1 \right]\mathcal{N} ) \in \mathcal{Y}^s
\end{multline}
is well-defined and smooth, where $\mathfrak{F}_\eta(x',x_n)e_n = x_n + \eta(x') \frac{x_n}{b}$. By the fifth item of Theorem~\ref{thm: X^s gen}, the second item of Theorem~\ref{thm:Xs}, and Corollary~\ref{cor: mult xn} the map 
\begin{multline}
	\R \times \R \times X^{s+5/2}(\Sigma; \R) \ni (\gamma, \kappa,\eta) \mapsto -\gamma \kappa \mathfrak{F}_\eta (x',x_n)e_n \p_1 \eta e_1 + (\nabla' \eta, 0) \\- \kappa \mathfrak{F}_\eta(x',x_n)e_n \Delta' \eta(x') e_1
	- \kappa(\mathfrak{F}_\eta(x',x_n)e_n\p_1 \nabla' \eta, \p_1 \eta) \in H^{s+1/2}(\Omega ; \R^n)
\end{multline}
is well-defined and smooth. By \eqref{eq:modU1}, \eqref{eq:modU2}, \eqref{eq:flattening}, and \eqref{eq:A}, we have
\begin{multline}\label{eq:udotu}
	 u \cdot \nabla_\mathcal{A} (U^1 + U^2) + (U^1 + U^2) \cdot \nabla_{\mathcal{A}}(u + U^1 + U^2) = \sum_{j,k=1}^n u_j \mathcal{A}_{jk} \p_k(U^1 + U^2)  + \sum_{k=1}^n (U_1^1 + U_1^2) \mathcal{A}_{1k} \p_k( u  + U^1  + U^2 ) \\
	 = \sum_{j=1}^{n-1} u_j \p_j (U^1 + U^2)   + \sum_{k=1}^{n-1} u_n \mathcal{A}_{n k} \p_k (U^1 + U^2)  + u_n\mathcal{K} \p_n (U^1 + U^2) + (U_1^1 + U_2^2) \p_1 (u + U^1 + U^2) \\ - (U_1^1 + U_1^1) x_n \mathcal{K} \frac{\p_1 \eta(x')}{b} \p_n(U^1 + U^2),
\end{multline}
where $\mathcal{K} = 1/\mathcal{J}$. By \eqref{eq:modU1}, \eqref{eq:modU2}, and \eqref{eq:flattening}, $(U^1 + U^2)(x,\eta) = \kappa (  b \mathfrak{F}_\eta(x)e_n -(\mathfrak{F}_\eta(x)e_n) ^2/2  +  \mathfrak{F}_\eta(x)e_n \eta(x')) e_1$ where $\mathfrak{F}_\eta(x)e_n = x_n (1+ \eta(x')/b) = x_n \mathcal{J}$, so we find that 
\begin{align}\label{eq:U1plusU2}
	 \p_k(U^1 + U^2)(x',x_n) = \begin{cases}
		\kappa  \mathcal{J} \frac{x_n(2b-x_n)}{b} \p_k \eta(x') e_1, & k \neq n \\
		 \kappa (b + x_n \mathcal{J} + \eta(x')) \mathcal{J}, & k = n.
	\end{cases}.
\end{align}
Thus, by the second item of Theorem~\ref{thm:Xs}, Corollary~\ref{cor: mult xn},  and the algebra properties of standard Sobolev spaces for $s > n/2$ the map $X^{s+5/2}(\Sigma;\R) \times H^s(\Omega; \R) \ni (\eta,\psi) \mapsto \psi\p_k(U^1 + U^2) \in H^s(\Omega; \R^n)$ is well-defined and smooth. By the first item of Theorem~\ref{thm: flattening maps est}, the map $B_{X^s}(0,\delta) \times H^s(\Omega; \R) \ni (\eta,\psi) \mapsto \psi \mathcal{K} \in H^s(\Omega; \R)$ is well-defined and smooth for any $\psi \in H^s(\Omega; \R^n)$. We also note that every non-trivial term in the components of \eqref{eq:udotu} is either a product of functions in $X^{s+5/2}(\Sigma ;\R)$ and functions in $H^s(\Omega; \R)$, or functions in $X^{s+5/2}(\Sigma ;\R)$ and derivatives of functions in $X^{s+5/2}(\Sigma ;\R)$. To summarize, by the observations made above, \eqref{eq:U1plusU2}, the second item of Theorem~\ref{thm:Xs}, Corollary~\ref{cor: mult xn}, and the first item of Theorem~\ref{thm: flattening maps est}, the map 
\begin{align}
	\R \times U^s_\delta \ni (\kappa, u,p,\eta) \mapsto  u \cdot \nabla_\mathcal{A} (U^1 + U^2) + (U^1 + U^2) \cdot \nabla_{\mathcal{A}}(u + U^1 + U^2) \in H^s(\Omega; \R^n)
\end{align}
is well-defined and smooth. Similarly, the maps
\begin{align}
	\R \times X^{s+5/2}(\Sigma; \R) \ni (\kappa,\eta) \mapsto \mathcal{J} \kappa ( x_n + \eta(x') \frac{x_n}{b}) \p_1 \eta(x') \in H^{s+3/2}(\Sigma;\R)
\end{align}
and
\begin{multline}
	 \R \times X^{s+5/2}(\Sigma ; \R) \ni (\kappa,\eta) \mapsto \\(s(\eta+b) \p_1\eta + \kappa(\eta+b) \eta \p_1\eta, \kappa(\eta+b)( e_1 \otimes (\nabla' \eta,0) + (\nabla'\eta, 0) \otimes e_1) \mathcal{N}) \in H^{3/2}(\Sigma;\R) \times H^{3/2}(\Sigma; \R^n )
\end{multline}
are also well-defined and smooth. Finally, it remains to show that 
\begin{align}
	-s(\eta+b)\p_1 \eta -\kappa (\eta+b)\eta \p_1 \eta - \int_0^b \mathcal{J} (\cdot,x_n) \kappa \left( x_n + \eta(\cdot) \frac{x_n}{b} \right) \p_1 \eta(\cdot) \; dx_n \in \dot{H}^{-1}(\Sigma; \R). 
\end{align}
By \eqref{eq:JandK}, we have
\begin{align}
	-s(\eta+b)\p_1 \eta -\kappa (\eta+b)\eta \p_1 \eta - \int_0^b \mathcal{J}(\cdot,x_n) \kappa \left( x_n + \eta(\cdot) \frac{x_n}{b} \right) \p_1 \eta(\cdot) \; dx_n = -\kappa (b^2 \p_1 \eta + b \p_1 \eta^2 + \frac{1}{3} \p_1 \eta^3).
\end{align}
Thus, by Theorem~\ref{thm: 1} and the third item of Theorem~\ref{thm:Xs} we have $b^2 \p_1 \eta + b \p_1 \eta^2 + \frac{1}{3} \p_1 \eta^3 \in \dot{H}^{-1}(\Sigma; \R)$. Therefore, we have shown that $(f,g,h,k) \in \mathcal{Y}^s$, and the map defined by \eqref{eq:fghkmap} is smooth. 
\end{proof}

\subsection{Solvability of the flattened system \eqref{eq: main flattened}}\label{sec: main flattened}
Now we are ready to construct solutions to \eqref{eq: main flattened} by using the implicit function theorem.

\begin{proof}[Proof of Theorem~\ref{thm:main1}]
We first consider the case with surface tension, $\sigma > 0$ and $n \ge 2$. Let $\delta$ be the minimum of the $\delta_1 > 0$ from Theorem~\ref{thm:smoothnessnonlinear} and $\delta_* > 0$ from the third item of Theorem~\ref{thm: flattening maps est}. Consider the open subset $U^s_\delta$ of $\mathcal{X}^s$ defined via
\begin{align} 
	U^s_\delta =\{ (u,p,\eta) \in \mathcal{X}^s \mid \norm{\eta}_{X^{s+\frac{5}{2}}  } < \delta \}. 
\end{align}
Using Proposition~\ref{prop:embed X} and standard Sobolev embedding, any open subset of $U^s_\delta$ containing $(0,0,0)$ satisfies the first assertion of the theorem. This proves the first item.

To prove the remaining items, we consider the Hilbert space
\begin{align} 
	\mathcal{E}^s = \R \times \R \times H^{s+2}(\Sigma \times \R ; \R^{n \times n}_{\sym}) \times H^{s+\frac{1}{2}}(\Sigma  ; \R^{n\times n}_{\sym}) \times H^{s+1}(\Sigma \times \R  ; \R^n) \times H^s(\Sigma  ; \R^n) 
\end{align}
and the solution map $\Xi : \mathcal{E}^s \times U^s_\delta \to \mathcal{Y}^s$ associated to \eqref{eq: main flattened} defined via 
\begin{multline} 
	\Xi(\gamma, \kappa,  \mathcal{T}, T, \mathfrak{f}, f, u,p, \eta) = (\diverge_\mathcal{A} S_\mathcal{A} (p, u) - \gamma e_1 \cdot \nabla_\mathcal{A} u - \gamma \kappa x_n \p_1 \eta e_1 + \left( u + U^1 + U^2\right) \cdot  \nabla_\mathcal{A} \left( u + U^1 + U^2 \right)   \\  + (\nabla' \eta, 0) -\kappa \mathfrak{F}_\eta(x)e_n \Delta' \eta e_1   - \kappa ( \mathfrak{F}_\eta(x)e_n  \nabla' \p_1 \eta, \p_1 \eta)  - \mathfrak{f} \circ \mathfrak{F}_\eta - L_{\Omega_b} f , \\
	\mathcal{J}[\diverge_\mathcal{A} u + \kappa \mathfrak{F}_\eta(x)e_n \p_1 \eta],u\cdot \mathcal{N} - ( - \gamma + s(\eta + b) + \kappa (\eta + b) \eta ) \p_1 \eta, \\
	S_\mathcal{A}(p, u) \mathcal{N} - [ \sigma \mathcal{H}(\eta)I + \mathcal{T} \circ \mathfrak{F}_\eta + S_b T\rvert_{\Sigma_b} + \kappa (\eta + b) (e_1 \otimes (\nabla'\eta,0 ) + (\nabla' \eta, 0) \otimes e_1)]\mathcal{N}),
\end{multline}
where $L_{\Omega_b} f(x) = f(x'), S_b T(x',b) = T(x')$. By Theorem~\ref{thm: flattening maps est}, Theorem~\ref{thm:smoothnessnonlinear}, and Lemmas A.10 and A.11 in \cite{leonitice} the map $\Xi$ is well-defined and $C^1$. 

By the product structure of $\mathcal{E}^s \times U^s_\delta$, we can define $D_1 \Xi : \mathcal{E}^s \times U^s_\delta \to \mathcal{L}(\mathcal{E}^s ; \mathcal{Y}^s )$ and $D_2 \Xi : \mathcal{E}^s \times U^s_\delta \to \mathcal{L}(\mathcal{X}^s; \mathcal{Y}^s )$ to be the derivatives of $\Xi$ with respect to $\mathcal{E}^s$ and $U^s_\delta$, respectively. Note that by the second item of Theorem~\ref{thm: flattening maps est}, we have $D_2\mathfrak{S}_b(0,0) = 0$ and $D_2\Lambda_\Omega(0,0) = 0$. Therefore, for any $\gamma \in \R$, $\Xi( \gamma,0, 0,0,0,0, 0,0,0) = (0, 0, 0, 0)$ since we also have $U^2 = 0$, $U^1 \cdot \nabla_\mathcal{A} U^1 = (U_1^1) \p_1 U^1 = 0$ for $\eta = 0$, and $D_2 \Xi (\gamma, 0, 0,0,0,0, 0,0,0) = L_{\kappa,\sigma}$ where $L_{\kappa,\sigma}$ is defined in \eqref{eq:Lkappa}. By Theorem~\ref{thm:M}, for every $\gamma_* \neq 0$ there exists some $\kappa_0 > 0$ for which $D_2 \Xi (\gamma, 0, 0,0,0,0, 0,0,0)$ is a linear isomorphism for every $\kappa \in (-\kappa_0 , \kappa_0)$. Thus, by the implicit function theorem, there exists open sets $\mathcal{U}(\gamma_*) \subseteq \mathcal{E}^s$ and $\mathcal{O}(\gamma_*) \subseteq U^s_\delta$ such that $(\gamma_*, 0, 0, 0, 0, 0) \in \mathcal{U}(\gamma_*)$ and $(0,0,0) \in \mathcal{O}(\gamma_*)$, and a $C^1$ and Lipschitz map $\varpi_{\gamma_*} : \mathcal{U}(\gamma_*) \to \mathcal{O}(\gamma_*) \subseteq U^s_\delta$ such that $\Xi(\gamma, \kappa, \mathcal{T}, T, \mathfrak{f}, f, \varpi_{\gamma_*}(\gamma,\kappa, \mathcal{T}, T, \mathfrak{f}, f) ) = (0, 0, 0, 0)$ for all $(\gamma, \kappa, \mathcal{T}, T, \mathfrak{f}, f) \in \mathcal{U}(\gamma_*)$. Moreover, $(u,p,\eta) = \varpi_{\gamma_*}(\gamma, \kappa, \mathcal{T}, T, \mathfrak{f}, f)$ is the unique solution to $\Xi(\gamma, \kappa, \mathcal{T}, T, \mathfrak{f}, f, u,p,\eta ) = (0, 0, 0, 0)$ in $\mathcal{O}(\gamma_*)$. 

Next, we define the open sets 
\begin{align}\label{eq: final open}
	 \mathcal{U}^s = \bigcup_{\gamma_* \in \R \setminus \{0\}}\mathcal{U}(\gamma_*) \subseteq \mathcal{E}^s \; \text{and} \; \mathcal{O}^s = \bigcup_{\gamma_* \in \R \setminus \{0\}} \mathcal{O}(\gamma_*) \subseteq U^s_\delta.
 \end{align}
 We note that by construction, $(\R \setminus \{0\}) \times \{0 \} \times \{ 0\} \times \{ 0 \} \times \{ 0 \} \times \{0 \} \subset \mathcal{U}^s$. Furthermore, for every 
$(\gamma, \kappa, \mathcal{T},T,\mathfrak{f},f) \in \mathcal{U}^s$, there exists a $\gamma_* \in \R \setminus \{0\}$ for which $(\gamma, \kappa, \mathcal{T},T,\mathfrak{f},f) \in \mathcal{U}(\gamma_*)$ and $(u,p,\eta) = \varpi_{\gamma_*}(\gamma, \kappa, \mathcal{T},T,\mathfrak{f},f) \in \mathcal{O}(\gamma_*)$. By the observation above and the implicit function theorem, the map $\overline{\varpi}: \mathcal{U}^s \to \mathcal{O}^s$ defined via  $\overline{\varpi}(\gamma, \kappa, \mathcal{T},T,\mathfrak{f},f) = \varpi_{\gamma_*}(\gamma, \kappa, \mathcal{T},T,\mathfrak{f},f)$, where $\gamma_* \in \R \setminus \{ 0 \}$ is such that $(\gamma, \kappa, \mathcal{T},T,\mathfrak{f},f) \in \mathcal{U}(\gamma_*)$, is well-defined, $C^1$, and locally Lipschitz. This proves the remaining items for $\sigma > 0$ and $n \ge 3$. 

To prove the remaining items in the case without surface tension and $n=2$, we argue along the same lines but use the second item of Theorem~\ref{thm:upsiloniso} instead of the first and use the isomorphism $L_{\kappa,0}$. 
\end{proof}

 \subsection{Solvability of the unflattened system \eqref{eq:main unflattened}} \label{sec: main unflattened}
 We now examine the solvability of the system \eqref{eq:main unflattened} in the original Eulerian coordinates.

 \begin{proof}[Proof of Theorem~\ref{thm:main2}]
Consider the map $\varpi: \mathcal{U}^s \to \mathcal{O}^s$ constructed in Theorem~\ref{thm:main1}. By Theorem~\ref{thm:main1}, for every $(\gamma, \kappa, \mathcal{T}, T,\mathfrak{f},f) \in \mathcal{U}^s$ the tuple $\mathcal{O}^s \ni (u,p,\eta) = \varpi(\gamma, \kappa, \mathcal{T}, T,\mathfrak{f},f)$ solves \eqref{eq: main flattened} classically. Since $(u,p,\eta) \in \mathcal{O}^s$, we have $\norm{\eta}_{C_b^0} \le b/2$, therefore by the second item in Theorem~\ref{thm: flattening maps est}, the flattening map $\mathfrak{F}_\eta$ and its inverse $\mathfrak{F}_\eta^{-1}$ are both $C^{\lfloor s - n/2 \rfloor + 3}$ diffeomorphisms. 
Now we fix $(\gamma, \kappa, \mathcal{T}, T,\mathfrak{f},f) \in \mathcal{U}^s$ and set $(u,p,\eta) = \varpi(\gamma, \kappa, \mathcal{T}, T,\mathfrak{f},f), v = u \circ \mathfrak{F}_\eta^{-1}$, and  $q = p \circ \mathfrak{F}_\eta^{-1}$. Then by the second item of Theorem~\ref{thm: flattening maps est} and Sobolev embedding, we have $v \in \Hzeros{s+2}(\Omega_{b+\eta} ; \R^n) \cap C^{\lfloor s - n/2 \rfloor + 2}(\Omega_{b+\eta} ; \R^n )$ and $q \in H^{s+1}(\Omega_{b+\eta}; \R) \cap C_b^{\lfloor s - n/2 \rfloor + 1}(\Omega_{b+\eta}; \R)$. Since $(\mathfrak{F}_\eta^{-1}(x))' = x'$, we have $(\mathfrak{f} \circ \mathfrak{F}_\eta + L_{\Omega_b} f) \circ \mathfrak{F}_\eta^{-1}(x) = \mathfrak{f}(x) + L_{\Omega_{b+\eta}}f(x)$ and $(\mathcal{T} \circ \mathfrak{F}_\eta \rvert_{\Sigma_{b}} + S_b T) \circ \mathfrak{F}^{-1}_\eta (x)= \mathcal{T}\rvert_{\Sigma_{b + \eta}} (x)+ S_{b+\eta} T(x)$ for all $x \in \Omega_{b+\eta}$. Thus, if $(u,p,\eta)$ is a solution tuple to \eqref{eq: main flattened} then $(v,q,\eta)$ is a solution tuple to \eqref{eq:main unflattened}. The last item follows from the fact that $\varpi$ is locally Lipschitz. 
\end{proof}

\appendix

\section{Analysis tools}\label{appendix}

\subsection{Permutations of Cartesian products}\label{appendix: permutation}

Here we collect a number of simple tools and bits of notation related to Cartesian products, especially products of groups. 

\begin{defn}
Suppose that $d \ge 1$ and that for $1 \le i \le d$ we have a commutative monoid $X_i$ with identity element $0_i \in X_i$. Further suppose that $E \subseteq \{1,\dotsc,d\}$ and $x \in X$.  We define $x_E \in X$ via 
\begin{equation}\label{eq: xi R}
 (x_E)_j = 
\begin{cases}
 x_j &\text{if }j \in E \\
 0_j & \text{if }j \notin E.
\end{cases}
\end{equation}
The utility of this notation is seen in the formula $x = x_E + x_{E^c}$, valid for any set $E \subseteq \{1,\dotsc,d\}$.
\end{defn}

We will often use this notation when $X = \Gamma$ or $X = \hat{\Gamma}$, where $\Gamma$ is of the form \eqref{eq: Gamma f}.  In this context, we introduce further notation.

\begin{defn}\label{defn: RT notation}
Let $\Gamma$ be as in \eqref{eq: Gamma f}. 
\begin{enumerate}
	 \item We define the sets $R_\Gamma, R_{\hat{\Gamma}}, T_\Gamma, T_{\hat{\Gamma}} \subseteq \{1,\dotsc,d\}$ via 
	 \begin{align}\label{eq: R}
		 R_\Gamma &= R_{\hat{\Gamma}} = \{ i \in \{1, \dots, d\} : \Gamma_i = \R \} =: \{ \mathfrak{r}_1 ,\dots , \mathfrak{r}_{\abs{R_\Gamma}} \} \\ \label{eq: T}
		 T_\Gamma &= T_{\hat{\Gamma}} = \{ i \in \{1, \dots, d\} : \Gamma_i = L_i \T \} = \{1,\dotsc,d\} \backslash R_{\Gamma} = \{1,\dotsc,d\} \backslash R_{\hat{\Gamma}}  =: \{ \mathfrak{t}_1 , \dots , \mathfrak{t}_{\abs{T_\Gamma}} \}, 
	 \end{align}
	 where we order these such that $\mathfrak{r}_1 < \dots < \mathfrak{r}_{\abs{R_\Gamma}}$  and $\mathfrak{t}_1 < \dots < \mathfrak{t}_{\abs{T_\Gamma}}$.  This allows us to write $\Gamma \ni x = x_{R_\Gamma} + x_{T_\Gamma}$ and   $\hat{\Gamma} \ni \xi = \xi_{R_{\hat{\Gamma}}} + \xi_{T_{\hat{\Gamma}}}$. 
	 \item We define the sets $\Gamma_{R},\Gamma_{T}, \hat{\Gamma}_{R}, \hat{\Gamma}_T$ via 
	 \begin{align}\label{eq: Gamma_R}
		  \Gamma_{R} = \{ x \in \Gamma \mid x_{T_\Gamma} = 0 \}, \; \Gamma_T = \{ x \in \Gamma \mid x_{R_\Gamma} = 0 \}, \; \hat{\Gamma}_{R} = \{ \xi \in \hat{\Gamma} \mid \xi_{T_{\hat{\Gamma}}} = 0 \}, \; \hat{\Gamma}_T = \{ \xi \in \hat{\Gamma} \mid \xi_{R_{\hat{\Gamma}}} = 0 \}.
	 \end{align}
	 It follows that we have the direct sum decompositions
	 \begin{align}\label{eq: direct sum}
		  \Gamma = \Gamma_R \oplus \Gamma_T, \; \hat{\Gamma} = \hat{\Gamma}_R \oplus \hat{\Gamma}_T.
	 \end{align}
\end{enumerate}
\end{defn}

At numerous points in the paper it is convenient to reorder the factors appearing in a Cartesian product.  We introduce notation for this and related ideas now.

\begin{defn}\label{defn: reordering notation}
Suppose that $X_1,\dotsc, X_d$ are sets and consider the Cartesian product $X = \prod_{i=1}^d X_i$.  Let $S_d$ denote the symmetric group of permutations of the indices $\{1,\dotsc,d\}$.
\begin{enumerate}
 \item For $g \in S_d$ we write $gX$ for the reordered Cartesian product $g X = \prod_{i=1}^d X_{g(i)}$.

 \item Let $g \in S_d$. We define the induced reordering map $\mathcal{P}_g : X \to gX$ via 
\begin{equation}\label{eq: reordering}
 \mathcal{P}_g(x) = \mathcal{P}_g(x_1,\dotsc,x_d) = (x_{g(1)},\dotsc,x_{g(d)}).
\end{equation}
This map is clearly a bijection.  Sometimes it will be convenient to include the domain in the notation, in which case we write $\mathcal{P}_{g,X}$ in place of $\mathcal{P}_g$.

 \item Given sets $E$ and $Y$, write $\mathcal{F}(E;Y) = \{f : E \to Y\}$.  Let $g \in S_d$ and $Y$ be a set.  We define the map $\sigma_g : \mathcal{F}(X;Y) \to \mathcal{F}(g X; Y)$ via  $\sigma_g f = f \circ \mathcal{P}_g^{-1}$.  Since $\mathcal{P}_g$ is invertible, the map $\sigma_g$ is as well, and $\sigma_g^{-1} : \mathcal{F}(g X; Y) \to \mathcal{F}( X; Y)$ is given by  $\sigma_g^{-1} f = f \circ \mathcal{P}_g$.  As above, we sometimes indicate the domain by writing $\sigma_{g,X}$ in place of $\sigma_g$.
\end{enumerate}
\end{defn}

It will be convenient to record the following obvious result as a lemma.

\begin{lem}\label{lem: permutation induces iso everything}
 Suppose that $X_1,\dotsc, X_d$ are sets and consider the Cartesian product $X = \prod_{i=1}^d X_i$.  Let $g \in S_d$ and let $\mathcal{P}_g$ be the induced reordering map defined in the second item in Definition~\ref{defn: reordering notation}. Then the following hold.
\begin{enumerate}
 \item If each $X_i$ is a group, then $\mathcal{P}_g$ is a group isomorphism.
 \item If each $X_i$ is a topological space and $X$ is endowed with the product topology, then $\mathcal{P}_g$ is a homeomorphism.
 \item If each $X_i$ is a metric space and $X$ is endowed with a product metric, then $\mathcal{P}_g$ is an isometry.
 \item If each $X_i$ is a $\sigma-$finite measure space and $X$ is endowed with the product measure, then $\mathcal{P}_g$ is a measure preserving map.
 \item If each $X_i$ is a smooth manifold and $X$ is endowed with the smooth product manifold structure, then $\mathcal{P}_g$ is a smooth diffeomorphism. 
\end{enumerate}
\end{lem}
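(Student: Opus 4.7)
The plan is to observe that each of the five structures on $X = \prod_{i=1}^d X_i$ is built coordinatewise from the structures on the factors, and the map $\mathcal{P}_g$ is itself defined coordinatewise by the formula \eqref{eq: reordering}. Consequently, each item reduces to verifying that the relevant structural axioms are permutation-symmetric under relabeling of indices. I will prove the items in order, noting that in every case $\mathcal{P}_g$ is manifestly a bijection (its inverse being $\mathcal{P}_{g^{-1}}$), so only the preservation of structure requires comment.

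For the first item, the group operation on $X$ is defined by $(x y)_i = x_i y_i$ and likewise on $gX$; thus $\mathcal{P}_g(xy)_i = (xy)_{g(i)} = x_{g(i)} y_{g(i)} = \mathcal{P}_g(x)_i \mathcal{P}_g(y)_i$, giving the homomorphism property. For the second item, I would use the characterization of the product topology in terms of the subbasis consisting of cylinder sets $\pi_i^{-1}(U_i)$ for $U_i \subseteq X_i$ open, noting that $\mathcal{P}_g$ and its inverse send such cylinders to cylinders of the same type with the index relabeled, so both are continuous. For the third item, every reasonable product metric (including the $\ell^2$ metric used in the paper) is a symmetric function of the single-factor distances $d_i(x_i, y_i)$, so reordering the coordinates gives $d_{gX}(\mathcal{P}_g x, \mathcal{P}_g y) = d_X(x,y)$ directly.

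For the fourth item, both $X$ and $gX$ are $\sigma$-finite, and the product measure is uniquely determined by its values on measurable rectangles $\prod_i A_i$. On such a rectangle we have $\mathcal{P}_g(\prod_i A_i) = \prod_i A_{g(i)}$, and the respective product measures give these rectangles the same value, namely $\prod_i \mu_i(A_i) = \prod_i \mu_{g(i)}(A_{g(i)})$, which extends to all measurable sets by the uniqueness of the product measure (this is where $\sigma$-finiteness is required). For the fifth item, I would use that the smooth atlas on $X$ is generated by products of charts on the factors; $\mathcal{P}_g$ sends each such product chart to a product chart on $gX$ and in those local coordinates is simply the linear permutation map, which is smooth with smooth inverse.

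The routine nature of this lemma makes it unlikely that any single step will present a genuine obstacle; the main point of care is in the measure-theoretic item, where one must invoke $\sigma$-finiteness to guarantee that the product measure is characterized by its values on rectangles so that preservation on rectangles upgrades to full measure preservation. Everything else is a bookkeeping check that the defining product structure is symmetric under index relabeling, which is precisely the content of the map $\mathcal{P}_g$.
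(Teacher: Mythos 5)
Your proposal is correct. The paper itself offers no proof of this lemma — it explicitly records the result as ``obvious'' and moves on — so your coordinatewise verification simply supplies the omitted details, and each step (including the correct observation that $\sigma$-finiteness is what lets preservation on measurable rectangles upgrade to full measure preservation) is sound.
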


Next we introduce the notion of the canonical reordering of $\Gamma$ and $\hat{\Gamma}$, where $\Gamma$ is as in \eqref{eq: Gamma f}.

\begin{defn}
Let $\Gamma$ be as in \eqref{eq: Gamma f}, and let $R_\Gamma, R_{\hat{\Gamma}}, T_\Gamma, T_{\hat{\Gamma}} \subseteq \{1,\dotsc,d\}$ be as in Definition \ref{defn: RT notation}.

\begin{enumerate}
	 \item We define the canonical permutation associated to $\Gamma$ to be $g_\ast \in S_d$ defined by 
	 \begin{align}\label{eq: canonical g}
		 g_\ast(i) =\begin{cases}
			\mathfrak{r}_i, &\text{if }  1\le i \le \abs{R_\Gamma} \\
			\mathfrak{t}_{i - \abs{R_\Gamma}}, &\text{otherwise}.
		 \end{cases} 
	\end{align} 
	We use this to define the canonical reorderings of $\Gamma$ and $\hat{\Gamma}$ via 
	\begin{align}\label{eq: canonical Gamma}
		\Gamma_\ast = g_*\Gamma 
		= \begin{cases}
			 \R^{\abs{R_\Gamma}} \times \prod_{i \in T_\Gamma} L_{g(i)} \T & \abs{R_\Gamma} \ge 1, \\
			\prod_{i \in T_\Gamma} L_{g(i)} \T, & R_\Gamma = \varnothing
		\end{cases} \;
	\text{and} \;
		\hat{\Gamma}_\ast = g_*\hat{\Gamma}
		= \begin{cases}
			 \R^{\abs{R_\Gamma}} \times \prod_{i \in T_\Gamma} L_{g(i)}^{-1} \Z & \abs{R_\Gamma} \ge 1, \\
			\prod_{i \in T_\Gamma} L_{g(i)}^{-1} \Z, & R_\Gamma = \varnothing.
		\end{cases}
	\end{align}
	Following the convention in Definition~\ref{defn: RT notation}, we may write 
	\begin{align}\label{eq: Gamma_* decomp}
		 \Gamma_* = (\Gamma_*)_R \oplus (\Gamma_*)_T =: \Gamma_{*,R} \oplus \Gamma_{*,T}, \; \hat{\Gamma}_* = (\hat{\Gamma}_*)_R \oplus (\hat{\Gamma}_*)_T : = \hat{\Gamma}_{*,R} \oplus \hat{\Gamma}_{*,T}.
	\end{align}
	The essential point here is that all toroidal or integer factors are moved to the end of the canonical reordered product with the real factors preceding. In turn, we define the canonical reordering maps
	\begin{equation}\label{eq: canonical pGamma}
	 \mathcal{P}_{\Gamma} := \mathcal{P}_{g_\ast,\Gamma} : \Gamma \to \Gamma_\ast  \text{ and }  \mathcal{P}_{\hat{\Gamma}} := \mathcal{P}_{g_\ast,\hat{\Gamma}} : \hat{\Gamma} \to \hat{\Gamma}_\ast
	\end{equation}
	as well as the induced maps
	\begin{equation}\label{eq: canonical sigma}
	 \sigma_\Gamma := \sigma_{g_\ast,\Gamma} : \mathcal{F}(\Gamma;Y) \to \mathcal{F}(\Gamma_\ast;Y) \text{ and }  \sigma_{\hat{\Gamma}} := \sigma_{g_\ast,\hat{\Gamma}} :\mathcal{F}(\hat{\Gamma};Y) \to \mathcal{F}(\hat{\Gamma}_\ast;Y)
	\end{equation}
	for any set $Y$, where $\mathcal{F}(X;Y)$ is defined as in the third item of Definition~\ref{defn: reordering notation}.
	\item In the case when $1 \in R_\Gamma$, we define the surjective map $\uppi_{\hat{\Gamma}_*}: \hat{\Gamma}_* \to \R^{\abs{R_{\hat{\Gamma}}}}$ via 
	\begin{align}\label{eq: surjection on pGamma}
		 \uppi_{\hat{\Gamma}_*} (\xi) &= \uppi_{\hat{\Gamma}_*} (\xi_{\mathfrak{r}_1}, \dots , \xi_{\mathfrak{r}_{\abs{R_\Gamma}}}, \xi_{\mathfrak{t}_1}, \dots, \xi_{\mathfrak{t}_{\abs{T_\Gamma}}}) = (\xi_{\mathfrak{r}_1}, \dots , \xi_{\mathfrak{r}_{\abs{R_\Gamma}}}) 
	\end{align}
	where $\mathfrak{r}_i, \mathfrak{t}_i$ are defined via \eqref{eq: R} and \eqref{eq: T}. We also define the injective map $l_{\hat{\Gamma}_*}: \R^{\abs{R_{\hat{\Gamma}}}} \to \hat{\Gamma}_*$ via $l_{\hat{\Gamma}_*}(\omega) = (\omega,0)$.
\end{enumerate}

\end{defn}

In Section~\ref{sec:functional}, we generalize many of results in \cite{leonitice} concerning the low-frequency behavior of functions belonging to the anisotropic Sobolev space $X^s(\Gamma; \F)$. To introduce the analogue of the low-frequency ball over $\hat{\Gamma}$, we introduce the low frequency cutoff $0 < r \le 1$ via
 \begin{align}\label{eq: low freq r}
	  r = \begin{cases}
		 1, & \Gamma = \R^d \\
		 \min \{1, \min_{i \in \{1, \dots , \abs{T_\Gamma}\}} L_i^{-1} \}, & \text{otherwise},
	  \end{cases}
 \end{align}
and we note that $\xi \in B_{\hat{\Gamma}}(0,r) \implies \xi_{T_\Gamma} = 0$. 

To simplify some key computations when $1 \in R_\Gamma$, we reduce the computation over the low-frequency ball $B_{\hat{\Gamma}}(0,r)$ in $\hat{\Gamma}$ to the low frequency ball $B_{\R^{|R_{\hat{\Gamma}}|}}(0,r)$ in $\R^{|R_{\hat{\Gamma}}|}$. We introduce the map to make this reduction in the following lemma.

\begin{lem}\label{rem: iso everything}
	Let $\Gamma$ be defined as in \eqref{eq: Gamma f} and assume $1 \in R_\Gamma$. We define the surjective map $\uppi_{\hat{\Gamma}}: \hat{\Gamma} \to \R^{\abs{R_{\hat{\Gamma}}}}$ via 
	\begin{align}\label{eq: uppi_R}
		\uppi_{\hat{\Gamma}} =  \uppi_{\hat{\Gamma}_\ast} \circ \mathcal{P}_{\hat{\Gamma}}.
	\end{align}
	The following hold.
\begin{enumerate}
	 \item The restriction of the map $\uppi_{\hat{\Gamma}_*}$ on $\hat{\Gamma}_{\ast,R}$ is an isometric measure-preserving group isomorphism between $\hat{\Gamma}_{\ast,R}$ and $\R^{|R_{\hat{\Gamma}}|}$, where $\hat{\Gamma}_{\ast,R}$ is the first summand in the direct sum decomposition of $\hat{\Gamma}_*$ in \eqref{eq: Gamma_* decomp}.
	 \item As a consequence of the first item and Lemma~\ref{lem: permutation induces iso everything}, the restriction of the map $\uppi_{\hat{\Gamma}}$ on $\hat{\Gamma}_R$ is an isometric measure-preserving group isomorphism between $\hat{\Gamma}_R$ and $\R^{|R_{\hat{\Gamma}}|}$, with the map $\mathcal{P}_{\hat{\Gamma}}^{-1} \circ l_{\hat{\Gamma}_\ast}$ as its inverse. 
	 \item The restriction of $\uppi_{\hat{\Gamma}}$ on $B(0,r) \subset \hat{\Gamma}_R$ is also an isometric measure-preserving group isomorphism between $B_{\hat{\Gamma}}(0,r)$ to $B_{\R^{|R_{\hat{\Gamma}}|}}(0,r)$, with the restriction of the map $\mathcal{P}_{\hat{\Gamma}}^{-1}  \circ  l_{\hat{\Gamma}_\ast}$ on $B_{\R^{|R_{\hat{\Gamma}}|}}(0,r)$ as its inverse. 
\end{enumerate}
\end{lem}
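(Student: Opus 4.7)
The plan is to unpack the definitions in \eqref{eq: canonical Gamma}, \eqref{eq: Gamma_* decomp}, and \eqref{eq: surjection on pGamma} and verify each of the four structural properties (group isomorphism, isometry, measure preservation, and bijectivity) directly, then bootstrap from the first item to the remaining two using Lemma~\ref{lem: permutation induces iso everything}.

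For the first item, I would observe that since $1 \in R_\Gamma$ implies $|R_\Gamma| \ge 1$, the canonical reordering $\hat{\Gamma}_* = \R^{|R_\Gamma|} \times \prod_{i \in T_\Gamma} L_{g(i)}^{-1}\Z$ places all real factors at the front. By the decomposition \eqref{eq: Gamma_* decomp}, elements of $\hat{\Gamma}_{*,R}$ are exactly those of the form $(\omega, 0)$ with $\omega \in \R^{|R_{\hat{\Gamma}}|}$, so the restriction $\uppi_{\hat{\Gamma}_*}\vert_{\hat{\Gamma}_{*,R}}$ acts as $(\omega, 0) \mapsto \omega$ and is manifestly a bijection onto $\R^{|R_{\hat{\Gamma}}|}$ with inverse $l_{\hat{\Gamma}_*}$. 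The group isomorphism property follows from the fact that addition on $\hat{\Gamma}_*$ is coordinatewise. For the isometry, the metric $d_{\hat{\Gamma}_*}$ is inherited from inclusion in $\R^d$, and when the toroidal coordinates are set to zero the squared distance reduces exactly to the Euclidean squared distance on the real coordinates. For measure preservation, the product measure on $\hat{\Gamma}_{*,R}$ (viewed as a subset of $\hat{\Gamma}_*$) is just the Lebesgue measure on $\R^{|R_{\hat{\Gamma}}|}$ paired with point masses at $0$ in the counting-measure factors; pushing this forward through $\uppi_{\hat{\Gamma}_*}$ yields the Lebesgue measure on $\R^{|R_{\hat{\Gamma}}|}$.

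For the second item, I would invoke Lemma~\ref{lem: permutation induces iso everything} applied to the permutation $g_\ast$ from \eqref{eq: canonical g}, which gives that $\mathcal{P}_{\hat{\Gamma}} : \hat{\Gamma} \to \hat{\Gamma}_*$ is simultaneously a group isomorphism, an isometry, a measure-preserving map, and a smooth diffeomorphism. A direct check from the definitions \eqref{eq: Gamma_R} and \eqref{eq: canonical pGamma} shows that $\mathcal{P}_{\hat{\Gamma}}(\hat{\Gamma}_R) = \hat{\Gamma}_{*,R}$, so composing the restriction $\mathcal{P}_{\hat{\Gamma}}\vert_{\hat{\Gamma}_R}$ with $\uppi_{\hat{\Gamma}_*}\vert_{\hat{\Gamma}_{*,R}}$ from the first item gives the desired isomorphism, and the stated inverse formula $\mathcal{P}_{\hat{\Gamma}}^{-1} \circ l_{\hat{\Gamma}_*}$ follows since $l_{\hat{\Gamma}_*}$ is the inverse of $\uppi_{\hat{\Gamma}_*}\vert_{\hat{\Gamma}_{*,R}}$.

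For the third item, I would first note that the choice of the radius $r$ in \eqref{eq: low freq r} is designed precisely so that $B_{\hat{\Gamma}}(0,r) \subseteq \hat{\Gamma}_R$; indeed, for any $\xi \in B_{\hat{\Gamma}}(0,r)$ with $\xi_{T_{\hat{\Gamma}}} \neq 0$, at least one coordinate $\xi_i$ with $i \in T_{\hat{\Gamma}}$ would satisfy $|\xi_i| \ge L_i^{-1} \ge r$, contradicting $|\xi| < r$. Given this inclusion, the third item follows immediately from the second since an isometry maps open balls of radius $r$ centered at the identity to open balls of radius $r$ centered at the identity.

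Since each verification reduces to either a direct coordinate computation or an application of Lemma~\ref{lem: permutation induces iso everything}, there is no serious obstacle; the only subtlety to be careful about is the measure-preserving assertion, which requires viewing the subspace $\hat{\Gamma}_{*,R}$ as carrying the restriction of the product measure rather than a freshly imposed one, and confirming that the counting measure of $\{0\}$ is $1$ so that the restricted measure coincides with Lebesgue measure on $\R^{|R_{\hat{\Gamma}}|}$ under $\uppi_{\hat{\Gamma}_*}$.
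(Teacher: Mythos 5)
Your proposal is correct and follows exactly the route the paper intends: the paper states this lemma without a written proof, treating item (1) as a direct coordinate verification, item (2) as an explicit consequence of item (1) together with Lemma~\ref{lem: permutation induces iso everything}, and item (3) as following from the containment $B_{\hat{\Gamma}}(0,r)\subseteq \hat{\Gamma}_R$ guaranteed by the choice of $r$ in \eqref{eq: low freq r}. Your care with the measure-preservation point (the counting-measure factors contributing unit mass at $0$) is exactly the one detail worth making explicit, and you have it right.
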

In other words, we have $\xi \in B_{\hat{\Gamma}}(0,r) \implies \uppi_{\hat{\Gamma}}  \xi \in B_{\R^{|R_{\hat{\Gamma}}|}}(0,r)$ and  $\omega \in B_{\R^{|R_{\hat{\Gamma}}|}}(0,r) \implies \mathcal{P}_{\hat{\Gamma}}^{-1}l_{\hat{\Gamma}_\ast} \omega \in B_{\hat{\Gamma}}(0,r)$, and the maps $\uppi_{\hat{\Gamma}} ,  \mathcal{P}_{\hat{\Gamma}}^{-1}l_{\hat{\Gamma}_\ast}$ are isometric measure-preserving group isomorphisms.

\subsection{Tempered distributions and the Fourier transform}\label{tempered}

In this subsection we carefully define the class of tempered distributions on $\Gamma$ and its Pontryagin dual $\hat{\Gamma}$. First, we note that for any $\alpha \in \N^d$, we may follow the convention in \eqref{eq: xi R} and write $\alpha = \alpha_{R_\Gamma} + \alpha_{T_\Gamma}$. We then define 
\begin{align}
	 \N^d_{R_\Gamma} = \{ \alpha \in \N^d \mid \alpha_{T_\Gamma} = 0\}, \; \N^d_{T_\Gamma} = \{ \alpha \in \N^d \mid \alpha_{R_\Gamma} = 0\}.
\end{align}
\begin{defn}\label{defn:schwartz}
Let $\Gamma$ be defined as in \eqref{eq: Gamma f}. 
\begin{enumerate}
\item We define the Schwartz class on $\Gamma$ to be 
	 \begin{align}
		  \mathscr{S}(\Gamma ; \C) = \{ f \in C^{\infty}(\Gamma; \C) \mid [f]_{\Gamma, \alpha,\beta} = \sup_{x \in \Gamma} \abs{ x^{\alpha }\p^{\beta} f(x)} < \infty \; \text{for all } \alpha \in \N^d_{R_\Gamma},\beta \in \N^d \},
	 \end{align}
	 and on $\hat{\Gamma}$ to be 
	 \begin{multline}
		\mathscr{S}(\hat{\Gamma} ; \C) = \{ f : \hat{\Gamma} \to \C \mid f(\mathcal{P}_{\hat{\Gamma}}^{-1} (\cdot, 0) + \xi_T)  \in C^{\infty}( \R^{\abs{R_{\Gamma}}} ; \C) \; \text{for all} \; \xi_{T} \in \hat{\Gamma}_{T_{\hat{\Gamma}}} \; \text{and} \; \\ [f]_{\hat{\Gamma}, \alpha, \beta} = \sup_{\xi \in \hat{\Gamma}}  \abs{\xi^\alpha \p^\beta f(\xi)} < \infty \; \text{for all} \; \alpha \in \N^{d}, \beta \in \N_{R_{\hat{\Gamma}}}^{d}\}.
	 \end{multline}
	 We equip $\mathscr{S}(\Gamma ; \C)$ and $\mathscr{S}(\hat{\Gamma} ; \C)$ with the Fr\'{e}chet topology induced by the countable family of seminorms $\{[\cdot]_{\Gamma,\alpha,\beta}\}_{\alpha \in \N^d_{R_\Gamma}, \beta \in \N^d}$ and $\{[\cdot]_{\hat{\Gamma},\alpha,\beta}\}_{\alpha \in \N^d, \beta \in \N_{R_{\hat{\Gamma}}}^d}$,  by endowing $\mathscr{S}(\Gamma ; \C)$ and $\mathscr{S}(\hat{\Gamma} ; \C)$ with the metrics
	 \begin{align}
		  d_{\mathscr{S}(\Gamma)}(f,g) = \sum_{\alpha \in \N^d_{R_\Gamma}}\sum_{\beta \in \N^d} \frac{1}{2^{\abs{\alpha} + \abs{\beta}}} \frac{[f-g]_{\Gamma,\alpha,\beta}}{1 + [f-g]_{\Gamma,\alpha,\beta}}, \; d_{\mathscr{S}(\hat{\Gamma})}(f,g) = \sum_{\alpha \in \N^d}\sum_{\beta \in \N_{R_{\hat{\Gamma}}}^d} \frac{1}{2^{\abs{\alpha} + \abs{\beta}}} \frac{[f-g]_{\hat{\Gamma},\alpha,\beta}}{1 + [f-g]_{\hat{\Gamma},\alpha,\beta}}.
	 \end{align}
	 
\item We define the unitary Fourier and inverse Fourier transforms $\mathscr{F}^{\pm}_\Gamma: \mathscr{S}(\Gamma ; \C) \to \mathscr{S}(\hat{\Gamma} ; \C)$, $\mathscr{F}^{\pm}_{\hat{\Gamma}}: \mathscr{S}(\hat{\Gamma} ; \C) \to \mathscr{S}(\Gamma; \C)$  on $\mathscr{S}(\Gamma; \C)$ and $\mathscr{S}(\hat{\Gamma}; \C)$ via 
\begin{multline}\label{eq: Fourier def}
	 \mathscr{F}^{\pm}_\Gamma \{ f \} (\xi) = \frac{1}{\sqrt{\prod_{i \in T_\Gamma} L_i}}\int_{\Gamma} f(x) e^{\mp 2\pi i \xi\cdot x} \; dx = \mathscr{F}_\Gamma^{\mp} \{ f \} (-\xi), \\
	\mathscr{F}^{\pm}_{\hat{\Gamma}} \{ f \}(x) = \frac{1}{\sqrt{\prod_{i \in T_{\Gamma}} L_i}} \int_{\hat{\Gamma}} f(\xi) e^{\mp 2\pi i \xi \cdot x}  \; d\xi = \mathscr{F}_{\hat{\Gamma}}^{\mp} \{f\} (-x). 
\end{multline}

\item We define the class of tempered distributions $\mathscr{S}'(\Gamma ; \C)$ and $\mathscr{S}'(\hat{\Gamma} ; \C)$ to be the set of continuous linear functionals on $\mathscr{S}(\Gamma ; \C)$ and  $\mathscr{S}(\hat{\Gamma} ; \C)$, respectively. 

\end{enumerate}
\end{defn}

The next lemma shows that the restriction of $\sigma_{g,\Gamma}$ on $\mathscr{S}(\Gamma ; \F)$ is an isometric isomorphism between $\mathscr{S}(\Gamma ; \F)$ and $\mathscr{S}(g \Gamma ; \F)$, where $\sigma_{g,\Gamma}$ is defined in the third item of Definition~\ref{defn: reordering notation}.

\begin{lem}\label{lem: schwartz iso}
Let $\Gamma$ be defined as in \eqref{eq: Gamma f}, $g \in S_d$ be a permutation, and let $g\Gamma$ be as defined in the first item of Definition~\ref{defn: reordering notation}. Then the map $\sigma_{g,\Gamma}: \mathscr{S}(\Gamma ; \F) \to \mathscr{S}(g\Gamma ; \F)$ defined in the third item of Definition~\ref{defn: reordering notation} is an isometric isomorphism, with its inverse given by the map $\sigma_{g,\Gamma}^{-1}: \mathscr{S}(g\Gamma ; \F) \to \mathscr{S}(\Gamma ; \F)$, also defined in the third item of Definition~\ref{defn: reordering notation}.
\end{lem}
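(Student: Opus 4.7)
The plan is to exploit the fact that $\mathcal{P}_g$ is just a coordinate permutation, so that all the structural ingredients defining $\mathscr{S}(\Gamma;\F)$ — smoothness, polynomial growth, partial derivatives, and the admissibility constraint $\alpha \in \N^d_{R_\Gamma}$ — transform into their $g\Gamma$ counterparts under a simple reindexing of multi-indices. The main work is bookkeeping on multi-indices; there is no genuine analytic obstacle, since Lemma~\ref{lem: permutation induces iso everything} already supplies the needed measure-theoretic, topological, and smooth isomorphisms.

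First, for a multi-index $\alpha \in \N^d$ I will introduce the reindexed multi-index $\tilde\alpha \in \N^d$ defined by $\tilde\alpha_j = \alpha_{g^{-1}(j)}$ (equivalently $\tilde\alpha_{g(i)} = \alpha_i$). Since the toroidal/real type of the $i$-th factor of $g\Gamma$ is that of the $g(i)$-th factor of $\Gamma$, one has $R_{g\Gamma} = g^{-1}(R_\Gamma)$, so that $\alpha \in \N^d_{R_{g\Gamma}}$ if and only if $\tilde\alpha \in \N^d_{R_\Gamma}$; the correspondence preserves $|\alpha|$, and the analogous statement holds for $\beta$. I will then observe that for $f \in \mathscr{S}(\Gamma;\F)$ and $h = \sigma_g f = f \circ \mathcal{P}_g^{-1}$, the chain rule applied to the linear coordinate-permutation $\mathcal{P}_g^{-1}$ gives, for any $\beta \in \N^d$,
\begin{equation}
	(\p^\beta h)(y) = (\p^{\tilde\beta} f)(\mathcal{P}_g^{-1} y),
\end{equation}
while a direct substitution gives $y^\alpha = (\mathcal{P}_g x)^\alpha = x^{\tilde\alpha}$ for $x = \mathcal{P}_g^{-1} y$.

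Combining these two identities and using that $\mathcal{P}_g$ is a bijection yields the seminorm identity
\begin{equation}
	[h]_{g\Gamma,\alpha,\beta} = \sup_{y \in g\Gamma} \abs{y^\alpha \p^\beta h(y)} = \sup_{x \in \Gamma} \abs{x^{\tilde\alpha} \p^{\tilde\beta} f(x)} = [f]_{\Gamma,\tilde\alpha,\tilde\beta},
\end{equation}
valid for every admissible pair $(\alpha,\beta)$. This simultaneously shows that $h \in \mathscr{S}(g\Gamma;\F)$ (so $\sigma_g$ is well-defined) and that the bijection $(\alpha,\beta) \mapsto (\tilde\alpha,\tilde\beta)$ on admissible index pairs induces a bijection between the seminorm families defining $\mathscr{S}(\Gamma;\F)$ and $\mathscr{S}(g\Gamma;\F)$.

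Finally, I will transfer this seminorm identity to the Fréchet metrics. Because $\abs{\alpha} = \abs{\tilde\alpha}$ and $\abs{\beta} = \abs{\tilde\beta}$, the weights $2^{-(\abs{\alpha}+\abs{\beta})}$ in the definition of $d_{\mathscr{S}(\Gamma)}$ and $d_{\mathscr{S}(g\Gamma)}$ are respected, so reindexing the sum over $(\alpha,\beta)$ via $(\tilde\alpha,\tilde\beta)$ gives $d_{\mathscr{S}(g\Gamma)}(\sigma_g f_1, \sigma_g f_2) = d_{\mathscr{S}(\Gamma)}(f_1,f_2)$, proving that $\sigma_g$ is an isometry. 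For the inverse, I will apply the same construction to $g^{-1} \in S_d$ to get an isometry $\sigma_{g^{-1},g\Gamma}: \mathscr{S}(g\Gamma;\F) \to \mathscr{S}(\Gamma;\F)$; the relation $\mathcal{P}_{g^{-1},g\Gamma} \circ \mathcal{P}_{g,\Gamma} = \mathrm{id}_\Gamma$, which is immediate from the componentwise definition \eqref{eq: reordering}, then identifies $\sigma_{g^{-1},g\Gamma}$ with $\sigma_g^{-1}$ as defined in the third item of Definition~\ref{defn: reordering notation}, completing the proof.
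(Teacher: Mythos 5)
Your proof is correct and takes essentially the same approach as the paper: both establish the seminorm identity $[\sigma_g f]_{g\Gamma,\alpha,\beta}=[f]_{\Gamma,\tilde\alpha,\tilde\beta}$ via reindexing of multi-indices (your $\tilde\alpha$ is the paper's $\mathcal{P}_{g,\N^d}^{-1}\alpha$), observe that this reindexing bijects the admissible index pairs while preserving $\abs{\alpha}+\abs{\beta}$, and conclude the metric identity. The only cosmetic difference is that you obtain the inverse by running the construction for $g^{-1}$, while the paper verifies the symmetric computation for $\sigma_{g,\Gamma}^{-1}$ directly.
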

\begin{proof}
For $g \in S_d$, let $\mathcal{P}_{g,\Gamma}: \Gamma \to g\Gamma, \mathcal{P}_{g,\N^d}: \N^d \to g\N^d$ be the induced reordering maps defined via \eqref{eq: reordering}. We note that for any $F \in \mathscr{S}(\Gamma ; \F), \alpha \in \N_{R_{g \Gamma}}^d$, and $\beta \in \N^d$ we have $F(\mathcal{P}_g^{-1} \cdot) \in C^\infty(g \Gamma ; \F)$ with 
\begin{align}\label{eq: same seminorm1}
	 [F(\mathcal{P}_g^{-1} \cdot )]_{g \Gamma, \alpha, \beta} = \sup_{x 
	 \in g \Gamma} \abs{x^\alpha \p_x^\beta F(\mathcal{P}_{g,\Gamma}^{-1} x)} = \sup_{y \in \Gamma} \abs{y^{\mathcal{P}_{g,\N^d}^{-1} \alpha} \p_y^{\mathcal{P}_{g,\N^d}^{-1}\beta} F(y)} = [F]_{\Gamma, \mathcal{P}_{g,\N^d}^{-1} \alpha, \mathcal{P}_{g,\N^d}^{-1} \beta} < \infty, 
\end{align}
and for any $G \in \mathscr{S}(g \Gamma; \F), \alpha \in \N^d_{R_\Gamma}$, and $\beta \in \N^d$ we have $G(\mathcal{P}_{g,\Gamma}^{-1} \cdot) \in C^\infty(\Gamma;\F)$ with 
\begin{align}\label{eq: same seminorm2}
	 [G(\mathcal{P}_{g,\Gamma}^{-1}  \cdot)]_{\Gamma, \alpha, \beta} = \sup_{x 
	 \in \Gamma} \abs{x^\alpha \p_x^\beta G(\mathcal{P}_{g,\Gamma}^{-1}  x)} = \sup_{y \in g \Gamma} \abs{y^{\mathcal{P}_{g,\N^d} \alpha} \p _y^{\mathcal{P}_{g,\N^d}\beta} G(y)} = [G]_{g \Gamma, \mathcal{P}_{g,\N^d} \alpha, \mathcal{P}_{g,\N^d}\beta} < \infty.
\end{align}
This shows that $\sigma_{g,\Gamma}$ and $\sigma_{g,\Gamma}^{-1}$ are well-defined. Since $\sigma_{g,\Gamma}$ is a bijection, and $\mathcal{P}_{g,\N^d}$ is a bijection between $\N^{d}_{R_\Gamma}$ and $\N^{d}_{R_{g \Gamma}}$, and between $\N^d$ and $g \N^d = \N^d$, with \eqref{eq: same seminorm1} and \eqref{eq: same seminorm2} we can also conclude that 
\begin{align}
	d_{\mathscr{S}(\Gamma)}(F,G) = d_{\mathscr{S}(g \Gamma)}(F(\mathcal{P}_g^{-1}\cdot), G(\mathcal{P}_g^{-1}\cdot)).
\end{align}
This shows that $\sigma_{g,\Gamma}$ is an isometric isomorphism. 
\end{proof}

Following the exact same argument as above we can conclude that the restriction of $\sigma_{g,\hat{\Gamma}}$ on $\mathscr{S}(\hat{\Gamma} ;\F)$ is an isometric isomorphism between $\mathscr{S}(\hat{\Gamma} ;\F)$ and $\mathscr{S}(\hat{\Gamma}_\ast ;\F)$.

Next, we record some important properties of the Schwartz classes $\mathscr{S}(\Gamma ; \F)$ and $\mathscr{S}(\hat{\Gamma} ; \F)$. 

\begin{prop}\label{prop:schwartz}
Let $\Gamma$ be defined as in \eqref{eq: Gamma f}. Then the following hold.
\begin{enumerate}
	 \item We have the continuous inclusions $\mathscr{S}(\Gamma ; \C) \hookrightarrow L^p(\Gamma; \C)$ and $\mathscr{S}(\hat{\Gamma} ; \C) \hookrightarrow L^p(\hat{\Gamma}; \C)$ for any $1 \le p \le \infty$. 
	 \item We have the continuous inclusion $L^p(\Gamma; \C) \hookrightarrow \mathscr{S}'(\Gamma ; \C)$ and $L^p(\hat{\Gamma}; \C) \hookrightarrow \mathscr{S}'(\hat{\Gamma} ; \C)$ for all $1 \le p \le \infty$. 
	 \item For any $f \in \mathscr{S}(\Gamma; \C)$, $\alpha \in \N^d_{R_\Gamma}$, and $\beta \in \N^d$ the functions $g_{\alpha,\beta}, h_{\alpha,\beta} : \Gamma \to \C$ defined via $g_{\alpha,\beta}(x) = x^{\alpha} \p^\beta f(x)$ and $h_{\alpha, \beta}= \p^{\alpha} (x^\beta f(x))$ belong to $\mathscr{S}(\Gamma ; \C)$. Furthermore, the maps $f \mapsto g_{\alpha, \beta}$ and $f \mapsto h_{\alpha, \beta}$ are continuous. Similarly, for any $f \in \mathscr{S}(\hat{\Gamma}; \C)$, $\alpha \in \N^d$, and $\beta \in \N_{R_{\hat{\Gamma}}}^d$ the functions $g_{\alpha,\beta}, h_{\alpha,\beta} : \Gamma \to \C$ defined via $g_{\alpha,\gamma}(\xi) = \xi^{\alpha} \p^\beta f(\xi)$ and $h_{\alpha, \beta} = \p^{\alpha} (\xi^\beta f(\xi))$ belong to $\mathscr{S}(\hat{\Gamma} ; \C)$. Furthermore, the maps $f \mapsto g_{\alpha, \beta}$ and $f \mapsto h_{\alpha, \beta}$ are continuous.

	 \item If $f \in \mathscr{S}(\Gamma ; \C)$, then $\mathscr{F}^{\pm}_{\Gamma} \{f\} \in \mathscr{S}(\hat{\Gamma}; \C)$ and the map $f \mapsto \mathscr{F}_{\Gamma}^{\pm} \{f\}$ is continuous. Likewise, if $f \in \mathscr{S}(\hat{\Gamma} ; \C)$, then $\mathscr{F}^{\pm}_{\hat{\Gamma}} \{f\} \in \mathscr{S}(\Gamma ; \C)$ and the map $f \mapsto \mathscr{F}_{\hat{\Gamma}}^{\pm} \{f\}$ is continuous.

	 \item If $f \in \mathscr{S}(\Gamma; \C)$ and $g \in \mathscr{S}(\hat{\Gamma} ; \C)$, then
	 \begin{align}
		  \int_{\hat{\Gamma}} \mathscr{F}^{\pm}_{\Gamma} \{ f \}(\xi, k) g (\xi, k) \; d\xi d k = \int_\Gamma f(x,y) \mathscr{F}^{\pm}_{\hat{\Gamma}} \{g\} (x,y) \; dx dy.
	 \end{align}

	 \item  If $f \in \mathscr{S}(\Gamma ; \C)$, then $f = \mathscr{F}^{\mp}_{\hat{\Gamma}} \circ \mathscr{F}^{\pm}_{\Gamma} \{f\}$. Similarly, if $f \in \mathscr{S}(\hat{\Gamma}; \C)$, then $f = \mathscr{F}^{\mp}_{\Gamma} \circ \mathscr{F}^{\pm}_{\hat{\Gamma}} \{f\}$.
	 
	 \item The Fourier and inverse Fourier transforms $\mathscr{F}^{\pm}_{\Gamma}: \mathscr{S}(\Gamma; \C) \to \mathscr{S}(\hat{\Gamma} ; \C)$ on $\mathscr{S}(\Gamma; \C)$ are isomorphisms from $\mathscr{S}(\Gamma; \C)$ to $\mathscr{S}(\hat{\Gamma} ; \C)$. Likewise, the Fourier and inverse Fourier transforms $\mathscr{F}^{\pm}_{\hat{\Gamma}}: \mathscr{S}(\hat{\Gamma}; \C) \to \mathscr{S}( \Gamma; \C)$ on $\mathscr{S}(\hat{\Gamma}; \C)$ are isomorphisms from $\mathscr{S}(\hat{\Gamma}; \C)$ to $\mathscr{S}(\Gamma ; \C)$
\end{enumerate}
\end{prop}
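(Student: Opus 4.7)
The plan is to reduce to the canonical reordered model $\Gamma_\ast = \R^{|R_\Gamma|} \times \prod_{i \in T_\Gamma} L_{g(i)}\T$ via the isometric isomorphism $\sigma_\Gamma$ of Lemma~\ref{lem: schwartz iso}, and then to leverage Fubini's theorem to treat the $\R^{|R_\Gamma|}$ and the $\prod L_i\T$ factors separately. On the toroidal factors the Schwartz condition degenerates (only $\beta$-derivatives appear, no weights), so Schwartz functions there are just smooth functions, while on the Euclidean factor one has the full classical Schwartz class. The hybrid product structure is therefore reducible, factor-by-factor, to the two well-known theories.

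\textbf{Items (1) and (2).} For item (1), pick a large enough $N$ so that $\langle x_{R_\Gamma} \rangle^{-N} \in L^p(\R^{|R_\Gamma|})$ and bound $|f(x)| \le C_N [f]_{\Gamma,0,0} + C_N \sum_{|\alpha|=N, \alpha \in \N^d_{R_\Gamma}} [f]_{\Gamma,\alpha,0}\langle x_{R_\Gamma}\rangle^{-N}$, integrating over $\Gamma_T$ (a finite-measure torus) and then over $\Gamma_R \cong \R^{|R_\Gamma|}$; the $\hat\Gamma$ case is symmetric after swapping the roles of $R$ and $T$, since toroidal factors in $\Gamma$ correspond to integer-lattice factors $L_i^{-1}\Z$ in $\hat\Gamma$, where now the $\alpha$-weights (which are allowed in $\mathscr{S}(\hat\Gamma)$) supply summability of the series. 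For item (2), the standard duality pairing $\langle u, f\rangle = \int u f$ and the same weight bounds show $|\langle u, f\rangle| \lesssim \|u\|_{L^p}$ times finitely many Schwartz seminorms of $f$, giving continuity in $\mathscr{S}$.

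\textbf{Items (3) and (4).} Item (3) is the Leibniz rule: seminorms of $x^\alpha \partial^\beta f$ and $\partial^\alpha(x^\beta f)$ are finite linear combinations of seminorms $[f]_{\Gamma,\alpha',\beta'}$ with $|\alpha'|\le|\alpha|+|\beta|$ and $|\beta'|\le|\alpha|+|\beta|$; the admissible index sets $\N^d_{R_\Gamma}$ and $\N^d$ are closed under the operations involved because we only differentiate or multiply by the admissible variables. For item (4), again reduce to $\Gamma_\ast$ and write $\mathscr{F}_{\Gamma_\ast} = \mathscr{F}_{\R^{|R_\Gamma|}} \otimes \mathscr{F}_{\prod L_i\T}$; on the Euclidean factor this is the classical Schwartz Fourier transform, and on the toroidal factor it sends smooth periodic functions to rapidly decaying sequences by integration by parts $|\beta|$ times to gain the factor $\langle k \rangle^{-|\beta|}$ needed to dominate $\xi^\alpha$. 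Tonelli's theorem and the uniform bound on the Schwartz norms lets these two pieces be combined, giving the continuity of $\mathscr{F}^\pm_\Gamma : \mathscr{S}(\Gamma;\C) \to \mathscr{S}(\hat\Gamma;\C)$; similarly for $\mathscr{F}^\pm_{\hat\Gamma}$.

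\textbf{Items (5), (6), and (7).} The transfer identity (5) is Fubini on $\Gamma \times \hat\Gamma$ applied to the absolutely integrable function $f(x)g(\xi) e^{\mp 2\pi i \xi \cdot x}/\sqrt{\prod L_i}$, with absolute integrability guaranteed by item (1). Fourier inversion (6) likewise factors through $\Gamma_\ast$: on $\R^{|R_\Gamma|}$ it is the classical inversion formula, and on the torus it is the Fourier series reconstruction $f(y) = \frac{1}{\sqrt{\prod L_i}}\sum_k \hat f(k) e^{\pm 2\pi i k \cdot y}$, which converges absolutely by the rapid decay of $\hat f$ established in item (4). Item (7) is then immediate: continuity is item (4), and items (5) and (6) show $\mathscr{F}^\mp \circ \mathscr{F}^\pm = \mathrm{Id}$ on both sides, giving that the Fourier transform is a topological isomorphism.

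The main obstacle is bookkeeping rather than analytic depth: one must be careful that the asymmetric seminorm families --- only $\N^d_{R_\Gamma}$-weights on $\Gamma$ but all $\N^d$-derivatives, and conversely on $\hat\Gamma$ --- are the correct ones to match under Fourier transform, and that the tensor-product structure correctly identifies polynomials in $\xi$ with derivatives on $\Gamma$ (which is fine on the $\R$-factors via integration by parts) and identifies polynomials in the integer variable $k$ with difference-like behavior (which we do not use; instead, derivatives in the continuous toroidal variable produce the powers of $k$ needed to beat the $\xi^\alpha$ weights in the seminorm). This structural matching is exactly why the definitions of $\mathscr{S}(\Gamma;\C)$ and $\mathscr{S}(\hat\Gamma;\C)$ were set up with the asymmetric index sets in Definition~\ref{defn:schwartz}.
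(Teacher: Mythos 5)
Your proposal is correct, and for items (1)--(5) and (7) it follows essentially the same path as the paper: weighted pointwise bounds plus splitting near and away from the origin for the $L^p$ embeddings, H\"older duality for (2), Leibniz for (3), reduction to the canonical reordering $\Gamma_\ast$ via $\sigma_\Gamma$, $\sigma_{\hat\Gamma}$ and integration by parts for (4) (your tensor-product phrasing $\mathscr{F}_{\Gamma_\ast}=\mathscr{F}_{\R^{|R_\Gamma|}}\otimes\mathscr{F}_{\prod L_i\T}$ is just a reorganization of the paper's direct computation of $(-2\pi i\xi)^\alpha(-2\pi i k)^\beta\p_\xi^\gamma\mathscr{F}^\pm\{f\}$), and Fubini for (5). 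The one place you genuinely diverge is the inversion formula (6): you splice together the two classical inversion theorems --- Euclidean Fourier inversion on the $\R^{|R_\Gamma|}$ factor and absolutely convergent Fourier series reconstruction on the toroidal factor --- with the interchange of sum and integral justified by the rapid decay from item (4). The paper instead runs a unified mollification argument on the mixed domain: it pairs $\mathscr{F}^\pm_\Gamma\{f\}$ against a Gaussian $e^{-\pi t^2(|\xi|^2+|k|^2)}$ times a character, computes $\mathscr{F}^\pm_{\hat\Gamma}\{g_t\}$ explicitly via the Poisson summation formula on the lattice $\prod L_i\Z$, verifies that these form an approximate identity, and lets $t\to0^+$. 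Your route is shorter and outsources the hard work to two standard theorems, at the price of having to track the normalization $(\prod L_i)^{-1/2}$ consistently through the factorization and to check that the partial transform $c_k(x)$ in the toroidal variables lands in a class where Euclidean inversion applies (which it does, by your item (4)); the paper's route is longer but self-contained and produces the explicit heat-kernel identity on $\Gamma$ as a byproduct. Both are valid proofs.
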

\begin{proof}
Define
\begin{align}
	\tau = \begin{cases}
		1, & \Gamma = \R^d \\
		\max \{1, 2\max_{i \in \{1, \dots , d_2\}} L_i \}, & \text{otherwise}.
	 \end{cases}
\end{align}
To prove the first item, we note that for any $1 \le p < \infty$, $f \in \mathscr{S}(\Gamma; \C)$, and $\alpha, \beta \in \N^d_{R_\Gamma}$ with $\abs{\alpha} = d+1, \abs{\beta} = \lfloor (d+1)/p \rfloor +1$ we have 
\begin{multline}
	 \norm{f}_{L^p(\Gamma)} \lesssim  \left( \int_{B(0,\tau)} \abs{f(x)}^p \; dx + \int_{B(0,\tau)^c} d_{\Gamma}(x,0)^{\alpha} \abs{ f(x)}^p d_{\Gamma}(x,0)^{-\alpha} \; dx \right)^{1/p} \\ \lesssim \left( \norm{f}_{L^\infty}^p +  \left( \sup_{x \in B(0,\tau)^c } d_{\Gamma}(x,0)^{\alpha/p} \abs{ f(x)} \right)^p \int_{B(0,\tau)^c} d_{\Gamma}(x,0)^{-\alpha} \; dx \right)^{1/p} \\ \lesssim \norm{f}_{L^\infty} + \sup_{x\in B(0,\tau)^c} d_{\Gamma}(x,0)^{\beta} \abs{ f(x)} \lesssim [f]_{\Gamma, 0, 0} + \sum_{ \beta \in \N^d_{R_\Gamma}, \abs{\beta} = \lfloor (d+1)/p \rfloor +1} [f]_{\Gamma, \beta, 0},
\end{multline}
and when $p = \infty$ we have $\norm{f}_{L^\infty(\Gamma)} = [f]_{\Gamma, 0,0}$. It follows then that $\mathscr{S}(\Gamma ; \C) \hookrightarrow L^p(\Gamma ; \C)$ for all $1 \le p \le \infty$. A similar set of computations can be performed to prove $\mathscr{S}(\hat{\Gamma} ; \C) \hookrightarrow L^p(\hat{\Gamma}; \C)$ for $1 \le p \le \infty$. For the second item, suppose that we have a sequence $\{f_k\}_{k=1}^\infty \subset L^p(\Gamma;\C)$ for $1 \le p \le \infty$ such that $f_k \to f$ in $L^p(\Gamma; \C)$. We note that the canonical distribution $T_{f_k} : \mathscr{S}(\Gamma ; \C) \to \C$ associated to each $f_k$ defined via 
\begin{align}
	 \langle T_{f_k}, \varphi \rangle_{\mathscr{S'}, \mathscr{S}} = \int_{\Gamma} f_{k} \varphi \; dx \; \text{for all} \; \varphi \in \mathscr{S}(\Gamma; \C)
\end{align}
is clearly linear. By the first item and H\"{o}lder's inequality, if $\{ \varphi_m \}_{m=1}^\infty \subset \mathscr{S}(\Gamma; \C)$ and $\varphi_m \to \varphi$ as $m \to \infty$ in $\mathscr{S}(\Gamma ; \C)$, then for each $k$ we have 
\begin{align}
	\abs{\langle T_{f_k}, \varphi_m - \varphi \rangle_{\mathscr{S'}, \mathscr{S}}} \le \norm{f_k}_{L^p} \norm{\varphi_m \to \varphi}_{L^q} \to 0.
\end{align}
This implies that $\mathcal{T}_{f_k} \in \mathscr{S}'(\Gamma; \C)$. Furthermore, since $f_k \to f$ in $L^p(\Gamma; \C)$, by the first item and H\"{o}lder's inequality again, we have 
\begin{align}
	\abs{\langle T_{f_k} - T_f, \varphi \rangle_{\mathscr{S'}, \mathscr{S}}} \le \norm{f_k - f}_{L^p} \norm{\varphi}_{L^q} \to 0 \; \text{for all} \; \varphi \in \mathscr{S}(\Gamma; \C). 
\end{align}
Since the map $L^p(\Gamma ; \C) \ni f \mapsto T_f \in \mathscr{S}'(\Gamma; \C)$ is also clearly injective, the inclusion map $i : L^p(\Gamma; \C) \to \mathscr{S}'(\Gamma; \C)$ as is well-defined and continuous.
This proves the second item for $\mathscr{S}(\Gamma ; \C)$, and the same argument can be applied to prove the second item for $\mathscr{S}(\hat{\Gamma} ; \C)$. The third item follows directly from the Leibniz rule. 

For the remaining items, we note that by using the isometric isomorphisms $\sigma_\Gamma: \mathscr{S}(\Gamma; \F) \to \mathscr{S}(\Gamma_\ast; \F)$ and $\sigma_{\hat{\Gamma}}: \mathscr{S}(\hat{\Gamma}; \F) \to \mathscr{S}(\hat{\Gamma}_\ast; \F)$ induced by the canonical permutation $g_*$, it suffices to prove the remaining items for $\mathscr{S}(\Gamma_*; \F)$ and $\mathscr{S}(\hat{\Gamma}_\ast; \F)$. Indeed, if the remaining items are true for $\mathscr{S}(\Gamma_*; \F)$ and $\mathscr{S}(\hat{\Gamma}_\ast; \F)$, then it suffices to check that 
\begin{align}\label{eq: appendix fourier comp}
\mathscr{F}_{\Gamma}^{\pm} = \sigma_{\hat{\Gamma}}^{-1} \circ \mathscr{F}_{\Gamma_\ast}^{\pm}  \circ \sigma_{\Gamma}, \; \mathscr{F}_{\hat{\Gamma}}^{\pm} = \sigma_{\Gamma}^{-1} \circ \mathscr{F}_{\hat{\Gamma}_\ast}^{\pm} \circ \sigma_{\hat{\Gamma}}, 
\end{align}
and the remaining items also follow for $\mathscr{S}(\Gamma; \F)$ and $\mathscr{S}(\hat{\Gamma}; \F)$ as the following diagram commutes:
\begin{equation}
	\begin{tikzcd}
		\mathscr{S}(\Gamma; \C) \arrow{r}{\mathscr{F}^{\pm}_{\Gamma}} \arrow{d}{\sigma_\Gamma} & \mathscr{S}(\hat{\Gamma}; \C) \arrow{d}{\sigma_{\hat{\Gamma}}} \arrow{r}{\mathscr{F}^{\mp}_{\hat{\Gamma}}} & \mathscr{S}(\Gamma; \C) \arrow{d}{\sigma_{\Gamma}}\\
		\mathscr{S}(\Gamma_\ast; \C) \arrow{r}{\mathscr{F}^{\pm}_{\Gamma_\ast}} & \mathscr{S}(\hat{\Gamma}_\ast; \C) \arrow{r}{\mathscr{F}^{\mp}_{\hat{\Gamma}_\ast}} &
		\mathscr{S}(\Gamma_\ast ; \C)
		\end{tikzcd} 
	\end{equation}
To verify \eqref{eq: appendix fourier comp}, we note that for any $f \in \mathscr{S}(\Gamma; \C)$, we have 
\begin{multline}\label{eq: check commute}
	(\sigma_{\hat{\Gamma}}^{-1}\mathscr{F}_{\Gamma_\ast}^{\pm} \sigma_{\Gamma} f)(\xi) =  \sigma_{\hat{\Gamma}}^{-1}\mathscr{F}_{\Gamma_\ast}^{\pm} [f(\mathcal{P}_{\Gamma}^{-1}\cdot)](\xi) = \sigma_{\hat{\Gamma}}^{-1}  \frac{1}{\sqrt{\prod_{i \in T_\Gamma} L_i}}\int_{\Gamma_\ast} f(\mathcal{P}_{\Gamma}^{-1}x) e^{\mp 2\pi i \xi\cdot x} \; dx \\ 
	= \sigma_{\hat{\Gamma}}^{-1} \frac{1}{\sqrt{\prod_{i \in T_\Gamma} L_i}}\int_{\Gamma} f(x) e^{\mp 2\pi i \mathcal{P}_{\hat{\Gamma}}^{-1} \xi\cdot x} \; dx = \sigma_{\hat{\Gamma}}^{-1} \mathscr{F}_{\Gamma}^{\pm} [f(\cdot)](\mathcal{P}_{\hat{\Gamma}}^{-1}\xi) = \mathscr{F}_{\Gamma}^{\pm} [f(\cdot)](\xi).
\end{multline}
A similar set of calculations can be performed to verify that $\sigma_{\Gamma}^{-1} \circ \mathscr{F}_{\hat{\Gamma}_\ast}^{\pm} \circ \sigma_{\hat{\Gamma}}$. Consequently, for the rest of  the proof we may assume without loss of generality that $\Gamma = \R^{d_1} \times \prod_{i=1}^{d_2} L_i \T$, with $d = \dim \Gamma = d_1 + d_2$. 

To prove the fourth item, we first note that if $f \in \mathscr{S}(\Gamma ; \C)$, by the first and third items we have $\p^\alpha ((-2\pi i x)^\gamma f) \in \mathscr{S}(\Gamma ; \C) \subset L^1(\Gamma ; \C)$ for any $\alpha, \gamma \in \N^{d_1}$. This shows that $\mathscr{F}^{\pm}_{\Gamma}{f}(\cdot, k) \in C^\infty(\R^{d_1} ; \C)$ for any $k \in \prod_{i=1}^{d_2} L_i^{-1}\Z$. Then for any $\alpha, \gamma \in \N^{d_1}, \beta \in \N^{d_2}$, we have 
\begin{multline}
(-2\pi i \xi)^\alpha (-2\pi i  k)^\beta \p_{\xi}^\gamma \mathscr{F}^{\pm} \{f\}(\xi,k) = (-2\pi i \xi)^\alpha (-2\pi i  k)^\beta  \p_{\xi}^\gamma \int_{\Gamma} f(x,y) e^{ \mp 2\pi i \xi\cdot x} e^{\mp 2\pi i k \cdot y} \; dxdy \\ = \int_\Gamma f(x,y) (\mp 2\pi i x)^\gamma \p_x^{\alpha} e^{\mp 2\pi i \xi \cdot x} \p_y^\beta e^{\mp 2\pi i k \cdot y} \; dx dy = (-1)^{\abs{\alpha} + \abs{\beta}} \int_{\Gamma} \p_x^\alpha[ (\mp 2\pi i x)^\gamma \p_y^\beta f(x,y)] e^{\mp 2\pi i \xi \cdot x} e^{\mp 2 \pi i k \cdot y} \; dx dy.
\end{multline}
By the first and third items, we find that the preceding inequality implies $\mathscr{F}^{\pm} \{f\} \in \mathscr{S}(\hat{\Gamma}; \C)$ and if $\{f_k\}_{k=1}^\infty \subset \mathscr{S}(\Gamma ; \C)$ and $f_k \to f$ in $\mathscr{S}(\Gamma; \C)$, then $\mathscr{F}_{\Gamma}^{\pm} \{ f_k \} \to \mathscr{F}_{\Gamma}^{\pm} \{ f\} $ in $\mathscr{S}(\hat{\Gamma} ; \C)$. Therefore, we can conclude that if $f \in \mathscr{S}(\Gamma ; \C)$ then $\mathscr{F}_{\Gamma}^{\pm} \{ f \} \in \mathscr{S}(\hat{\Gamma} ; \C)$, and the map $f \mapsto \mathscr{F}_{\Gamma}^{\pm} \{f\}$ is continuous. Likewise, by a similar calculation we find that if $f \in \mathscr{S}(\hat{\Gamma} ; \C)$ then $\mathscr{F}_{\hat{\Gamma}}^{\pm} \{ f \} \in \mathscr{S}(\Gamma ; \C)$, and the map $f \mapsto \mathscr{F}_{\hat{\Gamma}}^{\pm} \{f\}$ is continuous.
To prove the fifth item, we note that by the first item and Fubini's theorem, 
\begin{multline}
	\int_{\hat{\Gamma}} \mathscr{F}^{\pm}_{\Gamma} \{ f \}(\xi, k) g (\xi, k) \; d\xi d k =	\int_{\hat{\Gamma}} \left( \int_{\Gamma} f(x,y) e^{\mp 2\pi i \xi\cdot x} e^{\mp 2\pi i k \cdot y} \; dxdy \right)  g (\xi, k) \; d\xi d k \\ = \int_\Gamma f(x,y) \left( \int_{\hat{\Gamma}} g(\xi, k) e^{\mp 2\pi i \xi\cdot x} e^{\mp 2\pi i k \cdot y} \; d\xi dk \right) = \int_{\Gamma} f(x,y) \mathscr{F}^{\pm}_{\hat{\Gamma}} \{g\} (x,y) \; dx dy.
\end{multline}
To prove the sixth item, suppose $f \in \mathscr{S}(\Gamma ; \C)$ and for any fixed $t > 0$ and $(x,y) \in \Gamma$ we consider the function 
\begin{align}
	 \phi^{\pm}(\xi,k) =  e^{-\pi t^2 (\abs{\xi}^2 + \abs{k}^2)} e^{\pm 2\pi i \xi \cdot x } \frac{e^{\pm 2\pi i k \cdot y}}{\sqrt{\prod_{i=1}^{d_2} L_i}}.
\end{align}
We note that 
\begin{align}\label{eq: g conv}
	\mathscr{F}^{\pm}_{\hat{\Gamma}} \{ \phi^{\pm} \}  (r,s) = \mathscr{F}^{\pm}_{\hat{\Gamma}} \{ g_t \} (r-x, s-y),
\end{align}
where $g_t(\xi,k) = (\prod_{i=1}^{d_2} L_i)^{-1/2}e^{-\pi t^2 (\abs{\xi}^2 + \abs{k}^2)}$. Furthermore, we note that the dual lattice of $\Delta = \prod_{i=1}^{d_2} L_i^{-1} \Z$ is $\Delta' = \prod_{i=1}^{d_2} L_i \Z$ and the volume of the fundamental domain of $\Delta'$ is $\prod_{i=1}^{d_2} L_i$. Therefore, 
\begin{multline} \label{eq: g fourier}
	\mathscr{F}^{\pm}_{\hat{\Gamma}} \{g_t \} (x,y) 
	= \mathscr{F}^{\pm}_{\R^{d_1}} \{ e^{-\pi t^2 \abs{\cdot}^2 } \} (x) \cdot \mathscr{F}^{\pm}_{\prod_{i=1}^{d_2} L_i^{-1}\Z} \{ e^{-\pi t^2 \abs{\cdot}^2 } \} (y) \\
	= t^{-d_1} e^{-\pi \abs{x}^2 / t^2 } \left( \prod_{i=1}^{d_2} L_i \right)^{-1} \sum_{k \in \prod_{i=1}^{d_2} L_i^{-1}\Z} e^{-\pi t^2 \abs{k}^2} e^{2\pi i k \cdot y} 
	= t^{-d_1} e^{-\pi \abs{x}^2 / t^2 } t^{-d_2} \sum_{k \in \prod_{i=1}^{d_2} L_i\Z} e^{-\pi \abs{y+k}^2 / t^2} \\
	= t^{-(d_1 + d_2) } e^{-\pi \abs{x}^2 / t^2 } \sum_{k \in \prod_{i=1}^{d_2} L_i \Z} e^{-\pi \abs{y+k}^2 / t^2},
\end{multline}
where we used the Poisson summation formula for $h(y) = e^{-\pi t^2 \abs{y}^2 }$ on lattices to justify the second to last equality. By the fifth item, \eqref{eq: g conv} and \eqref{eq: g fourier}, we have 
\begin{multline}
	 \int_{\hat{\Gamma}} e^{-\pi t^2 (\abs{\xi}^2 + \abs{k}^2)} e^{\pm 2\pi i \xi \cdot x } \frac{e^{\pm 2\pi i k \cdot y}}{\sqrt{\prod_{i=1}^{d_2} L_i}} \mathscr{F}^{\pm}_{\Gamma} 
	\{f\} (\xi, k) \; d\xi dk = \int_{\Gamma} f(r,s) \mathscr{F}^{\pm}_{\hat{\Gamma}} \{ g_t \} (r - x, s - y) \; dr ds  \\  =  \int_{\Gamma} f(r,s) \mathscr{F}^{\pm}_{\hat{\Gamma}} \{ g_t \} (x - r, y - s) \; dr ds =f* \mathscr{F}_{\hat{\Gamma}}^{\pm} \{ g_t \} (x,y).
\end{multline}
We note that by the dominated convergence theorem, we have 
\begin{align}
	 \lim_{t \to 0^+}  \int_{\hat{\Gamma}} e^{-\pi t^2 (\abs{\xi}^2 + \abs{k}^2)} e^{\pm 2\pi i \xi \cdot x } \frac{e^{ \pm 2\pi i k \cdot y}}{\sqrt{\prod_{i=1}^{d_2} L_i}} \mathscr{F}^{\pm}_{\Gamma} 
	 \{f\} (\xi, k) \; d\xi dk = \mathscr{F}^{\mp}_{\hat{\Gamma}} \circ \mathscr{F}^{\pm}_{\Gamma} \{ f \} (x,y)
\end{align}
for all $(x,y) \in \Gamma$. On the other hand, we note that $\{ \mathscr{F}^{\pm}_{\hat{\Gamma}}\{g_t \} \}_{t > 0}$ form an approximate identity on $\Gamma$ since for all $t > 0$, $\mathscr{F}^{\pm}\{g_t \} (r,s) \ge 0$ on $\Gamma$ and by \eqref{eq: g fourier},
\begin{multline}\label{eq: approx iden}
	\int_{\Gamma} \mathscr{F}^{\pm}_{\hat{\Gamma}}\{g_t \} (r,s)  \; dr ds = t^{-(d_1 + d_2)}\int_{\Gamma} e^{-\pi \abs{r}^2 /t^2 } \sum_{k \in \prod_{i=1}^{d_2} L_i \Z} e^{-\pi \abs{s+k}^2 / t^2} \; dr ds \\ = t^{-(d_1 + d_2)} \int_{\R^{d_1}} e^{-\pi \abs{r}^2 / t^2} \; dr \int_{\R^{d_2}} e^{-\pi \abs{s}^2/t^2} \; ds = 1,
\end{multline}
and by \eqref{eq: approx iden} we also have $\lim_{t \to 0} \int_{\Gamma \setminus B(0,\delta)} \mathscr{F}_{\hat{\Gamma}}^{\pm}\{g_t \} (r,s)  \; dr ds \to 0$ for all $\delta > 0$. Therefore, for all $(x,y) \in \Gamma$ we have 
\begin{align}
	\mathscr{F}^{\mp}_{\hat{\Gamma}} \circ \mathscr{F}^{\pm}_{\Gamma} \{f\} (x,y) = \lim_{t\to 0^+} f*\mathscr{F}\{g_t\} (x,y) = f(x,y).
\end{align}
To prove the analogous inversion formula on $\mathscr{S}(\hat{\Gamma}; \C)$, for any given $t > 0$ and $(\xi, k) \in \hat{\Gamma}$ we consider the function 
\begin{align}
	 \varphi^{\pm}(x,y) =  \left(\prod_{i=1}^{d_2} L_i \right)^{1/2} e^{-\pi t^2 \abs{x}^2 } e^{\pm 2\pi i \xi \cdot x }   e^{\pm 2\pi i k \cdot y}.
\end{align}
We note that 
\begin{align}\label{eq: h approx iden}
	 \mathscr{F}_{\Gamma}^{\pm} \{ \varphi^{\pm} \} (\eta, m) = \mathscr{F}_{\Gamma}^{\pm} \{ h_t \} ( \eta - \xi, m - k) 
\end{align}
where $h_t(x,y) = \left(\prod_{i=1}^{d_2} L_i \right)^{1/2} e^{- \pi t^2 \abs{x}^2}$, and by a change of variables we have 
\begin{align}
	 \mathscr{F}_{\Gamma}^{\pm} \{ h_t \} (\xi, k) = \left(\prod_{i=1}^{d_2} L_i \right)^{1/2} \mathscr{F}_{\R^{d_1}} \{ e^{-\pi t^2 \abs{\cdot}^2 } \} (\xi) \cdot \mathscr{F}_{\prod_{i=1}^{d_2} L_i \T} \{ 1 \} (k) = t^{-d_1} e^{-\pi \abs{\xi}^2/t^2 } \delta_{0,k}, 
\end{align}
where $\delta_{0,k}$ is the Kronecker delta. Thus, by the fifth item, we have 
\begin{multline}
	\int_{\Gamma} \left(\prod_{i=1}^{d_2} L_i \right)^{1/2} e^{-\pi t^2 \abs{x}^2 } e^{\pm 2\pi i \xi \cdot x } e^{\pm 2\pi i k \cdot y} \mathscr{F}^{\pm}_{\hat{\Gamma}} 
   \{f\} (x, y) \; dx dy = \int_{\hat{\Gamma}} f(\eta, m ) \mathscr{F}^{\pm}_{\Gamma} \{ h_t \} (\eta - \xi, m - k) \; d\eta dm  \\ =  \int_{\hat{\Gamma}} f(\eta,m) \mathscr{F}^{\pm}_{\Gamma} \{ g_t \} (\xi - \eta, k- m) \; d\eta dm =f* \mathscr{F}_{\Gamma}^{\pm} \{ h_t \} (\xi,k).
\end{multline}
We note that by \eqref{eq: h approx iden}, $\{ \mathscr{F}_{\Gamma}^{\pm} \{ h_t \} \}_{t > 0}$ form an approximate identity on $\hat{\Gamma}$, therefore by repeating the same argument on $\mathscr{S}(\Gamma; \C)$ we find that $f (\xi, k) = \mathscr{F}^{\mp}_{\Gamma} \circ \mathscr{F}^{\pm}_{\hat{\Gamma}} \{f\} (\xi, k)$ for all $(\xi, k) \in \hat{\Gamma}$. This proves the sixth item.

The last item follows immediately from the fourth item and the seventh item. 
\end{proof}

We then extend the definition of the Fourier transform to the class of tempered distributions. 

\begin{defn}
Suppose $T_1 \in \mathscr{S}'(\Gamma ; \C)$ and $T_2 \in \mathscr{S}'(\hat{\Gamma} ; \C)$, where $\Sigma$ is defined via \eqref{eq: Gamma f}. 
\begin{enumerate}
\item We define the Fourier and inverse Fourier transform of $T_1$ to be $\mathscr{F}_{\Gamma}^{\pm} \{ T_1 \} \in \mathscr{S}'(\hat{\Gamma} ; \C)$ given by 
\begin{align}
	 \langle \mathscr{F}_{\Gamma}^{\pm} \{ T_1 \}, \varphi \rangle = \langle T_1, \mathscr{F}_{\Gamma}^{\pm} \{ \varphi \}\rangle \; \text{for all} \; \varphi \in \mathscr{S}(\hat{\Gamma} ; \C)
\end{align}
\item We define the Fourier and inverse Fourier transform of $T_2$ to be $\mathscr{F}_{\hat{\Gamma}}^{\pm} \{ T_2 \} \in \mathscr{S}'(\Gamma ; \C)$ given by 
\begin{align}
	 \langle \mathscr{F}_{\hat{\Gamma}}^{\pm} \{ T_2 \}, \varphi \rangle = \langle T_2, \mathscr{F}_{\hat{\Gamma}}^{\pm} \{ \varphi \} \rangle \; \text{for all} \; \varphi \in \mathscr{S}(\Gamma ; \C)
\end{align}
\end{enumerate}
\end{defn}
We note that by Proposition~\ref{prop:schwartz}, the maps $\mathscr{F}_{\Gamma}^{\pm}: \mathscr{S}'(\Gamma ; \C) \to \mathscr{S}'(\hat{\Gamma}; \C)$ and $\mathscr{F}_{\hat{\Gamma}}^{\pm}:  \mathscr{S}'(\hat{\Gamma}; \C) \to \mathscr{S}'(\Gamma; \C)$ are also isomorphisms.

Throughout the paper we will follow standard notation by denoting the Fourier and inverse Fourier transforms for a Schwartz function $f \in \mathscr{S}(\Gamma; \C)$ or a tempered distribution $f \in \mathscr{S}'(\Gamma; \C)$ by $\hat{f} = \mathscr{F}_\Gamma^{+} \{ f \}, \; \check{f} = \mathscr{F}_\Gamma^{-} \{ f \}$. Sometimes we will also write  $\mathscr{F}_{\Gamma}[f] = \mathscr{F}_\Gamma^{+} \{ f \}, \; \mathscr{F}_{\Gamma}^{-1} [f]  = \mathscr{F}_\Gamma^{-} \{ f \}$.

\subsection{Spaces defined via Fourier multipliers} \label{appendix: spaces}
We note that since $\hat{\Gamma}$ and $\hat{\Gamma}_\ast$ are always subgroups of $\R^d$, we can extend the reordering map $\mathcal{P}_{g,\hat{\Gamma}}$ trivially to $\R^d$. In the next lemma, we show that the restriction of the map $\sigma_{g,\Gamma}$ on a space $\mathscr{X}_\omega(\Gamma ; \F)$, where $\sigma_{g,\Gamma}$ is defined in the third item of Definition~\ref{defn: reordering notation}, is an isometric isomorphism  between $\mathscr{X}_\omega(\Gamma ; \F)$ and $\mathscr{X}_\omega(g \Gamma ; \F)$, where $\mathscr{X}_\omega(\Gamma ; \F)$ is a function space defined in terms of a Fourier multiplier $\omega : \R^d \to [0,\infty)$ that is invariant under $\mathcal{P}_{g,\hat{\Gamma}}$. 

\begin{lem}\label{lem: gen X^s permutation}
Let $\Gamma$ be defined as in \eqref{eq: Gamma f}. Let $g \in S_d$, $g\Gamma$ be defined as in the first item of Definition~\ref{defn: reordering notation}, and $\mathcal{P}_{g,\Gamma}, \mathcal{P}_{g, \hat{\Gamma}}$ be the reordering maps defined as in \eqref{eq: reordering}. Let $\omega: \R^d \to [0,\infty)$ be a Fourier multiplier that is positive $\mathcal{L}^d$-almost everywhere such that $\omega(\cdot) = \omega(\mathcal{P}_{g,\hat{\Gamma}} \cdot)$. We define the function space $\mathscr{X}_\omega(\Gamma ; \F)$ via 
\begin{align}
    \mathscr{X}_\omega(\Gamma ; \F)=
	\begin{cases}
		\tcb{f\in\mathscr{S}'(\Gamma;\C) \mid \hat{f}\in L^1_{\loc}(\hat{\Gamma};\C), \;f= \overline{f},\;\tnorm{\sqrt{\omega}\hat{f}}_{L^2}<\infty}, & \F = \R \\
		\tcb{f\in\mathscr{S}'(\Gamma;\C) \mid \hat{f}\in L^1_{\loc}(\hat{\Gamma};\C),\;\tnorm{\sqrt{\omega}\hat{f}}_{L^2}<\infty}, & \F = \C,
	\end{cases}
\end{align}
Then the map $\sigma_{g,\Gamma}: \mathscr{X}_\omega(\Gamma ; \F) \to  \mathscr{X}_\omega(g\Gamma; \F)$ defined via $\sigma_{g,\Gamma} f = f \circ \mathcal{P}_{g,\Gamma}^{-1}$ is an isometric isomorphism, with its inverse given by the map $\sigma_{g,\Gamma}^{-1}:  \mathscr{X}_\omega(g \Gamma ; \F) \to  \mathscr{X}_\omega(\Gamma ; \F)$ defined via $\sigma_{g,\hat{\Gamma}}^{-1} f = f \circ \mathcal{P}_{g,\Gamma}$. 
\end{lem}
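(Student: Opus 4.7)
The plan is to verify directly that $\sigma_{g,\Gamma}$ maps $\mathscr{X}_\omega(\Gamma;\F)$ into $\mathscr{X}_\omega(g\Gamma;\F)$ isometrically, then invoke the trivial invertibility of $\sigma_{g,\Gamma}$ to conclude. Throughout, the Pontryagin dual of $g\Gamma$ is $\mathcal{P}_{g,\hat\Gamma}\hat\Gamma = g\hat\Gamma$, and we will freely identify $\hat\Gamma, g\hat\Gamma \subseteq \R^d$ so that the hypothesis $\omega(\cdot) = \omega(\mathcal{P}_{g,\hat\Gamma}\cdot)$ makes sense on both.

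First I would show $\sigma_{g,\Gamma}$ is a well-defined linear bijection between the tempered distribution spaces $\mathscr{S}'(\Gamma;\C)$ and $\mathscr{S}'(g\Gamma;\C)$. By Lemma~\ref{lem: schwartz iso}, the map $\sigma_{g,\Gamma}$ restricts to an isometric isomorphism of Schwartz spaces, so duality yields a corresponding isomorphism at the level of tempered distributions, with $\sigma_{g,\Gamma}^{-1} = \sigma_{g^{-1},g\Gamma}$. Next, precisely as in the computation \eqref{eq: check commute}, a direct change of variables shows that for $f \in \mathscr{S}(\Gamma;\C)$
\begin{equation*}
\mathscr{F}^{+}_{g\Gamma}[\sigma_{g,\Gamma} f](\xi) = \mathscr{F}^{+}_{\Gamma}[f](\mathcal{P}_{g,\hat\Gamma}^{-1}\xi) = (\sigma_{g,\hat\Gamma}\hat f)(\xi),
\end{equation*}
and this intertwining extends to tempered distributions by duality, i.e.\ $\widehat{\sigma_{g,\Gamma} f} = \sigma_{g,\hat\Gamma}\hat f$.

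With the intertwining in hand, the core computation is an application of the change-of-variables formula combined with the invariance of $\omega$. Using the fact that $\mathcal{P}_{g,\hat\Gamma}$ is a measure-preserving homeomorphism of $\hat\Gamma$ onto $g\hat\Gamma$ by Lemma~\ref{lem: permutation induces iso everything}, and the hypothesis $\omega(\mathcal{P}_{g,\hat\Gamma}\eta) = \omega(\eta)$, I would write
\begin{equation*}
\|\sigma_{g,\Gamma} f\|_{\mathscr{X}_\omega(g\Gamma)}^2 = \int_{g\hat\Gamma} \omega(\xi)\,|\hat f(\mathcal{P}_{g,\hat\Gamma}^{-1}\xi)|^2\,d\xi = \int_{\hat\Gamma} \omega(\mathcal{P}_{g,\hat\Gamma}\eta)\,|\hat f(\eta)|^2\,d\eta = \|f\|_{\mathscr{X}_\omega(\Gamma)}^2,
\end{equation*}
which gives the isometry. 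The same change of variables shows $\widehat{\sigma_{g,\Gamma} f} \in L^1_{\loc}(g\hat\Gamma;\C)$ iff $\hat f \in L^1_{\loc}(\hat\Gamma;\C)$, since $\mathcal{P}_{g,\hat\Gamma}$ maps compact sets to compact sets. When $\F = \R$, one checks that the reality constraint $f = \bar f$ in $\mathscr{S}'(\Gamma;\C)$ is preserved because $\sigma_{g,\Gamma}$ is a real composition (and commutes with complex conjugation of distributions via its action on Schwartz test functions).

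Finally, I would note that since $\sigma_{g,\Gamma}$ is trivially a bijection with inverse $\sigma_{g^{-1},g\Gamma} = \sigma_{g,\Gamma}^{-1}$ (acting on functions by precomposition with $\mathcal{P}_{g,\Gamma}$), and since the symmetric statement applies with $g\Gamma$ in place of $\Gamma$ (the multiplier $\omega$ remains invariant under $\mathcal{P}_{g,\hat\Gamma}$ after relabeling), the inverse is also well-defined and isometric, completing the proof. No step here is a genuine obstacle; the only point requiring a bit of care is verifying that the intertwining identity $\widehat{\sigma_{g,\Gamma} f} = \sigma_{g,\hat\Gamma}\hat f$ passes from $\mathscr{S}$ to $\mathscr{S}'$, which is purely formal once Lemma~\ref{lem: schwartz iso} and Proposition~\ref{prop:schwartz} are available.
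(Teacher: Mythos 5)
Your proposal is correct and follows essentially the same route as the paper's proof: it uses Lemma~\ref{lem: schwartz iso} to handle the distributional level, the intertwining identity from \eqref{eq: check commute} to relate Fourier transforms, and the change-of-variables computation with the measure-preservation of $\mathcal{P}_{g,\hat\Gamma}$ and the invariance of $\omega$ to obtain the isometry. Your extra care in verifying the $L^1_{\loc}$ and reality conditions is a welcome (if minor) elaboration of points the paper asserts without detail.
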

\begin{proof}
We first note that by Lemma~\ref{lem: schwartz iso}, the map $\sigma_{g,\Gamma}: \mathscr{S}'(\Gamma; \F) \to \mathscr{S}'(g \Gamma; \F)$, with its inverse given by $\sigma_{g,\Gamma}^{-1}: \mathscr{S}'(g \Gamma ; \F) \to \mathscr{S}'(\Gamma ; \F)$. Then for any $f \in \mathscr{X}_\omega(\Gamma; \F)$, we have $\sigma_{g,\Gamma} f \in \mathscr{S}'(g \Gamma; \C), \mathscr{F}_{g \Gamma}^+ [\sigma_{g, \Gamma} f]\in L^1_{\loc}(\hat{\Gamma}_\ast;\C)$, and $\sigma_{g,\Gamma} f = \overline{\sigma_{g,\Gamma} f} $. Furthermore, by \eqref{eq: check commute} we have $\mathscr{F}_{g \Gamma}^+ [\sigma_{g,\Gamma} f] = \sigma_{g,\hat{\Gamma}} \mathscr{F}_\Gamma^+ [f]$. Then 
\begin{multline}\label{eq: switching calculation}
	 \norm{ \sigma_{g,\Gamma} f}_{\mathscr{X}_\omega(g \Gamma)}^2 = \int_{g \hat{\Gamma}}  \omega(\xi) \abs{\mathscr{F}^{+}_\Gamma[f](\mathcal{P}_{g,\hat{\Gamma}}^{-1} \xi)}^2 \; d\xi 
	 = \int_{\hat{\Gamma}} \omega(\mathcal{P}_{g,\hat{\Gamma}} \xi) \abs{\mathscr{F}^{+}_{\Gamma}[f]( \xi)}^2 \; d\xi \\ = \int_{\hat{\Gamma}} \omega(\xi) \abs{\mathscr{F}^{+}_{\Gamma}[f]( \xi)}^2 \; d\xi= \norm{f}_{\mathscr{X}_\omega(\Gamma)}^2.
\end{multline}
This shows that $\sigma_{g,\Gamma} f \in \mathscr{X}_\omega(g \Gamma; \F)$ with $\norm{\sigma_{g,\Gamma} f}_{\mathscr{X}_\omega(g \Gamma)} = \norm{f}_{\mathscr{X}_\omega(\Gamma)}$. Following the exact same calculations as above we also find that for any $f \in \mathscr{X}_\omega(g \Gamma; \F)$, we have $\sigma_{g,\Gamma}^{-1} f=  f \circ \mathcal{P}_{g, \Gamma} \in \mathscr{X}_\omega(\Gamma ; \F)$ with $\Vert\sigma_{g,\Gamma}^{-1}f\Vert_{\mathscr{X}_\omega(\Gamma)} = \norm{f}_{\mathscr{X}_\omega(g \Gamma)}$. This shows that $\sigma_{g,\Gamma}: \mathscr{X}_\omega(\Gamma ; \F) \to  \mathscr{X}_\omega(g\Gamma; \F)$ is an isometric isomorphism with $\sigma_{g,\Gamma}^{-1}$ as its inverse. 
\end{proof}

\subsection*{Acknowledgement} The authors would like to thank Beno\^{i}t Pausader for pointing out the proof for Theorem~\ref{thm:boundedness of I}.

\nocite{*}
\bibliographystyle{abbrv}
\bibliography{bib.bib}

\end{document}